\newcommand{\defn}[1]{\textbf{\textit{#1}}}
\newcommand{\pr}{\operatorname{pr}}
\newcommand{\Id}{\mathrm{Id}}
\newcommand{\coiso}{\operatorname{coiso}^0}
\newcommand{\coisoeq}{\operatorname{coiso}}
\newcommand{\transfer}{\mathsf{T}^0}
\newcommand{\transfereq}{\mathsf{T}}
\newcommand{\htimes}{\times^h}
\newcommand{\stimes}{\times^s}
\renewcommand{\B}{\mathbf{B}}
\newcommand{\obj}{^{0}}
\newcommand{\arr}{^{1}}
\newcommand{\comp}[1]{^{#1}}
\newcommand{\mmm}{\mathsf{m}}
\newcommand{\iii}{\mathsf{i}}
\newcommand{\uuu}{\mathsf{1}}
\renewcommand{\C}{\mathcal{C}}
\renewcommand{\c}{c}
\newcommand{\IM}{\sigma}
\newcommand{\aaa}{\rho}
\newcommand{\conn}{\tau}
\newcommand{\dconn}{\check{\conn}}
\newcommand{\qad}{\operatorname{Ad}}
\newcommand{\curv}{K}
\newcommand{\ME}{\mathcal{M}}
\newcommand{\moment}{\mu}
\newcommand{\act}{\mathsf{a}}
\title[Shifted coisotropic structures for differentible stacks]{Shifted coisotropic structures for\\ differentiable stacks}
\author{Maxence Mayrand}
\address[Maxence Mayrand]{D\'epartement de math\'ematiques, Universit\'e de Sherbrooke, 2500 Bd de l'Universit\'e, Sherbrooke, QC, J1K 2R1, Canada}
\email{maxence.mayrand@usherbrooke.ca}
\date{\today}
\begin{document}

%
\begin{abstract}

We introduce a notion of coisotropics on 1-shifted symplectic Lie groupoids (i.e.\ quasi-symplectic groupoids) using twisted Dirac structures and show that it satisfies properties analogous to the corresponding derived-algebraic notion in shifted Poisson geometry.
In particular, intersections of 1-coisotropics are 0-shifted Poisson.
We also show that 1-shifted coisotropic structures transfer through Morita equivalences, giving a well-defined notion for differentiable stacks.
Most results are formulated with clean-intersection conditions weaker than transversality while avoiding derived geometry.
Examples of 1-coisotropics that are not necessarily Lagrangians include Hamiltonian actions of quasi-symplectic groupoids on Dirac manifolds, and this recovers several generalizations of Marsden--Weinstein--Meyer's symplectic reduction via intersection and Morita transfer.

\end{abstract}

\maketitle
\tableofcontents

\section{Introduction}

Differential geometry has recently benefited tremendously from interacting with the theory of derived algebraic stacks.
The root of this 
interaction is that differentiable stacks can be viewed as Lie groupoids up to Morita equivalences \cite{beh-xu:03}, which have been at the core of much of differential geometry for several decades, either explicitly or implicitly.
Significant advances have thus been made by translating concepts related to Lie groupoids in the language of stacks, providing new insights \cite{tse-zhu:05,tse-zhu:06,ler:10,saf:16,saf:17a,saf:17b,pri:20,saf:21b,bai-gua:23}, powerful machinery \cite{zhu:09,cue-zhu:23,ber-ler:20,rog-zhu:20,pym-saf:20}, and far-reaching generalizations \cite{blo:08,cab-hoy-puj:20,bur-nos-zhu:20,hof:20,ban-che-sti-xu:20,hof-sja:21}. 
A considerable body of work has therefore been devoted to constructing ``stacky'' versions of differential-geometric notions, such as 
volume \cite{wei:09},
vector fields \cite{hep:09}, 
representations \cite{aba-cra:13}, 
Riemannian metrics \cite{hoy-fer:19},
measures \cite{cra-mes:19},
vector bundles \cite{hoy-ort:20},
equivariant cohomology \cite{bar-neu:21}, 
Poisson structures \cite{bon-cic-lau-xu:22}, 
contact structures \cite{mag-tor-vit:23}, 
gerbes \cite{beh-xu:03,lau-sti-xu:09,beh-xu:11}, 
$K$-theory \cite{tu-xu-lau:04},
and more.

Most relevant to us is the differential-geometric version of Pantev--To\"en--Vaqui\'e--Vezzosi's shifted symplectic structures \cite{ptvv:13} --- a powerful generalization of symplectic geometry on derived algebraic stacks with applications to quantum field theory.
The ``shift'' is an integer $n$ that recovers ordinary symplectic geometry when set to $0$ and the derived stack is a manifold.
In the case of shift $n = 1$ on a differentiable stack, this recovers the notion of quasi-symplectic groupoids up to gauge transformations \cite{get:14,cal:21,cue-zhu:23}, which was introduced around a decade earlier by Xu \cite{xu:04} and Bursztyn--Crainic--Weinstein--Zhu \cite{bur-cra-wei-zhu:04}.
These groupoids are the integrations of Dirac structures, thus placing Dirac geometry and its related geometries, such as Poisson \cite{bur-wei:05}, quasi-Poisson \cite{ale-kos-mei:02,bur-cra:05}, and generalized complex geometry \cite{gua:11,bai-gua:23}, firmly into the realm of differentiable stacks. 

One of the main existence results for shifted symplectic structures is the notion of Lagrangian intersection \cite{ptvv:13}.
Given two \emph{$n$-shifted Lagrangians}
\[
\begin{tikzcd}[row sep={2em,between origins},column sep={4em,between origins}]
\mathbf{L}_1 \arrow{dr} & & \mathbf{L}_2 \arrow{dl} \\
& \mathbf{X} &
\end{tikzcd}
\]
in an $n$-shifted symplectic stack $\mathbf{X}$, the derived fibre product $\mathbf{L}_1 \times_{\mathbf{X}} \mathbf{L}_2$ has an $(n-1)$-shifted symplectic structure.
In particular, the intersection of two $1$-shifted Lagrangians is $0$-shifted symplectic.
When translated into differential-geometric terms, this produces methods for constructing symplectic manifolds.
In particular, Marsden--Weinstein--Meyer symplectic reduction \cite{mar-wei:74,mey:73} and quasi-Hamiltonian reduction \cite{ale-mal-mei:98} arise in this way \cite{cal:14,cal:15,saf:16,cal:21,ane-cal:22}.
As shown by P.\ Crooks, A.\ B\u{a}libanu, and the author \cite{cro-may:22,bal-may:22}, many other more exotic constructions also fall under this scheme, such as symplectic cutting \cite{ler:95,ler-mei-tol-woo:98} and symplectic implosion \cite{gui-jef-sja:02,hur-jef-sja:06}.

On the other hand, a well-established construction that is \emph{not} an intersection of shifted Lagrangians is the reduction theory of Dirac manifolds \cite{bur-cra:05}.
Instead, as shown in this paper, it can be interpreted as an intersection of shifted \emph{coisotropics} ---
a concept from shifted Poisson geometry \cite{cptvv,mel-saf:18a,mel-saf:18b,hau-mel-saf:22}.
As for Lagrangians, derived fibre products of $n$-shifted coisotropics are $(n - 1)$-shifted Poisson.
However, although the differential-geometric version of 1-shifted Lagrangians is evident from the original paper \cite{ptvv:13}, the same cannot be said for shifted coisotropics, which are more deeply rooted in algebro-geometric concepts.
One of the goals of this paper is to introduce a definition of 1-shifted coisotropics on 1-shifted symplectic stacks in the language of Lie groupoids.
In short, a 1-shifted coisotropic is a Lie groupoid morphism $\c : \C \to \G$, where $\G$ is a quasi-symplectic groupoid, together with a twisted Dirac structure on the space of objects $\C\obj$ satisfying a compatibility and a non-degeneracy condition (Definition \ref{gm52z93m}).
We show that this notion has the expected properties analogous to the derived-algebraic case:

\begin{theorem}[Properties of 1-coisotropics]
Under suitable non-degeneracy conditions,
\begin{enumerate}[label={\textup{(\arabic*)}}]
\item \label{g7rgs6l8}
\textit{intersections of 1-shifted coisotropics are 0-shifted Poisson (Theorem \ref{2jy36i7i} and Theorem \ref{rt0qkos3}),}
\item \label{xea3qq2x}
\textit{1-shifted coisotropic structures transfer through Morita equivalences (Theorem \ref{vazibeld}), and}
\item \label{lwnzxn8d}
\textit{1-shifted coisotropics have an induced 0-shifted Poisson structure (Corollary \ref{t61dwbsa}).}
\end{enumerate}
\end{theorem}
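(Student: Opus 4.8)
The statement collects the three structural results established in the body, so the plan is to prove each part by reducing it to a controlled operation on twisted Dirac structures, invoking the clean-intersection hypotheses to guarantee smoothness in place of transversality. Throughout I model a $0$-shifted Poisson structure on a differentiable stack as a multiplicative bivector on a representing Lie groupoid, equivalently as an \emph{untwisted} Dirac structure on the space of objects that is transverse to the tangent bundle---hence the graph of a bivector---and compatible with the multiplication. The common mechanism is that the compatibility and non-degeneracy conditions built into Definition~\ref{gm52z93m} are exactly what force the Dirac structures produced below to be smooth, untwisted, and graph-like.

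For part~\ref{g7rgs6l8}, I start from two $1$-coisotropics $\c_i \colon \C_i \to \G$ ($i = 1,2$), where $\G$ is quasi-symplectic with space of objects $M = \G\obj$ and background $3$-form $\phi$, and each $\C_i\obj$ carries a $\c_i^*\phi$-twisted Dirac structure $L_i \subset T\C_i\obj \oplus T^*\C_i\obj$. I form the homotopy fibre product groupoid $\C_1 \htimes_\G \C_2$; under the clean-intersection condition this is an honest Lie groupoid whose objects are the smooth fibre product $\C_1\obj \times_M \C_2\obj$. On objects I combine $L_1$ and $L_2$ through their Courant-theoretic fibre product---the intersection of the two backward images along the projections---and the clean-intersection hypothesis makes this a constant-rank, smooth Dirac structure. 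The two twistings both restrict to the pullback of $\phi$ along the projections, and since the two coisotropics enter the fibre product with opposite orientation they combine with opposite signs and cancel, so the combined structure is untwisted; the non-degeneracy condition then forces transversality to the tangent bundle, exhibiting the combined Dirac structure as the graph of a bivector $\pi$. Involutivity of the Dirac structure is the Jacobi identity for $\pi$, and multiplicativity is inherited from the multiplicativity of the pieces, so $\pi$ is the desired $0$-shifted Poisson structure on the intersection.

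For part~\ref{xea3qq2x}, I use that a Morita equivalence $\G \sim \G'$ of quasi-symplectic groupoids is implemented by a bibundle whose two legs are surjective submersions and under which the $1$-shifted symplectic data differ by a gauge transformation by a closed $2$-form, as in the cited work of Xu and Bursztyn--Crainic--Weinstein--Zhu. Given $\c \colon \C \to \G$, I transfer it by pulling back along the bibundle: I form the fibre-product groupoid over $\G$ and move the twisted Dirac structure across the two submersive legs by the corresponding forward and backward images. Because the legs are surjective submersions these images are automatically clean, hence smooth; the gauge $2$-form is absorbed into the twisting, and the compatibility and non-degeneracy conditions are manifestly preserved by these operations. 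Verifying that the resulting coisotropic is independent of the chosen bibundle, up to the equivalence relation on coisotropic structures, then gives a well-defined notion on the underlying stack. Part~\ref{lwnzxn8d} follows as a corollary of part~\ref{g7rgs6l8}: intersecting $\C$ with the canonical ``top'' $1$-coisotropic furnished by the identity $\G \to \G$ returns $\C$ itself, now carrying the induced $0$-shifted Poisson structure.

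The main obstacle across all three parts is controlling smoothness and non-degeneracy under \emph{clean}, rather than transverse, intersection. Concretely, one must check that the Courant-theoretic fibre product of the two Dirac structures has locally constant rank and stays transverse to the tangent bundle---so that it is genuinely the graph of a smooth bivector---precisely along the strata singled out by the non-degeneracy conditions. A more delicate bookkeeping point is the exact cancellation of the two background $3$-forms, together with the gauge contributions in part~\ref{xea3qq2x}, and the verification that the resulting bivector is multiplicative, i.e.\ compatible with the groupoid structure; I expect this to be the technical heart, carried out directly in Dirac-geometric terms so as to avoid derived machinery.
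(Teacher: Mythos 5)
There is a genuine gap, and it starts with the target notion. In this paper a $0$-shifted Poisson structure on a Lie groupoid $\G$ is an untwisted Dirac structure $L$ on $\G\obj$ with $\sss^*L = \ttt^*L$ and $\im \aaa = \ker L$ (Definition \ref{3thr311z}); it is emphatically \emph{not} transverse to the tangent bundle, and it is not the graph of a bivector on $\G\obj$. Its kernel is the tangent space to the orbits, and it only becomes a genuine Poisson bivector after descending to the orbit space when that is a manifold (Corollary \ref{s7wgomy8}). Your claim that the non-degeneracy condition ``forces transversality to the tangent bundle, exhibiting the combined Dirac structure as the graph of a bivector $\pi$'' is therefore false in the basic examples (e.g.\ $G \ltimes \mu^{-1}(0)$ with non-discrete stabilizers), and the subsequent appeal to multiplicativity of $\pi$ on the arrow space is not the structure the theorem asserts. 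Relatedly, the correct combination of $L_1$ and $L_2$ is the \emph{sum} of pullbacks $p_1^*L_1 + p_2^*L_2$ (Theorem \ref{2jy36i7i}), not an intersection of backward images --- an intersection of Dirac structures is not Lagrangian in general --- and for the homotopy fibre product one must subtract the term $p_0^*\Gamma_{\omega_2}$ pulled back from the arrow space of the middle groupoid (Theorem \ref{rt0qkos3}); without it the compatibility identity $\ttt^*L = \sss^*L + \Gamma_{\c^*\omega}$ and the cancellation of background $3$-forms both fail.

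For part \ref{xea3qq2x}, the assertion that ``the compatibility and non-degeneracy conditions are manifestly preserved'' skips the actual content of Theorem \ref{vazibeld}. The connecting form is $\delta = -g^*\gamma + \theta_1^*\omega_1 - \theta_2^*\omega_2$, so the natural transformations in the $2$-commutative square contribute terms $\theta_i^*\omega_i$ beyond the gauge $2$-form, and verifying that the surjectivity of $(\aaa_\C, \c^*\IM_\omega\c_*, \c_*)$ transfers requires comparing $f_*$ and $g_*$ on Lie algebroids for homotopic morphisms; this is done via an Ehresmann connection and the Abad--Crainic adjoint representation up to homotopy (\S\ref{f73gy0ci}), together with the identities of Lemma \ref{9lc8lqnv} and Lemma \ref{1g34n1g6}. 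None of this is automatic from the legs being surjective submersions. Your reduction of part \ref{lwnzxn8d} to part \ref{g7rgs6l8} by intersecting with the identity $1$-coisotropic is the paper's argument (Corollary \ref{t61dwbsa}), but it inherits the problems above: the induced structure is $L - \c^*L_\G$, which need not even be smooth without an extra clean-intersection or discrete-isotropy hypothesis.
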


Here, a 0-shifted Poisson structure on a Lie groupoid $\G$ is a Dirac structure $L$ on $\G\obj$ such that $\sss^*L = \ttt^*L$ and $\im \aaa_\G = \ker L$, where $\aaa_\G$ is the anchor map.
It can be interpreted as a ``transverse Poisson structure.''
In particular, if the orbit space $\G\obj/\G\arr$ is a manifold, then $0$-shifted Poisson structures on $\G$ are in one-to-one correspondence with ordinary Poisson structures on $\G\obj/\G\arr$.

Property \ref{g7rgs6l8} will follow from a stronger result about the composition of 1-shifted coisotropic correspondences, i.e.\ a shifted version of Weinstein's coisotropic calculus \cite{wei:88} and a differential-geometric version of \cite{hau-mel-saf:22}.
Property \ref{xea3qq2x} will be used to define 1-shifted coisotropic structures on morphisms of differentiable stacks using the equivalence of bicategories from Lie groupoids localized at essential equivalences to differentiable stacks \cite{beh-xu:11,pro:96}. 
The proof uses the adjoint representation up to homotopy of Abad--Crainic \cite{aba-cra:13} to construct chain homotopies between maps on tangent complexes.

The most basic examples of 1-shifted coisotropics are similar to the two ``extreme cases'' of coisotropics in ordinary symplectic geometry, i.e.\ Lagrangians and the whole space:

\begin{proposition}[Examples of 1-coisotropics]
The following are 1-coisotropics:
\begin{itemize} 
\item 1-shifted Lagrangians in the sense of \cite{ptvv:13}, and
\item the identity morphism $\G \to \G$ of a quasi-symplectic groupoid (Proposition \ref{2kzh0or3}).
\end{itemize} 
\end{proposition}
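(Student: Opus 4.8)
The plan is to verify both bullets directly against Definition~\ref{gm52z93m}, which (as described in the introduction) asks for a Lie groupoid morphism $\c : \C \to \G$ into a quasi-symplectic groupoid $\G$, a twisted Dirac structure $L$ on $\C\obj$, a compatibility condition relating $L$ to the pullback of the quasi-symplectic data along $\c$, and a non-degeneracy condition. Since the two examples correspond to the two ``extreme'' coisotropics, I expect each to satisfy the definition by trivializing one of the two structures: the identity morphism should make the compatibility automatic, while the Lagrangian case should force $L$ to be the maximal (graph-type) Dirac structure coming from the symplectic form.

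First I would treat the identity morphism $\G \to \G$, which is stated separately as Proposition~\ref{2kzh0or3}. Here the natural candidate for the twisted Dirac structure on $\G\obj$ is the one canonically determined by the quasi-symplectic $2$-form $\IM$ on $\G\arr$ together with its curving $\aaa$ (the closed $3$-form on $\G\obj$), so that the Dirac structure is the graph of the leading component of the quasi-symplectic structure; the $3$-form twist is exactly $\aaa_\G$, matching the twisted-Dirac framework built on twisted Dirac structures. I would then check the compatibility condition, which for the identity morphism $\c = \Id$ reduces to an identity already built into the axioms of a quasi-symplectic groupoid (namely $\sss^*\aaa - \ttt^*\aaa = d\IM$ and the multiplicativity $\mmm^*\IM = \pr_1^*\IM + \pr_2^*\IM$), so there is nothing to impose beyond what $\G$ already satisfies. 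The non-degeneracy condition should likewise follow from the non-degeneracy packaged into the definition of a quasi-symplectic groupoid, since that is precisely what makes $\G$ a $1$-shifted symplectic object; I would phrase this as saying that the relevant map on the tangent complex is a quasi-isomorphism because $\IM$ is non-degenerate in the quasi-symplectic sense.

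For the Lagrangian bullet, given a $1$-shifted Lagrangian $\c : \C \to \G$ in the sense of \cite{ptvv:13} (translated to groupoid terms), I would produce the required twisted Dirac structure on $\C\obj$ from the Lagrangian $2$-form. A Lagrangian structure supplies a $2$-form $\omega$ on $\C\obj$ together with a chosen trivialization of $\c^*\IM$ up to $d\omega$, and the Dirac structure $L$ is then the graph $\{(v, \iota_v\omega) : v \in T\C\obj\}$, twisted by the appropriate pullback $3$-form so that the twisting matches $\c^*\aaa$ minus $d\omega$. The compatibility condition becomes exactly the $2$-form part of the Lagrangian data, and the Dirac non-degeneracy (maximal isotropy of a graph) holds automatically because a graph of a $2$-form is always maximal isotropic; the $1$-shifted Lagrangian non-degeneracy condition supplies the remaining quasi-isomorphism on tangent complexes needed for the $1$-coisotropic non-degeneracy.

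The main obstacle I anticipate is the non-degeneracy condition rather than the compatibility, since compatibility in both cases is essentially a rewriting of axioms already present. The subtlety is that the $1$-coisotropic non-degeneracy is a statement about a morphism of \emph{two-term} (or longer) tangent complexes being a quasi-isomorphism, and matching it to the Lagrangian non-degeneracy from \cite{ptvv:13} requires carefully identifying the relevant complexes: the tangent complex of $\C$ relative to $\G$, the anchor/moment data, and the map induced by $\omega$. I would resolve this by writing out the short complexes explicitly using the anchor $\aaa_\G$ and the morphism $d\c$, then exhibiting the comparison map and checking exactness degree by degree, most likely invoking the adjoint representation up to homotopy of Abad--Crainic \cite{aba-cra:13} (as the introduction signals is used elsewhere) to organize the chain-level bookkeeping. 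For the identity case this comparison is the identity up to the isomorphism furnished by $\IM^\flat$, so non-degeneracy of $\IM$ closes the argument immediately.
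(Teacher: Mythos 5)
Your overall strategy --- take $L = \im(\aaa,\IM_\omega)$ for the identity and $L = \Gamma_\gamma$ for a $1$-shifted Lagrangian, then check the two conditions of Definition~\ref{gm52z93m} --- is the same as the paper's. But you have mislocated where the work is, and in one place the plan as written would stall. For the identity morphism you assert that the compatibility condition ``reduces to an identity already built into the axioms'' and that ``there is nothing to impose.'' That is not so: while $\eta = \phi$ is indeed automatic, the second half of compatibility, $\ttt^*L = \sss^*L + \Gamma_\omega$ for $L = \im(\aaa,\IM_\omega)$, is a genuine statement about the induced Dirac structure and is the entire content of the paper's proof of Proposition~\ref{2kzh0or3}. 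It is proved by showing that for $(v,\ttt^*\alpha)\in\ttt^*L$ one has $\ttt^*\alpha - i_v\omega \in \im\sss^*$ (using $\sss^*\IM_\omega a = i_{a^L}\omega$ from Lemma~\ref{7wn1m20c} and isotropy of $L$) and then that the resulting $\beta$ satisfies $(\sss_*v,\beta)\in L^\perp = L$ (using $\omega$-orthogonality of $\ker\sss_*$ and $\ker\ttt_*$). None of this is a restatement of $d\omega = \sss^*\phi - \ttt^*\phi$ or of multiplicativity alone. Relatedly, $L = \im(\aaa,\IM_\omega)$ is \emph{not} ``the graph of the leading component'': a quasi-symplectic groupoid integrates an arbitrary twisted Dirac structure, which in general has $L\cap T^*\G\obj \ne 0$, so it is not the graph of a $2$-form on $\G\obj$. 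Finally, you attribute the non-degeneracy in the identity case to the non-degeneracy of $\omega$; in fact surjectivity of the map $A_\G \to L\times_{\Id}A_\G$ (which is all Definition~\ref{gm52z93m} requires) is immediate from the very definition of $L$ as an image --- it is injectivity (strongness), which the proposition does not claim, that uses $\ker\aaa\cap\ker\IM_\omega = 0$.

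For the Lagrangian bullet your identification $L = \Gamma_\gamma$ with background $3$-form $-d\gamma = \c^*\phi$ is exactly right, and the compatibility condition does become $\ttt^*\gamma - \sss^*\gamma = \c^*\omega$. However, your plan to compare the two non-degeneracy conditions ``degree by degree'' using the adjoint representation up to homotopy is unnecessary machinery: the paper's Lemma~\ref{t7rjqxe5} already reduces the quasi-isomorphism condition on the chain map \eqref{r7ynptgu} to bijectivity (resp.\ surjectivity for the middle cohomology) of the single bundle map \eqref{lajk1793}, and when $L = \Gamma_\gamma$ the projection $p_T : L \to T\C\obj$ identifies \eqref{r7ynptgu} with the Lagrangian chain map \eqref{fkoguq0d}, so the two non-degeneracy conditions coincide on the nose. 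The Abad--Crainic connection machinery plays no role here; it enters only in the Morita-transfer arguments of a later section.
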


Non-trivial examples come from Hamiltonian actions of quasi-symplectic groupoids on twisted Dirac manifolds:

\begin{proposition}[Hamiltonian actions as 1-coisotropics (Proposition \ref{ww8x8m9h})]\label{yrsedx4p}
Let $\G$ be a quasi-symplectic groupoid acting on a manifold $M$.
Then a 1-shifted coisotropic structure on the projection $\G \ltimes M \to \G$ is the same as a twisted Dirac structure on $M$ making the $\G$-action Hamiltonian.
\end{proposition}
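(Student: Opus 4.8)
The plan is to prove the proposition by unwinding Definition \ref{gm52z93m} in the case at hand and matching the resulting data and conditions against the definition of a Hamiltonian action of a quasi-symplectic groupoid on a twisted Dirac manifold \cite{bur-cra-wei-zhu:04}. The action groupoid $\G \ltimes M$ has object space $(\G \ltimes M)\obj = M$ and arrow space $(\G \ltimes M)\arr = \G\arr \times_{\G\obj} M$ (pairs $(g,m)$ with $\sss(g) = \moment(m)$), with source $(g,m) \mapsto m$, target $(g,m) \mapsto \act(g,m)$, and the projection $\c : \G \ltimes M \to \G$ is the moment map $\moment$ on objects and $(g,m) \mapsto g$ on arrows. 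Since $\c\obj = \moment$, a 1-shifted coisotropic structure on $\c$ is by definition a $\moment^*\phi$-twisted Dirac structure $L$ on $M$ (here $\phi$ is the background 3-form of $\G$, and $\moment^*\phi$ is closed because $\phi$ is) satisfying the compatibility and non-degeneracy conditions of Definition \ref{gm52z93m}. The whole proof then reduces to checking that these two conditions coincide with the moment-map (plus invariance) condition and the non-degeneracy condition in the definition of a Hamiltonian action.

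First I would unpack the compatibility condition. It relates the pullback $(\c\arr)^*\IM$ of the multiplicative $2$-form to the Dirac structure $L$ through the source and target of $\G \ltimes M$. Because $\c\arr$ is the projection $(g,m) \mapsto g$, because the source and target of $\G \ltimes M$ are $m$ and $\act(g,m)$, and because $\IM$ is multiplicative with $d\IM = \sss^*\phi - \ttt^*\phi$, I expect the compatibility condition to decompose, after evaluating on tangent vectors of $\G\arr \times_{\G\obj} M$, into exactly two statements: invariance of $L$ under the $\G$-action, and the moment-map condition that each infinitesimal action vector $\act_\xi$, paired with the contraction of $\IM$ determined by $\xi$ and pulled back along $\moment$, lies in $L$. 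The bookkeeping is to decompose a tangent vector to the fibre product into a component along $\G\arr$ and a component along $M$ and to track which contractions of $\IM$ land in the graph of $L$; the twisting by $\moment^*\phi$ should appear precisely as the obstruction measured by $d\IM = \sss^*\phi - \ttt^*\phi$, matching the $\moment^*\phi$-twist of $L$.

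Finally I would compare the non-degeneracy conditions. The non-degeneracy of Definition \ref{gm52z93m} constrains the interaction of $\ker L$ with the image of the anchor $\aaa_{\G \ltimes M}$ (i.e.\ the action directions) and with $\aaa_\G$, and I expect it to reproduce the clean-intersection/non-degeneracy requirement built into the Hamiltonian definition, under which $\moment$ is suitably non-degenerate along $\ker L$ and the action meets $\ker L$ cleanly. The main obstacle is the tangent-space computation on $\G\arr \times_{\G\obj} M$ needed to show that the compatibility condition is \emph{equivalent} to the moment-map condition in both directions, not merely implied by it: one must verify that every element of $L$ forced by the compatibility condition is accounted for by an infinitesimal action vector and its contraction, and conversely that the moment-map condition suffices to recover the full compatibility identity after pullback. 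Once both implications are in hand and the non-degeneracy conditions are matched, the proposition follows.
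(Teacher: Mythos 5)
Your overall strategy (unwind Definition \ref{gm52z93m} for the action groupoid and match it against the definition of a Hamiltonian action) is the paper's strategy, but the proposal misallocates the work. In this paper a Hamiltonian action is \emph{defined} (Definition \ref{vofxgsye}) by the conditions $\moment^*\phi = \eta$, $\act^*L = \pr_M^*L + \Gamma_{\pr_\G^*\omega}$, and $\ker\moment_* \cap \ker L = 0$. Since the source and target of $\G\ltimes M$ are exactly $\pr_M$ and $\act$, and $\c^*\omega = \pr_{\G}^*\omega$ for the projection $\c$, the compatibility condition of Definition \ref{gm52z93m} \emph{is} the compatibility condition of Definition \ref{vofxgsye}, verbatim; there is nothing to decompose into ``invariance plus a moment-map condition,'' and the tangent-space bookkeeping you identify as the main obstacle does not arise. (If you meant to match against some other formulation of Hamiltonian actions, that is not what the proposition asserts and would require a separate argument you have not supplied.) Also note that $\omega$ is the multiplicative $2$-form and $\IM_\omega$ its infinitesimal counterpart $A_\G \to T^*\G\obj$; your text conflates the two.

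The actual content of the proof --- which your proposal only gestures at --- is the equivalence of the two non-degeneracy conditions. Using $A_{\G\ltimes M} = \moment^*A_\G$, the map \eqref{0v1fz7h6} becomes $a \mapsto ((\act_* a, \moment^*\IM_\omega a), a)$ with values in $L\times_\c A_\G$, and one must show that this is surjective if and only if $\ker\moment_*\cap\ker L = 0$. The key step, absent from your proposal, is that the compatibility condition (via Lemma \ref{18e8u16h}) forces $(\act_* a, \moment^*\IM_\omega a)\in L$ for every $a$; hence for any $((v,\alpha),a)$ in the codomain the difference $(v-\act_* a,\,0)$ lies in $L$ and $v - \act_* a \in \ker\moment_*$, so the Hamiltonian non-degeneracy gives $v = \act_* a$ and surjectivity follows, while conversely applying surjectivity to $((v,0),0)$ for $v\in\ker\moment_*\cap\ker L$ gives $v=0$. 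Your description of the Hamiltonian non-degeneracy condition as ``$\moment$ is suitably non-degenerate along $\ker L$ and the action meets $\ker L$ cleanly'' is also off: the condition is precisely $\ker\moment_*\cap\ker L = 0$, and no cleanness of the action enters. As written, the proposal does not close the one step that carries the mathematical content.
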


Choosing an orbit $\O \s \G\obj$ of $\G$ induces another 1-coisotropic $\G|_\O \to \G$ (Proposition \ref{3pl9nrs8}), and we will see (Corollary \ref{2wut6us3}) that its intersection with the one in Proposition \ref{yrsedx4p} recovers Dirac reduction \cite{bur-cra:05,xu:04} via Properties \ref{g7rgs6l8} and \ref{xea3qq2x}.
Changing $\G|_\O \to \G$ to other 1-coisotropics gives new notions of reduction (see Theorem \ref{nrztzlyn} below).

We will also formulate versions of Properties \ref{g7rgs6l8} and \ref{xea3qq2x} under weaker assumptions that go outside the corresponding derived-algebraic properties but are more useful for differential-geometric applications. 
To explain this, consider the basic example of a Lie group $G$ acting on a symplectic manifold $M$ in a Hamiltonian way with moment map $\mu : M \to \g^*$.
If the action is free, we get two transverse 1-shifted Lagrangians whose intersection is the action groupoid $G \ltimes \mu^{-1}(0)$, which is therefore 0-shifted symplectic.
If the action is also proper, $G \ltimes \mu^{-1}(0)$ is Morita equivalent to the manifold $\mu^{-1}(0)/G$, so we get an ordinary symplectic structure on $\mu^{-1}(0)/G$, recovering Marsden--Weinstein--Meyer reduction \cite{mar-wei:74,mey:73}.
On the other hand, symplectic reduction holds under more generality, i.e.\ it suffices to assume that $\mu^{-1}(0)$ and $\mu^{-1}(0)/G$ are manifolds and the quotient map is a submersion (but $G$ does not necessarily act freely nor properly).
Although very simple, this generalization cannot be obtained as a Lagrangian intersection since the intersection is a derived stack not presentable by a Lie groupoid and the manifold $\mu^{-1}(0)/G$ is not Morita equivalent to the action groupoid $G \ltimes \mu^{-1}(0)$ due to non-trivial isotropy groups.
For these reasons, we prove versions of Properties \ref{g7rgs6l8} and \ref{xea3qq2x} using clean intersections and a weaker notion of Morita equivalence.
As a corollary, we obtain the following.

\begin{theorem}[Dirac reduction along a 1-coisotropic]\label{nrztzlyn}
Let $\G$ and $\H$ be quasi-symplectic groupoids such that $\G \times \H$ acts on a twisted Dirac manifold $M$ in a Hamiltonian way with moment map
\[
(\mu, \nu) : M \too \G\obj \times \H\obj.
\]
Let $\c : \C \to \G$ be a 1-coisotropic.
If the intersection $\C\obj \times_{\G\obj} M$ is clean and the quotient
\[
M \sll{\c} \G \coloneqq (\C\obj \times_{\G\obj} M) / \C
\]
is a manifold, then $M \sll{\c} \G$ has the structure of a Hamiltonian $\H$-space.
If $\C$ acts locally freely on $\C\obj \times_{\G\obj} M$, then the fibre product $\C\obj \times_{\G\obj} M$ is transverse.
\end{theorem}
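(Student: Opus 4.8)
The plan is to realize the reduction as a composition of 1-coisotropic correspondences followed by a Morita transfer, so that the statement becomes a corollary of the clean-intersection form of Property \ref{g7rgs6l8} and the weak-Morita form of Property \ref{xea3qq2x}.

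First I would repackage the data as correspondences. By the product version of Proposition \ref{yrsedx4p}, the Hamiltonian $\G \times \H$-action on $M$ is the same as a 1-coisotropic structure on the projection $(\G \times \H) \ltimes M \to \G \times \H$, which I read as a 1-coisotropic correspondence from $\G$ to $\H$. The 1-coisotropic $\c : \C \to \G$ is itself a correspondence from the point to $\G$, so the reduced object should be the composite correspondence from the point to $\H$. Its underlying groupoid is the fibre product of $\c$ with the projection over $\G$; on objects this is exactly $\C\obj \times_{\G\obj} M$, and the commuting $\H$-factor survives as a residual $\H$-action.

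Next I would feed this into the composition theorem for 1-coisotropic correspondences in its clean form (Theorem \ref{rt0qkos3}). The hypothesis that $\C\obj \times_{\G\obj} M$ is clean is precisely what guarantees that the fibre product is a Lie groupoid and that the intersected twisted Dirac datum satisfies the 0-shifted Poisson conditions, now relative to the spectator groupoid $\H$; this produces a 1-coisotropic correspondence from the point to $\H$ presented by the action groupoid $\C \ltimes (\C\obj \times_{\G\obj} M)$. To promote this to a genuine Hamiltonian $\H$-space I would invoke the weak-Morita form of Property \ref{xea3qq2x} (Theorem \ref{vazibeld}): the assumption that $M \sll{\c} \G = (\C\obj \times_{\G\obj} M)/\C$ is a manifold with submersive quotient map makes the action groupoid weakly Morita equivalent to $M \sll{\c} \G$, and transferring the structure along this equivalence and reading Proposition \ref{yrsedx4p} backwards yields a twisted Dirac structure on $M \sll{\c} \G$ for which the induced $\H$-action is Hamiltonian.

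The step I expect to be the main obstacle is the final transversality claim. Cleanness already identifies $T(\C\obj \times_{\G\obj} M)$ with $T\C\obj \times_{T\G\obj} TM$, so it remains to show $d\c\obj(T\C\obj) + d\mu(TM) = T\G\obj$ along the intersection, i.e.\ that $\coker\big(d\c\obj \oplus (-d\mu)\big)$ vanishes. Here I would use the non-degeneracy condition in the definition of a 1-coisotropic to exhibit this cokernel as the top cohomology of the tangent complex of the composite groupoid, and then run the adjoint-representation-up-to-homotopy bookkeeping used elsewhere in the paper to identify that cohomology with the isotropy of the $\C$-action on $\C\obj \times_{\G\obj} M$. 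This is the groupoid incarnation of the classical duality between stabilizer algebras and the annihilator of the image of the moment differential; local freeness forces the isotropy to vanish, hence the cokernel vanishes and cleanness upgrades to transversality. Setting up this duality precisely, and checking its compatibility with the relevant chain homotopies, is the main technical work.
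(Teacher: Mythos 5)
Your overall strategy coincides with the paper's: encode the Hamiltonian $\G\times\H$-action as a 1-coisotropic correspondence $(\G\times\H)\ltimes M$ from $\G$ to $\H^-$ (Proposition \ref{ww8x8m9h}), compose it with the correspondence $\{*\}\leftarrow\C^-\to\G$, and transfer the resulting 1-coisotropic along the weak Morita morphism $(\C\times\H)\ltimes Z\to\H\ltimes(Z/\C)$, where $Z=\C\obj\times_{\G\obj}M$, reading off a Hamiltonian $\H$-space by Proposition \ref{ww8x8m9h} again. One slip: you cite the homotopy fibre product theorem (Theorem \ref{rt0qkos3}) while describing the object space as $\C\obj\times_{\G\obj}M$, which is the \emph{strong} fibre product; the paper uses Theorem \ref{2jy36i7i}, whose auxiliary hypothesis that $A_\C\to A_\G\leftarrow A_{(\G\times\H)\ltimes M}$ be transverse holds automatically because the second map is surjective. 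Since the second leg is an action groupoid the two fibre products are Morita equivalent, so this is repairable, but the citation and the object you describe should be made consistent.

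The genuine gap is your claim that cleanness of $\C\obj\times_{\G\obj}M$ ``guarantees \dots that the intersected twisted Dirac datum satisfies the 0-shifted Poisson conditions.'' It does not: smoothness of the summed Dirac structure $L=p_1^*L_\C+p_2^*L_M$ is a separate hypothesis in part (1) of the composition theorem and is not implied by cleanness of the fibre product of the base maps. The paper supplies it from the hypothesis you only invoke later: since $Z\to Z/\C$ is a submersion, the infinitesimal action $\mathsf{a}_{\C*}:A_\C\to TZ$ has constant rank, and the exact sequence of Theorem \ref{2jy36i7i}\ref{u56ix79e} (whose middle term is exactly $\ker\mathsf{a}_{\C*}$ in this situation) then forces $R^\circ$, hence $R$, to have constant rank --- which is the cleanness of \eqref{cqrik68i} that part (2) converts into smoothness of $L$. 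The same sequence makes your final transversality step immediate: local freeness gives $\ker\mathsf{a}_{\C*}=0$, the surjection $\ker\mathsf{a}_{\C*}\to R^\circ$ gives $R^\circ=0$, and transversality of \eqref{cqrik68i} implies transversality of the base fibre product. The tangent-complex and adjoint-representation-up-to-homotopy bookkeeping you flag as ``the main technical work'' is therefore unnecessary; the duality between stabilizers and the annihilator of the image of the moment differential is already packaged in part (3) of the composition theorems.
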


We thus think of 1-coisotropics as generalizations of the ``level'' in ordinary symplectic reduction.
For instance:

\begin{example}\
\begin{itemize}
\item
Taking $\c : \G|_\O \hookrightarrow \G$ for an orbit $\O$ recovers Dirac reduction \cite{bur-cra:05,xu:04} and hence also Marsden--Weinstein--Meyer reduction \cite{mar-wei:74,mey:73} and quasi-Hamiltonian reduction \cite{ale-mal-mei:98}.

\item
If $\c: \C \hookrightarrow \G$ is a 1-shifted Lagrangian subgroupoid, this recovers symplectic reduction along a submanifold \cite{cro-may:22} and a global version of the notion of reduction along strong Dirac maps introduced in \cite{bal-may:22}.
In particular, it includes symplectic cutting \cite{ler:95,ler-mei-tol-woo:98}, symplectic implosion \cite{gui-jef-sja:02,hur-jef-sja:06}, preimages of Poisson transversals \cite{fre-mar:17}, and the Ginzburg--Kazhdan's construction \cite{gin.kaz:23} of the Moore--Tachikawa topological quantum field theory \cite{moo-tac:12}. 
\end{itemize}
\end{example}

Finally, we will explain a conjectural differentiation/integration process for 1-shifted coisotropics.
It can be seen as a generalization of Cattaneo's correspondence between coisotropic submanifolds in Poisson geometry and Lagrangian subgroupoids \cite{cat:04}.

\begin{conjecture}
There is a version of Lie's third theorem whose global objects are 1-coisotropics and whose infinitesimal objects are pairs of twisted Dirac manifolds $(M, L_M, \phi)$ and $(N, L_N, \psi)$ together with a smooth map $\c : N \to M$ such that $\c^*\phi = \psi$ and
\[
L_N \times_\c L_M \coloneqq \{((v, \alpha), (w, \beta)) \in L_N \times L_M : \c_*v = w \text{ and } \alpha = \c^*\beta\}
\]
has constant rank over $N$.
\end{conjecture}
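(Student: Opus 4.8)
The plan is to establish the correspondence in the standard shape of a Lie's third theorem: construct a differentiation functor from 1-coisotropics to the proposed infinitesimal data, an integration functor in the reverse direction, and prove that the two are mutually inverse up to the appropriate equivalence, following the pattern of the Crainic--Fernandes proof and its Dirac-geometric refinement by Bursztyn--Crainic--Weinstein--Zhu \cite{bur-cra-wei-zhu:04,xu:04}. On the differentiation side, given a 1-coisotropic $\c : \C \to \G$, I would take $(M, L_M, \phi)$ to be the twisted Dirac manifold obtained by differentiating the quasi-symplectic groupoid $\G$, so that $M = \G\obj$ and $L_M$ is the Lie algebroid of $\G$, and take $(N, L_N, \psi)$ to be $\C\obj$ equipped with the twisted Dirac structure that is part of the 1-coisotropic data, with $\c : N \to M$ the induced map on objects. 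The compatibility $\c^*\phi = \psi$ is then read off directly from the compatibility clause of Definition \ref{gm52z93m}.

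The crux of the differentiation step is to produce a canonical isomorphism of vector bundles $\mathrm{Lie}(\C) \cong L_N \times_\c L_M$ over $N$, under which the differentiated morphism $\mathrm{Lie}(\c) : \mathrm{Lie}(\C) \to \mathrm{Lie}(\G) = L_M$ becomes the second projection. Once this is in place, the constant-rank condition on $L_N \times_\c L_M$ is automatic, being the rank of the Lie algebroid $\mathrm{Lie}(\C)$; conversely, this is precisely the content that the non-degeneracy condition of a 1-coisotropic encodes infinitesimally. I expect the identification to follow from unwinding the non-degeneracy condition on the tangent complex of $\C \to \G$ and comparing it, via the adjoint representation up to homotopy already used in the proof of Property \ref{xea3qq2x}, with the fibre product defining $L_N \times_\c L_M$.

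For the integration, I would first integrate $(M, L_M, \phi)$ to a source-simply-connected quasi-symplectic groupoid $\G$, which is possible whenever $L_M$ is integrable. Next I would show that $A \coloneqq L_N \times_\c L_M$, with anchor $((v,\alpha),(w,\beta)) \mapsto v$, carries a Lie algebroid structure: this is the genuinely coisotropic input, playing the role of the conormal subalgebroid in Cattaneo's correspondence \cite{cat:04}, and is where the constant-rank hypothesis guarantees that $A$ is a smooth subbundle closed under the bracket. Assuming $A$ is integrable, I would integrate it to a source-simply-connected Lie groupoid $\C$ and integrate the Lie algebroid morphism $A \to L_M$ to a groupoid morphism $\c : \C \to \G$ by Lie's second theorem, then equip $\C\obj = N$ with $(L_N, \psi)$ and verify that $(\c, L_N, \psi)$ satisfies every clause of Definition \ref{gm52z93m}.

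The main obstacle is twofold. First, as in every version of Lie's third theorem, there are integrability obstructions: neither $L_M$ nor $A$ need integrate to honest Lie groupoids, so the statement can only hold under integrability hypotheses or after passing to local (or \'etale/stacky) groupoids, and the formulation of the equivalence on the global side must account for this. Second, and more seriously, proving that $A = L_N \times_\c L_M$ is closed under the Dirac bracket --- equivalently, that the constant-rank condition is the correct infinitesimal shadow of the 1-coisotropic non-degeneracy condition --- is the real heart of the matter; I expect this to require a careful tangent-complex computation, since the non-degeneracy condition of Definition \ref{gm52z93m} is a quasi-isomorphism condition that must be shown to differentiate exactly to the involutivity and rank conditions defining the infinitesimal objects.
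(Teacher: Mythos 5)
The statement you are addressing is a conjecture: the paper offers no proof of it, only supporting evidence in \S\ref{xlzo381e}, so there is no complete argument to compare yours against. That said, your overall architecture (differentiate, integrate, check the two are mutually inverse) and your differentiation step do match the paper's sketch: the identification $A_\C \cong L_N \times_\c L_M$ via the non-degeneracy map \eqref{0v1fz7h6} is exactly how Definition \ref{3v9jbw09} is motivated there. Note, however, that this map is an isomorphism only for \emph{strong} 1-coisotropics (for general ones it is merely surjective), so the differentiation functor as you describe it is only defined on the strong ones; the conjecture as stated does not make this restriction explicit and your proposal should.

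The substantive problem is that you have misplaced the difficulty. You identify the ``real heart of the matter'' as showing that $L_N \times_\c L_M$ is closed under the bracket, but the paper treats this as immediate: sections of the fibre product are pairs of $\c$-related sections of two Dirac structures satisfying $\c^*\phi = \psi$, such pairs are closed under the respective Dorfman brackets, and the constant-rank hypothesis then makes $L_N \times_\c L_M$ a Lie algebroid with anchor the first tangent projection. Likewise, the non-degeneracy clause of Definition \ref{gm52z93m} is automatic after integration, since $A_\C = L_N \times_\c L_M$ is by construction identified with $L \times_\c A_\G$. The genuinely open point --- the reason this is a conjecture rather than a theorem --- is the remaining compatibility clause $\ttt^*L_N = \sss^*L_N + \Gamma_{\c^*\omega}$ on the integrated groupoid $\C$. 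This is a global multiplicativity statement about the source-simply-connected integration that must be produced from purely infinitesimal data, and the paper verifies it only in special cases: trivial Dirac structures on (pre-)Poisson submanifolds, strong Dirac maps (where the integration is an action groupoid and one invokes \S\ref{616chpl6}), and backward-Dirac submersions (via Proposition \ref{2kzh0or3} and Theorem \ref{vazibeld}). Your plan, which ends with ``verify that $(\c, L_N, \psi)$ satisfies every clause of Definition \ref{gm52z93m},'' would need a genuinely new integration argument for this clause --- presumably an analogue of integrating infinitesimally multiplicative data to multiplicative data in the style of Bursztyn--Crainic--Weinstein--Zhu --- and as written it does not contain one.
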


Several facts support the conjecture; see \S\ref{xlzo381e}.


\subsection*{Organization of the paper}
In \S\ref{c7u27pg5}, we review basic notions of Lie groupoids and Dirac geometry.
In \S\ref{beux4a3x}, we define 1-coisotropics and provide some examples.
Then \S\ref{o2caey5r} proves that 1-coisotropic correspondences can be composed, obtaining Properties \ref{g7rgs6l8} and \ref{lwnzxn8d} as corollaries.
In \S\ref{eph7yukn}, we obtain a few general theorems of independent interest about transferring Dirac structures through Morita morphisms.
We then review in \S\ref{1vfh5k59} the Morita transfer of quasi-symplectic groupoids, mainly due to Xu \cite{xu:04}.
Then \S\ref{4pvyvpb2} proves Property \ref{xea3qq2x}, i.e.\ 1-coisotropics can be transferred in a similar way.
In \S\ref{r9gv7nax}, we show that this transfer of 1-coisotropics is compatible with the composition of Morita equivalences.
We then show in \S\ref{616chpl6} how to get 1-coisotropics from Hamiltonian actions of quasi-symplectic groupoids and obtain Theorem \ref{nrztzlyn} as a corollary.
Finally, we define 1-shifted coisotropic structures on morphisms of differentiable stacks in \S\ref{dy26su68}.

\subsection*{Acknowledgements}

We thank Henrique Bursztyn for fruitful discussions and, in particular, for suggesting that Hamiltonian actions of quasi-symplectic groupoids on Dirac manifolds are 1-coisotropics.
We are also grateful to Ana B\u{a}libanu and Peter Crooks for helpful conversations, to Miquel Cueca for insightful comments on an earlier draft, and to the referee for their valuable suggestions, which have significantly improved the paper. 
This work was supported by an NSERC Discovery grant.

\section{Conventions and basic facts}
\label{c7u27pg5}


We summarize basic notions on Lie groupoids and Dirac structures to explain our notation and conventions.
For more on Lie groupoids, see e.g.\ \cite{sil-wei:99,moe-mrc:03,mac:05,cra-fer:11,bur-hoy:23}, and for Dirac structures, see e.g.\ \cite{cou:90,sev-wei:01,bur-cra:05,bur-cra-wei-zhu:04,bur:13}.

\subsection{Lie groupoids}

We denote a Lie groupoid by $\G = (\G\arr \tto \G\obj)$ with source and target maps $\sss, \ttt$, multiplication $\mmm : \G\comp{2} \coloneqq \G\arr \times_{\sss,\ttt} \G\arr \to \G\arr$, unit $\uuu$, and inversion $\iii$.
We consider a Lie groupoid as a category with objects $\G\obj$ and arrows $\G\arr$. 
The Lie algebroid of $\G$ is denoted $A_\G \coloneqq \ker \sss_*|_{\G\obj}$ with anchor map $\aaa_\G : A_\G \to T\G\obj : a \mto \ttt_*a$, or simply $\aaa$ if $\G$ is clear from the context.
For $a \in A_\G$, its left and right translations at $g \in \G$ are denoted $a^L_g$ and $a^R_g$, respectively.
We have $\ttt_*(a^R_g) = \aaa(a)$ and $\sss_*(a^L_g) = -\aaa(a)$.
It will be useful to observe that
\begin{equation}\label{4ksoptjs}
\mmm_*(v, a) = v + a^L_g,
\end{equation}
for all $v \in T_g\G$ and $a \in (A_\G)_{\ttt(g)}$ such that $\sss_*v = \aaa a$.
(Indeed, $\sss_*(v + a^L_g) = 0$ and $\iii_*(a^R_{g^{-1}}) = -a^L_g$, so $\mmm_*(v, a) = \mmm_*(v + a^L_g, 0) + \mmm_*(\iii_* a^R_{g^{-1}}, a) = v + a^L_g$, since $\mmm\circ(\iii \circ R_{g^{-1}}, \Id)$ is constant.)

A \defn{morphism of Lie groupoids} is a functor $f : \H \to \G$ whose maps $f\obj : \H\obj \to \G\obj$ and $f\arr : \H\arr \to \G\arr$ are smooth.
We omit the indices $\obj$ and $\arr$ on $f$ if they are clear from the context.
The induced morphism of Lie algebroids is denoted $f_* : A_\H \to A_\G$.
A \defn{natural transformation} 
\begin{equation}\label{xb8qzdsb}
\begin{tikzcd}[row sep=0pt]
&\null \arrow[Rightarrow,shorten=2pt]{dd}{\theta} & \\
\H \arrow[bend left]{rr}{f} \arrow[bend right, swap]{rr}{g} & & \G \\
&\null &
\end{tikzcd}
\end{equation}
between $f$ and $g$ is a smooth map $\theta : \H\obj \to \G\arr$ that is a natural transformation from $f$ to $g$ when viewed as functors, i.e.\
\begin{equation}\label{nq5g7v03}
\sss \circ \theta = f, \quad \ttt \circ \theta = g, \quad\text{and}\quad
\theta(\ttt(h)) \cdot f(h) = g(h) \cdot \theta(\sss(h)),
\end{equation}
for all $h \in \H$.
In that case, we say that $f$ and $g$ are \defn{homotopic}.
Vertical composition of natural transformations $\theta : f \Rightarrow g$ and $\eta : g \Rightarrow h$ is denoted $\eta \star \theta$, and horizontal composition with $\circ$ or juxtaposition.
We say that a diagram of Lie groupoid morphisms is \defn{2-commutative} if it commutes up to natural transformations.

A \defn{multiplicative form} on a Lie groupoid $\G$ is a differential form $\omega$ on $\G\arr$ such that $\mmm^*\omega = \pr_1^*\omega + \pr_2^*\omega$, where $\pr_i : \G\comp{2} \to \G$ are the two projections.
The following basic facts will be used repeatedly. 

\begin{lemma}\label{wip6t5yh}
Let $\G$ be a Lie groupoid endowed with a multiplicative form $\omega$.
\begin{enumerate}[label=\textup{(\arabic*)}]
\item \label{u0vnf6px}
For a natural transformation $\theta : f \Rightarrow g$ as in \eqref{xb8qzdsb}, we have
\[
g^*\omega - f^*\omega = \ttt^*\theta^*\omega - \sss^*\theta^*\omega.
\]

%
\item
\label{8096zsbm}
If $\eta : g \Rightarrow h$ is another natural transformation, then $(\eta \star \theta)^*\omega = \eta^*\omega + \theta^*\omega$.
\end{enumerate}
\end{lemma}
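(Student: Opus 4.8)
The plan is to reduce both parts to the defining relation $\mmm^*\omega = \pr_1^*\omega + \pr_2^*\omega$ of a multiplicative form, by re-expressing the naturality and composition data as equalities of smooth maps valued in $\G\arr$ and then pulling back $\omega$.

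For part \ref{u0vnf6px}, I would read the third condition of \eqref{nq5g7v03}, namely $\theta(\ttt(h)) \cdot f(h) = g(h) \cdot \theta(\sss(h))$, as an equality of two smooth maps $\H\arr \to \G\arr$, each factoring through $\mmm$. Concretely, I consider
\[
\Phi, \Psi : \H\arr \to \G\comp{2}, \qquad \Phi(h) = (\theta(\ttt(h)), f(h)), \qquad \Psi(h) = (g(h), \theta(\sss(h))).
\]
The first two equations of \eqref{nq5g7v03}, together with the fact that $f$ and $g$ commute with $\sss$ and $\ttt$, guarantee that $\Phi$ and $\Psi$ genuinely land in the fibre product $\G\comp{2}$, and the third equation says precisely $\mmm\circ\Phi = \mmm\circ\Psi$. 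Pulling $\omega$ back along $\mmm$ through each map and applying multiplicativity, I obtain $\Phi^*\mmm^*\omega = \ttt^*\theta^*\omega + f^*\omega$ and $\Psi^*\mmm^*\omega = g^*\omega + \sss^*\theta^*\omega$, using $\pr_1\circ\Phi = \theta\circ\ttt$, $\pr_2\circ\Phi = f$, $\pr_1\circ\Psi = g$, and $\pr_2\circ\Psi = \theta\circ\sss$. Equating the two expressions and rearranging yields the claimed identity.

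For part \ref{8096zsbm}, I would note that the vertical composite is, by definition, the map $\eta\star\theta = \mmm\circ(\eta,\theta) : \H\obj \to \G\arr$, where $(\eta,\theta)$ is composable since $\ttt\circ\theta = g = \sss\circ\eta$. Then multiplicativity gives immediately
\[
(\eta\star\theta)^*\omega = (\eta,\theta)^*(\pr_1^*\omega + \pr_2^*\omega) = \eta^*\omega + \theta^*\omega.
\]

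There is no genuine obstacle here: both statements are direct consequences of multiplicativity. The only point requiring care is the bookkeeping of the source/target constraints ensuring that the relevant pairs lie in $\G\comp{2}$ and that the $\pr_i$-components are the maps claimed; these follow mechanically from \eqref{nq5g7v03} and the functoriality of $f$ and $g$.
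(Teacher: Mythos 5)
Your proof is correct and follows essentially the same route as the paper: both parts reduce to $\mmm^*\omega = \pr_1^*\omega + \pr_2^*\omega$ by viewing the naturality identity (resp.\ the vertical composite) as an equality of maps factoring through $\mmm$, the only difference being that the paper phrases part \ref{u0vnf6px} infinitesimally on tangent vectors while you phrase it as a pullback identity of forms. Your bookkeeping of the source/target constraints placing $\Phi$ and $\Psi$ in $\G\comp{2}$ is accurate.
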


\begin{proof}
\ref{u0vnf6px}
By \eqref{nq5g7v03}, we have $\mmm(\theta \ttt_* v, f_*v) = \mmm(g_*v, \theta \sss_* v)$ for all $v \in T\H$.
By multiplicativity of $\omega$, this implies that for all $u, v \in T\G$, we have $\omega(\theta \ttt_* u, \theta \ttt_* v) + \omega(f_*u, f_*v) = \omega(g_*u, g_*v) + \omega(\theta \sss_* u, \theta \sss_* v)$.
\ref{8096zsbm}
We have $\eta \star \theta = \mmm \circ (\eta \times \theta)$, so $(\eta \star \beta)^*\omega = (\eta \times \theta)^* \mmm^*\omega = (\eta \times \theta)^* (\pr_1^*\omega + \pr_2^*\omega) = \eta^*\omega + \theta^*\omega$.
\end{proof}

The \defn{restriction} of a groupoid $\G$ to a subset $S \s \G\obj$ is denoted $\G|_S \coloneqq \sss^{-1}(S) \cap \ttt^{-1}(S)$.

For an action of a Lie groupoid $\G$ on smooth manifold $M$ with moment map $\mu : M \to \G\obj$, the \defn{action groupoid} is denoted $\G \ltimes M$, with source map $\sss(g, p) = p$ and target map $\ttt(g, p) = g \cdot p$.

If $f : N \to \G\obj$ is a smooth map such that $\ttt \circ \pr_{\G\arr} : N \times_{f,\sss} \G\arr \to \G\obj$ is a surjective submersion, the \defn{pullback} of $\G$ by $f$, denoted $f^*\G$, is the Lie groupoid with objects $N$, arrows $N \times_{\G\obj} \G\obj \times_{\G\obj} N \coloneqq \{(x, g, y) \in N \times \G\arr \times N : f(x) = \sss(g), f(y) = \ttt(g)\}$, and structure maps $\sss(x, g, y) = x$, $\ttt(x, g, y) = y$, $\uuu(x) = (x, \uuu_{f(x)}, x)$, $\iii(x, g, y) = (y, g^{-1}, x)$, $\mmm((x_1,g_1,y_1),(x_2,g_2,y_2)) = (x_2, g_1g_2, y_1)$.
The condition on $\ttt \circ \pr_{\G\arr}$ holds, in particular, if $f$ is a surjective submersion.

\subsection{Twisted Dirac structures}\label{1n8d3ppc}
Let $M$ be a smooth manifold.
A \defn{Dirac structure} on $M$ is a subbundle $L \s TM \oplus T^*M$ together with a 3-form $\phi$, called the \defn{background 3-form}, such that $L$ is Lagrangian, i.e.\ $L^\perp = L$ with respect to the symmetric pairing $\ip{(v, \alpha), (w, \beta)} = \alpha(w) + \beta(v)$, and involutive, i.e.\ smooth sections of $L$ are preserved under the Dorfman bracket
\begin{equation}\label{qq4huasw}
\llbracket(v, \alpha), (w, \beta)\rrbracket_\phi
\coloneqq
([v, w], i_{v}d\beta + d i_{v} \beta - i_{w}d\alpha + i_{w} i_{v} \phi).
\end{equation}
In that case, $L$ is a Lie algebroid whose anchor map is the projection $p_T : L \to TM$.
We denote the other projection by $p_{T^*} : L \to T^*M$.
The \defn{kernel} of $L$ is $\ker L \coloneqq L \cap TM$.
We say that $L$ is \defn{untwisted} if $\phi = 0$.
We will make frequent use of the following four constructions.

\subsubsection{Graph}
The \defn{graph} of a 2-form $\omega$ on $M$ is the Dirac structure
\[
\Gamma_\omega \coloneqq \{(v, i_v\omega) : v \in TM\}
\]
with background 3-form $-d\omega$.
A Dirac structure $L$ is of this form if and only if $L \cap T^*M = 0$.
In that case, we say that $L$ is \defn{non-degenerate}.

Similarly, if $\pi : T^*M \to TM$ is a Poisson structure, then its graph $\{(\pi(\alpha), \alpha) : \alpha \in T^*M\}$ is an untwisted Dirac structure.
A Dirac structure $L$ is of this form if and only if $\ker L = 0$.

\subsubsection{Sum}\label{e5tv6dsz}
Let $(L_1, \phi_1)$ and $(L_2, \phi_2)$ be Dirac structures on $M$.
The \defn{sum} \cite{gua:18} (or \defn{tensor product} \cite{gua:11}) of $L_1$ and $L_2$ is the set
\[
L_1 + L_2 \coloneqq \{(v, \alpha_1 + \alpha_2) : (v, \alpha_1) \in L_1 \text{ and } (v, \alpha_2) \in L_2\}.
\]
If $L_1 + L_2$ is smooth, it is a Dirac structure with background 3-form $\phi_1 + \phi_2$.
In particular, the \defn{gauge transformation} of a Dirac structure $(L, \phi)$ by a 2-form $\omega$ is the Dirac structure $L + \Gamma_\omega$ with background 3-form $\phi - d\omega$.

\subsubsection{Pullback}\label{e37p3coq}
Let $f : M \to N$ be a smooth map.
The \defn{pullback} of a Dirac structure $(L, \phi)$ on $N$ is the set
\[
f^*L \coloneqq \{(v, f^*\alpha) \in TM \oplus T^*M : (f_*v, \alpha) \in L\}.
\]
If $f^*L$ is smooth, then it is a Dirac structure twisted by $f^*\phi$.
This is the case, in particular, if $f$ is a submersion (see e.g.\ \cite[Proposition 1.10]{bur:13}).

\subsubsection{Pushforward}
Let $f : M \to N$ be a surjective smooth map and $(L, \phi)$ a Dirac structure on $M$.
We say that $L$ is \defn{$f$-invariant} if
\[
(f_*L)_p \coloneqq \{(f_* v, \alpha) \in T_{f(p)}N \oplus T^*_{f(p)}N : (v, f^*\alpha) \in L_p\}
\]
is constant along fibres of $f$.
In that case, the sets $(f_*L)_p$ define a subbundle $f_*L$ of $TN \oplus T^*N$.
If $f_*L$ is smooth and $\phi = f^*\eta$ for some $3$-form $\eta$ on $N$, then $(f_*L, \eta)$ is a Dirac structure on $N$, called the \defn{pushforward} of $(L, \phi)$ by $f$ (see e.g.\ \cite[Proposition 1.13]{bur:13}).


\subsection{Clean and transverse intersections}\label{fj51c8fw}

Two smooth maps
\begin{equation}\label{7qm8van1}
\begin{tikzcd}
N_1 \arrow{r}{f_1} & M & N_2 \arrow[swap]{l}{f_2}
\end{tikzcd}
\end{equation}
\defn{intersect cleanly} if the fibre product $N_1 \times_M N_2 \coloneqq \{(n_1, n_2) \in N_1 \times N_2 : f_1(n_1) = f_2(n_2)\}$ is a smooth submanifold of $N_1 \times N_2$ with the expected tangent spaces, i.e.\ $T_{(n_1, n_2)}(N_1 \times_M N_2) = T_{n_1}N_1 \times_{T_mM} T_{n_2}N_2$.
They \defn{intersect transversely} if $f_{1*}(T_{p_1}N_1) + f_{2*}(T_{p_2}N_2) = T_{q} M$ for all $(p_1, p_2) \in N_1 \times_M N_2$.

Two vector bundle homomorphisms
\[
\begin{tikzcd}
W_1 \arrow{r}{F_1} & V & W_2 \arrow[swap]{l}{F_2}
\end{tikzcd}
\]
covering \eqref{7qm8van1} \defn{intersect cleanly} if $\im(F_1)_{p_1} + \im(F_2)_{p_2}$ has constant rank for $(p_1, p_2) \in N_1 \times_M N_2$.
We say that they are \defn{transverse} if $\im(F_1)_{p_1} + \im(F_2)_{p_2} = V_q$ for all $(p_1, p_2) \in N_1 \times_M N_2$.

%

\section{1-shifted coisotropic structures}
\label{beux4a3x}

\subsection{Definition of 1-shifted coisotropic structures}
Our starting point is the notion of quasi-symplectic groupoids introduced by Xu \cite{xu:04} and Bursztyn--Crainic--Weinstein--Zhu \cite{bur-cra-wei-zhu:04}, which will be interpreted as presentations of 1-shifted symplectic stacks \cite{get:14,cal:21,cue-zhu:23} (see \S\ref{dy26su68} for more details).
Recall that a \defn{quasi-symplectic groupoid} \cite{xu:04} (also known as a \emph{twisted presymplectic groupoid} \cite{bur-cra-wei-zhu:04}) is a Lie groupoid $\G$ endowed with a multiplicative $2$-form $\omega$ on $\G\arr$ and a closed $3$-form $\phi$ on $\G\obj$ such that $d\omega = \sss^*\phi - \ttt^*\phi$, $\dim \G\arr = 2 \dim \G\obj$, and $\ker \omega_x \cap \ker \sss_* \cap \ker \ttt_*  = 0$ for all $x \in \G\obj$.
We denote the corresponding infinitesimally multiplicative form by
\[
\IM_\omega : A_\G \too T^*\G\obj, \quad \IM_\omega(a) \coloneqq \uuu^*(i_a\omega).
\]
Quasi-symplectic groupoids are the integrations of (twisted) Dirac structures \cite{bur-cra-wei-zhu:04}.
That is, given a quasi-symplectic groupoid $(\G, \omega, \phi)$, the set $L \coloneqq \im(\aaa, \IM_\omega)$ is a Dirac structure on $\G\obj$ with background 3-form $\phi$.
Conversely, any Dirac structure $(L, \phi)$ whose Lie algebroid $L$ is integrable arises in this way. 
The following basic facts will be used repeatedly throughout the paper.

\begin{lemma}[{\cite{bur-cra-wei-zhu:04,xu:04}}]\label{7wn1m20c}
For a quasi-symplectic groupoid $(\G, \omega, \phi)$, we have
\begin{enumerate}[label=\textup{(\arabic*)}]
\item
\label{dmfylnua}
$\aaa^* \IM_\omega = -\IM_\omega^*\aaa$,
\item
\label{rwp7208a}
$\ker \aaa \cap \ker \IM_\omega = 0$,
\item
\label{r6sdh90a}
$\ker(\aaa^* + \IM_\omega^*) = \im(\aaa, \IM_\omega)$, and 
\item
\label{xul5itsk}
$\sss^*\IM_\omega a = i_{a^L}\omega$ and $\ttt^*\IM_\omega a = i_{a^R}\omega$ for all $a \in A_\G$.
\end{enumerate}
\end{lemma}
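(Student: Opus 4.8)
The plan is to treat the translation formulas \ref{xul5itsk} as the fundamental identity and derive the other three statements from them together with the non-degeneracy hypothesis $\ker\omega_x \cap \ker\sss_* \cap \ker\ttt_* = 0$. To prove $\ttt^*\IM_\omega a = i_{a^R}\omega$ at a point $g$, I would evaluate multiplicativity $\mmm^*\omega = \pr_1^*\omega + \pr_2^*\omega$ at the composable pair $(\uuu(\ttt g), g) \in \G\comp{2}$. Writing $a^R_g = \mmm_*(a, 0)$, and noting that $\mmm_*(\uuu_*\ttt_* v, v) = v$ for every $v \in T_g\G\arr$ (since $\mmm(\uuu(\ttt q), q) = q$), multiplicativity gives
\[
\omega_g(a^R_g, v) = \omega_{\uuu(\ttt g)}(a, \uuu_*\ttt_* v) + \omega_g(0, v) = (\IM_\omega a)(\ttt_* v),
\]
which is exactly $i_{a^R}\omega = \ttt^*\IM_\omega a$. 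The companion identity $\sss^*\IM_\omega a = i_{a^L}\omega$ then follows symmetrically, either from \eqref{4ksoptjs} directly or by transporting the right-hand identity across the inversion map using $\iii_*(a^R_{g^{-1}}) = -a^L_g$ and the standard consequence $\iii^*\omega = -\omega$ of multiplicativity.

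Statement \ref{dmfylnua} I would deduce from \ref{xul5itsk} and the skew-symmetry of $\omega$. Evaluating $i_{a^R}\omega = \ttt^*\IM_\omega a$ on the right translation $b^R_g$ and using $\ttt_* b^R_g = \aaa b$ yields $\omega_g(a^R_g, b^R_g) = \langle \IM_\omega a, \aaa b\rangle$. Swapping the roles of $a$ and $b$ and using $\omega_g(a^R_g, b^R_g) = -\omega_g(b^R_g, a^R_g)$ gives $\langle \IM_\omega a, \aaa b\rangle = -\langle\IM_\omega b, \aaa a\rangle$, which is precisely $\aaa^*\IM_\omega = -\IM_\omega^*\aaa$.

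For \ref{rwp7208a}, suppose $a \in (A_\G)_x$ satisfies $\aaa a = 0$ and $\IM_\omega a = 0$; I want to conclude $a = 0$. Since $\dim\G\arr = 2\dim\G\obj$ and $\uuu_* T_x\G\obj \cap \ker\sss_* = 0$, we have the splitting $T_{\uuu x}\G\arr = \uuu_* T_x\G\obj \oplus (A_\G)_x$, so it suffices to show that $\omega(a, -)$ vanishes on both summands. On $\uuu_* T_x\G\obj$ this is the vanishing of $\IM_\omega a$, while on $(A_\G)_x$ I evaluate \ref{xul5itsk} at the unit (where $a^R_{\uuu x} = a$) to get $\omega(a, b) = \langle\IM_\omega a, \aaa b\rangle = 0$ for all $b \in (A_\G)_x$. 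Hence $a \in \ker\omega_{\uuu x} \cap \ker\sss_* \cap \ker\ttt_* = 0$. Finally, \ref{r6sdh90a} is the assertion that $L = \im(\aaa, \IM_\omega)$ is Lagrangian: a direct pairing computation identifies $\ker(\aaa^* + \IM_\omega^*)$ with $L^\perp$, statement \ref{dmfylnua} shows $L$ is isotropic, and \ref{rwp7208a} shows $\dim L = \dim A_\G = \dim\G\obj = \tfrac12\dim(T\G\obj \oplus T^*\G\obj)$, so that $L^\perp = L$.

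The only genuinely delicate point is the bookkeeping in \ref{xul5itsk}: one must track base points carefully and respect the left/right translation conventions—in particular the sign in $\iii_*(a^R_{g^{-1}}) = -a^L_g$—when transporting the identity across inversion. Everything else is formal once \ref{xul5itsk} is established; in particular \ref{r6sdh90a} reduces to the classical ``isotropic of half dimension implies Lagrangian'' argument and requires no input beyond \ref{dmfylnua} and \ref{rwp7208a}.
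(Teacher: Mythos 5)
Your argument is correct, but it is organized quite differently from the paper's. The paper only proves part \ref{xul5itsk} from scratch (by essentially the same computation as yours: evaluating multiplicativity on pairs of vectors tangent to the graph of multiplication at $(\uuu_{\ttt(g)},g,g)$ and $(g,\uuu_{\sss(g)},g)$), and imports parts \ref{dmfylnua}--\ref{r6sdh90a} from Bursztyn--Crainic--Weinstein--Zhu: \ref{dmfylnua} is their Proposition 3.5(i), while \ref{rwp7208a} and \ref{r6sdh90a} are read off from their Corollary 4.8, which asserts that $(\aaa,\IM_\omega):A_\G\to L$ is an isomorphism onto a Lagrangian subbundle. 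You instead take \ref{xul5itsk} as the primitive identity and rederive \ref{dmfylnua}--\ref{r6sdh90a} as pointwise linear algebra at the units: skew-symmetry of $\omega$ on right-invariant vectors gives \ref{dmfylnua}; the identity $i_a\omega=\ttt^*\IM_\omega a$ at $\uuu_x$ together with $\ker\omega\cap\ker\sss_*\cap\ker\ttt_*=0$ gives \ref{rwp7208a} (indeed, once $\IM_\omega a=0$ you get $i_a\omega=0$ on all of $T_{\uuu_x}\G\arr$ directly, so your splitting $T_{\uuu_x}\G\arr=\uuu_*T_x\G\obj\oplus (A_\G)_x$ is not even needed); and \ref{r6sdh90a} is the half-dimensional-isotropic argument using $\dim\G\arr=2\dim\G\obj$. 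All of these steps check out, including the transport of the $\ttt$-identity across inversion via $\iii_*(a^R_{g^{-1}})=-a^L_g$ and $\iii^*\omega=-\omega$. What your route buys is a self-contained proof that makes the logical dependencies transparent; what the paper's route buys is brevity and, implicitly, the stronger fact from \cite{bur-cra-wei-zhu:04} that $L=\im(\aaa,\IM_\omega)$ is not merely Lagrangian but involutive --- a point your argument does not (and need not, for this lemma) address.
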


\begin{proof}
Part \ref{dmfylnua} is \cite[Proposition 3.5(i)]{bur-cra-wei-zhu:04}.
The map $(\aaa, \IM_\omega) : A_\G \to L \coloneqq \im(\aaa, \IM_\omega)$ is an isomorphism \cite[Corollary 4.8]{bur-cra-wei-zhu:04}.
Then Part \ref{rwp7208a} is injectivity of this map, and Part \ref{r6sdh90a} is surjectivity together with the fact that $L$ is Lagrangian.
To show Part \ref{xul5itsk}, note that for all $v \in T_g\G$, the vectors $(v, \uuu\sss_* v, v)$ and $(0, a - \uuu \ttt_* a, a^L_g)$ are tangent to the graph of the multiplication of $\G$ at $(g, \uuu_{\sss(g)}, g)$.
Hence, $\omega(a - \uuu\ttt_* a, \uuu\sss_* v) = \omega(a^L_g, v)$.
Since $\uuu^*\omega = 0$ \cite[Lemma 3.1]{bur-cra-wei-zhu:04}, this implies that $\omega(a, \uuu\sss_* v) = \omega(a^L_g, v)$, i.e.\ $\sss^*\IM_\omega a = i_{a^L_g}\omega$.
Similarly, $(\uuu\ttt_* v, v, v)$ and $(a, 0, a^R_g)$ are tangent to the graph at $(\uuu_{\ttt(g)}, g, g)$ so $\omega(a, \uuu\ttt_* v) = \omega(a^R_g, v)$.
\end{proof}

By thinking of quasi-symplectic groupoids as presentations of 1-shifted symplectic stacks as in \cite{get:14,cal:21,cue-zhu:23}, the original paper of Pantev--To\"en--Vaqui\'e--Vezzosi \cite{ptvv:13} provides a definition of 1-shifted Lagrangians on $(\G, \omega, \phi)$.
Namely, a \defn{1-shifted Lagrangian structure} on a Lie groupoid morphism $\c : \C \to \G$ is a 2-form $\gamma$ on $\C\obj$ satisfying the following two conditions.
\begin{itemize}
\item
\defn{Compatibility.} We have
\[
\c^*\phi = -d\gamma \quad\text{and}\quad \c^*\omega = \ttt^*\gamma - \sss^*\gamma.
\]
\item
\defn{Non-degeneracy.}
The chain map
\begin{equation}\label{fkoguq0d}
\begin{tikzcd}[column sep=50pt]
A_\C \arrow{d} \arrow{r}{(\aaa_\C, \c_*)} & T\C\obj \oplus \c^*A_\G \arrow{r}{\c_* - \aaa_\G} \arrow{d}{-\gamma + c^*\IM_\omega} & \c^*T\G\obj \arrow{d}{\c^*\IM_\omega^*} \\
0 \arrow{r} & T^*\C\obj \arrow{r}{\aaa_\C^*} & A_\C^*
\end{tikzcd}
\end{equation}
is a quasi-isomorphism.
\end{itemize}

The notion of 1-shifted coisotropics will generalize this by replacing the 2-form $\gamma$ on $\C\obj$ with a Dirac structure $L$ suitably compatible with $(\omega, \phi)$, and modifying the chain map \eqref{fkoguq0d} appropriately with $L$ in place of $T\C\obj$.
Let us first construct the chain map:

\begin{lemma}\label{18e8u16h}
Let $\c : \C \to \G$ be a morphism of Lie groupoids, $\omega$ a multiplicative 2-form on $\G$, and $L$ a Dirac structure on $\C\obj$ such that
\[
\ttt^*L = \sss^*L + \Gamma_{\c^*\omega}.
\]
Then the image of the map
\begin{equation}\label{013fffem}
(\aaa_\C, \c^*\IM_\omega \c_*) : A_\C \too T\C\obj \oplus T^*\C\obj
\end{equation}
is contained in $L$.
Moreover, the diagram of vector bundle homomorphisms over $\C\obj$ given by
\begin{equation}\label{r7ynptgu}
\begin{tikzcd}[column sep=50pt]
A_\C \arrow{d} \arrow{r}{(\aaa_\C, \c^* \IM_\omega \c_*, \c_*)} & L \oplus \c^*A_\G \arrow{r}{\c_* p_T - \aaa_\G} \arrow{d}{-p_{T^*} + \c^*\IM_\omega} & \c^*T\G\obj \arrow{d}{\c^*\IM_\omega^*} \\
0 \arrow{r} & T^*\C\obj \arrow{r}{\aaa_\C^*} & A_\C^*
\end{tikzcd}
\end{equation}
is a chain map.
\end{lemma}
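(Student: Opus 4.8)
The plan is to treat the two assertions together, taking the inclusion $\im(\aaa_\C, \c^*\IM_\omega\c_*)\subseteq L$ as the heart of the matter; once it is in hand the chain-map property is almost formal. Throughout, write $\theta_a \coloneqq \c^*\IM_\omega\c_* a$ for $a \in A_\C$, viewed as a bundle map $A_\C \to T^*\C\obj$. The first step is to record the two ``translation formulas''
\[
\sss_\C^*\theta_a = i_{a^L}(\c^*\omega), \qquad \ttt_\C^*\theta_a = i_{a^R}(\c^*\omega),
\]
as $1$-forms on $\C\arr$. These follow from Lemma \ref{7wn1m20c}\ref{xul5itsk} applied to $\c_* a \in A_\G$ and pulled back along $\c\arr$, using that $\c$ is a functor (so $\c_* a^L = (\c_* a)^L$ and $\c_* a^R = (\c_* a)^R$) and that $\sss_\G\circ\c\arr = \c\obj\circ\sss_\C$, $\ttt_\G\circ\c\arr = \c\obj\circ\ttt_\C$.

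For the inclusion, I would restrict the hypothesis $\ttt^*L = \sss^*L + \Gamma_{\c^*\omega}$ to a unit $\uuu(x)$ of $\C$ and evaluate it on the tangent vector $a \in (A_\C)_x \subseteq T_{\uuu(x)}\C\arr$. Since $(0,0) \in L_x$ and $\sss_* a = 0$, we have $(a, 0) \in \sss_\C^*L$; and tautologically $(a, i_a\c^*\omega) \in \Gamma_{\c^*\omega}$. As these have the same $T\C\arr$-component, their Dirac sum $(a, i_a\c^*\omega)$ lies in $(\sss_\C^*L + \Gamma_{\c^*\omega})_{\uuu(x)} = (\ttt_\C^*L)_{\uuu(x)}$ by the compatibility assumption. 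But $a = a^R$ at the unit, so the second translation formula gives $i_a\c^*\omega = \ttt_\C^*(\theta_a)_x$ there; hence $(a, \ttt_\C^*(\theta_a)_x) \in \ttt_\C^*L$, which by the definition of the pullback Dirac structure (and surjectivity of $\ttt_*$ at the unit) means precisely $(\ttt_* a, (\theta_a)_x) = (\aaa_\C a, (\theta_a)_x) \in L_x$. This is the desired inclusion, i.e.\ the image of \eqref{013fffem} lies in $L$.

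With this established, the chain-map verification is routine. The map $(\aaa_\C, \theta, \c_*)$ now genuinely lands in $L\oplus\c^*A_\G$, and the top row is a complex because $\c$ is a morphism of Lie groupoids: $\c_* p_T - \aaa_\G$ sends $((\aaa_\C a, \theta_a), \c_* a)$ to $\c_*\aaa_\C a - \aaa_\G\c_* a = 0$. The bottom row is trivially a complex, and the left-hand square commutes since $(-p_{T^*}+\c^*\IM_\omega)((\aaa_\C a,\theta_a),\c_* a) = -\theta_a + \c^*\IM_\omega\c_* a = 0$. For the right-hand square I would pair both composites with an arbitrary $b \in A_\C$; the terms coming from the $\c^*A_\G$ summand cancel using $\aaa_\G^*\IM_\omega = -\IM_\omega^*\aaa_\G$ (Lemma \ref{7wn1m20c}\ref{dmfylnua}) together with $\c_*\aaa_\C = \aaa_\G\c_*$, and what remains is the identity $\langle\IM_\omega\c_* b, \c_* v\rangle + \langle\alpha, \aaa_\C b\rangle = 0$ for $(v,\alpha) \in L$. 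Recognising the left-hand side as the Dirac pairing $\langle(\aaa_\C b, \theta_b),(v,\alpha)\rangle$, this vanishes because both entries lie in $L$ and $L$ is Lagrangian; so the inclusion proved above is invoked a second time here.

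The main obstacle is exactly the inclusion $\im(\aaa_\C,\c^*\IM_\omega\c_*)\subseteq L$: it is the only place the compatibility hypothesis is used, and it is needed even to make the upper-left arrow of the diagram well defined. The crux is the reduction to units, where right-translations collapse to the identity inclusion $A_\C \hookrightarrow T\C\arr$ and the trivial element $(0,0)\in L$ feeds through $\sss_\C^*L$; combining this with the translation formula for $\ttt_\C^*\theta_a$ converts the multiplicative compatibility of $L$ into its infinitesimal shadow. Everything else --- the complex conditions and the commutativity of the two squares --- is bookkeeping modulo Lemma \ref{7wn1m20c}.
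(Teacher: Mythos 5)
Your proof is correct and follows essentially the same route as the paper's: both arguments reduce the compatibility hypothesis $\ttt^*L = \sss^*L + \Gamma_{\c^*\omega}$ to the unit section, use the translation formulas of Lemma \ref{7wn1m20c}\ref{xul5itsk}, and verify the second square via Lemma \ref{7wn1m20c}\ref{dmfylnua} together with the pairing identity $\ip{\alpha, \aaa_\C b} + \ip{\IM_\omega \c_* b, \c_* v} = 0$. The only (harmless) difference is the order of deduction: the paper first establishes that pairing identity by applying the hypothesis to $(\uuu_* v, \ttt^*\alpha)$ for $(v,\alpha) \in L$ and then obtains the inclusion from $L^\perp = L$, whereas you prove the inclusion directly by applying the hypothesis to $(b, i_b\c^*\omega)$ for $b \in A_\C$ and then deduce the pairing identity from the isotropy of $L$.
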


\begin{proof}
Since $L^\perp = L$, to show that \eqref{013fffem} has its image in $L$, it suffices to show that
\begin{equation}\label{4dohrg5v}
\ip{\alpha, \aaa_\C b} + \ip{\IM_\omega \c_* b, \c_* v} = 0, \quad \text{for all $b \in A_\C$ and $(v, \alpha) \in L$.}
\end{equation}
We have $(\uuu_*v, \ttt^*\alpha) \in \ttt^*L = \sss^*L + \Gamma_{\c^*\omega}$, so $(\uuu_*v, \ttt^*\alpha) = (\uuu_*v, \sss^*\beta + i_{\uuu_*v}\c^*\omega)$ for some $\beta \in T^*\C\obj$ such that $(v, \beta) \in L$.
Then $\ip{\IM_\omega \c_*b, \c_*v} = \omega(\c_*b, 1 \c_*v) = (\c^*\omega)(b, \uuu_*v) = (\sss^*\beta - \ttt^*\alpha)(b) = -\alpha(\aaa_\C b)$, which proves \eqref{4dohrg5v}.
The commutativity of the first square of \eqref{r7ynptgu} is automatic, and the commutativity of the second square follows from \eqref{4dohrg5v} and Lemma \ref{7wn1m20c}\ref{dmfylnua}.
\end{proof}

The following observation greatly simplifies the definition of 1-Lagrangians and 1-coisotropics.

\begin{lemma}\label{t7rjqxe5}
The chain map \eqref{r7ynptgu} is a quasi-isomorphism if and only if the map
\begin{equation}\label{lajk1793}
(\aaa_\C, \c^* \IM_\omega \c_*, \c_*) : A_\C \too L \times_\c A_\G
\end{equation}
is an isomorphism of vector bundles over $\C\obj$, where
\[
L \times_\c A_\G \coloneqq \{((v, \alpha), a) \in L \oplus \c^*A_\G : \c_* v = \aaa_\G a \text{ and } \alpha = \c^* \IM_\omega a\}.
\]
More generally, \eqref{r7ynptgu} induces an isomorphism on the middle cohomology groups if and only if \eqref{lajk1793} is surjective.
\end{lemma}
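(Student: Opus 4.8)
The plan is to analyze the chain map \eqref{r7ynptgu} cohomologically by relating its cohomology groups directly to the kernel and cokernel of the map \eqref{lajk1793}. Write the three terms of the top and bottom complex and recall that a chain map between two-term complexes (here viewed as a three-term complex by placing the terms in degrees $-1, 0, 1$) is a quasi-isomorphism precisely when it induces isomorphisms on all cohomology. I would first observe that the degree $-1$ and degree $+1$ cohomology groups are governed by the injectivity of \eqref{013fffem}-type data and by surjectivity of $\c^*\IM_\omega^* \circ (\c_*p_T - \aaa_\G)$ together with the lower row, so the real content lives in the middle degree. The strategy is therefore to isolate the middle cohomology and show it vanishes if and only if \eqref{lajk1793} is an isomorphism, and that it is merely the failure of surjectivity of \eqref{lajk1793} in general.

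First I would compute the middle cohomology of the total complex of \eqref{r7ynptgu}, i.e.\ the cohomology at the node $L \oplus \c^*A_\G$ (degree $0$) of the mapping cone, or more directly unwind what ``isomorphism on middle cohomology'' means for the vertical chain map. Concretely, the middle cohomology of the top row is $\ker(\c_*p_T - \aaa_\G) / \im(\aaa_\C, \c^*\IM_\omega\c_*, \c_*)$ and of the bottom row is $\ker \aaa_\C^* / \im$ of the zero map into $T^*\C\obj$; but the cleaner approach is to note that $L \times_\c A_\G = \ker(\c_* p_T - \aaa_\G)$ already incorporates the constraint $\alpha = \c^*\IM_\omega a$. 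I would verify this last point carefully: an element $((v,\alpha), a) \in L \oplus \c^*A_\G$ with $\c_* p_T((v,\alpha)) = \aaa_\G a$ satisfies $\c_* v = \aaa_\G a$, and I must check that the additional condition $\alpha = \c^*\IM_\omega a$ defining $L \times_\c A_\G$ is automatically equivalent, modulo the vertical maps, to lying in the kernel after accounting for the vertical differential $-p_{T^*} + \c^*\IM_\omega$. This reconciliation — showing that the fibre product condition $\alpha = \c^*\IM_\omega a$ is exactly what the chain-map structure enforces at the cohomological level — is the technical heart of the argument.

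Next, assuming that identification, the map \eqref{lajk1793} is precisely the map induced by $(\aaa_\C, \c^*\IM_\omega\c_*, \c_*)$ from $A_\C$ into $\ker(\c_* p_T - \aaa_\G) = L \times_\c A_\G$. I would then argue that the middle cohomology of the chain map vanishes (isomorphism on $H^0$) if and only if the cokernel of \eqref{lajk1793} vanishes and no spurious classes survive from the lower complex — which forces surjectivity — and that full quasi-isomorphism additionally demands isomorphisms at the outer degrees, which by a diagram chase correspond to injectivity of \eqref{lajk1793} together with the already-established complementary isomorphisms. The assembled statement is then: surjectivity of \eqref{lajk1793} $\iff$ $H^0$-isomorphism, and bijectivity of \eqref{lajk1793} $\iff$ quasi-isomorphism.

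The main obstacle I anticipate is the bookkeeping in the middle-degree identification, specifically confirming that the vertical differential $-p_{T^*} + \c^*\IM_\omega$ together with $\aaa_\C^*$ does not contribute extra cohomology that would decouple ``$H^0$-isomorphism'' from ``surjectivity of \eqref{lajk1793}.'' I expect this to hinge on the Lagrangian property $L^\perp = L$ and on Lemma \ref{7wn1m20c}\ref{dmfylnua} (the relation $\aaa^*\IM_\omega = -\IM_\omega^*\aaa$), which were already used in Lemma \ref{18e8u16h} to establish that \eqref{r7ynptgu} is a genuine chain map; these same identities should guarantee that the constraint $\alpha = \c^*\IM_\omega a$ is compatible with the differentials so that the cone computation collapses cleanly onto \eqref{lajk1793}. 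Once that collapse is verified, the equivalences follow from elementary homological algebra of two-term complexes.
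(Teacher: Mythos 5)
There is a genuine gap. Your plan correctly identifies that the content lives in comparing the three induced maps on cohomology, and the easy half is indeed easy: injectivity of \eqref{lajk1793} is equivalent to the degree $-1$ map being an isomorphism, and surjectivity of \eqref{lajk1793} is equivalent to \emph{injectivity} of the middle map, because $L \times_\c A_\G$ is the intersection $\ker(\c_*p_T - \aaa_\G) \cap \ker(-p_{T^*} + \c^*\IM_\omega)$. Note that your identification $L \times_\c A_\G = \ker(\c_*p_T - \aaa_\G)$ is false as stated — the horizontal differential only records $\c_*v = \aaa_\G a$, and the condition $\alpha = \c^*\IM_\omega a$ is precisely membership in the kernel of the \emph{vertical} differential; conflating the two is what lets you believe the cone computation ``collapses cleanly.''

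What your proposal omits entirely is the hard direction: you must show that surjectivity of \eqref{lajk1793} forces the middle cohomology map to be \emph{surjective} onto $\ker\aaa_\C^*$, and that bijectivity of \eqref{lajk1793} forces the degree $+1$ map $\c^*T\G\obj/\im(\c_*p_T - \aaa_\G) \to A_\C^*/\im\aaa_\C^*$ to be an isomorphism. Neither is formal. The paper proves the first by dualizing: it reduces surjectivity to the inclusion $\ker\bigl((-p_{T^*} + \c^*\IM_\omega)^*\bigr) \s \im\aaa_\C$, and then uses $L^\perp = L$ together with Lemma \ref{7wn1m20c}\ref{r6sdh90a} (that $\ker(\aaa_\G^* + \IM_\omega^*) = \im(\aaa_\G, \IM_\omega)$) to produce an element of $L \times_\c A_\G$ to which surjectivity of \eqref{lajk1793} can be applied. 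The second is proved by an analogous annihilator argument on the dual map $\ker\aaa_\C \to \ker p_T^*\c^* \cap \ker\aaa_\G^*$, again invoking Lemma \ref{7wn1m20c}\ref{r6sdh90a} and \ref{7wn1m20c}\ref{rwp7208a}. Your proposal waves at these steps (``no spurious classes survive,'' ``these same identities should guarantee that \dots the cone computation collapses''), but the identities you cite (Lemma \ref{7wn1m20c}\ref{dmfylnua} and $L^\perp = L$) were used only to establish that \eqref{r7ynptgu} is a chain map; the quasi-isomorphism claim needs the stronger statement \ref{7wn1m20c}\ref{r6sdh90a}, applied through explicit duality arguments that are the actual substance of the lemma and are absent from your outline.
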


\begin{proof}
Denote the three maps on cohomology groups induced by \eqref{r7ynptgu} by
\[
\begin{tikzcd}[column sep=1em]
\ker(\aaa_\C, \c^* \IM_\omega \c_*, \c_*) \arrow{d}{\varphi_1}
& \ker(\c_* p_T - \aaa_\G) / \im(\aaa_\C, \c^* \IM_\omega \c_*, \c_*) \arrow{d}{\varphi_2}
& \c^*T\G\obj / \im(\c_* p_T - \aaa_\G) \arrow{d}{\varphi_3} \\
0 & \ker \aaa_\C^* & A_\C^* / \im \aaa_\C^*.
\end{tikzcd}
\]
Injectivity of \eqref{lajk1793} is equivalent to $\varphi_1$ being an isomorphism, and surjectivity of \eqref{lajk1793} is equivalent to $\varphi_2$ being injective.
Hence, it remains to show that if \eqref{lajk1793} is surjective, then so is $\varphi_2$, and if \eqref{lajk1793} is an isomorphism, then so is $\varphi_3$.

For surjectivity of $\varphi_2$, it suffices to show that $\ker \aaa_\C^* \s \im(-p_{T^*} + \c^*\IM_\omega : L \times_{T\G\obj} \c^*A_\G \to T^*\C\obj)$.
Taking the annihilator on both sides, this reduces to $\ker((-p_{T^*} + \c^*\IM_\omega)^*) \s \im \aaa_\C$, i.e.\ we need to show that for all $v \in T\C\obj$ such that
\begin{equation}\label{bj0fnjk8}
-p_{T^*}^* v = p_T^* \c^*\alpha
\quad\text{and}\quad
\IM_\omega^*\c_* v = -\aaa_{\G}^* \alpha
\end{equation}
for some $\alpha \in T^*\G\obj$, then $v \in \im \aaa_\C$.
The first part of \eqref{bj0fnjk8} implies that $(v, \c^*\alpha) \in L^\perp = L$.
By Lemma \ref{7wn1m20c}\ref{r6sdh90a}, the second part of \eqref{bj0fnjk8} implies that $(\c_* v, \alpha) = (\aaa_\G a, \IM_\omega a)$ for some $a \in A_\G$.
Then $((v, \c^*\alpha), a) \in L \times_\c A_\G$, so $v \in \im \aaa_\C$ by surjectivity of \eqref{lajk1793}.

To show that $\varphi_3$ is an isomorphism, consider its dual
\begin{equation}\label{sr630ygf}
\ker \aaa_\C \too \ker((\c_* p_T - \aaa_\G)^*) = \ker p_T^* \c^* \cap \ker \aaa_\G^*, \quad b \mtoo \IM_\omega \c_* b,
\end{equation}
where $p_T^*\c^* : \c^*T^*\G\obj \to T^*\C\obj \to L^*$.
To show surjectivity of \eqref{sr630ygf}, let $a \in \ker p_T^*\c^* \cap \ker \aaa_\G^*$.
Since $\aaa_\G^*\alpha = 0$, Lemma \ref{7wn1m20c}\ref{r6sdh90a} implies that $\alpha = \IM_\omega a$ for some $a \in A_\G$ such that $\aaa_\G a = 0$.
It follows that $((0, \c^*\alpha), a) \in L \times_\c A_\G$, and hence $((0, \c^*\alpha), a) = ((\aaa_\C b, \c^*\IM_\omega \c_* b), \c_* b)$ for some $b \in A_\C$.
In particular, $b \in \ker \aaa_\C$ and $\IM_\omega \c_* b = \IM_\omega a = \alpha$.
For injectivity of \eqref{sr630ygf}, note that if $\aaa_\C b = 0$ and $\IM_\omega \c_* b = 0$, then also $\aaa_\G \c_* b = \c_* \aaa_\C b = 0$.
It follows that $\c_* b = 0$ (Lemma \ref{7wn1m20c}\ref{rwp7208a}) so $b = 0$ by injectivity of \eqref{lajk1793}.
\end{proof}

We are now ready to define the main object of study of this paper.

\begin{definition}\label{gm52z93m}
Let $(\G, \omega, \phi)$ be a quasi-symplectic groupoid, $\C$ a Lie groupoid, and $\c : \C \to \G$ a morphism of Lie groupoids.
A \defn{1-shifted coisotropic structure} on $\c$ is a Dirac structure $(L, \eta)$ on $\C\obj$ satisfying the following two conditions.
\begin{enumerate}[label={(\arabic*)}]
\item\label{0k2j1uwv}
\defn{Compatibility.}
We have $\eta = \c^*\phi$ and $\ttt^*L = \sss^*L + \Gamma_{\c^*\omega}$.
\item\label{3bw7gcnj}
\defn{Non-degeneracy.}
The map
\begin{equation}\label{0v1fz7h6}
(\aaa_\C, \c^* \IM_\omega \c_*, \c_*) : A_\C \too L \times_\c A_\G
\end{equation}
is surjective, where $L \times_\c A_\G \coloneqq \{((v, \alpha), a) \in L \oplus \c^*A_\G : \c_* v = \aaa a \text{ and } \alpha = \c^* \IM_\omega a\}$.
\end{enumerate}
Since the background 3-form $\eta$ is determined by $\phi$, we denote a 1-shifted coisotropic structure simply by $L$.
The triple $(\C, \c, L)$ is called a \defn{1-coisotropic} on $\G$.
A 1-shifted coisotropic structure is \defn{strong} if \eqref{0v1fz7h6} is also injective, i.e.\ $\ker \rho_\C \cap \ker \c_* = 0$, or equivalently (by Lemma \ref{t7rjqxe5}), the chain map \eqref{r7ynptgu} is a quasi-isomorphism.
\end{definition}

\begin{remark}
Due to the properties of 1-shifted coisotropics shown in this paper, we expect that this definition is equivalent to the corresponding derived-algebraic notion \cite{cptvv,mel-saf:18a,mel-saf:18b,hau-mel-saf:22} 
when $\c : \C \to \G$ is a morphism of algebraic groupoids.
\end{remark}

%
%
%

\subsection{Examples of 1-coisotropics}

Our first example is a shifted version of the fact that for an ordinary symplectic manifold, the whole space is a coisotropic submanifold.

\begin{proposition}\label{2kzh0or3}
Let $(\G, \omega, \phi)$ be a quasi-symplectic groupoid and $L$ the induced Dirac structure on $\G\obj$.
Then $L$ is a 1-shifted coisotropic structure on the identity morphism $\G \to \G$.
\end{proposition}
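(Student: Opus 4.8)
The plan is to verify the two conditions of Definition \ref{gm52z93m} for the identity morphism $\c = \Id : \G \to \G$, where $\C = \G$, $\c_* = \Id$ on both $A_\G$ and $T\G\obj$, and $L = \im(\aaa_\G, \IM_\omega)$ is the Dirac structure associated to the quasi-symplectic groupoid by Lemma \ref{7wn1m20c}. Here $\c^*\omega = \omega$, $\c^*\phi = \phi$, and $\aaa_\C = \aaa_\G$, so the background $3$-form condition $\eta = \c^*\phi = \phi$ holds by construction.

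\textbf{Compatibility.} For condition \ref{0k2j1uwv} I must check $\ttt^*L = \sss^*L + \Gamma_\omega$ on $\G\arr$. The natural way to do this is to use that $L$ itself is integrated by $\G$: concretely, $(v,\alpha) \in L$ means $(v, \alpha) = (\aaa_\G a, \IM_\omega a)$ for some $a \in A_\G$, so $\sss^*L$ consists of pairs $(u, \sss^*\IM_\omega a)$ with $\sss_* u = \aaa_\G a$, and similarly for $\ttt^*L$. Using Lemma \ref{7wn1m20c}\ref{xul5itsk}, which gives $\sss^*\IM_\omega a = i_{a^L}\omega$ and $\ttt^*\IM_\omega a = i_{a^R}\omega$, I expect to reduce the equality of Dirac structures to a statement about how left and right translations of $a$ differ by a multiplicative-form correction. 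Given $g \in \G$ and a vector $w \in T_g\G$, I would decompose $w$ using the source and target fibrations and the relation \eqref{4ksoptjs} for multiplication, writing $w$ in terms of a vector satisfying the source condition plus a left-translated algebroid element, and then compute $i_w\omega$ via multiplicativity to match the $\Gamma_\omega$ summand against the difference of the pulled-back covectors.

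\textbf{Non-degeneracy.} For condition \ref{3bw7gcnj} I need the map $(\aaa_\G, \IM_\omega, \Id) : A_\G \to L \times_\Id A_\G$ to be surjective. But here $L \times_\Id A_\G = \{((v,\alpha), a) \in L \oplus A_\G : v = \aaa_\G a \text{ and } \alpha = \IM_\omega a\}$, and since $(\aaa_\G, \IM_\omega)$ is exactly the isomorphism $A_\G \xrightarrow{\sim} L$ from Lemma \ref{7wn1m20c} (injectivity is Part \ref{rwp7208a}, surjectivity onto $L$ is built in), an element $((v,\alpha),a)$ of the fibre product already forces $(v,\alpha) = (\aaa_\G a, \IM_\omega a)$, so it is the image of $a$. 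Thus the map is not merely surjective but bijective, which simultaneously shows the structure is \emph{strong}. I would state this explicitly, since it is essentially immediate from the definition of the fibre product once Lemma \ref{7wn1m20c}\ref{rwp7208a} is invoked.

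\textbf{Main obstacle.} The non-degeneracy half is essentially formal, so the real work is the compatibility identity $\ttt^*L = \sss^*L + \Gamma_\omega$. The subtlety is that this is an equality of \emph{sets} (Dirac structures) in $T\G\arr \oplus T^*\G\arr$, not just of their algebroid parameterizations, so I must show both inclusions at the level of arbitrary tangent vectors $w \in T_g\G$ rather than only on the image of $\aaa_\G$. I anticipate the cleanest route is to fix $g$, pick a general $(w, \xi) \in \ttt^*L$, produce the corresponding algebroid element $a$ with $(\ttt_* w, \xi') \in L$, and then exhibit the matching $\sss^*L$-part plus the $i_w\omega$ correction using Lemma \ref{7wn1m20c}\ref{xul5itsk} and the translation identities $\ttt_*(a^R_g) = \aaa(a)$, $\sss_*(a^L_g) = -\aaa(a)$ from the conventions; the dimension count $\dim \G\arr = 2\dim\G\obj$ guarantees both sides are Lagrangian of equal rank, so establishing one inclusion suffices.
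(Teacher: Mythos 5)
Your treatment of the non-degeneracy condition is complete and identical to the paper's: an element of $L \times_{\Id} A_\G$ is by definition of the form $((\aaa_\G a, \IM_\omega a), a)$, so the map is bijective and the structure is strong. Your observation that one inclusion suffices for the compatibility condition is also sound (though the right reason is that $\ttt^*L$ and the gauge transform $\sss^*L + \Gamma_\omega$ are both Lagrangian subbundles of $T\G\arr \oplus T^*\G\arr$ of rank $\dim\G\arr$, not the dimension count per se), and your plan for the inclusion $\ttt^*L \subseteq \sss^*L + \Gamma_\omega$ starts the same way as the paper: given $(v,\ttt^*\alpha) \in \ttt^*L$, use Lemma \ref{7wn1m20c}\ref{xul5itsk} to show that $\ttt^*\alpha - i_v\omega$ annihilates $\ker\sss_*$ (every such vector is $a^R_g$, and $(\ttt^*\alpha - i_v\omega)(a^R_g) = \ip{\alpha,\aaa a} + \ip{\IM_\omega a, \ttt_*v} = 0$ by isotropy of $L$), hence equals $\sss^*\beta$ for some $\beta \in T^*\G\obj$.

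The gap is in what comes next. Writing $\ttt^*\alpha = \sss^*\beta + i_v\omega$ only places $(v,\ttt^*\alpha)$ in the subbundle $\{(w,\xi) : \xi - i_w\omega \in \im\sss^*\}$, which has rank $\dim\G\arr + \dim\G\obj$ and strictly contains $\sss^*L + \Gamma_\omega$; membership in $\sss^*L + \Gamma_\omega$ additionally requires $(\sss_*v,\beta) \in L$, and nothing in your sketch addresses how to verify this. This is the genuinely non-trivial step, and the paper's argument for it uses two further inputs you do not mention: the parameterization of $L$ by $\ker\ttt_*$ (every element of $L$ is $(-\sss_*b, \IM_\omega b)$ for some $b \in \ker\ttt_*$) and the $\omega$-orthogonality of $\ker\sss_*$ and $\ker\ttt_*$ from \cite[Lemma 3.1(ii)]{bur-cra-wei-zhu:04}, which together give $\ip{(\sss_*v,\beta),(-\sss_*b,\IM_\omega b)} = (i_v\omega - \ttt^*\alpha)(b) + \omega(b,v) = 0$ and hence $(\sss_*v,\beta) \in L^\perp = L$. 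Without some such computation the compatibility condition is not established, so you should supply this step explicitly.
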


\begin{proof}
To show that
$\ttt^*L = \sss^*L + \Gamma_\omega$,
let $(v, \ttt^*\alpha) \in \ttt^*L$, where $(\ttt_* v, \alpha) \in L$.
We claim that $\ttt^*\alpha - i_v\omega \in \im \sss^* = (\ker \sss_*)^\circ$.
Let $w \in \ker \sss_*$ so that $w = a^R_g$ for some $a \in A_\G$ and $g \in \G$.
By Lemma \ref{7wn1m20c}\ref{xul5itsk} and the isotropy of $L$, we have $(\ttt^*\alpha - i_v\omega)(w) = \ip{\alpha, \aaa a} + \ip{\IM_\omega a, \ttt_* v} = 0$.
It follows that $\ttt^*\alpha - i_v\omega = \sss^*\beta$ for some $\beta \in T^*\G\obj$, so $(v, \ttt^*\alpha) = (v, \sss^*\beta + i_v\omega)$.
It then suffices to show that $(\sss_* v, \beta) \in L = L^\perp$.
Note that every element of $L$ can be written $(-\sss_* b, \IM_\omega b)$ for some $b \in \ker \ttt_*$.
Since $\sss_*(v - 1\sss_* v) = 0$ and $\ker \ttt_*$ and $\ker \sss_*$ are $\omega$-orthogonal \cite[Lemma 3.1(ii)]{bur-cra-wei-zhu:04}, we have $\omega(b, \uuu\sss_* v) = \omega(b, v)$.
It follows that $\ip{(\sss_* v, \beta), (-\sss_* b, \IM_\omega b)} = -\sss^*\beta(b) + \omega(b, \uuu \sss_* v) = (i_v\omega - \ttt^*\alpha)(b) + \omega(b, v) = 0$, and hence $(\sss_* v, \beta) \in L$.
Finally, \eqref{0v1fz7h6} is surjective since every $((v, \alpha), a) \in L \times_{\Id} A_\G$ is the image of $a \in A_\G$.
\end{proof}

At the other extreme, 1-shifted Lagrangian structures are 1-shifted coisotropic structures.
Indeed, by Lemma \ref{t7rjqxe5}, they are precisely the strong 1-shifted coisotropic structures given by the graph of a 2-form.
In particular, the following example will be useful in our discussion on Hamiltonian reduction (\S\ref{616chpl6}).

\begin{proposition}\label{3pl9nrs8}
Let $(\G, \omega, \phi)$ be a quasi-symplectic groupoid and $\O \s \G\obj$ a $\G$-orbit.
Let $\gamma$ be the canonical presymplectic form on $\O$, i.e.\ $\gamma(\aaa a, \aaa b) = \omega(a, b)$ for all $a, b \in A_\G|_\O$ \cite{bur-cra:05}.
Then $\gamma$ is a 1-shifted Lagrangian structure on the inclusion $\c : \G|_\O \hookrightarrow \G$.
\end{proposition}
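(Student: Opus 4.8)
The plan is to verify directly the two conditions defining a 1-shifted Lagrangian structure for $\gamma$, after recording one algebraic identity that controls how $\gamma$ interacts with $\IM_\omega$. First I would use that $\O$ is a $\G$-orbit, so that $\aaa$ is fibrewise surjective onto $T\O$ along $\O$ and $T_x\O = \im\aaa_x$; a short check then gives the canonical identifications $A_\C = A_\G|_\O$, with $\c_* \colon A_\C \to \c^*A_\G$ the identity and $\aaa_\C = \aaa|_\O$. The key identity I would establish is
\[
\gamma(\aaa a, \aaa b) = \omega(a, b) = \ip{\IM_\omega a, \aaa b}, \qquad a, b \in A_\G|_\O .
\]
Here the first equality is the definition of $\gamma$; for the second, $\IM_\omega a = \uuu^*(i_a\omega)$ gives $\ip{\IM_\omega a, \aaa b} = \omega(a, \uuu_*\aaa b)$, and $\uuu_*\aaa b - b$ lies in $\ker\ttt_*$, which is $\omega$-orthogonal to $a \in \ker\sss_*$ at the unit (the orthogonality already invoked in the proof of Proposition \ref{2kzh0or3}). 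Combined with the antisymmetry from Lemma \ref{7wn1m20c}\ref{dmfylnua}, this identity also shows $\gamma$ is well defined.

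For the multiplicativity condition $\c^*\omega = \ttt^*\gamma - \sss^*\gamma$ I would exploit transitivity of $\G|_\O$. A dimension count (using that source and target fibres of $\G|_\O$ are full fibres of $\G$) shows $T_g(\G|_\O)\arr = \ker\sss_*|_g + \ker\ttt_*|_g$ at every arrow $g$, so it suffices to test both sides on vectors $a^R_g + b^L_g$. Applying Lemma \ref{7wn1m20c}\ref{xul5itsk} to rewrite $\omega(a^R_g, -)$ and $\omega(b^L_g, -)$ through $\IM_\omega$, and reading off the $\sss_*$- and $\ttt_*$-images of left and right translations from the conventions of \S\ref{c7u27pg5}, the whole computation collapses onto the identity above and the two sides coincide.

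The remaining compatibility $\c^*\phi = -d\gamma$ is the classical fact that the presymplectic form of a Dirac leaf has differential $-\phi|_\O$ \cite{bur-cra:05}, but I would give a self-contained groupoid argument. Restricting $d\omega = \sss^*\phi - \ttt^*\phi$ to $\G|_\O$ and substituting the multiplicativity just proved yields $\ttt^*\psi = \sss^*\psi$ for the $3$-form $\psi \coloneqq d\gamma + \c^*\phi$ on $\O$. Evaluating this on three vectors in $\ker\sss_*|_g$, whose $\ttt_*$-images exhaust $T_{\ttt(g)}\O$ by transitivity while their $\sss_*$-images vanish, forces $\psi = 0$, i.e.\ $\c^*\phi = -d\gamma$.

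Finally, for non-degeneracy, Lemma \ref{t7rjqxe5} reduces the claim to showing that $(\aaa_\C, \c^*\IM_\omega\c_*, \c_*) \colon A_\C \to \Gamma_\gamma \times_\c A_\G$ is an isomorphism. Since $\c_* = \Id$, this map is a two-sided inverse to the projection $\Gamma_\gamma \times_\c A_\G \to A_\G|_\O$, which is a bijection exactly because the key identity gives $i_{\aaa a}\gamma = \c^*\IM_\omega a$; in particular the structure is strong, as it must be for a Lagrangian. I expect the main obstacle to be the multiplicativity step: pinning down the left/right decomposition of $T_g(\G|_\O)\arr$ together with the exact signs in Lemma \ref{7wn1m20c}\ref{xul5itsk}, since everything downstream --- including the clean proof of $\c^*\phi = -d\gamma$ --- is built on it.
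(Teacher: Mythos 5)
Your proposal is correct, and it diverges from the paper's proof in an interesting way on both compatibility conditions. For $\c^*\omega = \ttt^*\gamma - \sss^*\gamma$, the paper argues via the multiplication map: it writes $\sss_*u = \ttt_*a$, applies $\mmm^*\omega = \pr_1^*\omega + \pr_2^*\omega$ to $\omega(\mmm_*(u,a),\mmm_*(v,b))$, and identifies the two resulting terms with $\sss^*\gamma$ and $\ttt^*\gamma$; you instead use the decomposition $T_g(\G|_\O) = \ker\sss_* + \ker\ttt_*$ (valid because $\G|_\O = \sss^{-1}(\O) = \ttt^{-1}(\O)$ for an orbit) and reduce everything to Lemma \ref{7wn1m20c}\ref{xul5itsk} together with the identity $\omega(a,b) = \ip{\IM_\omega a, \aaa b}$ --- I checked the four cross-terms and the signs work out. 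The bigger divergence is in $\c^*\phi = -d\gamma$: the paper proves this \emph{first}, by a direct expansion of $d\gamma$ on sections of the induced Dirac structure $L$ using involutivity under the $\phi$-twisted Dorfman bracket, whereas you derive it \emph{from} multiplicativity by observing that $\psi \coloneqq d\gamma + \c^*\phi$ satisfies $\sss^*\psi = \ttt^*\psi$ and that any basic form of positive degree on a transitive groupoid vanishes (evaluate $\ttt^*\psi$ on right-invariant vectors, whose $\sss_*$-images vanish and whose $\ttt_*$-images exhaust $T\O$). This is shorter and avoids the cyclic-permutation bookkeeping, at the cost of making the two compatibility conditions logically dependent rather than independent. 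Your non-degeneracy argument coincides with the paper's: both reduce, via Lemma \ref{t7rjqxe5}, to checking that $i_{\aaa a}\gamma = \c^*\IM_\omega a$, which is again the key identity. One small point worth keeping: your observation that Lemma \ref{7wn1m20c}\ref{dmfylnua} is what makes $\gamma$ well defined in the first place is a detail the paper leaves to the citation of \cite{bur-cra:05}.
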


\begin{proof}
We first show that $\c^*\phi = -d\gamma$.
Let $L$ be the induced Dirac structure on $\G\obj$ so that $\gamma(u, v) = \alpha(v) = -\beta(u)$ for all $(u, \alpha), (v, \beta) \in L|_\O$.
Then, for all local sections $(v_i, \alpha_i)$ of $L$, we have
\begin{align*}
d\gamma(v_1, v_2, v_3) &= v_1 \gamma(v_2, v_3) + \gamma(v_1, [v_2, v_3]) + c.p. \\
&= v_1 \alpha_2(v_3) + \alpha_1([v_2, v_3]) + c.p.,
\end{align*}
where $c.p.$ denotes cyclic permutations of the first terms.
Since sections of $L$ are closed under the Dorfman bracket \eqref{qq4huasw} and $L$ is Lagrangian, we have
\begin{align*}
0 &= \ip{([v_1, v_2], i_{v_1}d\alpha_2 + d i_{v_1} \alpha_2 - i_{v_2}d\alpha_1 + i_{v_2} i_{v_1} \phi), (v_3, \alpha_3)} \\
&= \alpha_3([v_1, v_2]) + d\alpha_2(v_1, v_3) + v_3 \alpha_2(v_1) - d\alpha_1(v_2, v_3) + \phi(v_1, v_2, v_3).
\end{align*}
Note also that $d\alpha_2(v_1, v_3) = v_1 \alpha_2(v_3) - v_3 \alpha_2(v_1) - \alpha_2([v_1, v_3])$ and
\begin{align*}
d\alpha_1(v_2, v_3) &= v_2 \alpha_1(v_3) - v_3 \alpha_1(v_2) - \alpha_1([v_2, v_3]) \\
&= -v_2 \alpha_3(v_1) - v_3 \alpha_1(v_2) - \alpha_1([v_2, v_3]),
\end{align*}
since $\ip{(v_1, \alpha_1), (v_3, \alpha_3)} = 0$.
All together, this gives
\begin{align*}
0 &= \alpha_3([v_1, v_2]) + v_1 \alpha_2(v_3) - \alpha_2([v_1, v_3]) + v_2\alpha_3(v_1) + v_3 \alpha_1(v_2) + \alpha_1([v_2, v_3]) + \phi(v_1, v_2, v_3) \\
&= d\gamma(v_1, v_2, v_3) + \phi(v_1, v_2, v_3),
\end{align*}
as desired.
We now show that $\c^*\omega = \ttt^*\gamma - \sss^*\gamma$.
Let $u, v \in T\G|_\O$.
Then $\sss_* u = \ttt_* a$ and $\sss_* v = \ttt_* b$ for some $a, b \in A_\G|_\O$.
By multiplicativity of $\omega$, we have
\begin{equation}\label{erwyepcs}
\omega(u, v) + \omega(a, b) = \omega(\mmm_*(u, a), \mmm_*(v, b)).
\end{equation}
The second term can be written $\omega(a, b) = \gamma(\ttt_* a, \ttt_* b) = \sss^*\gamma(u, v)$.
For the right-hand side, first note that for all $w_1, w_2 \in \ker \sss_*|_{\G|_\O}$ we have $w_i = (z_i)^R_g$ for some $z_i \in A_\G|_\O$, so $\omega(w_1, w_2) = \omega(z_1, z_2) = \gamma(\aaa z_1, \aaa z_2) = \gamma(\ttt_* w_1, \ttt_* w_2)$.
In particular, $\omega(\mmm_*(u, a), \mmm_*(v, b)) = \gamma(\ttt_* \mmm_*(u, a), \ttt_* \mmm_*(v, b)) = \ttt^*\gamma(u, v)$.
By \eqref{erwyepcs}, we have $\c^*\omega + \sss^*\gamma = \ttt^*\gamma$, proving the compatibility condition.
The non-degeneracy condition amounts to the bijectivity of the map
\[
A_\G|_\O \too \{(v, a) \in T\O \times A_\G|_\O : v = \aaa a,  i_v\gamma = \c^*\IM_\omega a\}, \quad
a \mtoo (\aaa a, a).
\]
Let $(v, a)$ be in the codomain.
Then for all $b \in A_\G|_\O$ we have $(i_{\aaa a}\gamma)(\aaa b) = \omega(a, b) = \omega(a, \uuu_* \aaa b) = \IM_\omega(\aaa b)$, so $i_{\aaa a} \gamma = \c^*\IM_\omega a$, i.e.\ $(v, a)$ is the image of $a \in A_\G|_\O$.
\end{proof}

Other useful examples are as follows.

\begin{example}\label{s2rkhg7b}
Let $\G$ be a symplectic groupoid \cite{wei:87,cos-daz-wei:87} (i.e.\ a quasi-symplectic groupoid with $\phi = 0$ and $\omega$ symplectic) integrating the Poisson manifold $\G\obj$ and let $\C\obj \s \G\obj$ be a coisotropic submanifold.
Then $A_\G \cong T^*\G\obj$ and the conormal bundle $(T\C\obj)^\circ$ is a Lie subalgebroid integrating to a Lagrangian subgroupoid $\C \to \G$ \cite{cat:04}.
It follows that the trivial $2$-form is a 1-shifted Lagrangian structure on $\C \to \G$.
Indeed, the compatibility condition is automatic and the non-degeneracy condition is the bijectivity of the map $(T\C\obj)^\circ \to \{(v, \xi) \in T\C\obj \times T^*\G\obj : v = \pi(\xi) \text{ and } \xi|_{T\C\obj} = 0\}$, $\xi \mto (\pi(\xi), \xi)$, where $\pi$ is the Poisson bivector field.
\end{example}

\begin{example}\label{li2xxdqm}
Generalizing the previous example, let $\G$ be a symplectic groupoid and $\C\obj \s \G\obj$ a pre-Poisson submanifold \cite{cat-zam:07,cat-zam:09}, i.e.\ a submanifold such that $A_\C \coloneqq \pi^{-1}(T\C\obj) \cap (T\C\obj)^\circ$ has constant rank (see also \cite[\S2.2]{cro-may:22}).
By \cite[Proposition 7.2]{cat-zam:09}, $A_\C$ is a Lie subalgebroid of $A_\G \cong T^*\G\obj$ integrating to an isotropic subgroupoid $\C \to \G$.
It follows as in Example \ref{s2rkhg7b} that the trivial 2-form on $\C\obj$ is a 1-shifted Lagrangian structure on $\C \to \G$ (see also \cite[Proposition 6.1]{cro-may:22}).
These 1-Lagrangians were studied in \cite{cro-may:22} under the name \emph{stabilizer subgroupoids}, where they generalize reduction levels in symplectic reduction; see \S\ref{616chpl6}.
\end{example}

%

Another important source of 1-coisotropics comes from Hamiltonian actions of quasi-symplectic groupoids on Dirac manifolds, but this will be treated in \S\ref{616chpl6}.


\subsection{0-shifted Poisson structures}
\label{zaduohbe}
One of the \emph{raisons d'\^etre} of 1-shifted coisotropics is to intersect them to produce 0-shifted Poisson structures.
We thus need the following.

\begin{definition}\label{3thr311z}
A \defn{$0$-shifted Poisson structure} on a Lie groupoid $\G$ is an untwisted Dirac structure $L$ on $\G\obj$ such that $\sss^*L = \ttt^*L$ and $\im \aaa = \ker L$.
\end{definition}

Note that the first condition $\sss^*L = \ttt^*L$ implies that $\im \aaa \s \ker L$, so the second condition is a non-degeneracy one.

For example, a $0$-shifted Poisson structure given by the graph of a 2-form is a $0$-symplectic form as in \cite[\S5.5]{hof-sja:21}.
As we will see later (Corollary \ref{s7wgomy8}), if the orbit space $\G\obj/\G\arr$ is a manifold, then 0-shifted Poisson structures on $\G \tto M$ are in one-to-one correspondence with Poisson structures on $\G\obj / \G\arr$.
We thus think of 0-shifted Poisson structures as Poisson structures on the corresponding differentiable stack.
The following is immediate from the definitions (cf.\ \cite[Example 2.3]{cal:15} for a similar result in derived algebraic geometry).
%


\begin{proposition}\label{q2gecljb}
Let $\G$ be a Lie groupoid.
A 1-shifted coisotropic structure on $\G \to \{*\}$ is the same as a 0-shifted Poisson structure on $\G$.
\qed
\end{proposition}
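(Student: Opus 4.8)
The plan is to simply unwind Definition \ref{gm52z93m} in the special case where the target quasi-symplectic groupoid is the point $\{*\}$ and to check that the two conditions there collapse exactly onto those of Definition \ref{3thr311z}. In the notation of Definition \ref{gm52z93m}, the groupoid $\G$ of the proposition plays the role of the source $\C$, while $\{*\}$ is the (trivially) quasi-symplectic groupoid carrying $\omega = 0$, $\phi = 0$, and $A_{\{*\}} = 0$, so that $\IM_\omega = 0$ and the induced map $\c_* : A_\G \to A_{\{*\}}$ vanishes. I would organize the verification around the compatibility and non-degeneracy conditions in turn.

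For compatibility, I would first note that $\eta = \c^*\phi = 0$, so the background $3$-form is forced to vanish and $L$ is untwisted. Next, since $\c^*\omega = 0$, the graph $\Gamma_{\c^*\omega}$ equals $\Gamma_0 = \{(v, 0) : v \in T\G\obj\}$, and I would observe that adding it changes nothing: in the sum of \S\ref{e5tv6dsz} the $\Gamma_0$-component of any covector is $0$, whence $\sss^*L + \Gamma_0 = \sss^*L$. Thus the compatibility requirement $\ttt^*L = \sss^*L + \Gamma_{\c^*\omega}$ becomes exactly $\ttt^*L = \sss^*L$.

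For non-degeneracy, the key step is to identify the fibre product $L \times_\c A_{\{*\}}$. Since $A_{\{*\}} = 0$ the element $a$ is forced to be $0$, and then the constraint $\alpha = \c^*\IM_\omega a$ forces $\alpha = 0$; hence $L \times_\c A_{\{*\}}$ is canonically $\{(v, 0) \in L\} = \ker L$. Under this identification the map $(\aaa_\G, \c^*\IM_\omega \c_*, \c_*)$ of \eqref{0v1fz7h6} has its last two components zero and reduces to $\aaa_\G : A_\G \to \ker L$. The inclusion $\im \aaa_\G \s \ker L$ already holds once $\ttt^*L = \sss^*L$ (this is the content of Lemma \ref{18e8u16h}, or the remark following Definition \ref{3thr311z}), so surjectivity of \eqref{0v1fz7h6} is precisely the statement $\im \aaa_\G = \ker L$.

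Putting the two conditions together, a 1-shifted coisotropic structure on $\c : \G \to \{*\}$ is exactly an untwisted Dirac structure $L$ on $\G\obj$ with $\ttt^*L = \sss^*L$ and $\im \aaa_\G = \ker L$, i.e.\ a 0-shifted Poisson structure on $\G$; reading the argument backwards gives the converse correspondence. There is no genuine obstacle, since the statement is a definitional unwinding — the only points deserving a moment's care are the identity $\sss^*L + \Gamma_0 = \sss^*L$ and the identification of $L \times_\c A_{\{*\}}$ with $\ker L$.
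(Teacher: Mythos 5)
Your proof is correct and matches the paper's treatment: the paper states this result with no proof at all (it is declared ``immediate from the definitions'' and closed with \qed), and your argument is exactly the definitional unwinding that justifies that claim, including the two small points worth checking ($\sss^*L + \Gamma_0 = \sss^*L$ and $L \times_\c A_{\{*\}} \cong \ker L$, with the inclusion $\im\aaa_\G \s \ker L$ supplied by the compatibility condition so that surjectivity of \eqref{0v1fz7h6} yields the equality $\im\aaa_\G = \ker L$).
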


\begin{remark}
As explained by Pridham in \cite[Remark 2.6]{pri:20}, a $0$-shifted Poisson structure in the sense of \cite{pri:17} can be viewed as a $P_\infty$ algebras in the sense of Cattaneo--Felder \cite{cat-fel:07}.
We should therefore expect a relationship between Definition \ref{3thr311z} and $P_\infty$ algebras.
We leave this for future work.
\end{remark}

\subsection{Differentiation and integration for 1-coisotropics}
\label{xlzo381e}

We now explain a conjectural version of Lie's third theorem for 1-shifted coisotropic structures.
Let $(\G, \omega, \phi)$ be a quasi-symplectic groupoid integrating a Dirac structure $L_M$ on $M \coloneqq \G\obj$.
Suppose that $\C \to \G$ is a strong 1-coisotropic over $N \coloneqq \C\obj$ with Dirac structure $L_N$.
Since \eqref{0v1fz7h6} is an isomorphism, the Lie algebroid $A_\C$ of $\C$ is entirely determined by the map $N \to M$ and the Dirac structures $L_N$ and $L_M$.
Hence, the infinitesimal version of a 1-coisotropic is as follows.

\begin{definition}\label{3v9jbw09}
An \defn{infinitesimal 1-coisotropic} is a pair of Dirac manifolds $(M, L_M, \phi)$ and $(N, L_N, \psi)$ together with a smooth map $\c : N \to M$ such that $\c^*\phi = \psi$ and
\begin{equation}\label{x75vblco}
L_N \times_\c L_M \coloneqq \{((v, \alpha), (w, \beta)) \in L_N \times L_M : \c_*v = w \text{ and } \alpha = \c^*\beta\}
\end{equation}
has constant rank over $N$.
\end{definition}

It follows that $L_N \times_\c L_M$ is a Lie algebroid and the projection $L_N \times_\c L_M \to L_M$ is a morphism of Lie algebroids.
%
%
Hence, if $L_M$ and $L_N \times_\c L_M$ are integrable, there is a source-simply-connected quasi-symplectic groupoid $\G$ integrating $(M, L_M, \phi)$, a source-simply-connected Lie groupoid $\C$ integrating $L_N \times_\c L_M$, and a Lie groupoid morphism $\C \to \G$.
It is then natural to expect that $L_N$ is a 1-shifted coisotropic structure on $\C \to \G$.
This amounts to the compatibility condition $\ttt^*L_N = \sss^*L_N + \Gamma_{\c^*\omega}$.

For example, this holds if $M$ is a Poisson manifold and $N \s M$ is a submanifold with a trivial Dirac structure (Example \ref{li2xxdqm}).
In particular, a special case is given by the correspondence between coisotropic submanifolds and Lagrangian subgroupoids \cite{cat:04}.

Another special case is when $\c$ is a \defn{strong Dirac map} \cite{ale-bur-mei:09} (also known as a \emph{Dirac realization} \cite{bur-cra:05}), i.e.\ $\c_* L_N = L_M$ and $\ker \c_* \cap \ker L_N = 0$.
In that case, $L_N \times_\c L_M$ is the vector bundle pullback $\c^*L_M$ with anchor map given by the action of $L_M$ on $N$ \cite[Corollary 3.12]{bur-cra:05}.
If $(M, L_M, \phi)$ integrates to a source-simply-connected quasi-symplectic groupoid $\G$, this action integrates to an action of $\G$ on $N$ \cite[Theorem 4.7]{bur-cra:05}.
It follows that the integration of $L_N \times_\c L_M \to L_M$ is the projection $\G \ltimes N \to \G$, and $L_N$ is indeed a 1-shifted coisotropic structure by \S\ref{616chpl6}.

Similarly, it holds if $\c$ is a submersion and is backward Dirac, i.e.\ $L_N = \c^*L_M$.
Indeed, if $(M, L_M, \phi)$ integrates to $\G$, then $L_N \times_\c L_M \to L_M$ integrates to the pullback $\c^*\G \to \G$.
Then $L_N = \c^*L_M$ is a 1-shifted coisotropic structure on $\c^*\G \to \G$ by Proposition \ref{2kzh0or3} and Theorem \ref{vazibeld}.

An indication for the general case comes from \cite{bal-may:22}, where a corollary of the conjecture is proven directly.
In that paper, an infinitesimal version of the Dirac reduction in Theorem \ref{nrztzlyn} is developed, where the Hamiltonian action is replaced by a strong Dirac map and the 1-coisotropic by an infinitesimal 1-coisotropic $\c : (N, L_N, \psi) \to (M, L_M, \phi)$ such that $N \s M$ and $L_N$ is non-degenerate.
Then \cite{bal-may:22} together with \cite[Theorem 4.7]{bur-cra:05} show that Theorem \ref{nrztzlyn} holds in the special case where $\C \to \G$ is the integration of $L_N \times_\c L_M \to L_M$ as above, without \emph{a priori} assuming that $\C \to \G$ is 1-coisotropic.
More generally, one can show with the methods of this paper that there is an infinitesimal version of the results on intersection of 1-coisotropics in \S\ref{o2caey5r} using Definition \ref{3v9jbw09}.

\begin{remark}
As observed by Pym--Safronov \cite{pym-saf:20}, the infinitesimal version of a 1-shifted symplectic structure is a Dirac structure in an exact Courant algebroid. 
In the $C^\infty$ context, exact Courant algebroids are equivalent to Courant algebroids of the form $TM \oplus T^*M$ with the Dorfman bracket twisted by a closed 3-form $\phi$ \cite{sev:99}, and Dirac structures correspond to $\phi$-twisted Dirac structures.
When $c \colon N \hookrightarrow M$ is a closed submanifold and the 1-shifted coisotropic structure is Lagrangian (i.e.\ $L_N$ is the graph of a 2-form), then Definition~\ref{3v9jbw09} coincides with the notion of 1-shifted Lagrangian morphisms considered by Pym--Safronov in \cite[Theorem~7.4]{pym-saf:20}.
Indeed, identifying the exact Courant algebroid with $E = TM \oplus T^*M$, we can view the Dirac structure as $L_M \s E$, the generalized tangent bundle as $F = TN \oplus TN^\circ$, and we have a canonical isomorphism $c^!E \cong TN \oplus T^*N$.
A compatible Courant trivialization in the sense of \cite[Definition 7.3]{pym-saf:20} (see also \cite[Definition 6.1]{gua:11}) then corresponds to a $B$-field transformation \cite{gua:11,sev-wei:01}, i.e.\ a 2-form $\gamma$ on $N$ acting by $(v, \alpha) \mto (v, \alpha + i_v\gamma)$, such that $c^*\phi + d\gamma = 0$ and $\{(v, \alpha) \in c^*L_M : v \in TN \text{ and } \iota_v\gamma = c^*\alpha\}$ is a smooth subbundle.
This is exactly the compatibility condition appearing in Definition \ref{3v9jbw09} after setting $L_N = \Gamma_\gamma$ and $\psi = -d\gamma$.
\end{remark}

\section{Intersection of 1-coisotropics}\label{o2caey5r}

Let $(\G_i, \omega_i, \phi_i)$, for $i = 1, 2, 3$, be quasi-symplectic groupoids together with 1-coisotropics
\[
\c_1 = (\c_{11}, \c_{12}) : (\C_1, L_1) \too \G_1 \times \G_2^-,
\quad
\c_2 = (\c_{22}, \c_{23}) : (\C_2, L_2) \too \G_2 \times \G_3^-,
\]
where $\G_i^-$ denotes the same groupoid $\G_i$ with the opposite quasi-symplectic structure.
In other words, we have a zigzag of 1-shifted coisotropic correspondences
\[
\begin{tikzcd}[column sep={4em,between origins},row sep={4em,between origins}]
& \C_1 \arrow[swap]{dl}{\c_{11}} \arrow{dr}{\c_{12}} & & \C_2 \arrow[swap]{dl}{\c_{22}} \arrow{dr}{\c_{23}} & \\
\G_1 & & \G_2 & & \G_3,
\end{tikzcd}
\]
We wish to define the composition of these correspondences with a fibre product
\begin{equation}\label{ar6bxy7s}
\begin{tikzcd}[column sep={4em,between origins},row sep={4em,between origins}]
& & \C_1 \times_{\G_2} \C_2 \arrow{dl} \arrow{dr} & & \\
& \C_1 \arrow[swap]{dl}{\c_{11}} \arrow{dr}{\c_{12}} & & \C_2 \arrow[swap]{dl}{\c_{22}} \arrow{dr}{\c_{23}} & \\
\G_1 & & \G_2 & & \G_3,
\end{tikzcd}
\end{equation}
such that the induced map to $\G_1 \times \G_3^-$ has a canonical 1-shifted coisotropic structure.

There are two types of fibre products one can consider.
First, the \defn{strong fibre product} $\C_1 \stimes_{\G_2} \C_2$ is the usual set-theoretic fibre product on both the set of objects and arrows.
We also have the \defn{homotopy fibre product} $\C_1 \htimes_{\G_2} \C_2$, which is more natural in the context of differentiable stacks, as it presents the fibre product of the corresponding quotient stacks. 
The goal of this section is to show that, under suitable cleanness conditions, both types of fibre products have a canonical Dirac structure making \eqref{ar6bxy7s} into a 1-shifted coisotropic correspondence.
In particular, if $\G_1 = \G_3 = \{*\}$, we obtain a $0$-shifted Poisson structure (Proposition \ref{q2gecljb}).

\subsection{Strong fibre products}

The \defn{strong fibre product} of Lie groupoid morphisms  $\c_1 : \C_1 \to \G$ and $\c_2 : \C_2 \to \G$ is the groupoid
\[
\C_1 \stimes_\G \C_2 \coloneqq (\C_1\arr \times_{\G\arr} \C_2\arr \tto \C_1\obj \times_{\G\obj} \C_2\obj),
\]
with the obvious structure maps.
If the maps $\c_i\arr$ on arrows intersect cleanly, then so do the maps $\c_i\obj$ on objects \cite[Lemma A.1.3]{bur-cab-hoy:16}.
In that case, the strong fibre product is a Lie groupoid \cite[Proposition A.1.4]{bur-cab-hoy:16}, and we say that $\C_1 \stimes_\G \C_2$ is \defn{clean}.
It satisfies the universal property of fibre products for the category of Lie groupoids \cite[Proposition A.1.4]{bur-cab-hoy:16}.

\begin{theorem}
\label{2jy36i7i}
Let $(\G_i, \omega_i, \phi_i)$, for $i = 1, 2, 3$, be quasi-symplectic groupoids together with 1-coisotropics
\[
\c_1 = (\c_{11}, \c_{12}) : (\C_1, L_1) \too \G_1 \times \G_2^-
\quad\text{and}\quad
\c_2 = (\c_{22}, \c_{23}) : (\C_2, L_2) \too \G_2 \times \G_3^-,
\]
and consider the strong fibre product $\C \coloneqq \C_1 \stimes_{\G_2} \C_2$.
Suppose that the vector bundle homomorphisms
\begin{equation}\label{tlkiiyxl}
\begin{tikzcd}[column sep=4em]
A_{\C_1} \arrow{r}{\c_{12*}} & A_{\G_2} & A_{\C_2} \arrow[swap]{l}{\c_{22*}}
\end{tikzcd}
\end{equation}
are transverse (\S\ref{fj51c8fw}).

\begin{enumerate}[label={\textup{(\arabic*)}}]
\item \label{ukvnir14}
If $\C$ is clean and the sum of pullbacks of Dirac structures (\S\ref{1n8d3ppc})
\[
L \coloneqq p_1^*L_1 + p_2^*L_2
\]
is smooth, where $(p_1, p_2) : \C\obj \to \C\obj_1 \times \C\obj_2$ are the natural projections, then $L$ is a 1-shifted coisotropic structure on $\c : \C \to \G_1 \times \G_3^-$.

\item \label{zheuq0qk}
If $\C$ is clean and the vector bundle homomorphisms
\begin{equation}\label{cqrik68i}
\begin{tikzcd}[column sep=4em]
L_1 \arrow{r}{\c_{12*} p_T} & T\G\obj_2 & L_2 \arrow[swap]{l}{\c_{22*} p_T}
\end{tikzcd}
\end{equation}
intersect cleanly, then $L$ is smooth.
If \eqref{cqrik68i} is transverse and $L_i$ are strong, then $L$ is strong.

\item \label{u56ix79e}
Let $A_\C \coloneqq A_{\C_1} \times_{A_{\G_2}} A_{\C_2}$, viewed as a family of vector spaces over $\C\obj$, let $\aaa_\C : A_\C \to T\C\obj_1 \times_{T\G_2} T\C\obj_2 : (b_1, b_2) \mto (\aaa_{\C_1} b_1, \aaa_{\C_2} b_2)$, let $\c_* : A_\C \to A_{\G_1} \times A_{\G_3} : (b_1, b_2) \mto (\c_{11*}b_1, \c_{23*}b_2)$, and let $R$ be the sum of the images of \eqref{cqrik68i}.
Then there is a short exact sequence
\[
\begin{tikzcd}[column sep=1.5em]
\qquad\quad 
0 \arrow{r} & 
p_1^*(\ker \aaa_{\C_1} \cap \ker \c_{1*}) \oplus p_2^*(\ker \aaa_{\C_2} \cap \ker \c_{2*}) \arrow{r} & \ker \aaa_\C \cap \ker \c_* \arrow{r} & R^\circ
\arrow{r} & 0
\end{tikzcd}
\]
of families of vector spaces over $\C\obj$.
In particular, if $\ker \aaa_\C = 0$ then \eqref{cqrik68i} is transverse, and hence so is $\C\obj$.
\end{enumerate}
\end{theorem}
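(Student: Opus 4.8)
The excerpt ends on part \ref{u56ix79e}, so I take the short exact sequence to be the statement to prove. The plan is to produce it pointwise over $\C\obj$: since the three families need not have constant rank, ``short exact sequence of families of vector spaces'' just means fibrewise exactness, so I construct fibrewise linear maps and check injectivity, surjectivity, and middle exactness at each point. For $(b_1, b_2) \in A_\C$ write $a \coloneqq \c_{12*}b_1 = \c_{22*}b_2 \in A_{\G_2}$. The left map $\iota$ is the tautological inclusion: if $b_i \in \ker \aaa_{\C_i} \cap \ker \c_{i*}$ then $\c_{12*}b_1 = 0 = \c_{22*}b_2$, so $(b_1, b_2) \in A_\C$ and all defining conditions of $\ker \aaa_\C \cap \ker \c_*$ hold; this is visibly injective. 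The right map is
\[
\pi \colon \ker \aaa_\C \cap \ker \c_* \too R^\circ, \qquad \pi(b_1, b_2) \coloneqq \IM_{\omega_2}(a),
\]
which is unambiguous because $\c_{12*}b_1 = \c_{22*}b_2$.

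To see $\pi$ lands in $R^\circ$, I would pair a general $(v_1, \alpha_1) \in L_1$ against the element $(\aaa_{\C_1}b_1, \c_1^*\IM\c_{1*}b_1) \in L_1$ furnished by Lemma~\ref{18e8u16h} and use isotropy of $L_1$; expanding the IM map of the product target $\G_1 \times \G_2^-$ as $(\IM_{\omega_1}, -\IM_{\omega_2})$ (the sign recording the opposite structure), this reads $\ip{\alpha_1, \aaa_{\C_1}b_1} + \ip{\IM_{\omega_1}\c_{11*}b_1, \c_{11*}v_1} - \ip{\IM_{\omega_2}(a), \c_{12*}v_1} = 0$. Imposing $\aaa_{\C_1}b_1 = 0$ and $\c_{11*}b_1 = 0$ leaves $\ip{\IM_{\omega_2}(a), \c_{12*}v_1} = 0$, and the symmetric computation on $L_2$ gives $\ip{\IM_{\omega_2}(a), \c_{22*}v_2} = 0$; hence $\IM_{\omega_2}(a)$ kills $R = \c_{12*}p_T(L_1) + \c_{22*}p_T(L_2)$. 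Middle exactness is then short: $\im\iota \s \ker\pi$ since there $a = 0$, and conversely $\pi(b_1, b_2) = 0$ together with $\aaa_{\G_2}(a) = \c_{12*}\aaa_{\C_1}b_1 = 0$ forces $a = 0$ by Lemma~\ref{7wn1m20c}\ref{rwp7208a}, i.e.\ $\c_{12*}b_1 = \c_{22*}b_2 = 0$, placing $(b_1, b_2)$ in $\im\iota$.

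Surjectivity of $\pi$ is where the real work lies and where all three hypotheses combine; I expect this to be the main obstacle. Fix $\xi \in R^\circ$. First, since $\xi$ annihilates $\c_{i2*}p_T(L_i)$, the covector $\c_{i2}^*\xi$ annihilates $p_T(L_i)$, so $(0, \c_{i2}^*\xi) \in L_i^\perp = L_i$ by the Lagrangian property. Next I would show $\xi \in (\im\aaa_{\G_2})^\circ$: transversality \eqref{tlkiiyxl} gives $A_{\G_2} = \c_{12*}A_{\C_1} + \c_{22*}A_{\C_2}$, so $\im\aaa_{\G_2} = \c_{12*}(\im\aaa_{\C_1}) + \c_{22*}(\im\aaa_{\C_2})$, and Lemma~\ref{18e8u16h} gives $\im\aaa_{\C_i} \s p_T(L_i)$, whence $\im\aaa_{\G_2} \s R$ and $\xi \in R^\circ \s (\im\aaa_{\G_2})^\circ = \ker\aaa_{\G_2}^*$. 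By Lemma~\ref{7wn1m20c}\ref{r6sdh90a}, $(0, \xi) = (\aaa_{\G_2}a, \IM_{\omega_2}a)$ for some $a$, i.e.\ there is $a \in \ker\aaa_{\G_2}$ with $\IM_{\omega_2}(a) = \xi$. Finally I lift $a$ through each coisotropic: the pair $((0, -\c_{12}^*\xi), (0, a))$ lies in $L_1 \times_{\c_1} A_{\G_1 \times \G_2^-}$ (the constraints hold because $\aaa_{\G_2}a = 0$ and $\c_1^*\IM(0, a) = -\c_{12}^*\xi$), so surjectivity of the non-degeneracy map of $\c_1$ (Definition~\ref{gm52z93m}\ref{3bw7gcnj}) yields $b_1 \in \ker\aaa_{\C_1} \cap \ker\c_{11*}$ with $\c_{12*}b_1 = a$; symmetrically $((0, \c_{22}^*\xi), (a, 0))$ yields $b_2 \in \ker\aaa_{\C_2} \cap \ker\c_{23*}$ with $\c_{22*}b_2 = a$. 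Then $(b_1, b_2) \in \ker\aaa_\C \cap \ker\c_*$ and $\pi(b_1, b_2) = \IM_{\omega_2}(a) = \xi$. The delicate point is to arrange the lift with vanishing tangent components $v_i = 0$, which is exactly what lets the surjectivity of the coisotropic maps simultaneously realize the prescribed value $\xi = \IM_{\omega_2}(a)$ and annihilate the unwanted $\aaa_{\C_i}$, $\c_{11*}$, and $\c_{23*}$ directions; this in turn hinges on having first placed $(0, \c_{i2}^*\xi)$ inside $L_i$.

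For the ``in particular'', if $\ker\aaa_\C = 0$ then the middle term of the sequence vanishes, so exactness forces $R^\circ = 0$, i.e.\ $R = T\G\obj_2$, which is precisely transversality of \eqref{cqrik68i}. Since $p_T(L_i) \s T\C\obj_i$, it follows that $\c_{12*}(T\C\obj_1) + \c_{22*}(T\C\obj_2) = T\G\obj_2$, i.e.\ the object maps $\c_{12}, \c_{22}$ are transverse, so the fibre product $\C\obj$ is a transverse intersection.
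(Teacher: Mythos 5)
Your argument for part \ref{u56ix79e} is correct and follows essentially the same route as the paper's proof (Lemma \ref{oe2uqo50}): the same map $(b_1,b_2)\mapsto \IM_{\omega_2}\c_{12*}b_1$, the same use of Lemma \ref{7wn1m20c}\ref{rwp7208a} for middle exactness, and the same lifting of $\xi\in R^\circ$ through the non-degeneracy of $L_1$ and $L_2$ via the elements $((0,-\c_{12}^*\xi),(0,a))$ and $((0,\c_{22}^*\xi),(a,0))$. The one genuinely different touch is how you establish $\aaa_{\G_2}^*\xi=0$: you observe directly that transversality of \eqref{tlkiiyxl} plus Lemma \ref{18e8u16h} gives $\im\aaa_{\G_2}\s R$, whereas the paper invokes a pairing identity (\eqref{eimn63fp}) derived mid-proof of part \ref{ukvnir14}; your version is a little more self-contained, and the sign bookkeeping for the opposite structures on $\G_2^-$ and $\G_3^-$ checks out.

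The gap is that you have proved only one of the three assertions. The statement is the full theorem, and its main content is part \ref{ukvnir14}: that $L=p_1^*L_1+p_2^*L_2$ satisfies the compatibility condition $\ttt^*L=\sss^*L+\Gamma_{\c^*(\omega_1,-\omega_3)}$ and, much less trivially, the non-degeneracy condition, i.e.\ surjectivity of $A_\C\to L\times_\c(A_{\G_1}\times A_{\G_3})$. That verification is where the real work of the theorem lives: one must identify $T^*\C\obj$ with the quotient of $T^*\C\obj_1\times T^*\C\obj_2$ by the image of $\xi\mapsto(-\c_{12}^*\xi,\c_{22}^*\xi)$, extract from a given element of $L\times_\c(A_{\G_1}\times A_{\G_3})$ a covector $\xi\in T^*\G\obj_2$ satisfying $-\aaa_{\G_2}^*\xi=\IM_{\omega_2}^*\c_{12*}v_1=\IM_{\omega_2}^*\c_{22*}v_2$ (this is where transversality of \eqref{tlkiiyxl} enters for part \ref{ukvnir14}), promote it to an element $a_2\in A_{\G_2}$ via Lemma \ref{7wn1m20c}\ref{r6sdh90a}, and then apply non-degeneracy of $L_1$ and $L_2$ separately and match the two lifts over $A_{\G_2}$. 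Part \ref{zheuq0qk} likewise requires an argument (the paper's Lemma \ref{ej8aeupa}): smoothness of $L$ is deduced from an exact sequence $0\to U^\circ\to R^\circ\to p_1^!L_1\oplus_{T\G\obj_2}p_2^!L_2\to L\to 0$ of families of vector spaces, using cleanness of $\C\obj$ to control $U=\im\c_{12*}+\im\c_{22*}$ and constancy of $\rk R$ to conclude that the middle term is a smooth subbundle; none of this appears in your proposal. So while nothing you wrote is wrong, the proposal as it stands establishes only the exact sequence and its corollary, not the theorem.
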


\begin{remark}\
\begin{enumerate}[label={(\roman*)}]
\item
In most applications, such as in the context of Hamiltonian reduction (\S\ref{616chpl6}), the condition that \eqref{tlkiiyxl} is transverse is automatic, even if the fibre product $\C_1 \stimes_{\G_2} \C_2$ is only a clean intersection.
\item
Part \ref{u56ix79e} can be viewed as a generalization of the familiar fact in symplectic geometry that if a Hamiltonian action is locally free, then $0$ is a regular value of the moment map.
\end{enumerate}
\end{remark}

In particular, taking $\G_1$ and $\G_3$ to be trivial, Theorem \ref{2jy36i7i} together with Proposition \ref{q2gecljb} imply that the intersection of two 1-coisotropics is 0-shifted Poisson:

\begin{corollary}
Let 
\begin{equation}
\begin{tikzcd}[column sep=3em]
(\C_1, L_1) \arrow{r}{\c_1} & (\G, \omega, \phi) & (\C_2, L_2) \arrow[swap]{l}{\c_2}
\end{tikzcd}
\end{equation}
be two 1-coisotropics over a common quasi-symplectic groupoid.
Suppose that the strong fibre product $\C \coloneqq \C_1 \stimes_{\G} \C_2$ is clean and that the vector bundle homomorphisms $A_{\C_1} \to A_{\G} \leftarrow A_{\C_2}$ are transverse.
\begin{enumerate}[label={\textup{(\arabic*)}}]
\item
If $L \coloneqq p_2^*L_2 - p_1^*L_1$ is smooth, where $(p_1, p_2) : \C\obj \to \C\obj_1 \times \C\obj_2$ are the natural projections, then it is a 0-shifted Poisson structure on $\C$.

\item
If the vector bundle homomorphisms $L_1 \overset{\c_{1*}p_T}{\too} T\G\obj \overset{\c_{2*} p_T}{\longleftarrow} L_2$ intersect cleanly, then $L$ is smooth.
\end{enumerate}
\end{corollary}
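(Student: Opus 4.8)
The plan is to deduce this directly from Theorem \ref{2jy36i7i} by specializing $\G_1 = \G_3 = \{*\}$ and $\G_2 = \G$, and then to reinterpret the conclusion via Proposition \ref{q2gecljb}. The only genuine content beyond this specialization is the orientation bookkeeping, which I address first.

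Theorem \ref{2jy36i7i} takes the first leg to land in $\G_2^-$, whereas here both $\c_1$ and $\c_2$ map into $\G = \G_2$. The bridge is the observation that flipping the quasi-symplectic structure $(\omega, \phi) \mapsto (-\omega, -\phi)$ replaces a Dirac structure $L_1$ by its sign-reversal $-L_1 := \{(v, -\alpha) : (v, \alpha) \in L_1\}$, and that $(-L_1, -\c_1^*\phi)$ is a $1$-shifted coisotropic structure on $\c_1 : \C_1 \to \G^-$ exactly when $(L_1, \c_1^*\phi)$ is one on $\c_1 : \C_1 \to \G$. I would verify this by noting that the involution $(v, \alpha) \mapsto (v, -\alpha)$ preserves the Lagrangian condition, intertwines the $\phi$- and $(-\phi)$-twisted Dorfman brackets, and, since $\IM_{-\omega} = -\IM_\omega$, carries the compatibility identity $\ttt^*L_1 = \sss^*L_1 + \Gamma_{\c_1^*\omega}$ to $\ttt^*(-L_1) = \sss^*(-L_1) + \Gamma_{\c_1^*(-\omega)}$ and identifies the non-degeneracy map \eqref{0v1fz7h6} for $(-L_1, \G^-)$ with that for $(L_1, \G)$. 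This also accounts for the minus sign in $L = p_2^*L_2 - p_1^*L_1$, since $p_1^*(-L_1) = -p_1^*L_1$.

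Next I would match the hypotheses. With $\c_{12} = \c_1$ and $\c_{22} = \c_2$, the transversality of $A_{\C_1} \overset{\c_{1*}}{\too} A_\G \overset{\c_{2*}}{\longleftarrow} A_{\C_2}$ is precisely the transversality of \eqref{tlkiiyxl}, and the cleanness of $\C = \C_1 \stimes_\G \C_2$ is assumed directly. For part (2), the clean intersection of $L_1 \overset{\c_{1*}p_T}{\too} T\G\obj \overset{\c_{2*}p_T}{\longleftarrow} L_2$ is insensitive to the sign-reversal on the first factor: $p_T$ forgets the covector, so $\im(\c_{1*}p_T|_{-L_1}) = \im(\c_{1*}p_T|_{L_1})$, and the condition coincides with the clean-intersection hypothesis \eqref{cqrik68i}.

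With these identifications, I would apply Theorem \ref{2jy36i7i}: part \ref{zheuq0qk} gives smoothness of $L = p_2^*L_2 - p_1^*L_1$ under the hypothesis of (2), and part \ref{ukvnir14} shows that $L$ is a $1$-shifted coisotropic structure on the induced morphism $\C \to \G_1 \times \G_3^- = \{*\}$. Finally, Proposition \ref{q2gecljb} identifies a $1$-shifted coisotropic structure on $\C \to \{*\}$ with a $0$-shifted Poisson structure on $\C$, yielding (1); the background $3$-form is automatically trivial here, as the pullbacks of $\pm\phi$ along the common map $\C\obj \to \G\obj$ cancel. The only non-formal point is the sign bookkeeping of the second paragraph; everything else is a direct specialization, so I anticipate no real obstacle.
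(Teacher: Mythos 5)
Your proof is correct and matches the paper's approach exactly: the paper derives this corollary by specializing Theorem \ref{2jy36i7i} to $\G_1 = \G_3 = \{*\}$ and invoking Proposition \ref{q2gecljb}, leaving the details implicit. Your careful sign bookkeeping for the passage from a coisotropic over $\G$ to one over $\G^-$ (explaining the minus sign in $L = p_2^*L_2 - p_1^*L_1$) is accurate and fills in precisely what the paper omits.
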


The rest of this subsection is devoted to the proof of Theorem \ref{2jy36i7i}, which is divided into several lemmas.
We denote the natural projections by
\begin{align*}
(p_1, p_2) &: \C\obj
\too \C\obj_1 \times \C\obj_2 \\
(q_1, q_2) &: \C\arr
\too \C\arr_1 \times \C\arr_2.
\end{align*}
It is useful to keep in mind the following commutative diagrams.
\[
\begin{tikzcd}[column sep={3.5em,between origins},row sep={3.5em,between origins}]
& & \C\obj
\arrow[swap]{dl}{p_1} \arrow{dr}{p_2} & & \\
&\C\obj_1 \arrow[swap]{dl}{\c_{11}} \arrow{dr}{\c_{12}}
& & \C\obj_2 \arrow[swap]{dl}{\c_{22}} \arrow{dr}{\c_{23}} & \\
\G\obj_1 & & \G\obj_2 & & \G\obj_3,
\end{tikzcd}
\quad
\begin{tikzcd}[column sep={3.5em,between origins},row sep={3.5em,between origins}]
& & \C\arr
\arrow[swap]{dl}{q_1}  \arrow{dr}{q_2} & & \\
&\C\arr_1 \arrow[swap]{dl}{\c_{11}} \arrow{dr}{\c_{12}}
& & \C\arr_2 \arrow[swap]{dl}{\c_{22}} \arrow{dr}{\c_{23}} & \\
\G\arr_1 & & \G\arr_2 & & \G\arr_3.
\end{tikzcd}
\]
Suppose that \eqref{tlkiiyxl} is transverse and let $L \coloneqq p_1^*L_1 + p_2^*L_2$.

\begin{lemma}
Part \ref{ukvnir14} of Theorem \ref{2jy36i7i} holds.
\end{lemma}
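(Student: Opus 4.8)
The plan is to verify directly the two defining conditions of a $1$-shifted coisotropic structure (Definition \ref{gm52z93m}) for $L = p_1^*L_1 + p_2^*L_2$ on $\c : \C \to \G_1 \times \G_3^-$: the compatibility condition \ref{0k2j1uwv} and the non-degeneracy condition \ref{3bw7gcnj}. Throughout I would use that $\sss_\C, \ttt_\C$ are submersions, so that pullbacks along them of the smooth Dirac structure $L$ are again smooth Dirac structures; the identities $p_i \circ \sss_\C = \sss_{\C_i} \circ q_i$ and $p_i \circ \ttt_\C = \ttt_{\C_i} \circ q_i$; and the elementary facts that pullback of Dirac structures commutes with the sum construction of \S\ref{e5tv6dsz}, commutes with composition, and sends graphs to graphs via $f^*\Gamma_\theta = \Gamma_{f^*\theta}$.

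For compatibility, I would first compute the background $3$-form of $L$ as $p_1^*(\c_{11}^*\phi_1 - \c_{12}^*\phi_2) + p_2^*(\c_{22}^*\phi_2 - \c_{23}^*\phi_3)$, using that each $L_i$ is $1$-coisotropic; since $\c_{12}\circ p_1 = \c_{22}\circ p_2$ on $\C\obj$, the two $\phi_2$-terms cancel and leave $\c^*(\phi_1 \oplus (-\phi_3)) = \c^*\phi$. For the condition $\ttt_\C^* L = \sss_\C^* L + \Gamma_{\c^*\omega}$, I would pull back along $\sss_\C, \ttt_\C$, distribute over the sum, and insert the compatibility of each factor, $\ttt_{\C_1}^* L_1 = \sss_{\C_1}^* L_1 + \Gamma_{\c_{11}^*\omega_1 - \c_{12}^*\omega_2}$ and $\ttt_{\C_2}^* L_2 = \sss_{\C_2}^* L_2 + \Gamma_{\c_{22}^*\omega_2 - \c_{23}^*\omega_3}$. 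The crucial cancellation is again over $\G_2$: because $\c_{12}\circ q_1 = \c_{22}\circ q_2$ on $\C\arr$ and $\G_2$ enters $L_1$ with a minus sign (from $\G_2^-$) but $L_2$ with a plus sign, the two $\omega_2$-contributions cancel, leaving exactly $\Gamma_{\c^*\omega}$. This half is a direct, if bookkeeping-heavy, manipulation.

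For non-degeneracy I must show surjectivity of $\Phi = (\aaa_\C, \c^*\IM_\omega \c_*, \c_*) : A_\C \to L \times_\c A_\G$ with $A_\C = A_{\C_1}\times_{A_{\G_2}} A_{\C_2}$. The first observation is that the middle component of an element of the target is forced by its $A_\G$-component through $\alpha = \c^*\IM_\omega a$, so given $((v,\alpha),a)\in L\times_\c A_\G$ it suffices to produce $b = (b_1,b_2)\in A_\C$ with $\aaa_{\C_i} b_i = v_i$, $\c_{11*}b_1 = a_1$ and $\c_{23*}b_2 = a_3$ (the equation $\c_{12*}b_1 = \c_{22*}b_2$ being what places $b$ in $A_\C$). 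Writing $\alpha = p_1^*\beta_1 + p_2^*\beta_2$ with $(v_i,\beta_i)\in L_i$, expanding $\alpha = \c^*\IM_\omega a$, and using that cleanness of $\C\obj$ gives $T\C\obj = T\C_1\obj \times_{T\G_2\obj} T\C_2\obj$ (so that $(\ker\psi)^\circ = \im\psi^*$ for $\psi = \c_{12*} - \c_{22*}$), I can produce a single covector $\zeta \in T^*\G_2\obj$ with $\beta_1 = \c_{11}^*\IM_{\omega_1}a_1 + \c_{12}^*\zeta$ and $\beta_2 = -\c_{22}^*\zeta - \c_{23}^*\IM_{\omega_3}a_3$. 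The key claim is then that $(m,-\zeta)$, with $m := \c_{12*}v_1 = \c_{22*}v_2$, lies in the Dirac structure $L_{\G_2} = \im(\aaa_{\G_2},\IM_{\omega_2})$; granting this, Lemma \ref{7wn1m20c}\ref{rwp7208a} yields a unique $e_2\in A_{\G_2}$ with $(\aaa_{\G_2}e_2,\IM_{\omega_2}e_2) = (m,-\zeta)$, which upgrades $((v_1,\beta_1),(a_1,e_2))$ and $((v_2,\beta_2),(e_2,a_3))$ to genuine elements of $L_1\times_{\c_1}(A_{\G_1}\times A_{\G_2})$ and $L_2\times_{\c_2}(A_{\G_2}\times A_{\G_3})$. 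Finally, surjectivity of the non-degeneracy maps of $L_1$ and $L_2$ produces $b_1,b_2$ with $\c_{12*}b_1 = e_2 = \c_{22*}b_2$, so $b = (b_1,b_2)$ is the desired preimage.

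The main obstacle is precisely the claim $(m,-\zeta)\in L_{\G_2}$, and this is where the transversality hypothesis on \eqref{tlkiiyxl} is indispensable. By Lemma \ref{7wn1m20c}\ref{r6sdh90a} the claim is equivalent to $\IM_{\omega_2}^* m = \aaa_{\G_2}^*\zeta$, i.e. $(\IM_{\omega_2}a)(m) = \zeta(\aaa_{\G_2}a)$ for every $a\in A_{\G_2}$. Transversality of \eqref{tlkiiyxl} lets me write any such $a$ as $\c_{12*}b_1 + \c_{22*}b_2$, reducing this identity to the two cases $a = \c_{12*}b_1$ and $a = \c_{22*}b_2$; each follows by pairing $(v_i,\beta_i)\in L_i$ against the element of $L_i$ supplied by Lemma \ref{18e8u16h}, using the isotropy of $L_i$ together with the antisymmetry $\aaa^*\IM = -\IM^*\aaa$ of Lemma \ref{7wn1m20c}\ref{dmfylnua} to cancel the $\G_1$- (resp. $\G_3$-) terms. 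I expect the sign bookkeeping here --- keeping straight the minus signs coming from the opposite structures $\G_2^-$ and $\G_3^-$ --- to be the most error-prone part, but conceptually the only real input beyond linear algebra is the transversality, which is exactly what guarantees that the intermediate datum $e_2$ over $\G_2$ can be chosen consistently for both factors.
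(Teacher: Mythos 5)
Your proposal is correct and follows essentially the same route as the paper: the same cancellation over $\G_2$ for both compatibility identities, and for non-degeneracy the same device of extracting a covector $\zeta$ on $\G\obj_2$ from the identification of $T^*\C\obj$ with the relevant quotient, showing $(\c_{12*}v_1,-\zeta)$ lies in the Dirac structure of $\G_2$ via the transversality of \eqref{tlkiiyxl} together with Lemmas \ref{18e8u16h} and \ref{7wn1m20c}, and then invoking the non-degeneracy of $L_1$ and $L_2$ separately. The only cosmetic difference is a sign convention for $\zeta$ and the attribution of existence of $e_2$ to Lemma \ref{7wn1m20c}\ref{rwp7208a} rather than \ref{r6sdh90a}, neither of which affects the argument.
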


\begin{proof}


Since $L$ is smooth, it is a Dirac structure with background 3-form
\begin{align*}
&p_1^*\eta_1 + p_2^*\eta_2 = p_1^*(\c_{11}^*\phi_1 - \c_{12}^*\phi_2) + p_2^*(\c_{22}^*\phi_2 - \c_{23}^*\phi_3) = p_1^*\c_{11}^*\phi_1 - p_2^*\c_{23}^*\phi_3.
\end{align*}
This proves the first part of the compatibility condition in Definition \ref{gm52z93m}.
Now,
\begin{align*}
\ttt^*L &= \ttt^*p_1^*L_1 + \ttt^* p_2^*L_2\\
&= q_1^*\ttt^*L_1 + q_2^*\ttt^*L_2 \\
&= q_1^*(\sss^*L_1 + \Gamma_{\c_{11}^*\omega_1 - \c_{12}^*\omega_2}) + q_2^*(\sss^*L_2 + \Gamma_{\c_{22}^*\omega_2 - \c_{23}^*\omega_3}) \\
&= \sss^*L + \Gamma_{q_1^*\c_{11}^*\omega_1 - q_2^*\c_{23}^*\omega_3},
\end{align*}
which proves the second part of the compatibility condition.
To check the non-degeneracy condition, let $(((v_1, v_2), (\alpha_1, \alpha_2)), (a_1, a_3)) \in L \times_\c (A_{\G_1} \times A_{\G_3})$, i.e.\ $(v_1, v_2) \in T\C\obj$, $(\alpha_1, \alpha_2) \in T^*\C\obj$, and $(a_1, a_3) \in A_{\G_1} \times A_{\G_3}$ are such that $(v_i, \alpha_i) \in L_i$, $(\c_{11*}v_1, \c_{23*}v_2) = (\aaa_{\G_1}a_1, \aaa_{\G_3}a_3)$, and
\begin{equation}\label{5986lcaf}
(\alpha_1, \alpha_2) = p_1^*\c_{11}^* \IM_{\omega_1} a_1 - p_2^*\c_{23}^* \IM_{\omega_3} a_3.
\end{equation}
We need to show that there exists $(b_1, b_2) \in A_{\C}$ such that $\aaa_\C(b_1, b_2) = (v_1, v_2)$ and $(\c_{11*}b_1, \c_{23*}b_2) = (a_1, a_3)$.
We identify $T^*\C\obj$ with the quotient of $T^*\C\obj_1 \times T^*\C\obj_2$ by the image of the map $\xi \mto (-\c_{12}^*\xi, \c_{22}^*\xi)$ for $\xi \in T^*\G\obj_2$.
By \eqref{5986lcaf}, we have
\[
(\alpha_1 - \c_{11}^*\IM_{\omega_1}a_1, \alpha_2 + \c_{23}^*\IM_{\omega_3}a_3) = (-\c_{12}^*\xi, \c_{22}^*\xi),
\]
for some $\xi \in T^*\G\obj_2$.
We claim that
\begin{equation}\label{eimn63fp}
-\aaa_{\G_2}^*\xi = \IM_{\omega_2}^*\c_{12*}v_1 = \IM_{\omega_2}^*\c_{22*}v_2.
\end{equation}
For all $b_1 \in A_{\C_1}$ we have $(\aaa_{\C_1}b_1, \c_{11}^*\IM_{\omega_1}\c_{11*}b_1 - \c_{12}^*\IM_{\omega_2}\c_{12*}b_1) \in L_1$ by Lemma \ref{18e8u16h}, so
\[
0 = \ip{(v_1, \alpha_1), (\aaa_{\C_1}b_1, \c_{11}^*\IM_{\omega_1}\c_{11}b_1 - \c_{12}^*\IM_{\omega_2}\c_{12*}b_1)} = \ip{\alpha_1, \aaa_{\C_1}b_1} + \ip{\c_{11}^*\IM_{\omega_1}\c_{11*}b_1 - \c_{12}^*\IM_{\omega_2}\c_{12*}b_1, v_1}.
\]
It follows that
\begin{align*}
\ip{-\aaa_{\G_2}^*\xi, \c_{12*}b_1}
&= -\ip{\c_{12}^*\xi, \aaa_{\C_1}b_1} \\
&= \ip{\alpha_1 - \c_{11}^*\IM_{\omega_1}a_1, \aaa_{\C_1}b_1} \\
&= \ip{\alpha_1, \aaa_{\C_1}b_1} - \ip{\IM_{\omega_1}a_1, \c_{11*} \aaa_{\C_1} b_1} \\
&= \ip{\c_{12}^*\IM_{\omega_2}\c_{12*}b_1 - \c_{11}^*\IM_{\omega_1}\c_{11*}b_1, v_1} - \ip{\IM_{\omega_1}a_1, \aaa_{\G_1} \c_{11*} b_1} \\
&= \ip{\IM_{\omega_2}\c_{12*}b_1, \c_{12*}v_1} - \ip{\IM_{\omega_1}\c_{11*}b_1, \c_{11*}v_1} + \ip{\IM_{\omega_1}\c_{11*}b_1, \aaa_{\G_1} a_1} \\
&= \ip{\IM_{\omega_2}^*\c_{12*}v_1, \c_{12*}b_1}.
\end{align*}
Similarly,
\[
\ip{-\aaa_{\G_2}^*\xi, \c_{22*}b_2} = \ip{\IM_{\omega_2}^*\c_{22*}v_2, \c_{22*}b_2}.
\]
for all $b_2 \in A_{\C_2}$.
By transversality of \eqref{tlkiiyxl} and the fact that $\c_{12*}v_1 = \c_{22*}v_2$, this proves \eqref{eimn63fp}.
Lemma \ref{7wn1m20c}\ref{r6sdh90a} then implies that $(\c_{12*}v_1, \xi) = (\aaa_{\G_2}a_2, \IM_{\omega_2}a_2)$ for some $a_2 \in A_{\G_2}$.
It follows that $((v_1, \alpha_1), (a_1, a_2)) \in L_1 \times_{\c_1} (A_{\G_1} \times A_{\G_2})$ and $((v_2, \alpha_2), (a_2, a_3)) \in L_2 \times_{\c_2} (A_{\G_2} \times A_{\G_3})$.
By non-degeneracy of $L_1$ and $L_2$, there exist $b_1 \in A_{\C_1}$ and $b_2 \in A_{\C_2}$ such that $((v_1, \alpha_1), (a_1, a_2)) = ((\aaa_{\C_1}b_1, \c_{11}^*\IM_{\omega_1}\c_{11*}b_1 - \c_{12}^*\IM_{\omega_2}\c_{12*}b_1), (\c_{11*}b_1, \c_{12*}b_1))$ and $((v_2, \alpha_2), (a_2, a_3)) = ((\aaa_{\C_2}b_2, \c_{22}^*\IM_{\omega_2}\c_{22*}b_2 - \c_{23}^*\IM_{\omega_3}\c_{23*}b_2), (\c_{22*}b_2, \c_{23*}b_2))$.
In particular, $\c_{12*}b_1 = a_2 = \c_{22*}b_2$, so $(b_1, b_2) \in A_\C$.
\end{proof}

Let $R$ be the sum of the images of \eqref{cqrik68i}, viewed as a family of vector spaces over $\C\obj$.
The cleanness of \eqref{cqrik68i} is the statement that $R$ has constant rank.
Note that
\[
R^\circ = \{\xi \in T^*\G\obj_2 : (0, \c_{12}^*\xi) \in L_1 \text{ and } (0, \c_{22}^*\xi) \in L_2\}.
\]

\begin{lemma}\label{ej8aeupa}
If $\C$ is clean and $R$ has constant rank, then $L$ is smooth.
\end{lemma}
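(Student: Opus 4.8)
The plan is to present $L$ as the image of a single smooth vector bundle homomorphism of \emph{constant rank}, which makes smoothness automatic. I first record the two inputs. Cleanness of $\C$ identifies $T\C\obj$ with $\{(v_1,v_2)\in T\C\obj_1\oplus T\C\obj_2 : \c_{12*}v_1 = \c_{22*}v_2\}$, under which $p_{1*}$ and $p_{2*}$ are the two projections. Writing $\widetilde L_i \to \C\obj$ for the pullback \emph{of the vector bundle} $L_i \to \C\obj_i$ along $p_i$ (fibre $(L_i)_{x_i}$ at $(x_1,x_2)$) and $q \coloneqq \c_{12}\circ p_1 = \c_{22}\circ p_2$, the homomorphism
\[
\delta : \widetilde L_1 \oplus \widetilde L_2 \too q^*T\G\obj_2, \qquad ((v_1,\alpha_1),(v_2,\alpha_2)) \mtoo \c_{12*}v_1 - \c_{22*}v_2,
\]
has image exactly the family $R = \c_{12*}p_T(L_1) + \c_{22*}p_T(L_2)$. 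Hence the hypothesis that $R$ has constant rank says precisely that $\delta$ has constant rank, so $\mathcal W \coloneqq \ker\delta = L_1 \times_{T\G\obj_2} L_2$ (fibred over $\C\obj$) is a smooth subbundle.

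Next I would define
\[
\Theta : \mathcal W \too T\C\obj \oplus T^*\C\obj, \qquad ((v_1,\alpha_1),(v_2,\alpha_2)) \mtoo \big((v_1,v_2),\, p_1^*\alpha_1 + p_2^*\alpha_2\big).
\]
This lands in $T\C\obj\oplus T^*\C\obj$ exactly because $\c_{12*}v_1 = \c_{22*}v_2$ on $\mathcal W$, so cleanness puts $(v_1,v_2)\in T\C\obj$; and it is manifestly smooth. Unwinding the definitions of the Dirac pullback (\S\ref{e37p3coq}) and the sum (\S\ref{e5tv6dsz}), its image is exactly $L = p_1^*L_1 + p_2^*L_2$. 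Since the image of a constant-rank vector bundle homomorphism is a smooth subbundle, it remains only to check that $\Theta$ has constant rank, i.e.\ that $\dim L_{(x_1,x_2)} = \dim\C\obj$ at every point.

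This last point is pure pointwise linear algebra, independent of the hypotheses. A short count using $\dim\ker f + \dim\im f = \dim V$ shows that the pullback of a Lagrangian subspace of $T_nN\oplus T^*_nN$ along any linear map $f : V \to T_nN$ is again Lagrangian in $V\oplus V^*$; applying this to $p_{1*}$ and $p_{2*}$ shows each summand $p_i^*L_i$ is pointwise Lagrangian in $T\C\obj\oplus T^*\C\obj$. A second count, in which all the ranks of the projections $L_i \to T\C\obj$ cancel, shows that the tangent-identified sum of two Lagrangian subspaces is again Lagrangian. Hence $\dim L_{(x_1,x_2)} = \dim\C\obj$ for all points, $\Theta$ has constant rank, and $L = \im\Theta$ is smooth.

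The main obstacle is conceptual rather than computational. Because $p_1$ and $p_2$ are submersions only in the transverse case, the individual pullbacks $p_i^*L_i$ are in general neither smooth nor of locally constant rank, so one must resist the temptation to smooth them one at a time. The content of the argument is that the \emph{only} obstruction to smoothness is concentrated in the bundle $\mathcal W$, and is controlled exactly by the constant-rank hypothesis on $R$, whereas the rank of the sum $L$ itself is forced to be constant by the Lagrangian dimension count above. Cleanness of $\C$ then enters in precisely the two places where it is needed: to make $\Theta$ well-defined into $T\C\obj\oplus T^*\C\obj$, and to identify $\im\Theta$ with $L$.
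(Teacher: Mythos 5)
Your proof is correct and follows essentially the same route as the paper: both realize $L$ as the image of the fibre product $L_1\times_{T\G\obj_2}L_2$ (the paper's $p_1^!L_1\oplus_{T\G\obj_2}p_2^!L_2$), show this family is a smooth subbundle because it is the kernel of a bundle homomorphism whose rank is controlled by the constant-rank hypothesis on $R$, and then conclude smoothness of the image from the pointwise Lagrangian dimension count $\dim L_{(x_1,x_2)}=\dim\C\obj$. The only immaterial difference is bookkeeping: you get constant rank of the fibre product directly from rank--nullity applied to $\delta$, whereas the paper extracts it from a four-term exact sequence involving $R^\circ$ and $U^\circ=(\im\c_{12*}+\im\c_{22*})^\circ$; the two counts agree.
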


\begin{proof}
Since $\C\obj$ is a clean intersection, the set
\[
U \coloneqq \im(\c_{12*}) + \im(\c_{22*}) \s T\G_2\obj
\]
is a smooth vector bundle over $\C\obj$.
Similarly, $R$ is a smooth vector bundle over $\C\obj$.
We temporarily use the notation $p_i^!L_i$ for the vector bundle pullback of $L_i$ by $p_i$ to distinguish it from the pullback of Dirac structure.
We then have an exact sequence
\[
\begin{tikzcd}[row sep=0pt]
0 \arrow{r} & U^\circ \arrow[hook]{r} &  R^\circ \arrow{r} & p_1^!L_1 \oplus_{T\G\obj_2} p_2^!L_2 \arrow{r} & p_1^*L_1 + p_2^*L_2 \arrow{r} & 0 \\
& & \xi \arrow[maps to]{r} & ((0, \c_{12}^*\xi), (0, -\c_{22}^*\xi)) & & \\
& & & 
((v_1, \alpha_1), (v_2, \alpha_2)) \arrow[maps to]{r} &
((v_1, v_2), p_1^*\alpha_1 + p_2^*\alpha_2). &
\end{tikzcd}
\]
Since $U$, $R^\circ$, and $p_1^*L_1 + p_2^*L_2$ have constant rank, so does $p_1^!L_1 \oplus_{T\G\obj_2} p_2^!L_2$.
Since the latter can be expressed as the kernel of a vector bundle homomorphism, it is smooth, and hence so is its image.
\end{proof}

Without assuming that $\C$ is clean, we define its Lie algebroid as a family of vector spaces
\[
A_\C \coloneqq A_{\C_1} \times_{A_{\G_2}} A_{\C_2}
\]
over $\C\obj$, with anchor map $\aaa_\C(b_1, b_2) = (\aaa_{\C_1}b_1, \aaa_{\C_2}b_2)$.

\begin{lemma}\label{oe2uqo50}
Part \ref{u56ix79e} of Theorem \ref{2jy36i7i} holds, i.e.\ we have a short exact sequence
\[
\begin{tikzcd}[column sep=2em]
0 \arrow{r} & p_1^*(\ker \aaa_{\C_1} \cap \ker \c_{1*}) \oplus p_2^*(\ker \aaa_{\C_2} \cap \ker \c_{2*}) \arrow{r} & \ker \aaa_\C \cap \ker \c_* \arrow{r} & R^\circ \arrow{r} & 0
\end{tikzcd}
\]
of families of vector spaces over $\C\obj$.
\end{lemma}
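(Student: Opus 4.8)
The plan is to verify the sequence pointwise over $\C\obj$: it asserts exactness of a diagram of families of vector spaces, and the maps are fibrewise linear, so it suffices to do ordinary linear algebra at each $x \in \C\obj$, using throughout the standing transversality of \eqref{tlkiiyxl}. First I would pin down the two maps. The left-hand arrow is the inclusion $(b_1, b_2) \mapsto (b_1, b_2)$: if $b_1 \in \ker \aaa_{\C_1} \cap \ker \c_{1*}$ then $\c_{12*}b_1 = 0$, and likewise $\c_{22*}b_2 = 0$, so $(b_1, b_2) \in A_\C$, and it plainly lands in $\ker \aaa_\C \cap \ker \c_*$ and is injective. The right-hand arrow is
\[
\Psi(b_1, b_2) \coloneqq \IM_{\omega_2}\c_{12*}b_1 = \IM_{\omega_2}\c_{22*}b_2,
\]
well defined because $\c_{12*}b_1 = \c_{22*}b_2$ on $A_\C$. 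To see that $\Psi$ lands in $R^\circ$, I would apply Lemma \ref{18e8u16h} to $\c_1$ and $\c_2$: for $(b_1, b_2)$ in the domain, the elements $(\aaa_{\C_1}b_1, \c_{11}^*\IM_{\omega_1}\c_{11*}b_1 - \c_{12}^*\IM_{\omega_2}\c_{12*}b_1) \in L_1$ and $(\aaa_{\C_2}b_2, \c_{22}^*\IM_{\omega_2}\c_{22*}b_2 - \c_{23}^*\IM_{\omega_3}\c_{23*}b_2) \in L_2$ reduce, using $\aaa_{\C_i}b_i = 0$, $\c_{11*}b_1 = 0$, $\c_{23*}b_2 = 0$, to $(0, -\c_{12}^*\Psi(b_1, b_2)) \in L_1$ and $(0, \c_{22}^*\Psi(b_1, b_2)) \in L_2$, which is exactly the displayed description of $R^\circ$.

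Exactness in the middle is then quick. Containment of the image in $\ker \Psi$ is clear, since $b_1 \in \ker \c_{1*}$ forces $\c_{12*}b_1 = 0$. Conversely, if $(b_1, b_2) \in \ker \Psi$ then $\IM_{\omega_2}\c_{12*}b_1 = 0$ while $\aaa_{\G_2}\c_{12*}b_1 = \c_{12*}\aaa_{\C_1}b_1 = 0$, so Lemma \ref{7wn1m20c}\ref{rwp7208a} gives $\c_{12*}b_1 = 0$, hence $\c_{22*}b_2 = 0$ as well; this places $b_1 \in \ker \aaa_{\C_1} \cap \ker \c_{1*}$ and $b_2 \in \ker \aaa_{\C_2} \cap \ker \c_{2*}$, i.e.\ in the image of the left map.

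The crux, which I expect to be the only real obstacle, is surjectivity of $\Psi$. Given $\xi \in R^\circ$, I would first show $\xi \in \IM_{\omega_2}(\ker \aaa_{\G_2})$. Since $(0, \c_{12}^*\xi) \in L_1$ and $L_1$ is isotropic, pairing against $(\aaa_{\C_1}b_1, \dots) \in L_1$ yields $\ip{\xi, \aaa_{\G_2}\c_{12*}b_1} = \ip{\c_{12}^*\xi, \aaa_{\C_1}b_1} = 0$ for all $b_1$, and symmetrically $\ip{\xi, \aaa_{\G_2}\c_{22*}b_2} = 0$ for all $b_2$. Transversality of \eqref{tlkiiyxl}, i.e.\ $\im \c_{12*} + \im \c_{22*} = A_{\G_2}$, then gives $\xi \perp \im \aaa_{\G_2}$, so $(0, \xi)$ lies in the Dirac structure $\im(\aaa_{\G_2}, \IM_{\omega_2})$ of $\G_2$ (Lemma \ref{7wn1m20c}\ref{r6sdh90a}); that is, $\xi = \IM_{\omega_2}a_2$ for some $a_2 \in \ker \aaa_{\G_2}$. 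Finally I invoke the non-degeneracy \eqref{0v1fz7h6} of the two 1-coisotropics: the pairs $((0, -\c_{12}^*\xi), (0, a_2)) \in L_1 \times_{\c_1} (A_{\G_1} \times A_{\G_2})$ and $((0, \c_{22}^*\xi), (a_2, 0)) \in L_2 \times_{\c_2} (A_{\G_2} \times A_{\G_3})$ lift to $b_1, b_2$ with $\aaa_{\C_i}b_i = 0$, $\c_{11*}b_1 = 0$, $\c_{23*}b_2 = 0$ and $\c_{12*}b_1 = a_2 = \c_{22*}b_2$, so $(b_1, b_2) \in \ker \aaa_\C \cap \ker \c_*$ and $\Psi(b_1, b_2) = \IM_{\omega_2}a_2 = \xi$. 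The closing assertion is then immediate: if $\ker \aaa_\C = 0$ the middle and left terms vanish, forcing $R^\circ = 0$, hence $R = T\G\obj_2$, so \eqref{cqrik68i} and a fortiori the objects intersect transversely.
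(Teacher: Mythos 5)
Your proposal is correct and follows essentially the same route as the paper: the same inclusion and the same map $(b_1,b_2)\mapsto \IM_{\omega_2}\c_{12*}b_1$ into $R^\circ$ (well-definedness via Lemma \ref{18e8u16h}), middle exactness via Lemma \ref{7wn1m20c}\ref{rwp7208a}, and surjectivity by showing $\aaa_{\G_2}^*\xi=0$ from isotropy plus transversality of \eqref{tlkiiyxl}, invoking Lemma \ref{7wn1m20c}\ref{r6sdh90a} to produce $a_2$, and then lifting through the non-degeneracy of $L_1$ and $L_2$. The only cosmetic difference is that you read off $\c_{12*}b_1=a_2=\c_{22*}b_2$ directly from the lifts, whereas the paper re-derives the matching via Lemma \ref{7wn1m20c}\ref{rwp7208a}; both are fine.
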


\begin{proof}
By Lemma \ref{18e8u16h}, we have a well-defined map
\begin{equation}\label{qvio0coz}
\ker \aaa_\C \cap \ker \c_* \too R^\circ, \quad
(b_1, b_2) \mtoo \IM_{\omega_2} \c_{12*} b_1 = \IM_{\omega_2} \c_{22*} b_2.
\end{equation}
To show that this map is surjective, let $\xi \in R^\circ$.
By \eqref{eimn63fp} with $v_i = 0$ and $a_i = 0$, we have $\aaa_{\G_2}^*\xi = 0$ and hence $(0, \xi) = (\aaa_{\G_2}a_2, \IM_{\omega_2}a_2)$ for some $a_2 \in A_{\G_2}$ (Lemma \ref{7wn1m20c}\ref{r6sdh90a}).
In particular, $((0, -\c_{12}^*\xi), (0, a_2)) \in L_1 \times_{\c_1} (A_{\G_1} \times A_{\G_2})$.
By the non-degeneracy of $L_1$, there exists $b_1 \in A_{\C_1}$ such that $((0, -\c_{12}^*\xi), (0, a_2)) = ((\aaa_{\C_1} b_1, \c_{11}^*\IM_{\omega_1} \c_{11*}b_1 - \c_{12}^*\IM_{\omega_2} \c_{12*} b_1), (\c_{11*}b_1, \c_{12*} b_1))$.
It follows that $\xi = \IM_{\omega_2} \c_{12*} b_1$ for some $b_1 \in A_{\C_1}$ such that $\aaa_{\C_1} b_1 = 0$ and $\c_{11*}b_1 = 0$.
Similarly, $\xi = \IM_{\omega_2} \c_{22*} b_2$ for some $b_2 \in A_{\C_2}$ such that $\aaa_{\C_2} b_2 = 0$ and $\c_{23*} b_2 = 0$.
Since $\IM_{\omega_2} \c_{12*}b_1 = \IM_{\omega_2} \c_{22*}b_2$ and $\aaa_{\G_2} \c_{12*} b_1 = 0 = \aaa_{\G_2} \c_{22*}b_2$, we have $\c_{12*}b_1 = \c_{22*} b_2$ (Lemma \ref{7wn1m20c}\ref{rwp7208a}), so $(b_1, b_2) \in A_\C$ and $\aaa_{\C}(b_1, b_2) = 0$.
It follows that $\xi$ is the image of $(b_1, b_2) \in \ker \aaa_\C \cap \ker \c_*$ under \eqref{qvio0coz}, so the map is surjective.
The other map in the sequence is simply inclusion.
To show exactness at the middle term, let $(b_1, b_2) \in \ker \aaa_\C \cap \ker \c_*$ be such that $\IM_{\omega_2} \c_{12*}b_1 = \IM_{\omega_2} \c_{22*} b_2 = 0$.
Then $\aaa_{\G_2} \c_{12*}b_1 = 0 = \aaa_{\G_2} \c_{22*}b_2$, so $\c_{12*}b_1 = 0 = \c_{22*}b_2$ (Lemma \ref{7wn1m20c}\ref{rwp7208a}).
We then have $b_i \in \ker \aaa_{\C_i} \cap \ker \c_{i*}$ for $i = 1, 2$.
\end{proof}

\begin{lemma}
Part \ref{zheuq0qk} of Theorem \ref{2jy36i7i} holds.
\end{lemma}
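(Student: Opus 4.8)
The plan is to reduce both assertions to the two lemmas already established in this subsection, Lemma \ref{ej8aeupa} and Lemma \ref{oe2uqo50}, once the cleanness and transversality hypotheses on \eqref{cqrik68i} are correctly translated into statements about the family $R$ of vector spaces over $\C\obj$.

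For the smoothness of $L$, I would simply observe that, by the definition of clean intersection of vector bundle homomorphisms in \S\ref{fj51c8fw}, the cleanness of \eqref{cqrik68i} is precisely the assertion that $R = \im(\c_{12*}p_T) + \im(\c_{22*}p_T)$ has constant rank over $\C\obj$. This is exactly the hypothesis of Lemma \ref{ej8aeupa}, so combined with the standing assumption that $\C$ is clean, that lemma gives the smoothness of $L$ at once.

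For the strongness of $L$, I would first note that transversality of \eqref{cqrik68i} means $R = T\G\obj_2$, whence $R^\circ = 0$; in particular $R$ has constant rank, so \eqref{cqrik68i} is clean and the previous paragraph shows $L$ is smooth. Together with the standing transversality of \eqref{tlkiiyxl}, Part \ref{ukvnir14} then guarantees that $L$ is a genuine 1-shifted coisotropic structure on $\c$, so it remains only to check the injectivity of the non-degeneracy map \eqref{0v1fz7h6}, i.e.\ that $\ker \aaa_\C \cap \ker \c_* = 0$. Here I would invoke the short exact sequence of Lemma \ref{oe2uqo50}: since $L_1$ and $L_2$ are strong we have $\ker \aaa_{\C_i} \cap \ker \c_{i*} = 0$ for $i = 1, 2$, so the left-hand term vanishes, while $R^\circ = 0$ kills the right-hand term; exactness forces $\ker \aaa_\C \cap \ker \c_* = 0$. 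By Lemma \ref{t7rjqxe5} this injectivity is equivalent to the chain map \eqref{r7ynptgu} being a quasi-isomorphism, so $L$ is strong.

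I do not anticipate a genuine obstacle, since all the analytic content has already been isolated in Lemmas \ref{ej8aeupa} and \ref{oe2uqo50}. The only point requiring care is the bookkeeping of definitions---matching ``cleanness of \eqref{cqrik68i}'' with ``$R$ has constant rank,'' ``transversality'' with ``$R = T\G\obj_2$, hence $R^\circ = 0$,'' and ``strong'' with the vanishing $\ker \aaa_\C \cap \ker \c_* = 0$---so that the two earlier results apply verbatim.
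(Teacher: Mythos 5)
Your proposal is correct and follows essentially the same route as the paper: smoothness of $L$ via Lemma \ref{ej8aeupa} after identifying cleanness of \eqref{cqrik68i} with $R$ having constant rank, and strongness via the exact sequence of Lemma \ref{oe2uqo50} with $R^\circ = 0$ and $\ker \aaa_{\C_i} \cap \ker \c_{i*} = 0$. The paper's version is just more terse; your extra bookkeeping of the definitions is accurate.
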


\begin{proof}
The fact that $L$ is smooth follows from Lemma \ref{ej8aeupa}.
If $L_i$ are strong then $\ker \aaa_{\C_i} \cap \ker \c_{i*} = 0$, so if $R^\circ = 0$, then $L$ is also strong by Lemma \ref{oe2uqo50}.
\end{proof}

In particular, we obtain a differential-geometric version of the fact that $n$-shifted coisotropics have an $(n - 1)$-shifted Poisson structure \cite{cptvv,mel-saf:18a,mel-saf:18b}:

\begin{corollary}\label{t61dwbsa}
Let $\c : (\C, L) \to (\G, \omega, \phi)$ be a 1-coisotropic and let $L_\G$ be the Dirac structure on $\G\obj$ induced by $(\omega, \phi)$.
If $L - \c^*L_\G$ is smooth, then it is a 0-shifted Poisson structure on $\C$.
This holds, in particular, if $\C$ has discrete isotropy groups, or if $L \overset{\c_*p_T}{\too} T\G \overset{p_T}{\longleftarrow} L_\G$ intersect cleanly.
\end{corollary}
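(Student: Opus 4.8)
The plan is to deduce this corollary from the intersection of two 1-coisotropics (the corollary to Theorem \ref{2jy36i7i} stated just above, equivalently Theorem \ref{2jy36i7i}\ref{ukvnir14} combined with Proposition \ref{q2gecljb}) by choosing one of the two factors to be the identity morphism. Concretely, I would take $\C_1 = \G$ with $\c_1 = \Id$, which carries the canonical 1-coisotropic structure $L_\G$ by Proposition \ref{2kzh0or3}, and $\C_2 = \C$ with $\c_2 = \c$ and its given structure $L$, and then form the strong fibre product over $\G$.

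First I would check that the hypotheses are automatic. The strong fibre product $\G \stimes_\G \C$ is canonically isomorphic to $\C$, since fibering against an identity map changes nothing on objects or arrows; in particular it is clean and a genuine Lie groupoid. Likewise, the maps $A_\G \overset{\Id}{\to} A_\G \overset{\c_*}{\leftarrow} A_\C$ of \eqref{tlkiiyxl} are transverse because $\im \Id = A_\G$. Under the identification $\G \stimes_\G \C \cong \C$ the projections become $p_1 = \c$ and $p_2 = \Id$, so the resulting intersection Dirac structure is $p_2^*L - p_1^*L_\G = L - \c^*L_\G$. Theorem \ref{2jy36i7i}\ref{ukvnir14} then says this is a 1-coisotropic structure on $\C \to \{*\}$ (taking the outer groupoids trivial), and by Proposition \ref{q2gecljb} such a structure is precisely a 0-shifted Poisson structure on $\C$, whenever it is smooth.

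It then remains to supply the two sufficient conditions for smoothness. The clean-intersection case is immediate: Theorem \ref{2jy36i7i}\ref{zheuq0qk} gives smoothness of $L - \c^*L_\G$ once \eqref{cqrik68i} intersects cleanly, which here is exactly the condition that $L \overset{\c_* p_T}{\to} T\G\obj \overset{p_T}{\leftarrow} L_\G$ intersect cleanly. For the discrete-isotropy case I would invoke Theorem \ref{2jy36i7i}\ref{u56ix79e}: the combined bundle $A_\G \times_{A_\G} A_\C$ is identified with $A_\C$ via $b \mapsto (\c_* b, b)$, and under this identification the kernel of its anchor is $\{b \in A_\C : \aaa_\C b = 0\}$, i.e.\ the isotropy Lie algebras of $\C$, because $\aaa_\G \c_* b = \c_* \aaa_\C b$ vanishes whenever $\aaa_\C b$ does. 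Thus $\C$ having discrete isotropy groups is equivalent to the vanishing of this kernel, which by Theorem \ref{2jy36i7i}\ref{u56ix79e} forces \eqref{cqrik68i} to be transverse, hence clean, so $L - \c^*L_\G$ is smooth by Theorem \ref{2jy36i7i}\ref{zheuq0qk}.

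The only genuine subtlety — and the step I expect to require care — is the bookkeeping of orientations. To produce the minus sign in $L - \c^*L_\G$, the identity must be regarded as a 1-coisotropic $\G \to \G^-$ carrying the structure $-L_\G$ rather than $L_\G$. I would verify that $-L_\G$ satisfies the compatibility $\ttt^*(-L_\G) = \sss^*(-L_\G) + \Gamma_{\c^*(-\omega)}$ and that \eqref{0v1fz7h6} stays surjective; both follow formally from Proposition \ref{2kzh0or3} together with $\Gamma_{-\omega} = -\Gamma_\omega$ and $\IM_{-\omega} = -\IM_\omega$, so no new computation is needed. The remainder is purely a matter of tracking the canonical identifications established above.
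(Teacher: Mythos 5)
Your proposal is correct and follows essentially the same route as the paper: realize $\C$ as the strong fibre product of the given 1-coisotropic with the identity correspondence of $\G$ (suitably sign-reversed), apply Theorem \ref{2jy36i7i}\ref{ukvnir14} together with Proposition \ref{q2gecljb} to get the 0-shifted Poisson structure $L - \c^*L_\G$, and derive the two smoothness criteria from parts \ref{zheuq0qk} and \ref{u56ix79e}. The only cosmetic difference is that the paper places $\C$ on the left and $\Id : \G^- \to \G^-$ on the right of the zigzag, so the sign is absorbed into the identity factor exactly as in your final paragraph.
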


\begin{proof}
By Proposition \ref{2kzh0or3}, we have coisotropic correspondences
\[
\begin{tikzcd}[column sep={4em,between origins},row sep={2em,between origins}]
& \C \arrow[swap]{dl} \arrow{dr}{\c} & & \G^- \arrow[swap]{dl}{\Id} \arrow{dr} & \\
\{*\} & & \G^- & & \{*\}.
\end{tikzcd}
\]
The strong fibre product is $\C$, so Part \ref{ukvnir14} of Theorem \ref{2jy36i7i} shows that if $L - \c^*L_\G$ is smooth, then it is a 1-shifted coisotropic structure over a point, i.e.\ a 0-shifted Poisson structure (Proposition \ref{q2gecljb}).
The short exact sequence in Part \ref{u56ix79e} of Theorem \ref{2jy36i7i} reduces to
\[
0 \too \ker \aaa_\C \cap \ker \c_* \too \ker \aaa_\C \too R^\circ \too 0.
\]
In particular, if $\C$ has discrete isotropy groups, then $\ker \aaa_\C = 0$, so $R^\circ = 0$.
The result then follows by Part \ref{zheuq0qk} of Theorem \ref{2jy36i7i}.
\end{proof}

%

\begin{remark}
The bundle $L - \c^*L_\G$ in Corollary \ref{t61dwbsa} may fail to be smooth.
For example, let $\G$ be a symplectic groupoid and $\C\obj \s \G\obj$ a coisotropic submanifold as in Example \ref{s2rkhg7b}.
Then we have a 1-Lagrangian $\C \to \G$, where $\C$ integrates the conormal bundle and $L = T\C\obj$ is trivial.
It follows that $L - \c^*L_\G = \c^*\Gamma_{-\pi}$, where $\pi$ is the Poisson structure on $\G\obj$.
However, the pullback of the graph of Poisson structure to a coisotropic submanifold may not be smooth. 
For a specific example, take $\R^2$ with $\pi = x \frac{\d}{\d x} \wedge \frac{\d}{\d y}$ and $\{(x, y) : y = 0\}$.
Then $(\c^*L_\pi)_{(x, 0)}$ is equal to $\Span(\frac{\d}{\d x})$ if $x \ne 0$ and to $\Span(dx)$ if $x = 0$.
\end{remark}

\subsection{Homotopy fibre product}

The \defn{homotopy fibre product} (also known as the \emph{weak fibre product} \cite[Section 5.3]{moe-mrc:03} or \emph{weak homotopy pullback} \cite{hoy:13}), 
denoted $\C_1 \htimes_\G \C_2$, is the groupoid whose spaces of objects and arrows are respectively
\begin{align}
(\C_1 \htimes_\G \C_2)\obj &\coloneqq \C\obj_1 \times_{\G\obj} \G\arr \times_{\G\obj} \C\obj_2 = \{(x_1, g, x_2) : \c_1(x_1) = \sss(g) \text{ and } \c_2(x_2) = \ttt(g)\} \label{bmv5v7op} \\
(\C_1 \htimes_\G \C_2)\arr &\coloneqq \C\arr_1 \times_{\G\obj} \G\arr \times_{\G\obj} \C\arr_2 = \{(h_1, g, h_2) : \c_1(\sss(h_1)) = \sss(g) \text{ and } \c_2(\sss(h_2)) = \ttt(g)\},\label{ocdak67i}
\end{align}
with structure maps
\begin{align*}
\sss(h_1, g, h_2) &= (\sss(h_1), g, \sss(h_2)) \\
\ttt(h_1, g, h_2) &= (\ttt(h_1), \c_2(h_2) \cdot g \cdot \c_1(h_1)^{-1}, \ttt(h_2)) \\
\mmm((h_1, g, h_2), (\tilde{h}_1, \tilde{g}, \tilde{h}_2)) &= (h_1 \tilde{h}_1, \tilde{g}, h_2 \tilde{h}_2) \\
\uuu(x_1, g, x_2) &= (\uuu_{x_1}, g, \uuu_{x_2}).
\end{align*}
If the intersection \eqref{bmv5v7op} is clean, 
then \eqref{ocdak67i} is also clean and $\C_1 \htimes_\G \C_2$ is Lie groupoid \cite[Section 5.3]{moe-mrc:03}.
In that case, we say that the homotopy fibre product is \defn{clean}.
This happens, for example, if either $\c_1\obj$ or $\c_2\obj$ is a submersion.
The homotopy fibre product satisfies a universal property for 2-commutative squares \cite[Section 5.3]{moe-mrc:03} and hence presents the fibre product of the corresponding stacks when it exists.
In particular, the square
\[
\begin{tikzcd}[row sep={3em,between origins},column sep={3em,between origins}]
& \C_1 \htimes_\G \C_2 \arrow{dl} \arrow{dr} & \\
\C_1 \arrow{dr} \arrow[Rightarrow,shorten=16pt]{rr} & & \C_2 \arrow{dl} \\
& \G &
\end{tikzcd}
\]
is 2-commutative, where the natural transformation is the projection $\pr_\G : (\C_1 \htimes_\G \C_2)\obj \to \G\arr$.

\begin{remark}
In many situations, the weak and strong fibre products are Morita equivalent.
This happens, for example, if either $\C_1$ or $\C_2$ is an action groupoid of $\G$.
See also \cite[Proposition 3.2.5]{lac:22}.
\end{remark}

\begin{theorem}
\label{rt0qkos3}
Let $(\G_i, \omega_i, \phi_i)$, for $i = 1, 2, 3$, be quasi-symplectic groupoids together with 1-coisotropics
\[
\c_1 = (\c_{11}, \c_{12}) : (\C_1, L_1) \too \G_1 \times \G_2^-
\quad\text{and}\quad
\c_2 = (\c_{22}, \c_{23}) : (\C_2, L_2) \too \G_2 \times \G_3^-.
\]
Consider the homotopy fibre product $\C \coloneqq \C_1 \htimes_{\G_2} \C_2$.

\begin{enumerate}[label=\textup{(\arabic*)}]
\item \label{lpfuwmeg}
If $\C$ is clean and
\begin{equation}\label{uk28lq9e}
L \coloneqq p_1^*L_1 + p_2^*L_2 - p_0^*\Gamma_{\omega_2}
\end{equation}
is smooth, where $(p_1, p_0, p_2) : \C\obj \to \C\obj_1 \times \G\arr_2 \times \C\obj_2$ are the natural projections, then $L$ is a 1-shifted coisotropic structure on $\c : \C \to \G_1 \times \G_3^-$.

\item \label{k9kpqiad}
If $\C$ is clean and the vector bundle homomorphisms
\begin{equation}\label{rf57it5e}
\begin{tikzcd}[column sep=6em]
L_1 \times L_2 \arrow{r}{(\c_{12} p_T, \c_{22} p_T)} & T\G\obj_2 \times T\G\obj_2 & T\G\arr_2 \arrow[swap]{l}{(\sss, \ttt)}
\end{tikzcd}
\end{equation}
intersect cleanly, then $L$ is smooth.
If \eqref{rf57it5e} is transverse and $L_i$ are strong, then $L$ is strong.

\item \label{a09rl9m8}
Let $A_\C \coloneqq (p_1, p_2)^*(A_{\C_1} \times A_{\C_2})$, viewed as a family of vector spaces over $\C\obj$, let $\aaa_\C : A_\C \to T\C\obj_1 \times_{T\G\obj_2} T\G\arr_2 \times_{T\G\obj_2} T\C\obj_2 : (b_1, b_2) \mto (\aaa_{\C_1} b_1, (\c_{22*}b_2)^R - (\c_{12*} b_1)^L, \aaa_{\C_2} b_2)$, let $\c_* : A_\C \to A_{\G_1} \times A_{\G_3} : (b_1, b_2) \mto (\c_{11*}b_1, \c_{23*}b_2)$, and let $R$ be the sum of the images of \eqref{rf57it5e}.
Then there is a sequence
\[
\begin{tikzcd}[column sep=1.5em]
\qquad\quad 0 \arrow{r} & 
(\ker \aaa_{\C_1} \cap \ker \c_{1*}) \times (\ker \aaa_{\C_2} \cap \ker \c_{2*}) \arrow{r} & \ker \aaa_\C \cap \ker \c_* \arrow{r} & R^\circ
\arrow{r} & 0,
\end{tikzcd}
\]
of families of vector spaces over $\C\obj$ which is exact at the first two terms.
If $A_{\C_1} \to A_{\G_2} \leftarrow A_{\C_2}$ are transverse, then it is also exact at the third term $R^\circ$.
In particular, if in addition $\ker \aaa_\C \cap \ker \c_* = 0$, then \eqref{rf57it5e} is transverse and hence $\C$ is clean.

\end{enumerate}
\end{theorem}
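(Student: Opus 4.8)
The plan is to run the argument of Theorem \ref{2jy36i7i} again, replacing the strong fibre product by the homotopy one. The one genuinely new structural input is that the middle projection $p_0 : \C\obj \to \G\arr_2$ is exactly the natural transformation between $\c_{12}\circ p_1$ and $\c_{22}\circ p_2$ witnessing the $2$-commutativity of the defining square, so that $\omega_2$ interacts with it through Lemma \ref{wip6t5yh}. As in the statement I assume $\C$ clean and $L$ smooth, so that $A_\C = (p_1,p_2)^*(A_{\C_1}\times A_{\C_2})$ and all the pullbacks $p_i^*L_i$ are honest Dirac structures.

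For the compatibility condition of Part \ref{lpfuwmeg}, the background $3$-form of $L$ is the signed sum of those of $L_1$, $L_2$, and $\Gamma_{\omega_2}$; substituting $d\omega_2 = \sss^*\phi_2 - \ttt^*\phi_2$ and using $\sss\circ p_0 = \c_{12}\circ p_1$ and $\ttt\circ p_0 = \c_{22}\circ p_2$, the two copies of $\phi_2$ cancel and leave $p_1^*\c_{11}^*\phi_1 - p_2^*\c_{23}^*\phi_3 = \c^*(\phi_1 - \phi_3)$. For the multiplicative condition $\ttt^*L = \sss^*L + \Gamma_{\c^*\omega}$ I would pull the defining relations $\ttt^*L_i = \sss^*L_i + \Gamma_{(\cdots)}$ back through the arrow projections $q_1, q_2$ and use that the pullback of a graph is the graph of the pulled-back $2$-form. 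Everything then reduces to an identity of $2$-forms whose only nonformal term is $(p_0\circ\ttt_\C)^*\omega_2 - (p_0\circ\sss_\C)^*\omega_2$; applying Lemma \ref{wip6t5yh}\ref{u0vnf6px} to the natural transformation $p_0$ rewrites this as $(\c_{22}p_2)^*\omega_2 - (\c_{12}p_1)^*\omega_2$, which is precisely the discrepancy compensated by the correction term $-p_0^*\Gamma_{\omega_2}$ built into \eqref{uk28lq9e}.

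The non-degeneracy (surjectivity) of Part \ref{lpfuwmeg} is the longest step and the place where the homotopy case diverges most from Theorem \ref{2jy36i7i}. Starting from $(((v_1,w,v_2),\alpha),(a_1,a_3)) \in L\times_\c(A_{\G_1}\times A_{\G_3})$, I would write $\alpha = p_1^*\alpha_1 + p_2^*\alpha_2 - p_0^*(i_w\omega_2)$ with $(v_i,\alpha_i)\in L_i$. Running the splitting of \eqref{eimn63fp} separately on the $\C_1$- and $\C_2$-sides produces $a_2, a_2' \in A_{\G_2}$ and, via the non-degeneracy of $L_1$ and $L_2$, elements $b_1\in A_{\C_1}$, $b_2\in A_{\C_2}$ with $\c_{12*}b_1 = a_2$ and $\c_{22*}b_2 = a_2'$; since $A_\C$ carries no fibre-product constraint, $(b_1,b_2)$ automatically lies in $A_\C$. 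The crux is then to verify the middle anchor component $(\c_{22*}b_2)^R - (\c_{12*}b_1)^L = (a_2')^R - (a_2)^L$ equals $w$: both sides share their $\sss_*$- and $\ttt_*$-images, so their difference lies in $\ker\sss_*\cap\ker\ttt_*$, and using Lemma \ref{7wn1m20c}\ref{xul5itsk} to rewrite $i_{(a_2')^R}\omega_2 = \ttt^*\IM_{\omega_2}a_2'$ and $i_{(a_2)^L}\omega_2 = \sss^*\IM_{\omega_2}a_2$, the correction term $-p_0^*(i_w\omega_2)$ forces the two contractions into $\omega_2$ to agree, whence equality follows from the non-degeneracy $\ker\omega_2\cap\ker\sss_*\cap\ker\ttt_* = 0$ of the quasi-symplectic groupoid $\G_2$. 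It is exactly this mechanism — the $w$-component together with the correction term doing the work that transversality did in the strong case — that explains why Part \ref{lpfuwmeg} needs no transversality of \eqref{tlkiiyxl}. The smoothness and strongness claims of Part \ref{k9kpqiad} I would then obtain as in Lemma \ref{ej8aeupa}: the cleanness of \eqref{rf57it5e} makes the sum-of-images bundle $R$ of constant rank, and an exact sequence exhibits $L$ as the image of a constant-rank bundle.

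Finally, for Part \ref{a09rl9m8} I would define the map $\ker\aaa_\C\cap\ker\c_* \to R^\circ$ by $(b_1,b_2)\mapsto(\IM_{\omega_2}\c_{12*}b_1,\, -\IM_{\omega_2}\c_{22*}b_2)$; this lands in $R^\circ$ because the pairing against $\c_{12*}v_1$ and $\c_{22*}v_2$ vanishes by Lemma \ref{18e8u16h}, while $\sss^*(\IM_{\omega_2}\c_{12*}b_1) = \ttt^*(\IM_{\omega_2}\c_{22*}b_2)$ follows from Lemma \ref{7wn1m20c}\ref{xul5itsk} and the kernel relation $(\c_{12*}b_1)^L = (\c_{22*}b_2)^R$. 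Its kernel is $(\ker\aaa_{\C_1}\cap\ker\c_{1*})\times(\ker\aaa_{\C_2}\cap\ker\c_{2*})$, giving exactness at the first two terms formally. The one place requiring an extra hypothesis is surjectivity onto $R^\circ$: reconstructing $b_1$ and $b_2$ from a given element of $R^\circ$ and matching them into $\ker\aaa_\C$ demands that $A_{\C_1}\to A_{\G_2}\leftarrow A_{\C_2}$ be transverse, just as in \eqref{eimn63fp}. I expect this surjectivity, together with the verification of the middle anchor component in the non-degeneracy step, to be the main obstacle; by contrast, the compatibility condition should drop out cleanly from Lemma \ref{wip6t5yh}.
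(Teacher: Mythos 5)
Your proposal is correct and follows essentially the same route as the paper's proof: the same background-form cancellation, the same identification of $\C$ as the action groupoid $(\C_1\times\C_2)\ltimes\C\obj$ with $A_\C$ the unconstrained pullback, the same splitting into $\xi,\eta\in T^*\G\obj_2$ with the $v_0$-component and the $-p_0^*\Gamma_{\omega_2}$ correction replacing the transversality of \eqref{tlkiiyxl}, the same verification of the middle anchor component via $\ker\omega_2\cap\ker\sss_*\cap\ker\ttt_*=0$, and the same map $(b_1,b_2)\mapsto(\IM_{\omega_2}\c_{12*}b_1,-\IM_{\omega_2}\c_{22*}b_2)$ onto $R^\circ$ in Part \ref{a09rl9m8}. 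The only cosmetic difference is that you invoke Lemma \ref{wip6t5yh}\ref{u0vnf6px} for the identity $\ttt^*p_0^*\omega_2-\sss^*p_0^*\omega_2 = q_2^*\c_{22}^*\omega_2-q_1^*\c_{12}^*\omega_2$, whereas the paper derives it directly from multiplicativity and \eqref{1to6s83b}; these are equivalent.
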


As in the previous section, the special case where $\G_1$ and $\G_3$ are trivial implies that the homotopy fibre product of two 1-coisotropics is 0-shifted Poisson.

\begin{corollary}
Let 
\begin{equation}
\begin{tikzcd}[column sep=3em]
(\C_1, L_1) \arrow{r}{\c_1} & (\G, \omega, \phi) & (\C_2, L_2) \arrow[swap]{l}{\c_2}
\end{tikzcd}
\end{equation}
be two 1-coisotropics over a common quasi-symplectic groupoid and consider the homotopy fibre product $\C \coloneqq \C_1 \htimes_{\G_2} \C_2$.

\begin{enumerate}[label=\textup{(\arabic*)}]
\item
If $\C$ is clean and $L \coloneqq -p_1^*L_1 + p_2^*L_2 - p_0^*\Gamma_{\omega}$ is smooth, where $(p_1, p_0, p_2) : \C\obj \to \C\obj_1 \times \G\arr \times \C\obj_2$ are the natural projections, then $L$ is a 0-shifted Poisson structure on $\C$.

\item
If $\C$ is clean and the vector bundle homomorphisms
\[
\begin{tikzcd}[column sep=6em]
L_1 \times L_2 \arrow{r}{(\c_1 p_T, \c_2 p_T)} & T\G\obj \times T\G\obj_2 & T\G\arr \arrow[swap]{l}{(\sss, \ttt)}
\end{tikzcd}
\]
intersect cleanly, then $L$ is smooth.
\end{enumerate}
\end{corollary}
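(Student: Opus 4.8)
The plan is to deduce this Corollary directly from Theorem \ref{rt0qkos3} by specializing to the case $\G_1 = \G_3 = \{*\}$, the trivial groupoid, with $\G_2 = \G$. The only point requiring care is the orientation: in Theorem \ref{rt0qkos3} the first coisotropic $\c_1$ maps to $\G_2^-$ (the \emph{opposite} quasi-symplectic structure), whereas in the present statement both $\c_1$ and $\c_2$ map to the same $\G$. I would reconcile this with a short orientation-reversal observation, which also accounts for the minus sign on $p_1^*L_1$ in the formula for $L$.

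First I would establish the reversal lemma: a 1-coisotropic structure $L_1$ on $\c_1 \colon \C_1 \to (\G, \omega, \phi)$ is the same data as the structure $-L_1 \coloneqq \{(v, -\alpha) : (v, \alpha) \in L_1\}$ on $\c_1 \colon \C_1 \to (\G^-, -\omega, -\phi)$. There are three things to check. The Dirac structure $-L_1$ is again Lagrangian (the pairing changes sign) and its background $3$-form is $-\c_1^*\phi = \c_1^*(-\phi)$, since reversing the cotangent component of a $\phi$-twisted Dirac structure yields a $(-\phi)$-twisted one; this gives the first compatibility condition for $\G^-$. Applying $-(\cdot)$ to $\ttt^*L_1 = \sss^*L_1 + \Gamma_{\c_1^*\omega}$ and using $-(A+B) = (-A)+(-B)$ together with $-\Gamma_{\c_1^*\omega} = \Gamma_{\c_1^*(-\omega)}$ gives $\ttt^*(-L_1) = \sss^*(-L_1) + \Gamma_{\c_1^*(-\omega)}$, the second compatibility condition. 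Finally, since $\IM_{-\omega} = -\IM_\omega$, the isomorphism $(v, \alpha) \mapsto (v, -\alpha)$ identifies $L_1 \times_{\c_1} A_\G$ with $(-L_1) \times_{\c_1} A_{\G^-}$ and intertwines the non-degeneracy maps \eqref{0v1fz7h6}, so surjectivity (and injectivity, hence strength) is preserved.

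Putting this together, I would apply Theorem \ref{rt0qkos3} to the two coisotropics $(\C_1, -L_1) \to \{*\} \times \G^-$ and $(\C_2, L_2) \to \G \times \{*\}$. Because pullback of Dirac structures commutes with the sign, $p_1^*(-L_1) = -p_1^*L_1$, so the structure \eqref{uk28lq9e} produced by the theorem is exactly $L = -p_1^*L_1 + p_2^*L_2 - p_0^*\Gamma_\omega$. The cleanness hypothesis \eqref{rf57it5e} is unaffected by the sign, since the tangent projection $p_T$ (and hence the image of $\c_{12}p_T$) is the same on $L_1$ and $-L_1$; thus it reduces to the condition in the statement. Since the target of the induced $\c$ is $\{*\} \times \{*\} = \{*\}$, Proposition \ref{q2gecljb} says that a 1-shifted coisotropic structure on $\c$ is precisely a 0-shifted Poisson structure on $\C$. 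Part (1) then follows from Theorem \ref{rt0qkos3}\ref{lpfuwmeg} and Part (2) from Theorem \ref{rt0qkos3}\ref{k9kpqiad}.

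I do not expect a genuine obstacle here, as the argument is a formal specialization of a theorem already proved in the excerpt; the one place where an error could creep in is the sign bookkeeping of the orientation-reversal step, so that is where I would verify the computation most carefully.
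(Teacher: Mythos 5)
Your proposal is correct and follows the paper's own (largely implicit) argument: the corollary is presented as the specialization of Theorem \ref{rt0qkos3} to $\G_1 = \G_3 = \{*\}$ combined with Proposition \ref{q2gecljb}. The orientation-reversal lemma you add to account for the sign on $p_1^*L_1$ (replacing $L_1$ by $-L_1$ to land in $\G^-$, with $\IM_{-\omega} = -\IM_\omega$ and $p_T$ unaffected) is exactly the sign bookkeeping the paper leaves unstated, and your verification of it is correct.
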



The rest of this subsection is devoted to the proof of Theorem \ref{rt0qkos3}.
It is useful to keep in mind the following commutative diagram.
\[
\begin{tikzcd}[column sep={3em,between origins},row sep={3em,between origins}]
& & & \C\obj
\arrow[swap]{dll}{p_1} \arrow{d}{p_0} \arrow{drr}{p_2} & & & \\
&\C\obj_1 \arrow[swap]{dl}{\c_{11}} \arrow{dr}{\c_{12}}
& & \G_2\arr \arrow[swap]{dl}{\sss} \arrow{dr}{\ttt} 
& & \C\obj_2 \arrow[swap]{dl}{\c_{22}} \arrow{dr}{\c_{23}} \\
\G\obj_1 & & \G\obj_2 & & \G\obj_2 & & \G\obj_3
\end{tikzcd}
\]
Consider also the natural projections
\[
\begin{tikzcd}[column sep={3em,between origins},row sep={3em,between origins}]
& & & \C\arr
\arrow[swap]{dll}{q_1} \arrow{d}{q_0} \arrow{drr}{q_2} & & & \\
&\C\arr_1 
& & \G_2\arr
& & \C\arr_2
\end{tikzcd}
\]
We have
\[
p_1 \circ \sss = \sss \circ q_1, \quad
p_2 \circ \sss = \sss \circ q_2, \quad
p_0 \circ \sss = q_0, \quad
p_1 \circ \ttt = \ttt \circ q_1, \quad
p_2 \circ \ttt = \ttt \circ q_2,
\]
and
\begin{equation}\label{1to6s83b}
p_0 \circ \ttt = \mmm^3 \circ \tau \circ (\iii, \Id, \Id) \circ (\c_{12}, \Id, \c_{22}),
\end{equation}
where $\mmm^3 : \G_2^3 \to \G\arr_2$ is the multiplication of 3-composable arrows, and $\tau$ exchanges the first and third factors.

\begin{lemma}
Part \ref{lpfuwmeg} of Theorem \ref{rt0qkos3} holds.
\end{lemma}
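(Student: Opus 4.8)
The plan is to verify the two conditions of Definition \ref{gm52z93m} for the morphism $\c : \C \to \G_1 \times \G_3^-$ and the Dirac structure $L = p_1^*L_1 + p_2^*L_2 - p_0^*\Gamma_{\omega_2}$. Compatibility is a direct computation; the real content is non-degeneracy, where the extra arrow direction of the homotopy fibre product will let me dispense with any transversality hypothesis.

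For compatibility, I first read off the background $3$-form of $L$ as $p_1^*(\c_{11}^*\phi_1 - \c_{12}^*\phi_2) + p_2^*(\c_{22}^*\phi_2 - \c_{23}^*\phi_3) + p_0^*\,d\omega_2$ and substitute $d\omega_2 = \sss^*\phi_2 - \ttt^*\phi_2$ together with $\sss\circ p_0 = \c_{12}\circ p_1$ and $\ttt\circ p_0 = \c_{22}\circ p_2$; the $\phi_2$-terms cancel, leaving exactly $\c^*$ of the background $3$-form of $\G_1\times\G_3^-$. For the equality $\ttt^*L = \sss^*L + \Gamma_{\c^*\omega}$, the one nontrivial ingredient is $(p_0\circ\ttt)^*\omega_2$. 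Using \eqref{1to6s83b}, multiplicativity of $\omega_2$ splits $\mmm^3$ into its three factors and $\iii^*\omega_2 = -\omega_2$ (a consequence of multiplicativity and $\uuu^*\omega_2 = 0$) yields $(p_0\circ\ttt)^*\omega_2 = q_2^*\c_{22}^*\omega_2 + q_0^*\omega_2 - q_1^*\c_{12}^*\omega_2$. Feeding this and the compatibility conditions of $L_1$ and $L_2$ into $\ttt^*L = q_1^*\ttt^*L_1 + q_2^*\ttt^*L_2 - (p_0\circ\ttt)^*\Gamma_{\omega_2}$, all the $\omega_2$-graphs telescope and leave $\sss^*L + \Gamma_{q_1^*\c_{11}^*\omega_1 - q_2^*\c_{23}^*\omega_3}$, which is precisely $\sss^*L + \Gamma_{\c^*\omega}$.

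For non-degeneracy I must show $(\aaa_\C, \c^*\IM_\omega\c_*, \c_*)$ surjects onto $L\times_\c(A_{\G_1}\times A_{\G_3})$. Given $((V,\beta),(a_1,a_3))$ with $V = (v_1,\xi,v_2)$, I represent $\beta$ in the quotient description of $T^*\C\obj$ induced by the three projections: the representative $(\gamma_1, -i_\xi\omega_2, \gamma_2)$ furnished by $L$ (with $(v_1,\gamma_1)\in L_1$, $(v_2,\gamma_2)\in L_2$) and the representative $(\c_{11}^*\IM_{\omega_1}a_1, 0, -\c_{23}^*\IM_{\omega_3}a_3)$ furnished by $\beta = \c^*\IM_\omega(a_1,a_3)$ differ by a coboundary, producing $\mu\in T^*_{\sss(g)}\G_2\obj$ and $\nu\in T^*_{\ttt(g)}\G_2\obj$ with $\gamma_1 = \c_{11}^*\IM_{\omega_1}a_1 + \c_{12}^*\mu$, $\gamma_2 = \c_{22}^*\nu - \c_{23}^*\IM_{\omega_3}a_3$, and the decisive middle relation $i_\xi\omega_2 = \sss^*\mu + \ttt^*\nu$. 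The key step is to prove $(\c_{12*}v_1, -\mu)\in L_{\G_2}$ and $(\c_{22*}v_2, \nu)\in L_{\G_2}$. For the first I pair against an arbitrary generator $(\aaa_{\G_2}a, \IM_{\omega_2}a)$ of the Lagrangian $L_{\G_2}$: since $\sss_*\xi = \c_{12*}v_1$, Lemma \ref{7wn1m20c}\ref{xul5itsk} gives $(\IM_{\omega_2}a)(\c_{12*}v_1) = \omega_2(a^L_g,\xi)$, while contracting the middle relation with $a^L_g$ and using $\sss_*a^L_g = -\aaa_{\G_2}a$, $\ttt_*a^L_g = 0$ gives $\omega_2(a^L_g,\xi) = \mu(\aaa_{\G_2}a)$; hence the pairing $-\mu(\aaa_{\G_2}a) + (\IM_{\omega_2}a)(\c_{12*}v_1)$ vanishes and membership follows from $L_{\G_2}^\perp = L_{\G_2}$. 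The second is the mirror computation with $\ttt_*\xi = \c_{22*}v_2$ and $a^R_g$. Lemma \ref{7wn1m20c}\ref{r6sdh90a} then supplies $a_2\in(A_{\G_2})_{\sss(g)}$ and $a_2'\in(A_{\G_2})_{\ttt(g)}$ realizing these two pairs.

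Now $((v_1,\gamma_1),(a_1,a_2))\in L_1\times_{\c_1}(A_{\G_1}\times A_{\G_2})$ and $((v_2,\gamma_2),(a_2',a_3))\in L_2\times_{\c_2}(A_{\G_2}\times A_{\G_3})$, so the non-degeneracy of $L_1$ and $L_2$ yields $b_1\in A_{\C_1}$, $b_2\in A_{\C_2}$ with $\aaa_{\C_1}b_1 = v_1$, $\c_{11*}b_1 = a_1$, $\c_{12*}b_1 = a_2$ and $\aaa_{\C_2}b_2 = v_2$, $\c_{22*}b_2 = a_2'$, $\c_{23*}b_2 = a_3$. Taking $(b_1,b_2)\in A_\C$, the anchor formula of Part \ref{a09rl9m8} gives middle component $(a_2')^R_g - a_2^L_g$, and this equals $\xi$ because their difference lies in $\ker\sss_*\cap\ker\ttt_*\cap\ker\omega_2$ at $g$ (the $\omega_2$-part again via the middle relation and Lemma \ref{7wn1m20c}\ref{xul5itsk}), which vanishes by the quasi-symplectic non-degeneracy of $\G_2$ — valid at $g$, not merely at units, by Lemma \ref{7wn1m20c}\ref{rwp7208a}. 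Since $\c_*(b_1,b_2) = (a_1,a_3)$, the covector $\c^*\IM_\omega\c_*(b_1,b_2) = \beta$ matches automatically, so $(b_1,b_2)$ is the required preimage. The main obstacle is exactly the transversality-free proof of the two $L_{\G_2}$-memberships; once the middle relation $i_\xi\omega_2 = \sss^*\mu + \ttt^*\nu$ is in hand, the rest is careful bookkeeping with the translation identities and the two given non-degeneracies.
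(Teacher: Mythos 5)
Your proof is correct and follows essentially the same route as the paper's: the same telescoping computation for compatibility, the same decomposition of the covector into representatives producing $\mu,\nu$ with the middle relation $i_\xi\omega_2 = \sss^*\mu + \ttt^*\nu$, the same production of $a_2, a_2'$ via Lemma \ref{7wn1m20c}, and the same final step identifying the middle tangent component using $\ker\omega_2\cap\ker\sss_*\cap\ker\ttt_* = 0$. Your check of the two $L_{\G_2}$-memberships by pairing against generators of $\im(\aaa,\IM_{\omega_2})$ is just the unwound form of the paper's appeal to Lemma \ref{7wn1m20c}\ref{r6sdh90a}, and your explicit justification that the non-degeneracy of $\omega_2$ holds at non-unit points is a welcome (if standard) extra detail.
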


\begin{proof}
Since $L \coloneqq p_1^*L_{1} + p_2^*L_{2} - p_0^*\Gamma_{\omega_2}$ is smooth, it is a Dirac structure with background 3-form
\begin{align*}
p_1^*\eta_1 + p_2^*\eta_2 + p_0^*d\omega_2
&= p_1^*(\c_{11}^*\phi_1 - \c_{12}^*\phi_2) + p_2^*(\c_{22}^*\phi_2 - \c_{23}^*\phi_3) + p_0^*(\sss^*\phi_2 - \ttt^*\phi_2) \\
&= p_1^*\c_{11}^*\phi_1 - p_2^*\c_{23}^*\phi_3.
\end{align*}
This proves the first part of the compatibility condition in Definition \ref{gm52z93m}.
By the multiplicativity of $\omega_2$ and \eqref{1to6s83b} we have
\[
\ttt^*p_0^*\omega_2 = q_0^*\omega_2 - q_1^*\c_{12}^*\omega_2 + q_2^*\c_{22}^*\omega_2.
\]
Hence,
\begin{align*}
\ttt^*L &= \ttt^*p_1^*L_1 + \ttt^* p_2^*L_2 - \ttt^* p_0^*\Gamma_{\omega_2} \\
&= q_1^*\ttt^*L_1 + q_2^*\ttt^*L_2 - q_0^*\Gamma_{\omega_2} + q_1^*\c_{12}^*\Gamma_{\omega_2} - q_2^*\c_{22}^*\Gamma_{\omega_2} \\
&= q_1^*(\sss^*L_1 + \c_{11}^*\Gamma_{\omega_1}) + q_2^*(\sss^*L_2 - \c_{23}^*\Gamma_{\omega_3}) - q_0^*\Gamma_{\omega_2}\\
&= \sss^*(p_1^*L_1 + p_2^*L_2 - p_0^*\Gamma_{\omega_2}) + q_1^*\c_{11}^*\Gamma_{\omega_1} - q_2^*\c_{23}^*\Gamma_{\omega_3} \\
&= \sss^*L + \Gamma_{\c^*(\omega_1, -\omega_3)}.
\end{align*}
This proves the second compatibility condition.
It remains to check the non-degeneracy condition.
First, it is useful to observe that the homotopy fibre product is the action groupoid
\[
(\C_1 \times \C_2) \ltimes \C\obj
\]
where $(h_1, h_2) \cdot (x_1, g, x_2) = (\ttt(h_1), \c_{22}(h_2) g \c_{12}(h_2)^{-1}, \ttt(h_2))$ if $(\sss(h_1), \sss(h_2)) = (x_1, x_2)$.
Hence, its Lie algebroid is the vector bundle pullback
\[
A_\C = p^*(A_{\C_1} \times A_{\C_2}),
\]
where
\[
p \coloneqq (p_1, p_2) : \C\obj \too \C\obj_1 \times \C\obj_2,
\]
with anchor map
\begin{equation}\label{n0nqav1s}
\aaa_\C(b_1, b_2) = (\aaa_{\C_1}b_1, (\c_{22*}b_2)^R - (\c_{12*} b_1)^L, \aaa_{\C_2}b_2).
\end{equation}
We need to show that
\[
(\aaa_\C, \c^* \IM \c_*, \c_*) : A_\C \too L \times_\c (A_{\G_1} \times A_{\G_3})
\]
is surjective, where $\IM \coloneqq (\IM_{\omega_1}, -\IM_{\omega_3}) : A_{\G_1} \times A_{\G_3} \to T^*\G\obj_1 \times T^*\G\obj_3$.
To this end, consider
\[
(((v_1, v_0, v_2), (\alpha_1, \alpha_0, \alpha_2)), (a_1, a_3)) \in L \times_c (A_{\G_1} \times A_{\G_3}).
\]
In other words, $(v_1, v_0, v_2) \in T\C\obj$, $(\alpha_1, \alpha_0, \alpha_2) \in T^*\C\obj$, and $(a_1, a_3) \in A_{\G_1} \times A_{\G_3}$ are such that
\begin{equation}\label{sxhtxotg}
(v_1, \alpha_1) \in L_1, \quad
(v_2, \alpha_2) \in L_2, \quad
(v_0, \alpha_0) \in -\Gamma_{\omega_2},
\end{equation}
\[
(\c_{11*}v_1, \c_{23*}v_3) = (\aaa a_1, \aaa a_3)
\]
and
\begin{equation}\label{p9whn1xh}
(\alpha_1, \alpha_0, \alpha_2) = p_1^*\c_{11}^* \IM_{\omega_1} a_1 - p_2^*\c_{23}^* \IM_{\omega_3} a_3.
\end{equation}
We need to show that there exists $(b_1, b_2) \in A_\C$ such that $\aaa_\C(b_1, b_2) = (v_1, v_0, v_2)$ and $(\c_{11*}b_1, \c_{23*}b_2) = (a_1, a_3)$.
By \eqref{p9whn1xh}, we have
\begin{equation}\label{u0750hx1}
(\alpha_1 - \c_{11}^*\IM_{\omega_1}a_1, \alpha_0, \alpha_2 + \c_{23}^*\IM_{\omega_3}a_3) = (-\c_{12}^*\xi, \sss^*\xi + \ttt^*\eta, -\c_{22}^*\eta),
\end{equation}
for some $\xi, \eta \in T^*\G\obj_2$.
Let $(x_1, g, x_2) \in \C\obj$ be the point over which $(v_1, v_0, v_2)$ lives.
Note that for all $u \in A_{\G_2}|_{\sss(g)}$, we have
\begin{equation}\label{7kh1m4w1}
(\aaa_{\G_2}^*\xi)(u) = -\xi(\sss_* u^L_g) = -(\sss^*\xi + \ttt^*\eta)(u^L_g) = - \alpha_0(u^L_g) = \omega_2(v_0, u^L_g).
\end{equation}
By Lemma \ref{7wn1m20c}\ref{xul5itsk}, it follows that
\begin{equation}\label{pnv6d3wd}
(\IM_{\omega_2}^* \sss_* v_0)(u) = \omega_2(u, \uuu \sss_* v_0) = (\sss^* \IM_{\omega_2} u)(v_0) = (i_{u^L_g}\omega_2)(v_0) = -\omega_2(v_0, u^L_g) = -(\aaa_{\G_2}^*\xi)(u).
\end{equation}
Hence, $(\c_{12*} v_1, \xi) = (\aaa_{\G_2} a_2, \IM_{\omega_2} a_2)$ for some $a_2 \in A_{\G_2}$ (Lemma \ref{7wn1m20c}\ref{r6sdh90a}).
We then have an element
\[
((v_1, \alpha_1), (a_1, a_2)) \in L_1 \times_{\c_1} (A_{\G_1} \times A_{\G_2}).
\]
By the non-degeneracy of the 1-shifted coiostropic structure $L_1$, there exists $b_1 \in A_{\C_1}$ such that $v_1 = \aaa_{\C_1} b_1$, $a_1 = \c_{11*} b_1$, and $a_2 = \c_{12*} b_1$.
By the same argument, $(\c_{22*}v_2, \eta) = (\aaa_{\G_2} \tilde{a}_2, -\IM_{\omega_2} \tilde{a}_2)$ for some $\tilde{a}_2 \in A_{\G_2}$, and there exists $b_2 \in A_{\C_2}$ such that $v_2 = \aaa_{\C_2} b_2$, $\tilde{a}_2 = \c_{22*} b_2$, and $a_3 = \c_{23*} b_2$.
It then suffices to show that $\aaa_\C(b_1, b_2) = (v_1, v_0, v_2)$.
According to \eqref{n0nqav1s}, this amounts to the identity
\[
(\tilde{a}_2)^R_g - (a_2)^L_g = v_0.
\]
We have $\sss_*((\tilde{a}_2)^R_g - (a_2)^L_g - v_0) = \aaa a_2 - \sss_* v_0 = 0$ and $\ttt_*((\tilde{a}_2)^R_g - (a_2)^L_g - v_0) = \aaa\tilde{a}_2 - \ttt_* v_0 = 0$.
Hence, by the definition of quasi-symplectic structures, it suffices to show that $(\tilde{a}_2)^R_g - (a_2)^L_g - v_0 \in \ker \omega_2$.
By Lemma \ref{7wn1m20c}\ref{xul5itsk}, $i_{(a_2)^L_g}\omega_2 = \sss^*\IM_{\omega_2} a_2 = \sss^*\xi$ and $i_{(\tilde{a}_2)^R_g}\omega_2 = \ttt^* \IM_{\omega_2} \tilde{a}_2 = -\ttt^*\eta$.
It follows from \eqref{sxhtxotg} and \eqref{u0750hx1} that $i_{(\tilde{a}_2)^R_g - (a_2)^L_g - v_0}\omega_2 = -\ttt^*\eta -\sss^*\xi + \alpha_0 = 0$.
\end{proof}

Let $R$ is the sum of the images of \eqref{rf57it5e}.
Note that
\[
R^\circ = \{(\xi, \eta) \in T^*\G\obj_2 \times T^*\G\obj_2 : (0, \c_{12}^*\xi) \in L_1, (0, \c_{22}^*\eta) \in L_2, \sss^*\xi + \ttt^*\eta = 0\}.
\]

\begin{lemma}\label{jbecqi1g}
If $R$ has constant rank, then $L \coloneqq p_1^*L_{1} + p_2^*L_{2} - p_0^*\Gamma_{\omega_2}$ is smooth.
\end{lemma}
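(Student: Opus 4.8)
The plan is to reproduce the argument of Lemma~\ref{ej8aeupa}, exhibiting $L$ as the image of a smooth vector bundle under a bundle homomorphism of locally constant rank; the only new feature is the non-degenerate graph $\Gamma_{-\omega_2}$ on the arrow space $\G\arr_2$. Writing $p_1^!L_1$, $p_0^!\Gamma_{-\omega_2}$, $p_2^!L_2$ for the ordinary vector-bundle pullbacks to $\C\obj$, and using that cleanness of $\C$ gives the expected tangent spaces $T\C\obj = \{(v_1, w, v_2) : \c_{12*}v_1 = \sss_* w,\ \c_{22*}v_2 = \ttt_* w\}$, an element of $L = p_1^*L_1 + p_2^*L_2 - p_0^*\Gamma_{\omega_2}$ has the form $((v_1, w, v_2),\, p_1^*\alpha_1 + p_0^*\beta + p_2^*\alpha_2)$ with $(v_1,\alpha_1) \in L_1$, $(w,\beta) \in \Gamma_{-\omega_2}$, $(v_2,\alpha_2) \in L_2$, and $(v_1,w,v_2) \in T\C\obj$. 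Thus $L$ is the image of the tautological surjection $P \to L$, where
\[
P \coloneqq \bigl\{(\ell_1, \gamma, \ell_2) \in p_1^!L_1 \times p_0^!\Gamma_{-\omega_2} \times p_2^!L_2 : \c_{12*}p_T\ell_1 = \sss_* p_T\gamma \text{ and } \c_{22*}p_T\ell_2 = \ttt_* p_T\gamma\bigr\}.
\]

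First I would note that $P$ is the kernel of the vector bundle homomorphism
\[
p_1^!L_1 \times p_0^!\Gamma_{-\omega_2} \times p_2^!L_2 \too T\G\obj_2 \times T\G\obj_2, \quad (\ell_1, \gamma, \ell_2) \mapsto (\c_{12*}p_T\ell_1 - \sss_* p_T\gamma,\ \c_{22*}p_T\ell_2 - \ttt_* p_T\gamma),
\]
whose image is exactly $R$: since $\Gamma_{-\omega_2}$ is non-degenerate, $p_T\gamma$ ranges over all of $T\G\arr_2$, so the image is $\im(\c_{12*}p_T,\ \c_{22*}p_T) + \im(\sss_*,\ttt_*)$, the sum of the images of \eqref{rf57it5e}. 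Hence $\operatorname{rank} P = \operatorname{rank} L_1 + \dim\G\arr_2 + \operatorname{rank} L_2 - \operatorname{rank} R$, which is constant once $R$ has constant rank; being the kernel of a bundle map of constant rank, $P$ is a smooth vector bundle.

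Next I would compute $\ker(P \to L)$ to obtain a four-term exact sequence mirroring the one in Lemma~\ref{ej8aeupa}. An element $(\ell_1, \gamma, \ell_2)$ lies in this kernel exactly when $(v_1, w, v_2) = 0$ in $T\C\obj$ — which forces $v_1 = w = v_2 = 0$, and hence $\gamma = 0$ since $\Gamma_{-\omega_2}$ is a graph — and $p_1^*\alpha_1 + p_2^*\alpha_2 = 0$ in $T^*\C\obj$. Identifying $T^*\C\obj$ with the quotient of $T^*\C\obj_1 \times T^*\G\arr_2 \times T^*\C\obj_2$ by the image of $(\xi,\eta) \mapsto (\c_{12}^*\xi,\ -\sss^*\xi - \ttt^*\eta,\ \c_{22}^*\eta)$, this last condition becomes $\alpha_1 = \c_{12}^*\xi$, $\alpha_2 = \c_{22}^*\eta$, and $\sss^*\xi + \ttt^*\eta = 0$ for some $(\xi,\eta) \in T^*\G\obj_2 \times T^*\G\obj_2$; together with $(0,\alpha_1) \in L_1$ and $(0,\alpha_2) \in L_2$, this exhibits a surjection $R^\circ \to \ker(P \to L)$ with kernel $U^\circ$, where $U \subseteq T\G\obj_2 \times T\G\obj_2$ is the image of $(v_1, w, v_2) \mapsto (\c_{12*}v_1 - \sss_* w,\ \c_{22*}v_2 - \ttt_* w)$, a smooth constant-rank bundle by cleanness of $\C$. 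Splicing $0 \to \ker(P \to L) \to P \to L \to 0$ with $0 \to U^\circ \to R^\circ \to \ker(P \to L) \to 0$ yields
\[
0 \too U^\circ \too R^\circ \too P \too L \too 0.
\]
Since $U^\circ$, $R^\circ$, and $P$ all have constant rank, so does $L$; being the image of the smooth bundle $P$ under a map of locally constant rank, $L$ is a smooth subbundle, as claimed.

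I expect the main obstacle to be the bookkeeping in the kernel computation: correctly identifying $T^*\C\obj$ as the cokernel of the dual tangent map and tracking signs so that the induced map $R^\circ \to \ker(P \to L)$ is surjective with kernel precisely $U^\circ$. The non-degeneracy of $\Gamma_{-\omega_2}$ is the key structural input, as it simultaneously forces the image of the defining homomorphism to equal $R$ and eliminates the $T^*\G\arr_2$-contribution $\beta$ from the kernel, so that $\ker(P \to L)$ depends only on $L_1$, $L_2$, and the source/target data through $R^\circ$ and $U^\circ$.
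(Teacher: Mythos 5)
Your proof is correct and follows essentially the same route as the paper: both hinge on the four-term exact sequence $0 \to U^\circ \to R^\circ \to P \to L \to 0$ with $P$ the fibre-product bundle $p_1^!L_1 \oplus_{T\G\obj_2} T\G\arr_2 \oplus_{T\G\obj_2} p_2^!L_2$, deducing that $P$ is a smooth bundle and that its image $L$ has constant rank. The only cosmetic differences are that the paper first discards the gauge term $p_0^*\Gamma_{\omega_2}$ and obtains the constant rank of $P$ from the exact sequence (using that $L$ is pointwise Lagrangian), whereas you keep the graph term and get the rank of $P$ directly by rank--nullity from the surjection onto $R$.
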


\begin{proof}
It suffices to show that $p_1^*L_1 + p_2^*L_2$ is smooth.
Since the intersection $\C\obj \coloneqq \C\obj_1 \times_{\G\obj_2} \G\arr_2 \times_{\G\obj_2} \C\obj_2$ is clean, then
\[
U = \im(\c_{12*}) \times \im(\c_{22*}) + \im(\sss_*, \ttt_*).
\]
has constant rank over $\C\obj$.
Note that we have an exact sequence
\[
\begin{tikzcd}[row sep=0pt, column sep=20pt]
0 \arrow{r} & U^\circ \arrow[hook]{r} &  R^\circ \arrow{r} & p_1^!L_1 \oplus_{T\G\obj_2} T\G\arr_2 \oplus_{T\G\obj_2} p_2^!L_2 \arrow{r} & p_1^*L_1 + p_2^*L_2 \arrow{r} & 0 \\
& & (\xi, \eta) \arrow[mapsto]{r} & ((0, \c_{12}^*\xi), 0, (0, \c_{22}^*\eta)) \\
& & &((v_1, \alpha_1), v_0, (v_2, \alpha_2)) \arrow[mapsto]{r} & ((v_1, v_0, v_2), p_1^*\alpha_1 + p_2^*\alpha_2),
\end{tikzcd}
\]
where, as before, $p_i^!L_i$ is the vector bundle pullback.
Since $U^\circ$, $R^\circ$, and $p_1^*L_1 + p_2^*L_2$ have constant rank, so does $p_1^!L_1 \oplus_{T\G\obj_2} T\G\arr_2 \oplus_{T\G\obj_2} p_2^!L_2$.
Since the latter is the kernel of a vector bundle homomorphism, it is smooth, and hence so is its image.
\end{proof}

\begin{lemma}\label{zidpz55t}
Part \ref{a09rl9m8} of Theorem \ref{rt0qkos3} holds, i.e.\ there is a short exact sequence
\[
\begin{tikzcd}[column sep=1.5em]
\qquad\quad 0 \arrow{r} & 
(\ker \aaa_{\C_1} \cap \ker \c_{1*}) \times (\ker \aaa_{\C_2} \cap \ker \c_{2*}) \arrow{r} & \ker \aaa_\C \cap \ker \c_* \arrow{r} & R^\circ
\arrow{r} & 0,
\end{tikzcd}
\]
of families of vector spaces over $\C\obj$.
\end{lemma}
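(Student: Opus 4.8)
The plan is to adapt the proof of Lemma~\ref{oe2uqo50} to the homotopy fibre product, whose algebroid is the pullback $A_\C = p^*(A_{\C_1} \times A_{\C_2})$ with anchor \eqref{n0nqav1s}. The inclusion is the obvious one: if $b_1 \in \ker \aaa_{\C_1} \cap \ker \c_{1*}$ and $b_2 \in \ker \aaa_{\C_2} \cap \ker \c_{2*}$, then $\c_{12*} b_1 = 0 = \c_{22*} b_2$, so the middle component $(\c_{22*} b_2)^R - (\c_{12*} b_1)^L$ of $\aaa_\C(b_1, b_2)$ vanishes and $\c_*(b_1, b_2) = (\c_{11*} b_1, \c_{23*} b_2) = 0$; thus $(b_1, b_2) \in \ker \aaa_\C \cap \ker \c_*$. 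The map to $R^\circ$ will be $(b_1, b_2) \mapsto (-\IM_{\omega_2} \c_{12*} b_1,\; \IM_{\omega_2} \c_{22*} b_2)$, as in the strong case but now valued in a pair of covectors, one at $\sss(g)$ and one at $\ttt(g)$.

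First I would check that this map lands in $R^\circ$. Lemma~\ref{18e8u16h} applied to $\c_1$ and $\c_2$, together with $\aaa_{\C_i} b_i = 0$ and $\c_{11*} b_1 = 0 = \c_{23*} b_2$, gives $(0, -\c_{12}^* \IM_{\omega_2} \c_{12*} b_1) \in L_1$ and $(0, \c_{22}^* \IM_{\omega_2} \c_{22*} b_2) \in L_2$; and Lemma~\ref{7wn1m20c}\ref{xul5itsk} rewrites $-\sss^*\IM_{\omega_2}\c_{12*}b_1 + \ttt^*\IM_{\omega_2}\c_{22*}b_2 = i_{(\c_{22*}b_2)^R - (\c_{12*}b_1)^L}\,\omega_2$, whose contracted vector is the middle anchor component and hence vanishes on $\ker \aaa_\C$. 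Exactness at the middle term then proceeds as in Lemma~\ref{oe2uqo50}: if $\IM_{\omega_2}\c_{12*}b_1 = 0 = \IM_{\omega_2}\c_{22*}b_2$, then since $\aaa_{\G_2}\c_{12*}b_1 = \c_{12*}\aaa_{\C_1}b_1 = 0$ by naturality of the anchor (and likewise for $b_2$), Lemma~\ref{7wn1m20c}\ref{rwp7208a} forces $\c_{12*}b_1 = 0 = \c_{22*}b_2$, returning $(b_1, b_2)$ to the image of the inclusion.

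The main obstacle is surjectivity onto $R^\circ$, and here the homotopy fibre product is better behaved than the strong one. Fix $(x_1, g, x_2) \in \C\obj$ and $(\xi, \eta) \in R^\circ$ over it, so that $\xi$ and $\eta$ are covectors at $\sss(g)$ and $\ttt(g)$. The crucial observation is that the defining relation $\sss^*\xi + \ttt^*\eta = 0$ \emph{alone} forces $\aaa_{\G_2}^*\xi = 0$ and $\aaa_{\G_2}^*\eta = 0$: evaluating it on a left translation $u^L_g$ and using $\ttt_* u^L_g = 0$, $\sss_* u^L_g = -\aaa_{\G_2}u$ collapses it to $(\aaa_{\G_2}^*\xi)(u) = 0$, and symmetrically on $u^R_g$ for $\eta$. (This is why no transversality of \eqref{tlkiiyxl} is required, in contrast to the strong case, where the analogous identity \eqref{eimn63fp} relied on it.) By Lemma~\ref{7wn1m20c}\ref{r6sdh90a} we then obtain $a_2, \tilde a_2 \in A_{\G_2}$ with $\aaa_{\G_2}a_2 = 0 = \aaa_{\G_2}\tilde a_2$, $\IM_{\omega_2}a_2 = \xi$, and $\IM_{\omega_2}\tilde a_2 = \eta$. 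Since $(0, \c_{12}^*\xi) \in L_1$, the pair $((0,\c_{12}^*\xi),(0,-a_2))$ lies in $L_1 \times_{\c_1}(A_{\G_1}\times A_{\G_2})$, so non-degeneracy of $L_1$ yields $b_1$ with $\aaa_{\C_1}b_1 = 0$, $\c_{11*}b_1 = 0$, $\c_{12*}b_1 = -a_2$; symmetrically $L_2$ yields $b_2$ with $\aaa_{\C_2}b_2 = 0$, $\c_{23*}b_2 = 0$, $\c_{22*}b_2 = \tilde a_2$. Because $A_\C = p^*(A_{\C_1}\times A_{\C_2})$ imposes no matching condition between $b_1$ and $b_2$, the pair $(b_1, b_2)$ lies in $A_\C$, satisfies $\c_*(b_1, b_2) = 0$, and maps to $(\xi, \eta)$.

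The remaining---and most delicate---point is that $(b_1, b_2) \in \ker\aaa_\C$, i.e.\ that $w \coloneqq (\tilde a_2)^R_g + (a_2)^L_g = 0$. I would argue that $w \in \ker\sss_* \cap \ker\ttt_*$ directly from $\aaa_{\G_2}a_2 = 0 = \aaa_{\G_2}\tilde a_2$ and the translation identities, while $i_w\omega_2 = \ttt^*\IM_{\omega_2}\tilde a_2 + \sss^*\IM_{\omega_2}a_2 = \ttt^*\eta + \sss^*\xi = 0$ by Lemma~\ref{7wn1m20c}\ref{xul5itsk} and the $R^\circ$ relation; the quasi-symplectic non-degeneracy $\ker\omega_2 \cap \ker\sss_* \cap \ker\ttt_* = 0$, which holds at every arrow (not only at units) by multiplicativity, exactly as used in the proof of Part~\ref{lpfuwmeg}, then gives $w = 0$. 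Invoking this non-degeneracy of $\omega_2$ is the genuinely new ingredient relative to Lemma~\ref{oe2uqo50}, and is precisely where the homotopy anchor \eqref{n0nqav1s} enters.
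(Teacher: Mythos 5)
Your proof is correct and follows the paper's overall architecture (same connecting map up to an overall sign, same use of Lemma \ref{18e8u16h}, Lemma \ref{7wn1m20c}\ref{xul5itsk} and \ref{rwp7208a} for well-definedness and middle exactness, and the same endgame for surjectivity: lift $\xi,\eta$ to $a_2,\tilde a_2$ via Lemma \ref{7wn1m20c}\ref{r6sdh90a}, apply non-degeneracy of $L_1$ and $L_2$, and kill the middle anchor component using $\ker\omega_2\cap\ker\sss_*\cap\ker\ttt_*=0$). The one genuinely different step is how you obtain $\aaa_{\G_2}^*\xi=0$ and $\aaa_{\G_2}^*\eta=0$: the paper derives this by pairing $(0,\c_{12}^*\xi)$ and $(0,\c_{22}^*\eta)$ against the images of $A_{\C_1}$ and $A_{\C_2}$ in $L_1$, $L_2$ and then invoking the transversality of $A_{\C_1}\to A_{\G_2}\leftarrow A_{\C_2}$, which is exactly why Theorem \ref{rt0qkos3}\ref{a09rl9m8} lists that transversality as a hypothesis for exactness at $R^\circ$. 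You instead evaluate $\sss^*\xi+\ttt^*\eta=0$ on left and right translations $u^L_g$, $u^R_g$ — the same trick as \eqref{7kh1m4w1} in the proof of Part \ref{lpfuwmeg} — which gives both annihilation conditions for free. This is a clean and valid shortcut (the image of $(\sss_*,\ttt_*)$ contains $\im\aaa\times\{0\}$ and $\{0\}\times\im\aaa$), and it has the pleasant consequence that exactness at $R^\circ$ holds for the homotopy fibre product without the transversality hypothesis; it also avoids the slight awkwardness in the paper's argument that $\im\c_{12*}$ and $\im\c_{22*}$ sit in the fibres of $A_{\G_2}$ over $\sss(g)$ and $\ttt(g)$ respectively, which are distinct for a general arrow $g$. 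Your remaining checks (the typing of $\xi$ at $\sss(g)$ and $\eta$ at $\ttt(g)$, the signs in $L_1\times_{\c_1}(A_{\G_1}\times A_{\G_2})$ coming from $\G_2^-$, and the validity of the non-degeneracy of $\omega_2$ at arbitrary arrows) are all sound.
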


\begin{proof}
The first map is inclusion.
For the second map, let
\begin{equation}\label{hl1zyyry}
\ker \aaa_\C \cap \ker \c_* \too R^\circ,
\quad
(b_1, b_2) \mtoo (\IM_{\omega_2} \c_{12*} b_1, -\IM_{\omega_2} \c_{22*} b_2).
\end{equation}
To see that this is well-defined, let $(b_1, b_2) \in \ker \aaa_\C \cap \ker \c_*$ and let $\xi \coloneqq \IM_{\omega_2} \c_{12*} b_1$ and $\eta \coloneqq -\IM_{\omega_2} \c_{22*} b_2$.
By Lemma \ref{18e8u16h}, we have $(0, \c_{12}^*\xi) \in L_1$ and $(0, \c_{22}^*\eta) \in L_2$.
By Lemma \ref{7wn1m20c}\ref{xul5itsk}, $\sss^*\xi + \ttt^*\eta = i_{(\c_{12*}b_1)^L - (\c_{22*}b_2)^R}\omega_2 = 0$ since $\aaa_\C(b_1, b_2) = 0$.
Hence, $(\xi, \eta) \in R^\circ$.
Exactness at $\ker \aaa_\C \cap \ker \c_*$ follows from Lemma \ref{7wn1m20c}\ref{rwp7208a}.

Now, suppose now that $A_{\C_1} \to A_{\G_2} \leftarrow A_{\C_2}$ are transverse.
We show that \eqref{n0nqav1s} is surjective.
Let $(\xi, \eta) \in R^\circ$.
For all $b_1 \in A_{\C_1}$, we have $(\aaa b_1, \c_{11}^*\IM_{\omega_1} \c_{11*}b_1 - \c_{12}^*\IM_{\omega_2} \c_{12*} b_1) \in L_1$ by Lemma \ref{18e8u16h}.
Since also $(0, \c_{12}^*\xi) \in L_1$ and $L_1^\perp = L_1$, we have $\xi(\aaa_{\G_2} \c_{12*} b_1) = 0$.
Similarly, $\xi(\aaa_{\G_2} \c_{22*}b_2) = 0$ for all $b_2 \in A_{\C_2}$.
The transversality assumption then implies that $\aaa_{\G_2}^* \xi = 0$.
It follows (by Lemma \ref{7wn1m20c}\ref{rwp7208a}) that $(0, \xi) = -(\aaa_{\G_2} a, \IM_\omega a)$ for some $a \in A_{\G_2}$ and hence $((0, \c_{12}^*\xi), (0, a)) \in L_1 \times_{\c_1}(A_{\G_1} \times A_{\G_2})$.
By the non-degeneracy of $L_1$, $((0, \c_{12}^*\xi), (0, a)) = ((\aaa b_1, \c_{11}^*\IM_{\omega_1}\c_{11*} b_1 - \c_{12}^* \IM_{\omega_2} \c_{12*} b_1), (\c_{11*} b_1, \c_{12*}b_1))$ for some $b_1 \in A_{\C_1}$.
It follows that $\xi = \IM_{\omega_2}\c_{12*} b_1$ for some $b_1 \in A_{\C_1}$ such that $\c_{11*}b_1 = 0$ and $\aaa b_1 = 0$.
Similarly, $\eta = -\IM_{\omega_2} \c_{22*} b_2$ for some $b_2 \in A_{\C_2}$ such that $\c_{23*} b_2 = 0$ and $\aaa b_2 = 0$.
By Lemma \ref{7wn1m20c}\ref{xul5itsk}, we have $0 = \sss^*\xi + \ttt^*\eta = i_{(\c_{12*}b_1)^L - (\c_{22*}b_2)^R}\omega_2 = 0$.
It follows that $(\c_{12*}b_1)^L - (\c_{22*}b_2)^R \in \ker \omega \cap \ker \sss_* \cap \ker \ttt_* = 0$.
Hence, $(b_1, b_2) \in \ker \aaa_\C \cap \ker \c$ and maps to $(\xi, \eta)$ under \eqref{hl1zyyry}.
\end{proof}

\begin{lemma}
Part \ref{k9kpqiad} of Theorem \ref{rt0qkos3} holds.
\end{lemma}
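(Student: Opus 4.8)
The statement splits into two independent claims, and both reduce at once to the lemmas already in hand, exactly as in the proof of Part \ref{zheuq0qk} of Theorem \ref{2jy36i7i}. For the smoothness claim, I would simply invoke Lemma \ref{jbecqi1g}. Indeed, by the definition of clean intersection for vector bundle homomorphisms (\S\ref{fj51c8fw}), the hypothesis that \eqref{rf57it5e} intersects cleanly means precisely that $R$, the sum of the images of \eqref{rf57it5e}, has constant rank over $\C\obj$. Since $\C$ is clean, Lemma \ref{jbecqi1g} then delivers the smoothness of $L = p_1^*L_1 + p_2^*L_2 - p_0^*\Gamma_{\omega_2}$ directly, with nothing further to check.

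For the strongness claim, recall from Definition \ref{gm52z93m} that $L$ is strong exactly when the map \eqref{0v1fz7h6} is injective, i.e.\ $\ker \aaa_\C \cap \ker \c_* = 0$. I would read this off the sequence of Part \ref{a09rl9m8}, established in Lemma \ref{zidpz55t}. The crucial point is that this sequence is exact at its \emph{middle} term unconditionally, so the kernel of the map $\ker \aaa_\C \cap \ker \c_* \to R^\circ$ equals the image of $(\ker \aaa_{\C_1} \cap \ker \c_{1*}) \times (\ker \aaa_{\C_2} \cap \ker \c_{2*})$. Assuming $L_1$ and $L_2$ are strong makes this left-hand term vanish, which forces the map $\ker \aaa_\C \cap \ker \c_* \to R^\circ$ to be injective.

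It then remains only to observe that transversality of \eqref{rf57it5e} means $R = T\G\obj_2 \times T\G\obj_2$, hence $R^\circ = 0$; combined with the injectivity just obtained, this yields $\ker \aaa_\C \cap \ker \c_* = 0$, so $L$ is strong. (Transversality of course also gives $R$ constant rank, so the smoothness of $L$ established above already applies, and \eqref{0v1fz7h6} is surjective by Part \ref{lpfuwmeg}; thus $L$ is a genuine $1$-shifted coisotropic structure whose defining map is in addition injective.)

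I do not expect any real obstacle here, since all the analytic and homological content has already been isolated in Lemmas \ref{jbecqi1g} and \ref{zidpz55t}. The single point requiring care is to invoke exactness of the sequence of Part \ref{a09rl9m8} only at its middle term, where it holds without any transversality hypothesis on the algebroid maps $A_{\C_1} \to A_{\G_2} \leftarrow A_{\C_2}$, rather than the full short-exactness, which would demand that extra assumption and is not available in the homotopy setting.
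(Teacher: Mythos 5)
Your proof is correct and follows essentially the same route as the paper: smoothness via Lemma \ref{jbecqi1g} (cleanness of \eqref{rf57it5e} being exactly constant rank of $R$), and strongness via the sequence of Lemma \ref{zidpz55t} together with $R^\circ = 0$ from transversality and the vanishing of $\ker \aaa_{\C_i} \cap \ker \c_{i*}$ from strongness of the $L_i$. Your explicit remark that only exactness at the middle term is needed (which holds without the algebroid transversality hypothesis) is a point the paper leaves implicit, and it is correctly handled.
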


\begin{proof}
Cleanness of \eqref{rf57it5e} is equivalent to $R$ having constant rank, so $L$ is smooth by Lemma \ref{jbecqi1g}.
If \eqref{rf57it5e} is transverse and $L_i$ are strong, then $R^\circ = 0$ and $\ker \rho_{\C_i} \cap \ker \c_{i*} = 0$, so $\ker \rho_\C \cap \ker \c_* = 0$ by Lemma \ref{zidpz55t}.
\end{proof}


\section{Morita transfer of basic Dirac structures}
\label{eph7yukn}

We now review the notion of Morita equivalences and introduce a weak version suitable for applications to symplectic reduction and its generalizations.
We then explain how to transfer forms and Dirac structures under such equivalences.
This section is of independent interest.

\subsection{Morita equivalences}

\begin{definition}\label{xv4cb2ld}
A \defn{Morita morphism} 
\cite{beh-xu:03,beh:04,lau-sti-xu:09,beh-xu:11} (also known as a \emph{surjective equivalence} \cite{hoy:13} or \emph{hypercover} \cite{zhu:09,cue-zhu:23})
is a morphism of Lie groupoids $f : \H \to \G$ such that $f\obj : \H\obj \to \G\obj$ is a surjective submersion and the diagram
\begin{equation}\label{59upu6zm}
\begin{tikzcd}
\H\arr \arrow{d}{(\sss,\ttt)} \arrow{r} & \G\arr \arrow{d}{(\sss,\ttt)} \\
\H\obj \times \H\obj \arrow{r} & \G\obj \times \G\obj
\end{tikzcd}
\end{equation}
is cartesian, i.e.\ $\H$ is isomorphic to the pullback groupoid $(f\obj)^*\G$.
A \defn{Morita equivalence} between Lie groupoids $\G_1$ and $\G_2$ is a span
\[
\begin{tikzcd}[column sep={4em,between origins},row sep={2em,between origins}]
& \H \arrow{dl} \arrow{dr} & \\
\G_1 & & \G_2
\end{tikzcd}
\]
of Morita morphisms for some Lie groupoid $\H$.
\end{definition}

\begin{remark}\label{nhqq70ch}
There is an equivalent way of defining Morita equivalences using the weaker notion of essential equivalences.
An \defn{essential equivalence} \cite{met:03} (also known as a \emph{Morita map} \cite{hoy-ort:20,hoy-fer:19} or \emph{weak equivalence} \cite{moe-mrc:03,hoy:13}) is a morphism of Lie groupoids $f : \H \to \G$ such that $\ttt \circ \pr_{\G\arr} : \H\obj \times_{f\obj,\sss} \G\arr \to \G\obj$ is a surjective submersion and \eqref{59upu6zm} is cartesian.
Morita morphisms are essential equivalences.
Conversely, if $f : \H \to \G$ is an essential equivalence, then there is a Morita equivalence
\[
\begin{tikzcd}[row sep=1em]
& (\ttt \circ \pr_{\G\arr})^*\G \arrow{dl} \arrow{dr} & \\
\H \cong (f\obj)^*\G & & \G
\end{tikzcd}
\]
in the sense of Definition \ref{xv4cb2ld}, where the first map is given by
\[
(\H\obj \times_{\G\obj} \G\arr) \times_{\G\obj} \G\arr \times_{\G\obj} (\H\obj \times_{\G\obj} \G\arr)
\too
\H\obj \times_{\G\obj} \G \times_{\G\obj} \H\obj,
\quad
((x, a), g, (y, b)) \mtoo (x, b^{-1}ga, y).
\]
Hence, both notions produce the same equivalence relation on Lie groupoids.
\end{remark}

\begin{remark}
Another equivalent approach to Morita equivalences is via bibundles \cite{ler:10,hil-ska:87}.
We will not follow this approach, but note that the equivalence between Morita morphisms and bibundles is proved, for example, in \cite[Proposition 2.4]{lau-sti-xu:09} and \cite[Theorem 2.2]{beh-xu:11}, and the equivalence between essential equivalences and bibundles is proved in \cite{ler:10}.
\end{remark}



Morita equivalent Lie groupoids have homeomorphic orbit spaces \cite[Theorem 4.3.1]{hoy:13}.
In fact, as pointed out by del Hoyo and Fernandes \cite[Proposition 6.1.1]{hoy-fer:19}, the Morita equivalence class of a Lie groupoid can be thought of as an ``enhanced'' version of the orbit space, also encoding information about the normal representation.
See also \cite{bur-hoy:23} and the reference therein.
Morita equivalent classes of Lie groupoids are also in natural one-to-one correspondence with isomorphism classes of differentiable stacks \cite{beh-xu:03,beh-xu:11,ler:10}; we will return to this in \S\ref{dy26su68}.

On the other hand, if $\G$ is a Lie groupoid whose orbit space $Q = \G\obj/\G\arr$ is a manifold, then $\G$ and $Q$ are Morita equivalent (where $Q$ is viewed as the trivial Lie groupoid $Q \tto Q$) if and only if $\G$ has trivial isotropy groups.
This is problematic for applications to symplectic reduction and its generalization since we want, for example, an action groupoid $G \ltimes \mu^{-1}(0)$ of a Hamiltonian $G$-space with moment map $\mu$ to be equivalent to the quotient $\mu^{-1}(0)/G$ so that $0$-shifted symplectic structures transfer to ordinary symplectic structures, even if the $G$-action is not free.
For this reason, we introduce the following.

\begin{definition}
A \defn{weak Morita morphism} is a morphism of Lie groupoids $f : \H \to \G$ such that $f\obj : \H\obj \to \G\obj$ and $\H\arr \to (f\obj)^*\G\arr$ are surjective submersions.
\end{definition}

In other words, if $f\obj : \H\obj \to \G\obj$ is a surjective submersion, then $f$ is a Morita morphism if the induced map $\H\arr \to (f\obj)^*\G\arr$ is an isomorphism and it is a weak Morita morphism if this map is only a surjective submersion.
The following basic observation will be useful.

\begin{lemma}\label{htlesxzs}
Let $f : \H \to \G$ be a weak Morita morphism.
For every $v \in T\H\obj$ and $a \in A_\G$ such that $f_* v = \aaa a$, there exists $b \in A_\H$ such that $f_*b = a$ and $\aaa b = v$.
If $f$ is a Morita morphism, the element $b$ is unique.
\end{lemma}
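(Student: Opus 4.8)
The plan is to factor $f$ through the pullback groupoid and reduce the statement to a surjectivity (resp.\ bijectivity) property of the induced map on Lie algebroids. Since $f\obj : \H\obj \to \G\obj$ is a surjective submersion, the pullback groupoid $(f\obj)^*\G$ is defined, and $f$ factors as $\H \xrightarrow{\pi} (f\obj)^*\G \to \G$, where $\pi$ is the identity on objects and sends an arrow $h \in \H\arr$ to $(\sss(h), f(h), \ttt(h))$, and the second map is the canonical projection $(x, g, y) \mapsto g$. By the definition of a weak Morita morphism, $\pi\arr : \H\arr \to (f\obj)^*\G\arr$ is a surjective submersion, and it is an isomorphism precisely when $f$ is a Morita morphism.

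First I would compute the Lie algebroid of the pullback explicitly. Reading off the structure maps of $(f\obj)^*\G$, one identifies $A_{(f\obj)^*\G}$ at a point $x \in \H\obj$ with
\[
\{(a, w) \in (A_\G)_{f(x)} \times T_x\H\obj : \aaa_\G a = f_* w\},
\]
its anchor being $(a, w) \mapsto w$ and the canonical projection to $A_\G$ being $(a, w) \mapsto a$. In these terms, the data $(v, a)$ in the statement is exactly an element $b' \coloneqq (a, v) \in A_{(f\obj)^*\G}$, the hypothesis $f_* v = \aaa_\G a$ being the defining constraint of the pullback, and the sought element $b \in A_\H$ is then characterized by $\pi_* b = b'$.

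The key step is to show that $\pi_* : A_\H \to A_{(f\obj)^*\G}$ is surjective (resp.\ an isomorphism). Because $\pi$ is the identity on objects, it commutes with $\sss$ and $\ttt$, so $\pi_*$ lands in $A_{(f\obj)^*\G}$ and intertwines the anchors. To lift $b'$, I would use that $\pi\arr$ is a surjective submersion: its differential at the unit $\uuu_x \in \H\arr$ is surjective, so $b'$ has a preimage $b \in T_{\uuu_x}\H\arr$; since $\sss \circ \pi\arr = \sss$ and $\sss_* b' = 0$, this preimage automatically satisfies $\sss_* b = 0$, hence $b \in A_\H$. Then $f_* b = a$ and $\aaa_\H b = \ttt_* b = \aaa_{(f\obj)^*\G} b' = v$ follow from compatibility of $\pi$ with the structure maps. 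When $f$ is a Morita morphism, $\pi\arr$ is an isomorphism, so $\pi_*$ is too, and $b'$—hence $b$—is uniquely determined by $(v, a)$, giving uniqueness.

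The main obstacle, more a matter of care than of difficulty, is precisely the verification that the lift of $b'$ across the submersion $\pi\arr$ can be chosen inside $\ker \sss_*$. This is where it is essential that $\pi$ is a groupoid morphism that is the identity on objects, so that $\sss \circ \pi\arr = \sss$; this is exactly what upgrades a mere submersion on arrows to a surjection on Lie algebroids.
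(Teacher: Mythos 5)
Your proof is correct and follows essentially the same route as the paper's: both identify the Lie algebroid of the pullback groupoid $(f\obj)^*\G$ with $T\H\obj \times_{T\G\obj} A_\G$ and reduce the claim to surjectivity (resp.\ bijectivity) of the induced map $A_\H \to T\H\obj \times_{T\G\obj} A_\G$, $b \mapsto (\aaa b, f_*b)$. The only difference is that you spell out the surjectivity step — lifting across the submersion $\H\arr \to (f\obj)^*\G\arr$ and observing that $\sss \circ \pi\arr = \sss$ forces the lift into $\ker \sss_*$ — which the paper leaves implicit.
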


\begin{proof}
The Lie algebroid of the pullback of $\G$ by $f\obj$ is $T\H\obj \times_{T\G\obj} A_\G$ and the corresponding Lie algebroid morphism is given by $A_\H \to T\H\obj \times_{T\G\obj} A_\G$, $b \mto (\aaa b, f_*b)$.
The latter is surjective if $f$ is a weak Morita morphism and bijective if $f$ is a Morita morphism.
\end{proof}

%
%

\subsection{Basic forms}

We now explain how to transfer differential forms through weak Morita morphisms.

\begin{definition}
A \defn{basic form} on a Lie groupoid $\G$ is a differential form $\beta$ on $\G\obj$ such that $\sss^*\beta = \ttt^*\beta$.
\end{definition}

The following result appears in \cite[Corollary 1.3]{wat:22} in the special case of a proper Lie groupoid.
If $\G$ has trivial isotropy groups, it also follows from Morita invariance of basic forms (see e.g.\ \cite[Proposition 5.3.12(ii) and Remark 5.3.16]{hof-sja:21} or \cite[Proposition 8.3]{pfl-pos-tan:14}).

\begin{lemma}\label{6sfaxsof}
Let $\G$ be a Lie groupoid whose orbit space $\G\obj / \G\arr$ has the structure of a smooth manifold such that the quotient map $\pi : \G\obj \to \G\obj / \G\arr$ is a smooth submersion.
Then a differential form $\beta$ on $\G\obj$ is basic if and only if $\beta = \pi^*\alpha$ for some $\alpha$ on $\G\obj / \G\arr$.
\end{lemma}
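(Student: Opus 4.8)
The statement to prove is a standard descent criterion for differential forms along a surjective submersion, specialized to the quotient map of a Lie groupoid. The plan is to show the two implications separately, with the forward direction being trivial and the reverse direction being the substantive one.

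First I would dispose of the easy implication. If $\beta = \pi^*\alpha$ for some form $\alpha$ on $Q \coloneqq \G\obj/\G\arr$, then since $\pi \circ \sss = \pi \circ \ttt$ (both the source and target of an arrow lie in the same $\G$-orbit, hence map to the same point of $Q$), we get $\sss^*\beta = \sss^*\pi^*\alpha = (\pi \circ \sss)^*\alpha = (\pi \circ \ttt)^*\alpha = \ttt^*\pi^*\alpha = \ttt^*\beta$, so $\beta$ is basic.

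For the reverse direction, suppose $\beta$ is basic. Since $\pi$ is a surjective submersion, descending $\beta$ to a form $\alpha$ on $Q$ with $\pi^*\alpha = \beta$ is a purely pointwise/linear-algebra condition: one must verify that at each $x \in \G\obj$, the value $\beta_x$ annihilates $\ker \pi_{*,x}$ in the appropriate sense and is constant along the fibres of $\pi$, i.e.\ that $\beta_x$ factors through $\pi_{*,x} : T_x\G\obj \to T_{\pi(x)}Q$. Concretely, I would define $\alpha$ at a point $q = \pi(x)$ by choosing a local section of $\pi$ and pulling back $\beta$; the content is to show this is independent of the choices, which amounts to two checks. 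The \textbf{first check} is that $\ker \pi_{*,x}$ lies in the kernel of $\beta_x$ (so that $\beta_x$ is the pullback of a well-defined form on $T_qQ$). Here I would use that the fibres of $\pi$ are exactly the $\G$-orbits, so $\ker \pi_{*,x} = \im \aaa|_x = \{\aaa a : a \in (A_\G)_x\}$; then the basic condition $\sss^*\beta = \ttt^*\beta$, evaluated on a right-translated algebroid element as in Lemma~\ref{7wn1m20c}\ref{xul5itsk} (or directly via the unit), forces $i_{\aaa a}\beta = 0$. The \textbf{second check} is that the resulting form on $T_qQ$ is independent of the point $x$ in the fibre $\pi^{-1}(q)$; this is precisely where $\sss^*\beta = \ttt^*\beta$ is used again, now transporting $\beta_x$ to $\beta_y$ along an arrow $g$ from $x$ to $y$ within the same orbit.

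The main obstacle I anticipate is \textbf{smoothness} of the descended form $\alpha$. The pointwise construction gives a form on $Q$ that is \emph{a priori} only set-theoretically defined; one must verify $\alpha$ is smooth. The clean way around this is to exploit that $\pi$ is a surjective submersion and therefore admits local sections: choosing a local section $\sigma : U \to \G\obj$ of $\pi$ over an open $U \s Q$, I would set $\alpha|_U \coloneqq \sigma^*\beta$, which is manifestly smooth, and then verify $\pi^*(\sigma^*\beta) = \beta$ on $\pi^{-1}(U)$ using the two pointwise checks above (the equality of $\pi^*\sigma^*\beta$ and $\beta$ at a point $x$ reduces to comparing $\beta_x$ with $\beta_{\sigma(\pi(x))}$, which agree by fibrewise invariance since $\sigma(\pi(x))$ and $x$ lie in the same orbit). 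Independence of the local section, hence well-definedness of the global $\alpha$, follows from the same invariance. Thus the existence of local sections of the submersion $\pi$ is the key tool that converts the pointwise construction into a smooth global form and simultaneously resolves the smoothness issue.
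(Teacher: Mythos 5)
Your proposal is correct and follows essentially the same route as the paper: choose local sections $\sigma$ of $\pi$, set $\alpha = \sigma^*\beta$, and use the basic condition $\sss^*\beta = \ttt^*\beta$ to transport $\beta$ along groupoid arrows from an arbitrary point of an orbit back to the image of the section. The paper merely packages your two pointwise checks (annihilation of $\ker\pi_* = \im\aaa$ and fibrewise invariance) into a single lifting argument through the surjective submersion $\ttt \circ \pr_{\G\arr} : Q \times_{\sigma,\sss} \G\arr \to \G\obj$.
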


\begin{proof}
Let $\beta$ be a basic $k$-form on $\G$ and let $Q \coloneqq \G\obj/\G\arr$.
By taking local sections $\sigma_i : U_i \to \G\obj$ of $\pi$ and restricting $\G$ to $\pi^{-1}(U_i)$, we may reduce to the case where there is a global section $\sigma : Q \to \G\obj$.
It then suffices to show that $\pi^*\sigma^*\beta = \beta$.
To this end, we first observe that the map
\begin{equation}\label{lzaj53oo}
\ttt \circ \pr_{\G\arr} : Q \times_{\sigma,\sss} \G\arr \too \G\obj
\end{equation}
is a surjective submersion (i.e.\ $Q \to \G$ is a weak Morita morphism).
Surjectivity follows from the fact that for all $p \in \G\obj$, we have $\pi(p) = \pi(\sigma(\pi(p)))$, so there exists $g \in \G\arr$ such that $\sss(g) = \sigma(\pi(p))$ and $\ttt(g) = p$.
For submersivity, we show that the nullity of its differential is $\dim(Q \times_{\sigma,\sss} \G\arr) - \dim \G\obj = \dim Q$.
Using left translations, the kernel of the differential at $(q, g)$ is isomorphic to $T_qQ \times_{\sigma_*, \aaa} A_\G|_{\sss(g)}$.
Since $\im \aaa = \ker \pi_*$, we have $T_{\sss(g)}\G\obj = \im \sigma_* \oplus \im \aaa$, and hence $T_qQ \times_{\sigma_*, \aaa} A_\G|_{\sss(g)} = \dim Q + \rk A_\G - \dim \G\obj = \dim Q$.
Hence, \eqref{lzaj53oo} is a surjective submersion.
Now, let $v \in (T\G\obj)^{\oplus k}$.
Then there exists $(u, w) \in T(Q \times_{\sigma,\sss} \G\arr)^{\oplus k}$ such that $\ttt_* w = v$.
It follows that $\beta(v) = \ttt^*\beta(w) = \sss^*\beta(w) = \sigma^*\beta(u) = \pi^*\sigma^*\beta(v)$.
\end{proof}

We now generalize the previous result to any weak Morita morphism.

\begin{proposition}\label{m9pdmd75}
Let $f : \H \to \G$ be a weak Morita morphism.
For every basic form $\beta$ on $\H$, there is a unique basic form $\alpha$ on $\G$ such that $\beta = f^*\alpha$.
\end{proposition}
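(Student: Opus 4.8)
The plan is to construct $\alpha$ pointwise on $\G\obj$ by ``pushing $\beta$ down'' along the surjective submersion $f\obj$, and then to verify that the resulting form is smooth, basic, and pulls back to $\beta$. Uniqueness is immediate: since $f\obj$ is a surjective submersion, $(f\obj)^*$ is injective on forms, so at most one $\alpha$ can satisfy $f^*\alpha = \beta$. For existence, given $q \in \G\obj$ I would pick any $p \in \H\obj$ with $f(p) = q$ and, for $w_1, \dots, w_k \in T_q\G\obj$, choose lifts $v_i \in T_p\H\obj$ with $f_* v_i = w_i$ (possible since $f_*$ is surjective), and set $\alpha_q(w_1, \dots, w_k) \coloneqq \beta_p(v_1, \dots, v_k)$.

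The crux is well-definedness, which splits into two checks. First (horizontality), the value must be independent of the chosen lifts at a fixed $p$, i.e.\ $\beta_p$ must annihilate $\ker f_*$. Here I would use Lemma \ref{htlesxzs} with $a = 0$: any $v \in \ker f_*$ equals $\aaa_\H b$ for some $b \in A_\H$. It then suffices to observe that a basic form kills anchor directions, $i_{\aaa_\H b}\beta = 0$; this is a pointwise computation at the unit $\uuu_p$, evaluating $\sss^*\beta = \ttt^*\beta$ on the vectors $(b, \uuu_* u_2, \dots, \uuu_* u_k)$ and using $\sss_* b = 0$, $\ttt_* b = \aaa_\H b$. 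Second (independence of the fibre point), given $p, p'$ with $f(p) = f(p') = q$, the weak Morita condition that $\H\arr \to (f\obj)^*\G\arr$ is surjective provides an arrow $h \colon p \to p'$ with $f(h) = \uuu_q$. For lifts $v_i$ at $p$ and $v_i'$ at $p'$ of the same $w_i$, the triple $(v_i, \uuu_* w_i, v_i')$ lies in $T_h\big((f\obj)^*\G\arr\big)$, so the submersivity of $\H\arr \to (f\obj)^*\G\arr$ yields $z_i \in T_h\H\arr$ with $\sss_* z_i = v_i$ and $\ttt_* z_i = v_i'$; evaluating $\sss^*\beta = \ttt^*\beta$ on $(z_1, \dots, z_k)$ then gives $\beta_p(v_1, \dots, v_k) = \beta_{p'}(v_1', \dots, v_k')$.

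Granting well-definedness, smoothness follows by working locally: a local section $\sigma$ of the submersion $f\obj$ gives $\alpha = \sigma^*\beta$ on its domain, which is smooth. By construction $f^*\alpha = \beta$ (take the lift at $p$ to be the given vector). Finally, to see that $\alpha$ is basic I would note that $f\arr$ factors as the composite $\H\arr \to (f\obj)^*\G\arr \to \G\arr$ of two surjective submersions, hence is itself a surjective submersion, so $(f\arr)^*$ is injective; since $f$ is a functor, $(f\arr)^*(\sss^*\alpha - \ttt^*\alpha) = \sss^*\beta - \ttt^*\beta = 0$ forces $\sss^*\alpha = \ttt^*\alpha$. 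I expect the fibre-independence check to be the main obstacle, as it is the only place where the full strength of the weak Morita hypothesis on arrows is used and where the compatibility of arbitrary lifts at distinct fibre points must be engineered via the arrow $h$.
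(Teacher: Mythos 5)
Your proof is correct. It takes a more hands-on route than the paper: the paper factors $f$ through the pullback groupoid $\mathcal{P}=(f\obj)^*\G$ and the submersion (banal) groupoid $\mathcal{B}=(\H\obj\times_{\G\obj}\H\obj\tto\H\obj)$, descends the identity $\sss^*\beta=\ttt^*\beta$ from $\H$ to $\mathcal{P}$ (using that $\H\arr\to\mathcal{P}\arr$ is a submersion) and then to $\mathcal{B}$, and finally invokes Lemma \ref{6sfaxsof} applied to $\mathcal{B}$ to produce $\alpha$. Your fibre-independence check — lifting $(v_i,\uuu_*w_i,v_i')\in T_h\bigl((f\obj)^*\G\arr\bigr)$ to $z_i\in T_h\H\arr$ and evaluating $\sss^*\beta=\ttt^*\beta$ — is exactly the pointwise content of the paper's descent to $\mathcal{B}$, and your local-section argument for smoothness is the kernel of the proof of Lemma \ref{6sfaxsof}; so the mechanism is the same, but your version is self-contained and avoids both auxiliary groupoids. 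What the paper's packaging buys is reuse: the same $\mathcal{B}$/$\mathcal{P}$ diagram is recycled verbatim for Dirac structures in Proposition \ref{9u2g4w2h}, where a pointwise multilinear argument would be clumsier. One small streamlining available to you: your fibre-independence check with $p=p'$ and $h$ an isotropy arrow over $\uuu_q$ already subsumes the horizontality check (alternatively, horizontality follows from fibre-independence with $h=\uuu_p$ and $v_i'=v_i+(\text{kernel vectors})$), so the appeal to Lemma \ref{htlesxzs} and the anchor computation, while correct, is not strictly needed.
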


\begin{proof}
Consider the \emph{submersion groupoid} $\mathcal{B} \coloneqq (\H\obj \times_{\G\obj} \H\obj \tto \H\obj)$ associated with the submersion $f\obj : \H\obj \to \G\obj$ \cite[Example 4.2]{bur-hoy:23} (also known as the \emph{banal groupoid} \cite{beh:04}).
Let $g : \mathcal{P} \to \G$ be the pullback of $\G$ by $f\obj : \H\obj \to \G\obj$.
Then there is a Lie groupoid morphism $\imath : \mathcal{B} \to \mathcal{P}$ covering the identity map on $\H\obj$ such that the composition $\jmath \coloneqq g \circ \imath : \mathcal{B} \to \G$ lies in the identity section of $\G$.
Also, by the definition of weak Morita morphisms, there is a surjective submersion $k : \H \to \P$ covering the identity map on $\H\obj$ and whose composition with $g$ is the map $f$.
In particular, we have a commutative diagram
\begin{equation}\label{dhi82hlj}
\begin{tikzcd}
\H \arrow[swap]{d}{k} \arrow{dr}{f} & \\
\mathcal{P} \arrow{r}{g} & \G \\
\mathcal{B}. \arrow{u}{\imath} \arrow[swap]{ur}{\jmath} &
\end{tikzcd}
\end{equation}
Now, $k^*\sss_{\mathcal{P}}^*\beta = \sss_\H^*\beta = \ttt_\H^*\beta = k^*\ttt_{\mathcal{P}}^*\beta$ and $k$ is a submersion, so $\sss_{\mathcal{P}}^*\beta = \ttt_{\mathcal{P}}^*\beta$.
It follows that $\sss_{\mathcal{B}}^*\beta = \imath^* \sss_{\mathcal{P}}^*\beta = \imath^* \ttt_{\mathcal{P}}^*\beta = \ttt_{\mathcal{B}}^*\beta$.
By Lemma \ref{6sfaxsof} applied to $\mathcal{B}$, there is a (unique) form $\alpha$ on $\G\obj$ such that $f^*\alpha = \beta$.
We have $f^*\sss^*_\G\alpha = \sss^*_\H\beta = \ttt^*_\H\beta = f^*\ttt^*_\G\alpha$ and $f$ is a submersion, so $\sss^*\alpha = \ttt^*\alpha$.
\end{proof}

\subsection{Basic Dirac structures}

We now give analogues of the previous results where differential forms are replaced by Dirac structures.

\begin{lemma}\label{7rjqnb2r}
Let $\G$ be a Lie groupoid whose orbit space $\G\obj / \G\arr$ has the structure of a smooth manifold such that the quotient map $\pi : \G\obj \to \G\obj / \G\arr$ is a smooth submersion.
Let $(L, \phi)$ be a Dirac structure on $\G\obj$ such that $\sss^*L = \ttt^* L$ and $\sss^*\phi = \ttt^*\phi$.
Then $\pi_*L$ is a Dirac structure on $\G\obj/\G\arr$ whose background 3-form $\zeta$ is characterized by $\pi^*\zeta = \phi$.
Moreover, $\pi^*\pi_*L = L$.
\end{lemma}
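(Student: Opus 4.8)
The plan is to verify the three hypotheses of the pushforward construction for Dirac structures (\S\ref{1n8d3ppc}) applied to the submersion $\pi$: that $L$ is $\pi$-invariant, that $\pi_*L$ is smooth, and that $\phi$ descends to $Q \coloneqq \G\obj/\G\arr$. The last point is immediate, since $\sss^*\phi = \ttt^*\phi$ makes $\phi$ basic, so Lemma \ref{6sfaxsof} produces a unique $\zeta$ on $Q$ with $\pi^*\zeta = \phi$. The conceptual key to everything else is the observation recorded after Definition \ref{3thr311z} that $\sss^*L = \ttt^*L$ forces $\im\aaa \s \ker L$. Since $\ker\pi_* = \im\aaa$ and $L$ is Lagrangian, for $(v,\alpha)\in L$ and $u\in\ker L$ we get $\alpha(u)=\ip{(v,\alpha),(u,0)}=0$; hence $\alpha$ annihilates $\ker\pi_*$, i.e.\ $\alpha\in(\ker\pi_*)^\circ = \pi^*T^*Q$. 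Thus every covector occurring in $L$ is automatically a pullback $\pi^*\beta$, which drives the remaining steps.

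For $\pi$-invariance I would transfer elements of $\pi_*L$ along arrows. Given an arrow $g$ with $\sss(g)=p$, $\ttt(g)=p'$ and $(v,\pi^*\beta)\in L_p$, choose $w\in T_g\G\arr$ with $\sss_*w=v$ (possible as $\sss$ is a submersion). Then $(w,\sss^*\pi^*\beta)\in \sss^*L=\ttt^*L$, so there is $\gamma$ with $(\ttt_*w,\gamma)\in L_{p'}$ and $\ttt^*\gamma=\sss^*\pi^*\beta$. As source and target of an arrow lie in one orbit, $\pi\circ\sss=\pi\circ\ttt$, whence $\ttt^*\gamma=\ttt^*\pi^*\beta$ and $\gamma=\pi^*\beta$ by injectivity of $\ttt^*$. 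Since $\pi_*\ttt_*w=\pi_*\sss_*w=\pi_*v$, this gives $(\pi_*v,\beta)\in(\pi_*L)_{p'}$, hence $(\pi_*L)_p\s(\pi_*L)_{p'}$; the reverse inclusion follows from $g^{-1}$, and a chain of arrows handles arbitrary points of a fibre of $\pi$. So $\pi_*L$ is a well-defined, fibrewise Lagrangian subspace of $TQ\oplus T^*Q$, of constant rank $\dim Q$ by linear symplectic reduction.

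The main obstacle is smoothness of $\pi_*L$, which I would settle locally using a section $\sigma\colon W\to\G\obj$ of $\pi$ through a chosen point. By $\pi$-invariance, $(\pi_*L)_q=\{(\pi_*v,\beta):(v,\pi^*\beta)\in L_{\sigma(q)}\}$, and by the observation above the requirement $\alpha=\pi^*\beta$ is automatic for elements of $L$. I would therefore consider the smooth bundle map $\Theta\colon \sigma^!L\to TW\oplus T^*W$ over $W$, where $\sigma^!L$ is the vector-bundle pullback of $L$, sending $(v,\pi^*\beta)\mapsto(\pi_*v,\beta)$; this is well defined since $\beta$ is determined by $\pi^*\beta$. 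Its image is exactly $\pi_*L|_W$, and as this image has the constant rank $\dim Q$ established above, $\Theta$ has constant rank and its image is a smooth subbundle. Since the base point was arbitrary, $\pi_*L$ is smooth, so $(\pi_*L,\zeta)$ is a Dirac structure by the pushforward construction.

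Finally, for $\pi^*\pi_*L=L$ I would prove both inclusions directly. For $\s$: if $(v,\pi^*\beta)\in\pi^*\pi_*L$ then $(v'',\pi^*\beta)\in L$ for some $v''$ with $\pi_*v''=\pi_*v$; then $v-v''\in\ker\pi_*=\im\aaa\s\ker L$, so $(v-v'',0)\in L$ and adding gives $(v,\pi^*\beta)\in L$. For $\supseteq$: any $(v,\alpha)\in L$ has $\alpha=\pi^*\beta$ by the observation above, and $(\pi_*v,\beta)\in\pi_*L$ by definition, so $(v,\alpha)\in\pi^*\pi_*L$. This yields the identification, and together with $\pi^*\zeta=\phi$ it confirms the claimed characterization of the background $3$-form.
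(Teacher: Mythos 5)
Your proof is correct and follows essentially the same route as the paper's: both arguments hinge on $\ker\pi_* = \im\aaa \s \ker L$ (so that every covector appearing in $L$ is a pullback), derive $\pi$-invariance from $\sss^*L = \ttt^*L$ together with $\pi\circ\sss = \pi\circ\ttt$, obtain $\zeta$ from Lemma \ref{6sfaxsof}, and defer integrability to the standard pushforward result. The only differences are cosmetic — you prove smoothness of $\pi_*L$ by a direct constant-rank bundle-map argument where the paper cites \cite[Proposition 1.13]{bur:13}, and you establish $\pi^*\pi_*L = L$ by a double inclusion where the paper uses orthogonal complements and a dimension count.
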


\begin{proof}
To show that $\pi_*L$ is a well-defined almost Dirac structure on $\G\obj/\G\arr$, it suffices to show (see e.g.\ \cite[Proposition 1.13]{bur:13}) that
\begin{enumerate}[label={(\arabic*)}]
\item
\label{2554a73s}
$L$ is $\pi$-invariant, i.e.\ $\pi_* L_x = \pi_* L_y$ for all $x, y \in \G\obj$ such that $\pi(x) = \pi(y)$, and
\item
\label{94odhqki}
$\ker \pi_* \cap \ker L$ has constant rank.
\end{enumerate}
To show \ref{2554a73s}, let $g \in \G\arr$ and let $x = \sss(g)$ and $y = \ttt(g)$.
Note that $L_x = \sss_* (\sss^*L)_g$ and $L_y = \ttt_* (\ttt^*L)_g$.
Since $\pi \circ \sss = \pi \circ \ttt$, it follows that $\pi_*L_x = \pi_*\sss_* (\sss^*L)_g = \pi_*\ttt_*(\ttt^*L)_g = \pi_*L_y$.
To show \ref{94odhqki}, we claim that $\ker \pi_* \s \ker L$.
Let $v \in \ker \pi_* = \im \aaa$, so that $v = \ttt_*w$ for some $w \in \ker \sss_*$.
Then $(w, 0) \in \sss^*L = \ttt^*L$, so $(w, 0) = (w, \ttt^*\alpha)$ for some $\alpha$ such that $(\ttt_* w, \alpha) \in L$.
It follows that $(v, 0) = (\ttt_* w, \alpha) \in L$, i.e.\ $v \in \ker L$.
Hence, $\ker \pi_* \cap \ker L = \ker \pi_*$ has constant rank, and $\pi_*L$ is an almost Dirac structure.
By Lemma \ref{6sfaxsof}, $\phi = \pi^*\zeta$ for a unique closed 3-form $\zeta$ on $\G\obj/\G\arr$.
The integrability of $\pi_*L$ with respect to $\zeta$ then follows by the same proof as in \cite[Proposition 1.13]{bur:13}.

For the last part, we first show that $\ker \pi_* \s \ker L$.
Let $v \in \ker \pi_*$, i.e.\ $v = \aaa a$ for some $a \in A_\G$.
Now, $\sss_* a = 0$, so $(a, 0) \in \sss^*L = \ttt^*L$ and hence $(\ttt_* a, 0) \in L$. 
It follows that $v = \ttt_* a \in \ker L$.
Hence, $\ker \pi_* \s \ker L$ and, taking $\perp$ on both sides, we have $L \s T\G\obj \oplus \im \pi^*$.
This implies that $L \s \pi^*\pi_*L$ and, by dimension count, equality holds.
\end{proof}

\begin{proposition}\label{9u2g4w2h}
Let $f : \H \to \G$ be a weak Morita morphism and $(L, \phi)$ a Dirac structure on $\H\obj$ such that
\[
\ttt^*\phi = \sss^*\phi + f^*\omega_1
\quad\text{and}\quad
\ttt^*L = \sss^*L + \Gamma_{f^*\omega_2}
\]
for some multiplicative forms $\omega_1$ and $\omega_2$ on $\G\arr$.
Then $f_*L$ is a Dirac structure on $\G\obj$ whose background $3$-form $\zeta$ is characterized by $f^*\zeta = \phi$.
Moreover, $f^*f_* L = L$.
\end{proposition}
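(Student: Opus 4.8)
The plan is to reduce the statement to the already-established quotient-map case, Lemma~\ref{7rjqnb2r}, by transferring the two twisted multiplicativity conditions to the submersion groupoid, exactly as the basic-forms assertion was reduced to Lemma~\ref{6sfaxsof} in the proof of Proposition~\ref{m9pdmd75}. So I would first reuse the commutative diagram~\eqref{dhi82hlj}, with the pullback groupoid $\mathcal{P} = (f\obj)^*\G$, the canonical map $g \colon \mathcal{P} \to \G$, the surjective submersion $k \colon \H \to \mathcal{P}$ satisfying $g \circ k = f$, the submersion groupoid $\mathcal{B} = (\H\obj \times_{\G\obj} \H\obj \tto \H\obj)$, and the map $\imath \colon \mathcal{B} \to \mathcal{P}$ whose composite $\jmath = g \circ \imath$ lands in the unit section of $\G$. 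The key observation is that $\mathcal{B}\obj = \H\obj$ and that the quotient map of $\mathcal{B}$ is precisely $f\obj \colon \H\obj \to \G\obj$, so that $f_*L = (f\obj)_*L$ is the pushforward appearing in Lemma~\ref{7rjqnb2r}.

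The first step is to push the two hypotheses from $\H$ up to $\mathcal{P}$. Since $k$ is a surjective submersion covering the identity on objects, one has $\sss_\H = \sss_\mathcal{P} \circ k$, $\ttt_\H = \ttt_\mathcal{P} \circ k$, and $f = g \circ k$, so pulling back by $k$ turns any candidate relation on $\mathcal{P}$ into the given relation on $\H$. For the $3$-form, $k^*$ is injective on forms, so $k^*(\ttt_\mathcal{P}^*\phi - \sss_\mathcal{P}^*\phi - g^*\omega_1) = \ttt_\H^*\phi - \sss_\H^*\phi - f^*\omega_1 = 0$ gives $\ttt_\mathcal{P}^*\phi = \sss_\mathcal{P}^*\phi + g^*\omega_1$. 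For the Dirac structure, I would set $D \coloneqq \sss_\mathcal{P}^*L + \Gamma_{g^*\omega_2}$, which is a smooth Dirac structure on $\mathcal{P}\arr$ as a gauge transformation (\S\ref{e5tv6dsz}); using that pullback of Dirac structures commutes with gauge transformations by a pulled-back form, $k^*D = \sss_\H^*L + \Gamma_{f^*\omega_2} = \ttt_\H^*L = k^*\ttt_\mathcal{P}^*L$, and since pullback of Dirac structures along a surjective submersion is determined fibrewise (hence faithful), $\ttt_\mathcal{P}^*L = \sss_\mathcal{P}^*L + \Gamma_{g^*\omega_2}$.

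Next I would restrict these relations to $\mathcal{B}$ along $\imath$, which covers the identity on objects, so $\sss_\mathcal{B} = \sss_\mathcal{P} \circ \imath$ and $\ttt_\mathcal{B} = \ttt_\mathcal{P} \circ \imath$. Because $\jmath = g \circ \imath$ factors through the unit section and multiplicative forms vanish on units, $\jmath^*\omega_1 = \jmath^*\omega_2 = 0$. Pulling the two $\mathcal{P}$-relations back by $\imath$ and using again that pullback commutes fibrewise with gauge transformations then yields $\ttt_\mathcal{B}^*\phi = \sss_\mathcal{B}^*\phi$ and $\ttt_\mathcal{B}^*L = \sss_\mathcal{B}^*L$, i.e.\ $\phi$ and $L$ are basic on the submersion groupoid $\mathcal{B}$. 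Applying Lemma~\ref{7rjqnb2r} to $\mathcal{B}$, whose orbit space is $\G\obj$ with quotient map $f\obj$, now produces the Dirac structure $(f\obj)_*L = f_*L$ on $\G\obj$ together with a unique background $3$-form $\zeta$ characterized by $f^*\zeta = \phi$, and the identity $(f\obj)^*(f\obj)_*L = L$, i.e.\ $f^*f_*L = L$.

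I expect the only delicate point to be the transfer in the first step: one must know that pullback of Dirac structures along the surjective submersion $k$ is faithful and commutes with gauge transformations by pulled-back $2$-forms, which is what lets a relation on $\mathcal{P}$ be read off from the hypothesis on $\H$. Both facts are elementary fibrewise computations (using that $k_*$ is fibrewise surjective and $k^*$ fibrewise injective), but they are the crux that replaces the bare injectivity of $k^*$ used in the purely form-theoretic Proposition~\ref{m9pdmd75}. Everything else is bookkeeping with the diagram~\eqref{dhi82hlj} and the vanishing of multiplicative forms on units.
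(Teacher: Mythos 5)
Your proposal is correct and follows essentially the same route as the paper's proof: transfer the two twisted conditions from $\H$ to the pullback groupoid $\mathcal{P}$ using that $k$ is a surjective submersion, restrict to the submersion groupoid $\mathcal{B}$ via $\imath$ where the multiplicative forms vanish because $\jmath$ lands in the unit section, and then apply Lemma \ref{7rjqnb2r}. The extra care you take about faithfulness of Dirac pullback along $k$ and compatibility with gauge transformations is exactly the content of the paper's ``$k$ is a submersion, so \ldots'' steps.
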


\begin{proof}
We retain the notation in the proof of Proposition \ref{m9pdmd75}.
We have $k^*\ttt_{\mathcal{P}}^*L = \ttt_\H^*L = \sss_\H^*L + \Gamma_{f^*\omega_2} = k^*(\sss_{\mathcal{P}}^*L + \Gamma_{g^*\omega_2})$, and $k$ is a submersion, so $\ttt_{\mathcal{P}}^*L = \sss_{\mathcal{P}}^*L + \Gamma_{g^*\omega_2}$.
Then $\ttt_{\mathcal{B}}^*L = \imath^* \ttt^*_{\mathcal{P}} L = \imath^*(\sss^*_{\mathcal{P}} L + \Gamma_{g^*\omega_2}) = \sss_{\mathcal{B}}^*L + \Gamma_{\jmath^*\omega_2} = \sss_{\mathcal{B}}^*L$, since $\jmath$ has its image in the identity section and $\uuu^*\omega_2 = 0$ for a multiplicative form \cite[Lemma 3.1(i)]{bur-cra-wei-zhu:04}.
Similarly, $k^*\ttt_{\mathcal{P}}^*\phi = \ttt_\H^*\phi = \sss_\H^*\phi + f^*\omega_1 = k^*(\sss_{\mathcal{P}}^*\phi + g^*\omega_1)$ so $\ttt_{\mathcal{P}}^*\phi = \sss_{\mathcal{P}}^*\phi + g^*\omega_1$ and hence $\ttt_{\mathcal{B}}^*\phi = \imath^*\ttt^*_{\mathcal{P}} \phi = \imath^*(\sss^*_{\mathcal{P}}\phi + g^*\omega_1) = \sss_{\mathcal{B}}^*\phi + \jmath^*\omega_1 = \sss_{\mathcal{B}}^*\phi$.
The result then follows by Lemma \ref{7rjqnb2r} applied to $\mathcal{B}$.
\end{proof}

We can now prove the relationship between 0-shifted Poisson structures and ordinary Poisson structures mentioned in \S\ref{zaduohbe}.

\begin{corollary}\label{s7wgomy8}
Let $\G$ be a Lie groupoid whose orbit space $\G\obj/\G\arr$ has the structure of a smooth manifold such that the quotient map $\pi : \G\obj \to \G\obj/\G\arr$ is a smooth submersion.
Then there is a one-to-one correspondence between 0-shifted Poisson structures on $\G$ and Poisson structures on $\G\obj/\G\arr$ via pushfoward and pullback.
\end{corollary}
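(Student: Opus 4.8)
The plan is to exhibit the pushforward $\pi_*$ and pullback $\pi^*$ of Dirac structures (\S\ref{1n8d3ppc}) as mutually inverse bijections between the two sets. Write $Q \coloneqq \G\obj/\G\arr$ and recall that, since $\pi$ is a surjective submersion whose fibres are the $\G$-orbits, we have $\ker \pi_* = \im \aaa$; recall also that pullback of forms along the submersion $\pi$ is injective.

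First I would treat the pushforward direction. Let $L$ be a $0$-shifted Poisson structure on $\G$. By Definition \ref{3thr311z} it is untwisted and satisfies $\sss^*L = \ttt^*L$, so Lemma \ref{7rjqnb2r} applies (with $\phi = 0$) and yields a Dirac structure $\pi_*L$ on $Q$ whose background $3$-form $\zeta$ satisfies $\pi^*\zeta = 0$; injectivity of $\pi^*$ forces $\zeta = 0$, so $\pi_*L$ is untwisted. To see that $\pi_*L$ is the graph of a genuine Poisson structure (\S\ref{1n8d3ppc}), I would check $\ker(\pi_*L) = 0$: if $(w, 0) \in \pi_*L$ then $w = \pi_* v$ for some $v$ with $(v, 0) \in L$, i.e.\ $v \in \ker L = \im \aaa = \ker \pi_*$, whence $w = 0$. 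This is precisely where the non-degeneracy condition $\im\aaa = \ker L$ enters.

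For the pullback direction, present a Poisson structure on $Q$ as an untwisted Dirac structure $P$ with $\ker P = 0$ and set $L \coloneqq \pi^*P$, which is a well-defined untwisted Dirac structure because $\pi$ is a submersion. I would then verify the two conditions of Definition \ref{3thr311z}. The identity $\pi \circ \sss = \pi \circ \ttt$ together with functoriality of Dirac pullback along the submersions $\sss, \ttt$ gives $\sss^*L = (\pi \circ \sss)^*P = (\pi \circ \ttt)^*P = \ttt^*L$. For the kernel, $(v, 0) \in \pi^*P$ iff $(\pi_*v, 0)\in P$ (again by injectivity of $\pi^*$ on forms), iff $\pi_* v \in \ker P = 0$, iff $v \in \ker\pi_* = \im\aaa$; hence $\ker L = \im\aaa$, and $L$ is $0$-shifted Poisson.

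Finally, mutual inverseness: Lemma \ref{7rjqnb2r} already provides $\pi^*\pi_*L = L$, and $\pi_*\pi^*P = P$ follows by unwinding the definitions, using that $\pi_*$ is fibrewise surjective. I expect no serious obstacle in this argument; the only points demanding care are the two one-line kernel computations, where the groupoid-level non-degeneracy $\im\aaa = \ker L$ must be matched with the manifold-level non-degeneracy $\ker P = 0$, together with the repeated use of injectivity of pullback along $\pi$. These are exactly the places where the hypotheses that $Q$ is smooth and $\pi$ is a submersion are indispensable.
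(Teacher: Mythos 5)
Your proposal is correct and follows essentially the same route as the paper: apply Lemma \ref{7rjqnb2r} to get the pushforward, use $\ker L = \im \aaa = \ker \pi_*$ to see that $\pi_*L$ has trivial kernel, and invoke $\pi^*\pi_*L = L$ for mutual inverseness. You merely spell out in more detail the (routine) verification that $\pi^*P$ is $0$-shifted Poisson, which the paper leaves implicit.
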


\begin{proof}
Let $L$ be a 0-shifted Poisson structure on $\G$.
By Lemma \ref{7rjqnb2r}, $\pi_*L$ is a Dirac structure on $\G\obj/\G\arr$ such that $\pi^*\pi_*L = L$.
To show that $\pi_*L$ is the graph of a Poisson structure, it suffices to show that $\ker \pi_* L = 0$.
Let $v \in \ker \pi_*L$.
Then $v = \pi_*w$ for some $w \in \ker L$.
But $\ker L = \im \aaa = \ker \pi_*$, so $v = 0$.
Conversely, if $\Gamma_\sigma$ is the graph of a Poisson structure on $\G\obj/\G\arr$ then $\pi^*\Gamma_\sigma$ is a 0-shifted Poisson structure.
Since $\pi^*\pi_*L = L$, these two operations are inverse to each other.
\end{proof}

\section{Morita transfer of quasi-symplectic structures}
\label{1vfh5k59}

We now recall how to transfer quasi-symplectic structures on Morita equivalent Lie groupoids.
The results in this section are mostly due to Xu \cite{xu:04}, with slight refinements.
We introduce them as they form the basis of our definition of 1-shifted symplectic structures on differentiable stacks in \S\ref{dy26su68} and will serve as a model for our discussion on Morita transfer of 1-shifted coisotropic structures.

Let us first recall the notion of symplectomorphisms of quasi-symplectic groupoids when viewed as presentations of 1-shifted symplectic stacks.
It is the direct analogue of the fact a map between symplectic manifolds is a symplectomorphism if and only if its graph is Lagrangian.

\begin{definition}\label{ie53b839}
A \defn{symplectic Morita equivalence} (see e.g.\ \cite[Definition 2.31]{cue-zhu:23}) between quasi-symplectic groupoids $\G_1$ and $\G_2$ is a Morita equivalence 
\[
\begin{tikzcd}[row sep={2em,between origins},column sep={4em,between origins}]
& \L \arrow[swap]{dl} \arrow{dr} & \\
\G_1 & & \G_2
\end{tikzcd}
\]
of the underlying Lie groupoids together with a 1-shifted Lagrangian structure on $\L \to \G_1 \times \G_2^-$. 
\end{definition}

\begin{remark}
Another popular definition is in terms of Hamiltonian bimodules \cite{xu:04} (see also \cite[Appendix A]{ale-mei:22}).
Both approaches are equivalent \cite[Theorem 2.37]{cue-zhu:23}.
\end{remark}

It is useful to observe that there is a redundancy in the definition, namely, the non-degeneracy condition of the 1-shifted Lagrangian structure is automatic:

\begin{lemma}\label{bczyu6nt}
Let $(\G_1, \omega_1, \phi_1)$ and $(\G_2, \omega_2, \phi_2)$ be quasi-symplectic groupoids and
\begin{equation}\label{zfkea3ml}
\begin{tikzcd}[row sep={2em,between origins},column sep={4em,between origins}]
& \L \arrow[swap]{dl}{\varphi_1} \arrow{dr}{\varphi_2} & \\
\G_1 & & \G_2
\end{tikzcd}
\end{equation}
a Morita equivalence of Lie groupoids together with a $2$-form $\gamma$ on $\L\obj$ such that
\begin{equation}\label{kvzg1hqz}
\varphi_1^*\omega_1 - \varphi_2^*\omega_2 = \ttt^*\gamma - \sss^*\gamma
\quad\text{and}\quad
\varphi_1^*\phi_1 - \varphi_2^*\phi_2 = -d\gamma.
\end{equation}
Then $\gamma$ is a 1-shifted Lagrangian structure on $\L \to \G_1 \times \G_2^{-}$ and hence \eqref{zfkea3ml} is a symplectic Morita equivalence.
\end{lemma}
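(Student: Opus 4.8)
The plan is to notice that \eqref{kvzg1hqz} is precisely the compatibility condition of a 1-shifted Lagrangian structure on $\c \coloneqq (\varphi_1, \varphi_2) : \L \to \G_1 \times \G_2^-$, so that the only thing left to verify is non-degeneracy. Writing the quasi-symplectic data on $\G_1 \times \G_2^-$ as $\omega = (\omega_1, -\omega_2)$, $\phi = (\phi_1, -\phi_2)$, and $\IM_\omega = (\IM_{\omega_1}, -\IM_{\omega_2})$, the two identities in \eqref{kvzg1hqz} read $\c^*\phi = -d\gamma$ and $\c^*\omega = \ttt^*\gamma - \sss^*\gamma$. Passing to graphs, the second identity gives $\ttt^*\Gamma_\gamma = \sss^*\Gamma_\gamma + \Gamma_{\c^*\omega}$, so Lemma \ref{18e8u16h} applies to $L = \Gamma_\gamma$; in particular $\c^*\IM_\omega \c_* b = i_{\aaa_\L b}\gamma$ for all $b \in A_\L$. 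Since $\gamma$ is the graph of a $2$-form, the chain map \eqref{fkoguq0d} coincides with \eqref{r7ynptgu}, so by Lemma \ref{t7rjqxe5} it remains only to prove that
\[
\Phi : A_\L \too \Gamma_\gamma \times_\c A_\G, \qquad b \mtoo \big((\aaa_\L b,\, i_{\aaa_\L b}\gamma),\, (\varphi_{1*}b,\, \varphi_{2*}b)\big),
\]
is an isomorphism of vector bundles over $\L\obj$, where $A_\G = A_{\G_1} \times A_{\G_2}$.

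For surjectivity I would take an element $((v, i_v\gamma), (a_1, a_2))$ of $\Gamma_\gamma \times_\c A_\G$, so that $\varphi_{1*}v = \aaa_{\G_1} a_1$, $\varphi_{2*}v = \aaa_{\G_2} a_2$, and $i_v\gamma = \varphi_1^*\IM_{\omega_1} a_1 - \varphi_2^*\IM_{\omega_2} a_2$. Since $\varphi_1$ is a Morita morphism and $\varphi_{1*}v = \aaa_{\G_1}a_1$, Lemma \ref{htlesxzs} produces a unique $b \in A_\L$ with $\varphi_{1*}b = a_1$ and $\aaa_\L b = v$. The only genuine point is then to check that the second leg is also correct, i.e.\ $\varphi_{2*}b = a_2$. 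On one hand $\aaa_{\G_2}\varphi_{2*}b = \varphi_{2*}\aaa_\L b = \varphi_{2*}v = \aaa_{\G_2}a_2$, so $\varphi_{2*}b - a_2 \in \ker \aaa_{\G_2}$. On the other hand, Lemma \ref{18e8u16h} gives $i_v\gamma = i_{\aaa_\L b}\gamma = \varphi_1^*\IM_{\omega_1}a_1 - \varphi_2^*\IM_{\omega_2}\varphi_{2*}b$, and comparing with the defining relation for $i_v\gamma$ yields $\varphi_2^*\IM_{\omega_2}(\varphi_{2*}b - a_2) = 0$. As $\varphi_2\obj$ is a submersion, $\varphi_2^*$ is injective on covectors, so $\IM_{\omega_2}(\varphi_{2*}b - a_2) = 0$; combined with $\varphi_{2*}b - a_2 \in \ker \aaa_{\G_2}$ and Lemma \ref{7wn1m20c}\ref{rwp7208a}, this forces $\varphi_{2*}b = a_2$. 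Hence $\Phi(b) = ((v, i_v\gamma), (a_1, a_2))$.

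Injectivity is then immediate: if $b \in \ker \Phi$, then $\aaa_\L b = 0$ and $\varphi_{1*}b = 0$, so $b$ is the unique lift of $(0,0)$ provided by the Morita morphism $\varphi_1$ in Lemma \ref{htlesxzs}, whence $b = 0$. Therefore $\Phi$ is an isomorphism, $\gamma$ is a 1-shifted Lagrangian structure on $\c$, and \eqref{zfkea3ml} is a symplectic Morita equivalence. The crux of the argument — the step I expect to require the most care — is the matching of the two legs in the surjectivity proof, where the compatibility of $\gamma$ with $(\omega_1, \omega_2)$ and the non-degeneracy of the infinitesimally multiplicative form $\IM_{\omega_2}$ (Lemma \ref{7wn1m20c}\ref{rwp7208a}) must be combined; everything else follows formally from the two defining properties of a Morita morphism.
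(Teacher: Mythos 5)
Your proof is correct and follows the same overall strategy as the paper's: reduce the non-degeneracy condition to bijectivity of $A_\L \to \Gamma_\gamma \times_\c A_\G$ via Lemma \ref{t7rjqxe5}, lift using the Morita property (Lemma \ref{htlesxzs}), and conclude from $\ker\aaa \cap \ker\IM_\omega = 0$. The one place you diverge is in matching the second leg: the paper lifts $a_1$ and $a_2$ to two elements $\ell_1, \ell_2 \in A_\L$ with $\aaa\ell_i = v$ and shows $\ell_1 = \ell_2$ by applying Lemma \ref{7wn1m20c}\ref{rwp7208a} to the pullback structure $(\varphi_2^*\omega_2, \varphi_2^*\phi_2)$ on $\L$, which requires the external input (Xu's Proposition 4.8) that this pullback is again quasi-symplectic; you instead lift only once through $\varphi_1$ and push the discrepancy $\varphi_{2*}b - a_2$ down to $\G_2$, using injectivity of $\varphi_2^*$ on covectors (valid since $\varphi_2\obj$ is a submersion) to apply Lemma \ref{7wn1m20c}\ref{rwp7208a} directly on $\G_2$. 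Your variant is marginally more self-contained, at the cost of one extra observation about $\varphi_2^*$; both arguments are equally short and the choice is a matter of taste.
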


\begin{proof}
By Lemma \ref{t7rjqxe5}, the non-degeneracy condition amounts to the statement that the map
\begin{align}
A_\L &\too \{(v, a_1, a_2) \in T\L\obj \oplus \varphi_1^*A_{\G_1} \oplus \varphi_2^*A_{\G_2} : \varphi_{1*} v = \aaa a_1, \varphi_{2*} v = \aaa a_2, i_v\gamma = \varphi_1^*\IM_{\omega_1}a_1 - \varphi_2^*\IM_{\omega_2}a_2\} \nonumber \\
\ell &\mtoo (\aaa \ell, \varphi_1 \ell, \varphi_2 \ell) \label{8pg0mp7q}
\end{align}
is bijective.
Let $(v, a_1, a_2)$ be in the codomain.
By Lemma \ref{htlesxzs}, there exists $\ell_1, \ell_2 \in A_\L$ such that $\aaa \ell_i = v$ and $\varphi_{i*} \ell_i = a_i$ for $i = 1, 2$.
We then get
\begin{equation}\label{ud2pxcmn}
i_v\gamma = \IM_{\varphi_1^*\omega_1} \ell_1 - \IM_{\varphi_2^*\omega_2} \ell_2.
\end{equation}
Also, by the first part of \eqref{kvzg1hqz}, we have
\begin{equation}\label{ol695sgs}
\IM_{\varphi_1^*\omega_1}\ell - \IM_{\varphi_2^*\omega_2}\ell = i_{\aaa \ell} \gamma
\end{equation}
for all $\ell \in A_\L$.
Applying \eqref{ol695sgs} to $\ell = \ell_1$ and subtracting to \eqref{ud2pxcmn}, we get $\IM_{\varphi_2^*\omega_2}(\ell_1 - \ell_2) = 0$.
Since also $\aaa(\ell_1 - \ell_2) = 0$ and $(\varphi_2^*\omega_2, \varphi_2^*\phi_2)$ is a quasi-symplectic structure \cite[Proposition 4.8]{xu:04}, we conclude from the non-degeneracy condition of $(\varphi_2^*\omega_2, \varphi_2^*\phi_2)$ that $\ell_1 = \ell_2$ (see Lemma \ref{7wn1m20c}\ref{rwp7208a}).
Hence, \eqref{8pg0mp7q} is surjective.
Injectivity follows from the fact that $\varphi_i$ are Morita morphisms (see Lemma \ref{htlesxzs}).
\end{proof}

%

%
The \defn{gauge transformation} of a quasi-symplectic structure $(\omega, \phi)$ on a Lie groupoid $\G$ by a $2$-form $\gamma$ on $\G\obj$ is the new quasi-symplectic structure $(\omega + \sss^*\gamma - \ttt^*\gamma, \phi + d\gamma)$ \cite[\S4.1]{xu:04}.
We say that two quasi-symplectic structures are \defn{gauge equivalent} if they are related by a gauge transformation.
We denote the set of gauge equivalence classes of quasi-symplectic structures on $\G$ by $\operatorname{qsymp}(\G)$.


\begin{theorem}[{Xu \cite{xu:04}; see also \cite[Theorem 3.12]{bon-cic-lau-xu:22}}]\label{jpmgv5l0}
Let
\begin{equation}\label{vwwcl5o8}
\begin{tikzcd}[row sep={2em,between origins},column sep={4em,between origins}]
& \L \arrow{dl} \arrow{dr} & \\
\G_1 & & \G_2
\end{tikzcd}
\end{equation}
be a Morita equivalence of Lie groupoids.
For every quasi-symplectic structure $(\omega_1, \phi_1)$ on $\G_1$, there is a unique (up to gauge transformation) quasi-symplectic structure $(\omega_2, \phi_2)$ on $\G_2$ such that \eqref{vwwcl5o8} has the structure of a symplectic Morita equivalence.
Moreover, this gives a bijection
\begin{equation}\label{a17x97sf}
\begin{tikzcd}
\operatorname{qsymp}(\G_1) \arrow{r}{\L} & \operatorname{qsymp}(\G_2).
\end{tikzcd}
\end{equation}
\end{theorem}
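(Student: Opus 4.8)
The plan is to transfer the structure through $\L$ by pulling back along the left leg and pushing forward along the right leg, using the Dirac-transfer machinery of \S\ref{eph7yukn}. Write the span \eqref{vwwcl5o8} as $\varphi_1 : \L \to \G_1$ and $\varphi_2 : \L \to \G_2$, and let $L_1$ be the Dirac structure on $\G_1\obj$ induced by $(\omega_1, \phi_1)$. By Lemma \ref{bczyu6nt}, once a quasi-symplectic structure $(\omega_2, \phi_2)$ on $\G_2$ and a $2$-form $\gamma$ on $\L\obj$ satisfying the compatibility conditions \eqref{kvzg1hqz} are produced, non-degeneracy is automatic and \eqref{vwwcl5o8} becomes a symplectic Morita equivalence; so existence reduces to arranging \eqref{kvzg1hqz}. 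First I would pull $(\omega_1, \phi_1)$ back along the Morita morphism $\varphi_1$: by \cite[Proposition 4.8]{xu:04} this yields a quasi-symplectic structure $(\varphi_1^*\omega_1, \varphi_1^*\phi_1)$ on $\L$, whose induced Dirac structure is the pullback $\varphi_1^*L_1$ (smooth since $\varphi_1\obj$ is a submersion) and which therefore satisfies $\ttt^*(\varphi_1^*L_1) = \sss^*(\varphi_1^*L_1) + \Gamma_{\varphi_1^*\omega_1}$ by Proposition \ref{2kzh0or3}.

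The crux is to descend the multiplicative form along $\varphi_2$ up to gauge, i.e.\ to find a multiplicative form $\omega_2$ on $\G_2\arr$ and a $2$-form $\gamma$ on $\L\obj$ with
\[
\varphi_1^*\omega_1 = \varphi_2^*\omega_2 + \ttt^*\gamma - \sss^*\gamma.
\]
This is the essential content of Xu's theorem and the only genuinely nontrivial step; it expresses the Morita invariance of multiplicative forms and can be established, as in Proposition \ref{m9pdmd75}, by passing to the banal (submersion) groupoid of $\varphi_2\obj$ — now at the level of the nerve, the gauge $2$-form $\gamma$ absorbing the unit-pullback discrepancy. Granting this, I would gauge-transform $\varphi_1^*L_1$ by $-\gamma$ to obtain a Dirac structure $L'$ with background $\varphi_1^*\phi_1 + d\gamma$ satisfying $\ttt^*L' = \sss^*L' + \Gamma_{\varphi_2^*\omega_2}$; a direct computation gives $\ttt^*(\varphi_1^*\phi_1 + d\gamma) - \sss^*(\varphi_1^*\phi_1 + d\gamma) = -\varphi_2^* d\omega_2$, with $-d\omega_2$ again multiplicative. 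Proposition \ref{9u2g4w2h} then applies to the weak Morita morphism $\varphi_2$ and pushes $L'$ forward to a Dirac structure $L_2 = \varphi_{2*}L'$ on $\G_2\obj$ with a background $3$-form $\phi_2$ characterised by $\varphi_2^*\phi_2 = \varphi_1^*\phi_1 + d\gamma$ and $\varphi_2^*L_2 = L'$. Pulling the defining identities back along the submersion $\varphi_2$, one checks $d\phi_2 = 0$ and $d\omega_2 = \sss^*\phi_2 - \ttt^*\phi_2$, while the dimension and non-degeneracy axioms transfer from $\L$ using $\varphi_2^*L_2 = L'$ and the lifting property of Lemma \ref{htlesxzs}; thus $(\G_2, \omega_2, \phi_2)$ is quasi-symplectic and \eqref{kvzg1hqz} holds by construction.

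For uniqueness up to gauge, suppose $(\omega_2, \phi_2, \gamma)$ and $(\omega_2', \phi_2', \gamma')$ both satisfy \eqref{kvzg1hqz}. Subtracting gives $\varphi_2^*(\omega_2 - \omega_2') = \ttt^*(\gamma' - \gamma) - \sss^*(\gamma' - \gamma)$ and $\varphi_2^*(\phi_2 - \phi_2') = -d(\gamma' - \gamma)$; since $\varphi_2$ is a Morita morphism, a basic-form descent argument (Proposition \ref{m9pdmd75}) shows $\gamma' - \gamma$ descends to a $2$-form $\delta$ on $\G_2\obj$ realising $(\omega_2, \phi_2)$ and $(\omega_2', \phi_2')$ as gauge equivalent. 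Hence the assignment is well defined on gauge classes, yielding the map \eqref{a17x97sf}. Reversing the roles of $\varphi_1$ and $\varphi_2$ produces a transfer $\operatorname{qsymp}(\G_2) \to \operatorname{qsymp}(\G_1)$, and the symmetry of the construction together with the uniqueness just proved shows the two transfers are mutually inverse, so \eqref{a17x97sf} is a bijection. The main obstacle throughout is the displayed descent of the multiplicative form: unlike the Dirac structure and the background $3$-form, which transfer cleanly through Propositions \ref{9u2g4w2h} and \ref{m9pdmd75}, the $2$-form $\omega$ lives on arrows, and its Morita invariance up to gauge is the genuine cohomological input — the part I would lean on Xu for.
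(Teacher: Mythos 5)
Your outline is sound at the periphery and correctly isolates the crux, but the crux itself is left unproven, and the sketch you offer for it would not work as stated. The step you need is: given the multiplicative form $\varphi_1^*\omega_1$ on $\L\arr$, produce a multiplicative form $\omega_2$ on $\G_2\arr$ and a $2$-form $\gamma$ on $\L\obj$ with $\varphi_1^*\omega_1 - \varphi_2^*\omega_2 = \ttt^*\gamma - \sss^*\gamma$. You propose to get this ``as in Proposition \ref{m9pdmd75}, by passing to the banal groupoid of $\varphi_2\obj$,'' but Proposition \ref{m9pdmd75} descends forms that are \emph{basic}, and $\varphi_1^*\omega_1$ is precisely \emph{not} basic for the relevant groupoid --- it is basic only up to the coboundary $\ttt^*\gamma - \sss^*\gamma$ whose existence is the thing to be proved. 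What is actually needed is a descent of \emph{cohomology classes}, not of forms: the paper treats $(\omega_1,\phi_1)$ as a single degree-$3$ cocycle in the Bott--Shulman--Stasheff (total de Rham) complex and invokes the fact that Morita morphisms induce isomorphisms $H^3(\G_1)\cong H^3(\L)\cong H^3(\G_2)$, which produces $(\omega_2,\phi_2)$ and the connecting $2$-form $\gamma$ simultaneously, with gauge transformations appearing exactly as coboundaries. Your ``lean on Xu'' is an honest flag, but as written the proposal neither proves this step nor reduces it to a precisely citable statement, so there is a genuine gap here.

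Everything surrounding that step is essentially correct and in places takes a legitimately different route from the paper. Your reduction of non-degeneracy to Lemma \ref{bczyu6nt}, the lifting argument via Lemma \ref{htlesxzs} for the non-degeneracy of $(\omega_2,\phi_2)$, and the Morita invariance of the dimension count all match the paper. Your construction of $\phi_2$ by pushing forward the gauge-transformed Dirac structure through Proposition \ref{9u2g4w2h} is a harmless detour (the paper gets $\phi_2$ for free from the cohomology class, and one could also apply the basic-form descent directly to $\varphi_1^*\phi_1 + d\gamma$). Your uniqueness argument --- descending $\gamma'-\gamma$ to a $2$-form $\delta$ on $\G_2\obj$ realising the gauge equivalence --- is a valid alternative to the paper's purely cohomological one (``both classes map to the same element of $H^3(\L)$''), but note that the citation of Proposition \ref{m9pdmd75} is again slightly off: $\gamma'-\gamma$ is not basic on $\L$ itself, only on the submersion groupoid of $\varphi_2\obj$ (where $\varphi_2^*(\omega_2-\omega_2')$ vanishes because multiplicative forms pull back to zero on the unit section), so you must run the banal-groupoid step of that proof by hand rather than quote its statement. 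In short: correct skeleton, one central step missing, and that step is exactly the cohomological Morita invariance the paper's proof is built on.
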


\begin{proof}
Let us explain how this statement can be inferred from \cite{xu:04}, since it does not appear explicitly in this form.
Let $\varphi_i : \L \to \G_i$ be the two maps in \eqref{vwwcl5o8}.
The compatibility condition of the quasi-symplectic structure $(\omega_1, \phi_1)$ (i.e.\ that $\omega_1$ is multiplicative, $d\omega_1 = \sss^*\phi_1 - \ttt^*\phi_1$, and $d\phi_1 = 0$) is the statement that it forms a 3-cocycle in the total de Rham complex of $\G_2$ \cite[\S2.1]{xu:04}.
Hence, it defines an element $[(\omega_1, \phi_1)]$ of the Lie groupoid cohomology $H^3(\G_1)$.
Since the maps in \eqref{vwwcl5o8} are Morita morphisms, they induce a diagram
\begin{equation}\label{e98tbiki}
\begin{tikzcd}[row sep={3em,between origins},column sep={5em,between origins}]
& H^3(\L) & \\
H^3(\G_1) \arrow{ur}{\varphi_1^*} & & H^3(\G_2) \arrow[swap]{ul}{\varphi_2^*}
\end{tikzcd}
\end{equation}
of isomorphisms of vector spaces.
It follows that there exists $[(\omega_2, \phi_2)] \in H^3(\G_2)$ such that $\varphi_1^*[(\omega_1, \phi_1)] = \varphi_2^*[(\omega_2, \phi_2)]$, i.e.\
\begin{equation}\label{wa7ybmiz}
(\varphi_2^*\omega_2, \varphi_2^*\phi_2) = (\varphi_1^*\omega_1 + \sss^*\gamma - \ttt^*\gamma, \varphi_1^*\phi_1 + d\gamma),
\end{equation}
for some $2$-form $\gamma$ on $\L\obj$.
By \cite[Proposition 4.8]{xu:04}, $(\varphi_1^*\omega_1, \varphi_1^*\phi_1)$ is a quasi-symplectic structure on $\L$.
By \cite[Proposition 4.6]{xu:04}, its gauge transformation by $\gamma$, which is $(\varphi_2^*\omega_2, \varphi_2^*\phi_2)$, is a quasi-symplectic structure on $\L$.
To see that $(\omega_2, \phi_2)$ is a quasi-symplectic structure on $\G_2$, let $v \in \ker \omega_2 \cap \ker \sss_* \cap \ker \ttt_*$.
Since $\varphi_2$ is a Morita morphism, there exists $w \in T\L$ such that $\varphi_{2*} w = v$ and $\sss_* w = \ttt_* w = 0$ (Lemma \ref{htlesxzs}).
Then $i_w \varphi_2^*\omega_2 = \varphi_2^* i_v\omega_2 = 0$, so $w = 0$ by the non-degeneracy of $(\varphi_2^*\omega_2, \varphi_2^*\phi_2)$, and hence $v = 0$.
The condition that $\dim \G_2\arr = 2 \dim \G_2\obj$ follows from the fact that $2 \dim \G_2\obj - \dim \G_2\arr$ is a Morita invariant (the ``dimension'' of the orbit space).
Therefore, $(\omega_2, \phi_2)$ is a quasi-symplectic structure.
By \eqref{wa7ybmiz} and Lemma \ref{bczyu6nt}, $\gamma$ is a 1-shifted Lagrangian structure on $\L \to \G_1 \times \G_2^-$.

To show that $(\omega_2, \phi_2)$ is unique up to gauge transformation, let $(\omega_2', \phi_2')$ be another quasi-symplectic structure on $\G_2$ such that \eqref{vwwcl5o8} has a 1-shifted Lagrangian structure.
It follows that $[(\omega_2, \phi_2)]$ and $[(\omega_2', \phi_2')]$ map to the same element of $H^3(\L)$ under \eqref{e98tbiki}.
They are then cohomologous, i.e.\ differ by a gauge transformation.
\end{proof}

Next, we study the extent to which the transfer map \eqref{a17x97sf} depends on the Morita equivalence $\L$.
We say that two Morita equivalences $\G_1 \leftarrow \L \to \G_2$ and $\G_1 \leftarrow \L' \to \G_2$ are \defn{equivalent} if there exists a 2-commutative diagram of Morita morphisms
\begin{equation}\label{poby95jm}
\begin{tikzcd}[row sep={4em,between origins},column sep={4em,between origins}]
& \L \arrow{dl} \arrow{dr} & \\
\G_1 & \widehat{\L} \arrow{u} \arrow{d} & \G_2 \\
& \L'. \arrow{ul} \arrow{ur} &
\end{tikzcd}
\end{equation}

In other words, they are equivalent if they present the same morphism of stacks; see \S\ref{dy26su68}.

\begin{proposition}\label{0cc6w7hy}
The transfer map \eqref{a17x97sf} depends only on the equivalence class of \eqref{vwwcl5o8}. 
\end{proposition}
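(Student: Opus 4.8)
The plan is to reduce the statement to Lie groupoid cohomology, where the transfer map of Theorem~\ref{jpmgv5l0} is governed by pullback isomorphisms. Write $\varphi_1,\varphi_2$ and $\varphi_1',\varphi_2'$ for the legs of the two Morita equivalences $\G_1 \leftarrow \L \to \G_2$ and $\G_1 \leftarrow \L' \to \G_2$, and let $\psi\colon\widehat\L\to\L$, $\psi'\colon\widehat\L\to\L'$ be the vertical Morita morphisms in \eqref{poby95jm}, so that 2-commutativity means $\varphi_i\circ\psi$ is homotopic (written $\simeq$) to $\varphi_i'\circ\psi'$ for $i=1,2$. Recall from the proof of Theorem~\ref{jpmgv5l0} that, for a quasi-symplectic structure $(\omega_1,\phi_1)$ on $\G_1$, its transfer $(\omega_2,\phi_2)$ along $\L$ is characterized by $\varphi_1^*(\omega_1,\phi_1)$ and $\varphi_2^*(\omega_2,\phi_2)$ being gauge equivalent on $\L$, and similarly the transfer $(\omega_2',\phi_2')$ along $\L'$ satisfies that $(\varphi_1')^*(\omega_1,\phi_1)$ and $(\varphi_2')^*(\omega_2',\phi_2')$ are gauge equivalent on $\L'$. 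The goal is to show that $(\omega_2,\phi_2)$ and $(\omega_2',\phi_2')$ are gauge equivalent on $\G_2$.

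The key input, which I would isolate as a lemma, is that homotopic Lie groupoid morphisms pull back a quasi-symplectic structure to gauge-equivalent ones. Indeed, if $\theta\colon f\Rightarrow g$ is a natural transformation between $f,g\colon\H\to\G$ and $(\omega,\phi)$ is quasi-symplectic, then Lemma~\ref{wip6t5yh}\ref{u0vnf6px} gives $g^*\omega-f^*\omega=\ttt^*\theta^*\omega-\sss^*\theta^*\omega$, while $g^*\phi-f^*\phi=\theta^*(\ttt^*\phi-\sss^*\phi)=-d(\theta^*\omega)$ using $\sss\circ\theta=f$, $\ttt\circ\theta=g$, and $d\omega=\sss^*\phi-\ttt^*\phi$. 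Thus $g^*(\omega,\phi)$ is exactly the gauge transform of $f^*(\omega,\phi)$ by $-\theta^*\omega$. I would also record the elementary fact that gauge equivalence of quasi-symplectic structures is preserved under pullback along any Lie groupoid morphism, since such a morphism commutes with $\sss,\ttt$, and $d$.

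With these preliminaries, the argument is a diagram chase on $\widehat\L$. Pulling the two characterizing gauge equivalences back along $\psi$ and $\psi'$, and inserting the gauge equivalences coming from $\varphi_1\psi\simeq\varphi_1'\psi'$ and $\varphi_2\psi\simeq\varphi_2'\psi'$ via the key lemma, I would chain
\[
\psi^*\varphi_2^*(\omega_2,\phi_2)\;\sim\;\psi^*\varphi_1^*(\omega_1,\phi_1)\;\sim\;(\psi')^*(\varphi_1')^*(\omega_1,\phi_1)\;\sim\;(\psi')^*(\varphi_2')^*(\omega_2',\phi_2')\;\sim\;\psi^*\varphi_2^*(\omega_2',\phi_2'),
\]
where $\sim$ denotes gauge equivalence and the last step again uses $\varphi_2\psi\simeq\varphi_2'\psi'$ applied to $(\omega_2',\phi_2')$. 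Hence $(\varphi_2\psi)^*(\omega_2,\phi_2)$ and $(\varphi_2\psi)^*(\omega_2',\phi_2')$ are gauge equivalent, and in particular cohomologous, on $\widehat\L$.

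Finally I would descend back to $\G_2$. The composite $\varphi_2\circ\psi\colon\widehat\L\to\G_2$ is a Morita morphism, being a composition of Morita morphisms, so it induces an isomorphism on $H^3$ (as in \eqref{e98tbiki}); therefore $[(\omega_2,\phi_2)]=[(\omega_2',\phi_2')]$ in $H^3(\G_2)$. As cohomologous quasi-symplectic structures differ by a gauge transformation --- exactly the fact used in the uniqueness part of the proof of Theorem~\ref{jpmgv5l0} --- this yields that $(\omega_2,\phi_2)$ and $(\omega_2',\phi_2')$ are gauge equivalent, so the two transfer maps agree. The main obstacle is establishing the key lemma on homotopic morphisms together with the cohomological descent in the last step; both rest on Lemma~\ref{wip6t5yh} and on Morita invariance of $H^3$, which are already available.
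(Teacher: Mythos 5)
Your proposal is correct and follows essentially the same route as the paper: reduce to gauge/cohomology classes, use that homotopic morphisms induce the same pullback (which the paper cites as invariance of $f^*$ on Lie groupoid cohomology under natural transformations, and which you rederive explicitly from Lemma \ref{wip6t5yh}\ref{u0vnf6px} as the gauge transformation by $-\theta^*\omega$), and descend along the Morita morphism $\varphi_2\circ\psi$ using the isomorphism on $H^3$. The only difference is that you make the homotopy-invariance step explicit at the cocycle level rather than quoting the general cohomological fact, which is a fine (and arguably more self-contained) way to present the same argument.
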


\begin{proof}
Let $(\omega_1, \phi_1)$ be a quasi-symplectic structure on $\G_1$, let $(\omega_2, \phi_2)$ be the quasi-symplectic structure (up to gauge equivalence) on $\G_2$ obtained from $\G_1 \leftarrow \L \to \G_2$, and $(\omega_2', \phi_2')$ the one obtained from $\G_1 \leftarrow \L' \to \G_2$ as in \eqref{poby95jm}.
Recall that for a Lie groupoid morphism $f : \H \to \G$, the induced map on Lie groupoid cohomology $f^* : H^{\bullet}(\G) \to H^{\bullet}(\H)$ is invariant under natural transformation of $f$ (see e.g.\ \cite{beh:04}).
Hence, passing to cohomology, \eqref{poby95jm} gives a \emph{commutative} diagram of vector space isomorphisms
\begin{equation}\label{s99zulcs}
\begin{tikzcd}[row sep={5em,between origins},column sep={5em,between origins}]
& H^3(\L) \arrow[from=dl] \arrow[from=dr] & \\
H^3(\G_1) & H^3(\widehat{\L}) \arrow[from=u] \arrow[from=d] & H^3(\G_2) \\
& H^3(\L') \arrow[from=ul] \arrow[from=ur]. &
\end{tikzcd}
\end{equation}
As we recalled in the proof of Theorem \ref{jpmgv5l0}, $(\G_1, \omega_1, \phi_1) \leftarrow \L \to (\G_2, \omega_2, \phi_2)$ has the structure of a symplectic Morita equivalence if and only if $[(\omega_1, \phi_1)]$ and $[(\omega_2, \phi_2)]$ are mapped to the same element in $H^3(\L)$.
It follows that the cohomology classes $[(\omega_2, \phi_2)]$ and $[(\omega_2', \phi_2')]$ are mapped to the same element of $H^3(\widehat{\L})$ in \eqref{poby95jm} and hence are equal.
In other words, $(\omega_2, \phi_2)$ and $(\omega_2', \phi_2')$ are cohomologous, i.e.\ gauge equivalent \cite[\S4.1]{xu:04}.
\end{proof}

%

Suppose now that we have two Morita equivalences of Lie groupoids
\begin{equation}\label{q554p0i4}
\begin{tikzcd}[row sep={2em,between origins},column sep={4em,between origins}]
& \L_1 \arrow{dl} \arrow{dr} & & \L_2 \arrow{dl}\arrow{dr} & \\
\G_1 & & \G_2 & & \G_3.
\end{tikzcd}
\end{equation}
By taking the homotopy fibre product, we obtain a new Morita equivalence of Lie groupoids \cite[Proposition 4.4.4]{hoy:13}
\begin{equation}\label{wf31d5cr}
\begin{tikzcd}[row sep={3em,between origins},column sep={6em,between origins}]
& \L_1 \htimes_{\G_2} \L_2 \arrow{dl} \arrow{dr} & \\
\G_1 & & \G_3.
\end{tikzcd}
\end{equation}
Moreover, if \eqref{q554p0i4} are symplectic Morita equivalences, then so is \eqref{wf31d5cr}:

\begin{proposition}[{See also \cite[Proposition 2.33]{cue-zhu:23}}] \label{9j2iully}
Suppose that the Lie groupoids $\G_i$ in \eqref{q554p0i4} have quasi-symplectic structure $(\omega_i, \phi_i)$ and both equivalences in \eqref{q554p0i4} are symplectic Morita equivalences.
Then \eqref{wf31d5cr} is also a symplectic Morita equivalence.
\end{proposition}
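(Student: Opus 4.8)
The plan is to recognize $\L_1 \htimes_{\G_2} \L_2$ as the composition of the two 1-shifted coisotropic correspondences $\G_1 \leftarrow \L_1 \to \G_2$ and $\G_2 \leftarrow \L_2 \to \G_3$ and then to invoke the homotopy intersection theorem. First I would record that the underlying span $\G_1 \leftarrow \L_1 \htimes_{\G_2} \L_2 \to \G_3$ is already a Morita equivalence of Lie groupoids by \cite[Proposition 4.4.4]{hoy:13}, exactly as in \eqref{wf31d5cr}. By Definition \ref{ie53b839} it therefore suffices to produce a 1-shifted Lagrangian structure on the induced morphism $\c : \L_1 \htimes_{\G_2} \L_2 \to \G_1 \times \G_3^-$.

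By hypothesis each $\L_i$ carries a 1-shifted Lagrangian structure given by a $2$-form $\gamma_i$ on $\L_i\obj$, and by the characterization following Proposition \ref{3pl9nrs8} these are precisely the strong 1-shifted coisotropic structures $L_i \coloneqq \Gamma_{\gamma_i}$ that happen to be graphs. Since the two legs into $\G_2$, namely $\c_{12} : \L_1 \to \G_2$ and $\c_{22} : \L_2 \to \G_2$, are Morita morphisms, the maps $\c_{12}\obj$ and $\c_{22}\obj$ are surjective submersions. Consequently the homotopy fibre product $\C \coloneqq \L_1 \htimes_{\G_2} \L_2$ is clean, and the differentials $\c_{12*}$ and $\c_{22*}$ are fibrewise surjective onto $T\G_2\obj$. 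As $p_T$ restricts to an isomorphism on each graph $\Gamma_{\gamma_i}$, the image of the left-hand map of \eqref{rf57it5e} is already all of $T\G_2\obj \times T\G_2\obj$, so \eqref{rf57it5e} is automatically transverse.

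I would then apply Parts \ref{lpfuwmeg} and \ref{k9kpqiad} of Theorem \ref{rt0qkos3} to the relevant Dirac structure, which is the graph of a $2$-form:
\[
L \coloneqq p_1^*\Gamma_{\gamma_1} + p_2^*\Gamma_{\gamma_2} - p_0^*\Gamma_{\omega_2} = \Gamma_\gamma, \qquad \gamma \coloneqq p_1^*\gamma_1 + p_2^*\gamma_2 - p_0^*\omega_2,
\]
and is in particular automatically smooth. Part \ref{lpfuwmeg} then makes $L$ a 1-shifted coisotropic structure on $\c$, while Part \ref{k9kpqiad}, using the transversality just established together with the strongness of the $L_i$, makes $L$ strong. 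A strong 1-shifted coisotropic structure that is the graph of a $2$-form is a 1-shifted Lagrangian structure (again by the discussion after Proposition \ref{3pl9nrs8}), so $\gamma$ is a 1-shifted Lagrangian structure on $\c$ and the span is a symplectic Morita equivalence.

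The only point demanding genuine care, rather than citation bookkeeping, is to confirm that the compatibility clause of Part \ref{lpfuwmeg} of Theorem \ref{rt0qkos3} unwinds to the two identities of the shape \eqref{kvzg1hqz} (with $\G_2$ there replaced by $\G_3$), correctly tracking the opposite orientation on $\G_3^-$; this rests on the multiplicativity of $\omega_2$ and the cocycle identity \eqref{1to6s83b}. As an alternative to Part \ref{k9kpqiad}, once $\gamma$ and these identities are in hand one may instead extract non-degeneracy directly from Lemma \ref{bczyu6nt}, since $\C$ is a Morita equivalence---this is the same mechanism used in the proof of Theorem \ref{jpmgv5l0}. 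The main obstacle is thus organizational: aligning the projections $p_0, p_1, p_2$ and the $\G_3^-$ conventions, not any fresh analytic estimate.
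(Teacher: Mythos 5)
Your proposal is correct and takes essentially the same route as the paper: the paper's proof is a one-line appeal to Theorem \ref{rt0qkos3}\ref{k9kpqiad}, with transversality of \eqref{rf57it5e} automatic because $p_T : L_i \to T\L\obj_i$ is an isomorphism on graphs and the legs $\c_{12}, \c_{22}$ into $\G_2$ are Morita morphisms, hence submersive on objects. Your additional verifications (smoothness of $L = \Gamma_\gamma$, the identification of strong-graph coisotropics with Lagrangians via Lemma \ref{t7rjqxe5}) are exactly the bookkeeping the paper leaves implicit.
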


\begin{proof}
This follows from Theorem \ref{rt0qkos3}\ref{k9kpqiad}; the transversality of \eqref{rf57it5e} is automatic since $p_T : L_i \to T\L\obj_i$ and $\c_{i2*} : T\L\obj_i \to T\G\obj_2$ are surjective.
\end{proof}

In particular, the respective transfer maps fit into a commutative diagram
\begin{equation}\label{p8gy6x7h}
\begin{tikzcd}
\operatorname{qsymp}(\G_1) \arrow{r}{\L_1} \arrow[swap]{dr}{\L_1 \htimes_{\G_2} \L_2} & \operatorname{qsymp}(\G_2) \arrow{d}{\L_2} \\
& \operatorname{qsymp}(\G_3).
\end{tikzcd}
\end{equation}

%

\section{Morita transfer of 1-shifted coisotropic structures}
\label{4pvyvpb2}

In this section, we prove that 1-shifted coisotropics transfer through Morita morphisms.
We first state the transfer theorem in \S\ref{mvhqjvjk}, then recall the notion of adjoint representation up to homotopy in \S\ref{f73gy0ci} to derive some useful results, and prove the theorem in \S\ref{vazibeld}.

\subsection{The transfer theorem}
\label{mvhqjvjk}

Let us extend the notion of symplectic Morita equivalence of quasi-symplectic groupoids to account for 1-coisotropics on them.

\begin{definition}\label{w43yacl2}
Let $\c_1 : \C_1 \to \G_1$ and $\c_2 : \C_2 \to \G_2$ be morphisms of Lie groupoids.
\begin{enumerate}[label=\textup{(\arabic*)}]
\item\label{pz58funr}
A \defn{Morita equivalence} between $\c_1$ and $\c_2$ is a 2-commutative diagram of Lie groupoid morphisms
\begin{equation}\label{27zunnqp}
\begin{tikzcd}[column sep={5em,between origins},row sep={2.5em,between origins}]
& \K \arrow[swap]{dl}{\psi_1} \arrow{dd}{g} \arrow{dr}{\psi_2} & \\
\C_1 \arrow[swap]{dd}{\c_1} \arrow[Rightarrow,shorten=14pt,swap]{dr}{\theta_1} &  & \C_2 \arrow{dd}{\c_2} \arrow[Rightarrow,shorten=14pt]{dl}{\theta_2} \\
& \L \arrow{dl}{\varphi_1} \arrow[swap]{dr}{\varphi_2} & \\
\G_1 & & \G_2,
\end{tikzcd}
\end{equation}
where $\psi_i$ and $\varphi_i$ are Morita morphisms and $\theta_i : \c_i\psi_i \Rightarrow \varphi_ig$ are natural transformations.

\item\label{0dpv2tg2}
If $(\G_1, \omega_1, \phi_1)$ and $(\G_2, \omega_2, \phi_2)$ are quasi-symplectic groupoids, then a \defn{symplectic Morita equivalence} between $\c_1$ and $\c_2$ is a Morita equivalence as in \ref{pz58funr} together with a 1-shifted Lagrangian structure $\gamma$ on $\L \to \G_1 \times \G_2^-$.

\item\label{bx4cvb7v}
If $\c_1$ and $\c_2$ are endowed with 1-shifted coisotropic structures $L_1$ and $L_2$, then a \defn{coisotropic Morita equivalence} is a symplectic Morita equivalence as in \ref{0dpv2tg2}
such that
\begin{equation}\label{3ttcx243}
\psi_2^*L_2 = \psi_1^*L_1 + \Gamma_\delta,
\end{equation}
where
\begin{equation}\label{h2qt67r9}
\delta \coloneqq 
- g^*\gamma + \theta_1^*\omega_1 - \theta_2^*\omega_2,
\end{equation}
is called the \defn{connecting form}.
\end{enumerate}
We also define \defn{weak Morita equivalences}, \defn{weak symplectic Morita equivalences}, and \defn{weak coisotropic Morita equivalences} as above except that $\psi_1$ and $\psi_2$ are weak Morita morphisms.
\end{definition}

The goal of this section is to prove the following result.

\begin{theorem}
\label{vazibeld}
Let $\c_1 : \C_1 \to \G_1$ and $\c_2 : \C_2 \to \G_2$ be morphisms of Lie groupoids over quasi-symplectic groupoids together with a weak symplectic Morita equivalence \eqref{27zunnqp}.
Then for every 1-shifted coisotropic structure $L_1$ on $c_1$, there is a unique 1-shifted coisotropic structure $L_2$ on $c_2$ such that \eqref{27zunnqp} is a weak coisotropic Morita equivalence.
Moreover, this gives a one-to-one correspondence between 1-shifted coisotropic structures on $\c_1$ and those on $\c_2$.
If \eqref{27zunnqp} is strong, this restricts to a one-to-one correspondence between strong 1-shifted coisotropic structures on $\c_1$ and those on $\c_2$.
\end{theorem}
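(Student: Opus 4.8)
The plan is to transfer $L_1$ by pulling it back along $\psi_1$, correcting by the connecting form, and pushing forward along $\psi_2$. Concretely, set $M \coloneqq \psi_1^* L_1 + \Gamma_\delta$ on $\K\obj$, with $\delta$ as in \eqref{h2qt67r9}, and define $L_2 \coloneqq \psi_{2*} M$. The pullback $\psi_1^* L_1$ is a genuine Dirac structure since $\psi_1\obj$ is a surjective submersion (\S\ref{e37p3coq}), and the gauge correction keeps it so. To run the pushforward I would verify the hypotheses of Proposition \ref{9u2g4w2h} for the weak Morita morphism $\psi_2 : \K \to \C_2$, with the multiplicative forms $\c_2^*\omega_2$ and $-d(\c_2^*\omega_2)$ on $\C_2\arr$: namely that $\ttt^* M = \sss^* M + \Gamma_{(\c_2\psi_2)^*\omega_2}$ and that the background $3$-form of $M$ equals $(\c_2\psi_2)^*\phi_2$. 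Pulling the compatibility of $L_1$ back along $\psi_1$ gives $\ttt^*\psi_1^* L_1 = \sss^*\psi_1^* L_1 + \Gamma_{(\c_1\psi_1)^*\omega_1}$, so both conditions reduce to the two form-level identities
\[
\ttt^*\delta - \sss^*\delta = (\c_2\psi_2)^*\omega_2 - (\c_1\psi_1)^*\omega_1, \qquad d\delta = (\c_1\psi_1)^*\phi_1 - (\c_2\psi_2)^*\phi_2.
\]
Both are direct computations from \eqref{h2qt67r9}: the first uses Lemma \ref{wip6t5yh}\ref{u0vnf6px} for $\theta_1$ and $\theta_2$ together with $\ttt^*\gamma - \sss^*\gamma = \varphi_1^*\omega_1 - \varphi_2^*\omega_2$, so that the $(\varphi_i g)^*\omega_i$ contributions cancel; the second uses $d\omega_i = \sss^*\phi_i - \ttt^*\phi_i$, the naturality relations $\sss\theta_i = \c_i\psi_i$ and $\ttt\theta_i = \varphi_i g$, and $d\gamma = \varphi_2^*\phi_2 - \varphi_1^*\phi_1$. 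Proposition \ref{9u2g4w2h} then yields a Dirac structure $L_2 = \psi_{2*} M$ with $\psi_2^* L_2 = M$ and, since $\psi_2\obj$ is a submersion, with background $3$-form $\c_2^*\phi_2$; pulling $\ttt^* M = \sss^* M + \Gamma_{\psi_2^*\c_2^*\omega_2}$ back through the injective $\psi_2^*$ gives the compatibility condition \ref{0k2j1uwv} for $L_2$ on $\c_2$, while by construction $\psi_2^* L_2 = \psi_1^* L_1 + \Gamma_\delta$ is \eqref{3ttcx243}.

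The genuinely hard part is the non-degeneracy condition \ref{3bw7gcnj} for $L_2$. By Lemma \ref{t7rjqxe5} this is equivalent to the chain map \eqref{r7ynptgu} attached to $(\C_2, \c_2, L_2)$ inducing an isomorphism on middle cohomology, and to its being a quasi-isomorphism in the strong case. The strategy is to produce a quasi-isomorphism between this chain complex and the one attached to $(\C_1, \c_1, L_1)$, so that non-degeneracy (and strongness) transfers automatically. I would build the comparison over $\K$: the gauge relation $\psi_2^* L_2 = \psi_1^* L_1 + \Gamma_\delta$ gives a fibrewise isomorphism $\psi_1^* L_1 \to \psi_2^* L_2$, $(v, \alpha) \mapsto (v, \alpha + i_v\delta)$, matching the middle terms of the two pulled-back complexes, while the change of base quasi-symplectic groupoid from $\G_1$ to $\G_2$ is carried by the $1$-Lagrangian $\gamma$ on $\L$ (whose own non-degeneracy is automatic by Lemma \ref{bczyu6nt}), relating $\varphi_1^*\IM_{\omega_1}$ and $\varphi_2^*\IM_{\omega_2}$. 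The essential difficulty is that $\c_i\psi_i$ equals $\varphi_i g$ only up to the natural transformation $\theta_i$, so the maps they induce on tangent complexes agree only up to homotopy. Resolving this is precisely the purpose of the adjoint representation up to homotopy of Abad--Crainic developed in \S\ref{f73gy0ci}: each $\theta_i$ yields an explicit chain homotopy between the maps induced by $\c_i\psi_i$ and $\varphi_i g$, and each Morita morphism $\psi_i$ induces a quasi-isomorphism of adjoint representations up to homotopy. Splicing these homotopies and quasi-isomorphisms together produces the desired quasi-isomorphism of chain complexes. I expect this assembly --- keeping track of all the homotopies and checking their compatibility with the symplectic data --- to be the main obstacle of the whole argument.

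Uniqueness is immediate: any $L_2'$ making \eqref{27zunnqp} a weak coisotropic Morita equivalence satisfies $\psi_2^* L_2' = \psi_1^* L_1 + \Gamma_\delta = \psi_2^* L_2$, and since $\psi_2\obj$ is a surjective submersion the pullback $\psi_2^*$ is injective on Dirac structures, so $L_2' = L_2$. For the one-to-one correspondence I would rerun the construction with the roles of $1$ and $2$ interchanged: the opposite equivalence carries the $1$-Lagrangian $-\gamma$ on $\L \to \G_2 \times \G_1^-$, so its connecting form is $-\delta$, and transferring $L_2$ back gives $\psi_{1*}(\psi_2^* L_2 + \Gamma_{-\delta}) = \psi_{1*}(\psi_1^* L_1) = L_1$; hence the two transfer maps are mutually inverse. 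Finally, when \eqref{27zunnqp} is a genuine (non-weak) symplectic Morita equivalence, the lifts supplied by Lemma \ref{htlesxzs} are unique, so the maps induced by $\psi_i$ on tangent complexes are honest quasi-isomorphisms; the comparison of the previous paragraph then identifies full quasi-isomorphism of the two chain maps, showing that $L_2$ is strong exactly when $L_1$ is and restricting the bijection to strong structures.
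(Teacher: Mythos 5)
Your construction of $L_2$ and the verification of the compatibility condition follow the paper's proof exactly: the paper also sets $L_2 = \psi_{2*}(\psi_1^*L_1 + \Gamma_\delta)$, checks the two form-level identities $\ttt^*\delta - \sss^*\delta = \psi_2^*\c_2^*\omega_2 - \psi_1^*\c_1^*\omega_1$ and $d\delta = \psi_1^*\c_1^*\phi_1 - \psi_2^*\c_2^*\phi_2$ via Lemma \ref{wip6t5yh}\ref{u0vnf6px}, and invokes Proposition \ref{9u2g4w2h} to get a Dirac structure with $\psi_2^*L_2 = \psi_1^*L_1 + \Gamma_\delta$. Your uniqueness and bijectivity arguments (injectivity of $\psi_2^*$ along the surjective submersion $\psi_2\obj$, and running the same equivalence backwards with connecting form $-\delta$) are likewise the paper's Step 5.

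The gap is that the one step you correctly single out as the crux --- transferring non-degeneracy --- is left as a plan rather than carried out, and it is where essentially all the work of the theorem lives. The paper does not assemble an abstract quasi-isomorphism between the two tangent-complex chain maps; instead it runs a concrete surjectivity chase. Given $((v_2,\alpha_2),a_2) \in L_2 \times_{\c_2} A_{\G_2}$, one lifts $v_2$ to $v \in T\K\obj$ using $\psi_2^*L_2 = \psi_1^*L_1 + \Gamma_\delta$, uses Lemma \ref{htlesxzs} for the weak Morita morphism $\varphi_2$ to produce $\ell \in A_\L$ with $\aaa\ell = g_*v$ and $\varphi_{2*}\ell = \qad_{\theta_2(x)}a_2 + \dot\theta_2 v$, and then defines $a_1 \coloneqq \qad_{\theta_0(x)}\varphi_{1*}\ell + \dot\theta_0 v$ with $\theta_0 = \iii\circ\theta_1$. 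Showing that $((v_1,\alpha_1),a_1)$ actually lands in $L_1 \times_{\c_1} A_{\G_1}$ requires the identity $\psi_1^*\c_1^*\IM_{\omega_1}a_1 = \psi_2^*\c_2^*\IM_{\omega_2}a_2 - i_v\delta$, whose proof rests on the two computations in Lemma \ref{9lc8lqnv} relating $\IM_\omega$, $\qad$, $\dot\theta$, and $\theta^*\omega$; and pushing the resulting $h_1 \in A_{\C_1}$ back to $A_{\C_2}$ requires the curvature identity of Lemma \ref{1g34n1g6} to see that the two transports across $\L$ agree. These three lemmas are precisely the "compatibility of the homotopies with the symplectic data" you flag as the main obstacle; without them (or an equivalent verification that your fibrewise isomorphism $\psi_1^*L_1 \to \psi_2^*L_2$ intertwines the full chain maps, including the $\c^*\IM_\omega$ legs), the claim that non-degeneracy "transfers automatically" is not yet a proof. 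The same remark applies to the strong case, which the paper handles by a separate (shorter) injectivity chase in Step 4 rather than by appealing to a quasi-isomorphism.
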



\begin{remark}
If we start with a 1-coisotropic $\c : (\C, L) \to \G$ and a symplectic Morita equivalence $\G \leftarrow \L \to \widetilde{\G}$, then we can transfer $(\C, \c, L)$ to a 1-coisotropic on $\widetilde{\G}$.
Indeed, we can take, for example, $\widetilde{\C} \coloneqq \C \htimes_{\G} \L$ and the symplectic Morita equivalence
\[
\begin{tikzcd}[column sep={4em,between origins},row sep={2em,between origins}]
& \C \htimes_{\G} \L \arrow[swap]{dl} \arrow{dd} \arrow{dr} & \\
\C \arrow[swap]{dd} \arrow[Rightarrow,shorten=8pt,swap]{dr} &  & \widetilde{\C} \arrow{dd} \\
& \L \arrow{dl} \arrow[swap]{dr} & \\
\G & & \widetilde{\G}.
\end{tikzcd}
\]
By Theorem \ref{vazibeld}, there is a 1-shifted coisotropic structure $\widetilde{L}$ on $\widetilde{\C} \to \widetilde{\G}$.
Moreover, it is easy to see using the results of this paper that $(\widetilde{\C}, \widetilde{\c}, \widetilde{L})$ is unique up to gauge equivalences.
In other words, if $(\widetilde{\C}_1, \widetilde{\c}_1, \widetilde{L}_1)$ and $(\widetilde{\C}_2, \widetilde{\c}_2, \widetilde{L}_2)$ are two 1-coisotropics on $\widetilde{\G}$ with coisotropic Morita equivalences from $(\C, \c, L)$ covering $\G \leftarrow \L \to \widetilde{\G}$, then there is a 2-commutative diagram
\[
\begin{tikzcd}[column sep={3em,between origins},row sep={3em,between origins}]
& \K \arrow[swap]{dl}{\psi_1} \arrow{dr}{\psi_2} & \\
\C_1 \arrow[swap]{dr}{\c_1} \arrow[Rightarrow,shorten=13pt]{rr}{\theta} & & \C_2 \arrow{dl}{\c_2} \\
& \G, &
\end{tikzcd}
\]
where $\psi_i$ are Morita morphisms, and a closed basic 2-form $\beta$ on $\K\obj$ such that $\psi_2^*L_2 = \psi_1^*L_1 + \Gamma_{\beta + \theta^*\omega}$.
We leave the details to the reader since we do not need this result.
A special case appears in \cite[\S4.3]{xu:04}.
\end{remark}

\subsection{Adjoint representation up to homotopy and natural transformations}\label{f73gy0ci}

Recall that the tangent complex of the differentiable stack induced by a Lie groupoid $\G$ is the 2-term complex given by the anchor map $\aaa_\G : A_\G \to T\G\obj$ in degrees $-1$ and $0$ (see e.g.\ \cite[\S1.2.3]{cal:21}).
In particular, a morphism of Lie groupoids $f : \H \to \G$ induces a chain map from $\aaa_\H$ to $\aaa_\G$.
Suppose now that we have two such morphisms related by a natural transformation
\begin{equation}\label{k97rggdc}
\begin{tikzcd}[row sep=0pt]
&\null \arrow[Rightarrow,shorten=2pt]{dd}{\theta} & \\
\H \arrow[bend left]{rr}{f} \arrow[bend right, swap]{rr}{g} & & \G \\
&\null &
\end{tikzcd}
\end{equation}
and consider the induced chain maps
\begin{equation}\label{7yktpks9}
\begin{tikzcd}
A_\H \arrow{r}{\aaa_\H} \arrow[shift right=1,swap]{d}{f_*} \arrow[shift left=1]{d}{g_*} & T\H\obj \arrow[shift right=1,swap]{d}{f_*} \arrow[shift left=1]{d}{g_*} \\
A_\G \arrow{r}{\aaa_\G} & T\G\obj.
\end{tikzcd}
\end{equation}
Since $f$ and $g$ present the same map of differentiable stacks, we expect $f_*$ and $g_*$ to be chain homotopic.
This is not true in the obvious naive sense since the vector bundle homomorphisms $f_*$ and $g_*$ do not cover the same map and hence land on different fibres.
On the other hand, by choosing a Ehresmann connection on $\G$ and using Abad--Crainic's adjoint representation up to homotopy \cite{aba-cra:13}, we can connect those fibres and obtain the appropriate chain homotopy.
The goal of this subsection is to construct this chain homotopy and derive some useful identities.


%

Let us first briefly recall the adjoint representation up to homotopy \cite[\S2.4]{aba-cra:13}.
Let $\G$ be a Lie groupoid.
An \defn{Ehresmann connection} on $\G$ is a right splitting of the short exact sequence
\[
\begin{tikzcd}[column sep=3em]
0 \arrow{r} & \ttt^* A_\G \arrow{r}{R} & T\G\arr \arrow{r}{\sss_*} & \sss^*T\G\obj \arrow{r} & 0,
\end{tikzcd}
\]
where $R$ is right translation.
Given an Ehresmann connection $\tau$, we denote by $\dconn$ the corresponding left splitting:
\[
\begin{tikzcd}[column sep=3em]
0 \arrow{r} & \ttt^* A_\G \arrow{r}{R} & T\G\arr \arrow{r}{\sss_*} \arrow[bend left]{l}{\dconn} & \sss^*T\G\obj \arrow{r} \arrow[bend left]{l}{\conn}   & 0.
\end{tikzcd}
\]
Note that
\begin{equation}\label{bymi4ryy}
\dconn(v) = R_{g^{-1}}(v - \tau_g \sss_* v),
\end{equation}
for all $v \in T_g\G\arr$.
An Ehresmann connection always exists \cite[Lemma 2.9]{aba-cra:13}, and we fix one for the rest of this subsection.
Then $\conn$ induces quasi-actions of $\G$ on $T\G\obj$ and $A_\G$, given by
\begin{align*}
\qad_g &: T_{\sss(g)}\G\obj \too T_{\ttt(g)}\G\obj, \quad \qad_g v = \ttt_*(\conn_g(v)) \\
\qad_g &: (A_\G)_{\sss(g)} \too (A_\G)_{\ttt(g)}, \quad\qad_g a = \dconn(a^L_g),
\end{align*}
for all $g \in \G$, and called the \defn{adjoint representation}.
It depends on the choice of connection (up to homotopy), but we omit it from the notation for conciseness.
Note that $\qad_g a$ for $a \in (A_\G)_{\sss(g)}$ is also characterized by
\begin{equation}\label{jifax513}
(\qad_ga)^R_g = a^L_g + \conn_g \aaa a.
\end{equation}
In particular, we see that
\begin{equation}\label{gs6260jy}
\ker \qad_g \cap \ker \aaa = 0, \quad \text{for all } g \in \G.
\end{equation}
The anchor map is equivariant with respect to the adjoint representation, i.e.
\[
\qad_g \aaa a = \aaa \qad_g a,
\]
for all $g \in \G$ and $a \in (A_\G)_{\sss(g)}$.
The extent to which these quasi-actions are genuine actions is encoded by the \defn{basic curvature}, $\curv \in \Gamma(\G\comp{2}; \Hom(\sss^*T\G\obj, \ttt^*A_\G))$, defined by
\[
\curv(g, h)(v) = \dconn(\mmm_*(\conn_g\qad_hv, \conn_hv)) \in (A_\G)_{\ttt(g)},
\]
for $(g, h) \in \G\comp{2}$ and $v \in T_{\sss(h)}\G\obj$.
Indeed, for all $(g, h) \in \G\comp{2}$, $v \in T_{\sss(h)}\G\obj$, and $a \in (A_\G)_{\sss(h)}$, we have \cite[Proposition 2.15]{aba-cra:13}
\begin{align}
\qad_g \qad_h(v) - \qad_{gh}(v) &= \aaa(\curv(g, h)(v)) \\
\qad_g \qad_h(a) - \qad_{gh}(a) &= \curv(g, h)(\aaa(a)). \label{6ohv7hix}
\end{align}

Consider a natural transformation of Lie groupoid morphisms as in \eqref{k97rggdc}.
Note that for all $v \in T_x\H\obj$, we have $\theta_*v \in T_{\theta(x)}\G\arr$ and hence $\dconn \theta_*v \in (A_\G)_{\ttt(\theta(x))} = (A_\G)_{g(x)}$.
Therefore, we have a vector bundle homomorphism
\[
\dot{\theta} \coloneqq \dconn \circ \theta_* : T\H\obj \too g^*A_\G,
\]
that we call the \defn{differential} of the natural transformation $\theta$.
It provides a chain homotopy for \eqref{7yktpks9} in the following sense.

%

\begin{proposition}\label{uipubhok}
For all $v \in T_x\H\obj$ and $b \in (A_\H)_x$ we have
\begin{align}
g_* v - \qad_{\theta(x)} f_* v &= \aaa \dot{\theta}(v) \label{w6a0pud8} \\
g_* b - \qad_{\theta(x)} f_* b &= \dot{\theta} \aaa(b). \label{44x2e7y8}
\end{align}
\end{proposition}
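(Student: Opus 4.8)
The plan is to prove each of the two identities by unwinding the definitions of the adjoint representation and of the differential $\dot{\theta} = \dconn \circ \theta_*$, reducing everything to the two splitting identities $\sss_* \circ \conn = \Id$ and $\dconn \circ R = \Id$ together with the naturality of $\theta$. Throughout I would write $\gamma \coloneqq \theta(x) \in \G\arr$, so that $\sss(\gamma) = f(x)$ and $\ttt(\gamma) = g(x)$, and use freely that $g_* v = \ttt_* \theta_* v$ and $f_* v = \sss_* \theta_* v$ for $v \in T_x \H\obj$.

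First I would prove \eqref{w6a0pud8} by a direct computation. By definition $\qad_{\gamma} f_* v = \ttt_*(\conn_\gamma f_* v)$ and $g_* v = \ttt_*(\theta_* v)$, so $g_* v - \qad_{\gamma} f_* v = \ttt_*(\theta_* v - \conn_\gamma f_* v)$. The vector $\theta_* v - \conn_\gamma f_* v$ lies in $\ker \sss_*$ at $\gamma$, since $\sss_* \conn_\gamma = \Id$ and $\sss_* \theta_* v = f_* v$; because $\dconn$ vanishes on the image of $\conn$, applying $\dconn$ gives $\dconn(\theta_* v - \conn_\gamma f_* v) = \dot{\theta}(v)$, and as the vector already lies in $\ker \sss_* = \im R$ this means $\theta_* v - \conn_\gamma f_* v = (\dot{\theta}(v))^R_\gamma$. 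Pushing forward by $\ttt_*$ and using $\ttt_*(a^R_\gamma) = \aaa a$ then yields $g_* v - \qad_{\gamma} f_* v = \aaa \dot{\theta}(v)$.

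For \eqref{44x2e7y8} I would first transport the identity to $T_\gamma \G\arr$ via the injective right-translation map $a \mapsto a^R_\gamma$, reducing to the identity $(\star)$: $(g_* b)^R_\gamma = (f_* b)^L_\gamma + \theta_* \aaa(b)$. Indeed, by the characterization \eqref{jifax513} we have $(\qad_{\gamma} f_* b)^R_\gamma = (f_* b)^L_\gamma + \conn_\gamma \aaa(f_* b)$, while the splitting identity $R \dconn = \Id - \conn_\gamma \sss_*$ applied to $w = \theta_* \aaa(b)$ (whose $\sss_*$-image is $\aaa(f_* b)$) gives $(\dot{\theta}\aaa(b))^R_\gamma = \theta_* \aaa(b) - \conn_\gamma \aaa(f_* b)$. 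The two $\conn_\gamma \aaa(f_* b)$ terms cancel after right-translating \eqref{44x2e7y8}, leaving exactly $(\star)$.

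To establish $(\star)$, I would choose a curve $h(s)$ in $\H\arr$ with $h(0) = \uuu_x$, $\dot{h}(0) = b$, and $\sss(h(s)) \equiv x$, and differentiate the naturality relation $\theta(\ttt(h)) \cdot f(h) = g(h) \cdot \theta(\sss(h))$ from \eqref{nq5g7v03} at $s = 0$. On the right-hand side the second factor $\theta(x) = \gamma$ is constant and the first factor passes through the unit $\uuu_{g(x)}$ with algebroid velocity $g_* b$, so its derivative is $(g_* b)^R_\gamma$; on the left-hand side one multiplies the moving arrow $\theta(\ttt(h(s)))$, with honest velocity $\theta_* \aaa(b)$ at $\gamma$, by $f(h(s))$, passing through $\uuu_{f(x)}$ with algebroid velocity $f_* b$, so \eqref{4ksoptjs} gives derivative $\theta_* \aaa(b) + (f_* b)^L_\gamma$. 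Equating the two derivatives is precisely $(\star)$. The main obstacle is exactly this last step: correctly differentiating the two groupoid products while keeping track of which factor is an honest tangent vector and which is a Lie algebroid element, and verifying that \eqref{4ksoptjs} (and its right-translation counterpart) applies with the right base points. Everything else is bookkeeping with the splitting identities and \eqref{jifax513}.
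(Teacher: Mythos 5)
Your proof is correct and follows essentially the same route as the paper: part \eqref{w6a0pud8} is the identical one-line computation with the splitting identities, and for part \eqref{44x2e7y8} you differentiate the same naturality relation \eqref{nq5g7v03} along an $\sss$-fibre curve and invoke \eqref{4ksoptjs}, merely organizing the bookkeeping by right-translating the target identity to $T_{\theta(x)}\G\arr$ first rather than peeling off the $\dconn$'s at the end as the paper does. No gaps.
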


\begin{proof}
We have
\[
\aaa \dot{\theta} v = \ttt_*(\theta_* v - \conn_{\theta(x)} f_* v) = g_* v - \qad_{\theta(x)} f_* v,
\]
which proves \eqref{w6a0pud8}.
Now, differentiating the identity
\[
g(h) = \theta(\ttt(h)) \cdot f(h) \cdot \theta(\sss(h))^{-1}, \quad (\text{for } h \in \H),
\]
and using \eqref{4ksoptjs}, we get that for all $b \in (A_\H)_x$,
\begin{align*}
g_*b &= R_{\theta(x)^{-1}} \mmm_*(\theta \ttt_* b, f_*b) \\
&= R_{\theta(x)^{-1}}(\theta \ttt_* b + (f_*b)^L_{\theta(x)}) \\
&= R_{\theta(x)^{-1}}(\theta \ttt_* b - \conn_{\theta(x)} f \ttt_* b) + R_{\theta(x)^{-1}}((f_*b)^L_{\theta(x)} + \conn_{\theta(x)} \ttt f_* b) \\
&= \dconn(\theta \ttt_* b) + \dconn((f_*b)^L_{\theta(x)}) \\
&= \dot{\theta}\aaa b + \qad_{\theta(x)} f_*b,
\end{align*}
which proves \eqref{44x2e7y8}.
\end{proof}

The following two lemmas will be used in the proof of the transfer theorem.

\begin{lemma}\label{9lc8lqnv}
Let $\omega$ be a multiplicative 2-form on $\G$.
The following hold.
\begin{enumerate}[label={\textup{(\arabic*)}}]

\item
\label{nxi6yds0}
We have
\[
\ip{\IM_\omega \qad_g a, \qad_g v} = \ip{\IM_\omega a, v} + \omega(\tau_g \aaa a, \tau_g v),
\]
for all $a \in (A_\G)_{\sss(g)}$ and $v \in T_{\sss(g)}\G\obj$.

\item
\label{b91k4xix}
We have
\begin{align*}
\theta^*\omega(v, w) &= \ip{\IM_\omega \dot{\theta} v, \qad_{\theta(x)} f w} - \ip{\IM_\omega \dot{\theta} w, \qad_{\theta(x)} f v} + \ip{\IM_\omega \dot{\theta} v, \aaa \dot{\theta} w} + \omega(\tau_{\theta(x)} f v, \tau_{\theta(x)} f w),
\end{align*}
for all $v, w \in T_x\G\obj$.
\end{enumerate}
\end{lemma}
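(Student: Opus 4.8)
The plan is to reduce both identities to the defining relations of the adjoint representation together with the infinitesimal multiplicativity formulas $\sss^*\IM_\omega a = i_{a^L}\omega$ and $\ttt^*\IM_\omega a = i_{a^R}\omega$ from Lemma~\ref{7wn1m20c}\ref{xul5itsk}. The common mechanism is that a pairing $\ip{\IM_\omega b, u}$ with $b \in A_\G$ and $u \in T\G\obj$ becomes a value of $\omega$ as soon as $u$ is written as $\sss_*$ or $\ttt_*$ of a suitable vector on $\G\arr$; since $\qad_g v = \ttt_*(\conn_g v)$ and $\sss_*(\conn_g v) = v$, the connection provides exactly such lifts.

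For Part~\ref{nxi6yds0}, I would set $b \coloneqq \qad_g a$ and write $\qad_g v = \ttt_*(\conn_g v)$, so that Lemma~\ref{7wn1m20c}\ref{xul5itsk} gives
\[
\ip{\IM_\omega \qad_g a, \qad_g v} = (\IM_\omega b)(\ttt_* \conn_g v) = \omega(b^R_g, \conn_g v).
\]
Substituting the characterization $(\qad_g a)^R_g = a^L_g + \conn_g \aaa a$ from \eqref{jifax513} splits this into $\omega(a^L_g, \conn_g v) + \omega(\conn_g \aaa a, \conn_g v)$. The second summand is already $\omega(\tau_g \aaa a, \tau_g v)$, the desired correction term. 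For the first, I would apply $\sss^*\IM_\omega a = i_{a^L}\omega$ together with $\sss_*(\conn_g v) = v$ to get $\omega(a^L_g, \conn_g v) = (\IM_\omega a)(v) = \ip{\IM_\omega a, v}$, completing Part~\ref{nxi6yds0}.

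For Part~\ref{b91k4xix}, the key first step is to decompose $\theta_* v$ along the connection. Writing $h \coloneqq \theta(x)$ and using \eqref{bymi4ryy} together with $\sss \circ \theta = f$ from \eqref{nq5g7v03} (so $\sss_* \theta_* v = f_* v$), I obtain
\[
\theta_* v = (\dot{\theta} v)^R_h + \conn_h f_* v,
\]
and similarly for $w$. Expanding $\theta^*\omega(v, w) = \omega(\theta_* v, \theta_* w)$ into the four resulting terms, the horizontal–horizontal term $\omega(\conn_h f_* v, \conn_h f_* w) = \omega(\tau_{\theta(x)} f v, \tau_{\theta(x)} f w)$ is the last summand of the claim. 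Each mixed term is handled by $\ttt^*\IM_\omega a = i_{a^R}\omega$ and $\qad_h f_* w = \ttt_*(\conn_h f_* w)$, yielding $\omega((\dot{\theta} v)^R_h, \conn_h f_* w) = \ip{\IM_\omega \dot{\theta} v, \qad_{\theta(x)} f w}$ and, with $v$ and $w$ exchanged and an overall sign, $-\ip{\IM_\omega \dot{\theta} w, \qad_{\theta(x)} f v}$. Finally the vertical–vertical term $\omega((\dot{\theta} v)^R_h, (\dot{\theta} w)^R_h)$ becomes $\ip{\IM_\omega \dot{\theta} v, \aaa \dot{\theta} w}$ upon applying $\ttt^*\IM_\omega a = i_{a^R}\omega$ and $\ttt_*((\dot{\theta} w)^R_h) = \aaa \dot{\theta} w$. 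Summing the four contributions gives the stated formula.

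I expect the only real difficulty to be bookkeeping: keeping the left and right translations, the splittings $\conn$ and $\dconn$, and the sign conventions straight, and checking that each lift lands in the correct fibre so that \eqref{jifax513} and Lemma~\ref{7wn1m20c}\ref{xul5itsk} apply verbatim. There is no conceptual obstacle; both identities are formal consequences of the connection splitting and the infinitesimal multiplicativity of $\omega$, and Part~\ref{b91k4xix} is essentially Part~\ref{nxi6yds0} applied after the horizontal/vertical decomposition of $\theta_* v$.
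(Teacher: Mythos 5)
Your proof is correct and follows essentially the same route as the paper's: both parts come down to the identity $(\qad_g a)^R_g = a^L_g + \conn_g \aaa a$ from \eqref{jifax513} together with the relations $\sss^*\IM_\omega a = i_{a^L}\omega$ and $\ttt^*\IM_\omega a = i_{a^R}\omega$, and your decomposition $\theta_*v = (\dot{\theta} v)^R_{\theta(x)} + \conn_{\theta(x)} f_*v$ is exactly what the paper uses implicitly when it computes $\omega(\theta_* u - \conn_{\theta(x)} f_* u, \theta_* v - \conn_{\theta(x)} f_* v) = \omega(\dot{\theta}u, \dot{\theta}v)$, so your bilinear expansion merely organizes the same computation more directly. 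The one small caveat is that Lemma \ref{7wn1m20c}\ref{xul5itsk} is stated for quasi-symplectic groupoids whereas here $\omega$ is only assumed multiplicative; this is harmless because its proof uses only multiplicativity and $\uuu^*\omega = 0$, but it is presumably why the paper re-derives those identities inline via tangent vectors to the graph of the multiplication rather than citing the lemma.
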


\begin{proof}
\ref{nxi6yds0}
Since $(0, a, a^L_g)$ and $(\tau_g v, \uuu_* v, \tau_g v)$ are tangent to the graph of the multiplication of $\G$ at $(g, \uuu_{\sss(g)}, g)$, we have $\ip{\IM_\omega a, v} = \omega(a, \uuu_* v) = \omega(a^L_g, \tau_g v) = \omega((\qad_ga)^R_g - \tau_g \aaa a, \tau_g v)$, where we used \eqref{jifax513} for the last equality.
Now, $(\qad_g a, 0, (\qad_ga)^R_g)$ and $(\uuu_* \qad_g v, \tau_g v, \tau_g v)$ are also tangent to the graph of the multiplication at $(\uuu_{\ttt(g)}, g, g)$, so $\omega((\qad_ga)^R_g, \tau_gv) = \omega(\qad_g a, \uuu_* \qad_g v) = \ip{\IM_\omega\qad_g a, \qad_g v}$.

\ref{b91k4xix}
First note that
\[
\ip{\IM_\omega \dot{\theta} u, \aaa \dot{\theta} v} = \omega(\dot{\theta} u, \uuu \ttt_* \dot{\theta} v) = \omega(\dot{\theta} u, \dot{\theta} v)
\]
since $\ttt_*(\dot{\theta} v - \uuu \ttt_* \dot{\theta} v) = 0$ and $\ker \sss_*$ and $\ker \ttt_*$ are orthogonal \cite[Lemma 3.1(ii)]{bur-cra-wei-zhu:04}.
Now,
\[
\omega(\theta_* u - \tau_{\theta(x)} f_* u, \theta_* v - \tau_{\theta(x)} f_* v) = \omega(R_{\theta(x)^{-1}}(\theta_* u - \tau_{\theta(x)} \sss \theta_* u), R_{\theta(x)^{-1}}(\theta_* v - \tau_{\theta(x)}\sss \theta_* v)) = \omega(\dot{\theta}u, \dot{\theta}v),
\]
by multiplicativity of $\omega$.
It follows that
\begin{equation}\label{1ahtvr5p}
\omega(\theta_* u, \theta_* v) - \omega(\tau_{\theta(x)} f_* u, \theta_* v) - \omega(\theta_* u, \tau_{\theta(x)} f_* v) + \omega(\tau_{\theta(x)} f_* u, \tau_{\theta(x)} f_* v) = \ip{\IM_\omega \dot{\theta} u, \aaa \dot{\theta} v}.
\end{equation}
Note that for every $w_1, w_2 \in \ker (\sss_*)_g$, $(R_{g^{-1}}w_1, 0, w_1)$ and $(\uuu \ttt_* w_2, w_2, w_2)$ are tangent to the graph of the multiplication of $\G$ at $(\uuu_{\ttt(g)}, g, g)$, and hence $\omega(R_{g^{-1}}w_1, \uuu \ttt_* w_2) = \omega(w_1, w_2)$.
Therefore,
\begin{align*}
\ip{\IM_\omega \dot{\theta} v, \qad_{\theta(x)} f_* u} &= \omega(\dot{\theta} v, \uuu \ttt_* \tau_{\theta(x)} f_* u) \\
&= \omega(R_{\theta(x)^{-1}}(\theta_* v - \tau_{\theta(x)} \sss \theta_* v), \uuu \ttt_* \tau_{\theta(x)} f_* u) \\
&= \omega(\theta_* v - \tau_{\theta(x)} f_* v, \tau_{\theta(x)} f_* u).
\end{align*}
Hence,
\begin{align*}
\ip{\IM_\omega \dot{\theta} v, \qad_{\theta(x)} f_* u} - \ip{\IM_\omega \dot{\theta} u, \qad_{\theta(x)} f_* v}
&= \omega(\theta_* v - \tau_{\theta(x)} f_* v, \tau_{\theta(x)} f_* u) - \omega(\theta_* u - \tau_{\theta(x)} f_* u, \tau_{\theta(x)} f_* v) \\
&= \omega(\theta_* v, \conn_{\theta(x)}f_* u) - \omega(\theta_* u, \conn_{\theta(x)} f_* v) - \omega(\conn_{\theta(x)}f_* v, \conn_{\theta(x)}f_* u) \\
&\qquad  + \omega(\conn_{\theta(x)} f_* u, \conn_{\theta(x)} f_* v) \\
&= \ip{\IM_\omega \dot{\theta} u, \aaa \dot{\theta} v} - \omega(\theta_* u, \theta_* v) + \omega(\tau_{\theta(x)} f_* u, \tau_{\theta(x)} f_* v),
\end{align*}
by \eqref{1ahtvr5p}.
\end{proof}

\begin{lemma}\label{1g34n1g6}
Let $\theta : f \Rightarrow g$ be a natural transformation as in \eqref{k97rggdc} and let $\eta : g \Rightarrow f$ be its inverse.
Then
\[
\dot{\theta} v + \qad_{\theta(x)} \dot{\eta} v + \curv(\theta(x), \eta(x))(g_* v) = 0,
\]
for all $v \in T\H\obj$.
\end{lemma}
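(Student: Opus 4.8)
The plan is to differentiate the identity that expresses $\eta$ as the inverse of $\theta$. Since the inverse $2$-morphism is $\eta(x) = \theta(x)^{-1}$, the statement that $\theta \star \eta$ is the identity $2$-morphism on $g$ reads $\mmm(\theta(x), \eta(x)) = \uuu_{g(x)}$ as maps $\H\obj \to \G\arr$. Differentiating at $x$ in the direction $v \in T_x\H\obj$, and noting that $\ttt_* \theta_* v = g_* v = \sss_* \eta_* v$ makes $(\theta_* v, \eta_* v)$ a tangent vector of $\G\comp{2}$, I obtain $\mmm_*(\theta_* v, \eta_* v) = \uuu_* g_* v$. After fixing a unital Ehresmann connection (such connections exist \cite{aba-cra:13}), the right-hand side is annihilated by the retraction $\dconn$, so the whole problem reduces to computing $\dconn \mmm_*(\theta_* v, \eta_* v)$ and identifying it with the left-hand side of the claimed identity.

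Next I would use the splitting $T\G\arr = \conn(\sss^*T\G\obj) \oplus R(\ttt^*A_\G)$ given by \eqref{bymi4ryy} to write $\theta_* v = \conn_{\theta(x)} f_* v + (\dot{\theta} v)^R_{\theta(x)}$ and $\eta_* v = \conn_{\eta(x)} g_* v + (\dot{\eta} v)^R_{\eta(x)}$. Because $\mmm_*$ is linear on the fibre $T_{(\theta(x),\eta(x))}\G\comp{2}$, the key manoeuvre is to express $(\theta_* v, \eta_* v)$ as a sum of \emph{composable} sub-pairs. I would first split off the pair $(\conn_{\theta(x)} \qad_{\eta(x)} g_* v,\, \conn_{\eta(x)} g_* v)$, which is composable by construction and whose image under $\dconn\mmm_*$ is exactly $\curv(\theta(x), \eta(x))(g_* v)$ by the definition of the basic curvature. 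The complementary pair is $(P', T)$ with $T = (\dot{\eta} v)^R_{\eta(x)}$ and, after invoking \eqref{w6a0pud8} applied to $\eta$ (which gives $f_* v - \qad_{\eta(x)} g_* v = \aaa \dot{\eta} v$), $P' = \conn_{\theta(x)} \aaa\dot{\eta} v + (\dot{\theta} v)^R_{\theta(x)}$.

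I would then simplify $P'$. Rearranging \eqref{jifax513} to $\conn_g \aaa a = (\qad_g a)^R_g - a^L_g$ and using the inversion identity $\iii_*(a^R_{g^{-1}}) = -a^L_g$ (with $g = \theta(x)$) rewrites $P'$ as $c^R_{\theta(x)} + \iii_* T$, where $c \coloneqq \dot{\theta} v + \qad_{\theta(x)} \dot{\eta} v$. I would split $(P', T) = (c^R_{\theta(x)}, 0) + (\iii_* T, T)$: the second pair is tangent to the constant map $g' \mapsto \mmm(\iii(g'), g') = \uuu_{\sss(g')}$, so $\mmm_*(\iii_* T, T) = \uuu_* \sss_* T = 0$, while for the first one checks $\mmm_*(c^R_{\theta(x)}, 0) = (R_{\eta(x)})_*(R_{\theta(x)})_* c = (R_{\theta(x)\eta(x)})_* c = c$ since $\theta(x)\eta(x) = \uuu_{g(x)}$, whence $\dconn \mmm_*(c^R_{\theta(x)}, 0) = c$. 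Assembling the pieces gives $0 = \curv(\theta(x),\eta(x))(g_* v) + \dot{\theta} v + \qad_{\theta(x)}\dot{\eta} v$, which is the asserted identity.

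The main obstacle I anticipate is purely the bookkeeping of base points and composability: $\mmm_*$ is defined only on the fibre product $T\G\comp{2}$, so each regrouping of $\theta_* v$ and $\eta_* v$ into sub-pairs must be verified to satisfy $\sss_*(\text{first}) = \ttt_*(\text{second})$, and one must keep careful track of the fibres in which the various left/right translations and adjoint actions land. A secondary subtlety is the normalization of the connection: the vanishing of the right-hand side differential rests on $\dconn\uuu_* = 0$, which holds for unital connections, so I would either work with a unital connection throughout or explicitly absorb the non-unital correction term.
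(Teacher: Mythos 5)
Your proof is correct and follows essentially the same route as the paper's: both arguments differentiate the relation $\mmm \circ (\theta, \eta) = \uuu \circ g$ and unravel the result through the connection splitting, invoking \eqref{bymi4ryy}, \eqref{jifax513}, \eqref{4ksoptjs}, and Proposition \ref{uipubhok} at the same points. The only difference is organizational --- the paper right-translates both sides by $\theta(x)$ and compares, whereas you split $(\theta_* v, \eta_* v)$ into three composable sub-pairs --- and your caveat about unitality of the connection matches the convention of \cite{aba-cra:13} that the paper's own computation also uses implicitly.
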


\begin{proof}

We have
\begin{align*}
\curv(\theta(x), \eta(x))(g_* v)
&=
\dconn \mmm_*(\conn_{\theta(x)} \qad_{\eta(x)} g_* v, \conn_{\eta(x)} g_* v) \\
&=
\mmm_*(\conn_{\theta(x)} \qad_{\eta(x)} g_* v, \conn_{\eta(x)} g_* v)
- \conn_{\uuu_{g(x)}} \sss \mmm_*(\conn_{\theta(x)} \qad_{\eta(x)} g_* v, \conn_{\eta(x)} g_* v) \\
&= \mmm_*(\conn_{\theta(x)} \qad_{\eta(x)} g_* v, \conn_{\eta(x)} g_* v) - \uuu g_* v \\
&= \mmm_*(\tau_{\theta(x)} \qad_{\eta(x)} g_* v - \theta_* v, \tau_{\eta(x)} g_* v - \eta_* v),
\end{align*}
where for the last equality we used that $\mmm_*(\theta, \eta) = \uuu \ttt \theta_* = \uuu g_*$.
Since
\[
\dot{\eta} v = \dconn \eta_* v = R_{\eta(x)^{-1}}(\eta_* v - \tau_{\eta(x)} \sss \eta_* v) = R_{\theta(x)}(\eta_* v - \tau_{\eta(x)} g_* v),
\]
we have
\begin{align}
(\curv(\theta(x), \eta(x))(g_* v))^R_{\theta(x)}
&=
\mmm_*(\tau_{\theta(x)} \qad_{\eta(x)} g_* v - \theta_* v, R_{\theta(x)}(\tau_{\eta(x)} g_* v - \eta_* v)) \nonumber \\
&=
\mmm_*(\tau_{\theta(x)} \qad_{\eta(x)} g_* v - \theta_* v, -\dot{\eta} v) \nonumber \\
&= \tau_{\theta(x)} \qad_{\eta(x)} g_* v - \theta_* v - (\dot{\eta}v)^L_{\theta(x)}, \label{ltsxzp3c}
\end{align}
using \eqref{4ksoptjs} for the last step.
Now, by \eqref{bymi4ryy} and \eqref{jifax513}, we have
\begin{align}
(\dot{\theta} v + \qad_{\theta(x)} \dot{\eta} v)^R_{\theta(x)} &= \theta_* v - \tau_{\theta(x)} f_* v + (\dot{\eta} v)^L_{\theta(x)} +  \tau_{\theta(x)} \aaa \dot{\eta} v \nonumber \\
&= \theta_* v - \tau_{\theta(x)} (f_* v - \aaa \dot{\eta} v) + (\dot{\eta} v)^L_{\theta(x)} \nonumber \\
&= \theta_* v - \tau_{\theta(x)} \qad_{\eta(x)} g_* v + (\dot{\eta} v)^L_{\theta(x)}, \label{g3rv27ht}
\end{align}
where the last equality follows from Proposition \ref{uipubhok}.
Comparing \eqref{ltsxzp3c} and \eqref{g3rv27ht} gives the result.
\end{proof}

\subsection{Proof of Theorem \ref{vazibeld}}
\label{e1wxyjzw}

We now prove Theorem \ref{vazibeld} through the following five steps.
Let $(\G_1, \omega_1, \phi_1)$ and $(\G_2, \omega_2, \phi_2)$ be quasi-symplectic groupoids and let $\c_1 : \C_1 \to \G_1$ and $\c_2 : \C_2 \to \G_2$ be morphisms of Lie groupoids.
Suppose that we have a weak symplectic Morita equivalence between $\c_1$ and $\c_2$ as in \eqref{27zunnqp} with 1-shifted Lagrangian structure $\gamma$, i.e.\ (see Lemma \ref{bczyu6nt})
\begin{align}
-d\gamma &= \varphi_1^*\phi_1 - \varphi_2^*\phi_2 \\
\ttt^*\gamma - \sss^*\gamma &= \varphi_1^*\omega_1 - \varphi_2^*\omega_2. \label{rqchlevu}
\end{align}
The natural transformations $\theta_i : \c_i\psi_i \Rightarrow \varphi_i g$ are smooth maps $\theta_i : \K\obj \to \G\arr_i$ satisfying
\[
\sss \circ \theta_i = \c_i \circ \psi_i, \quad \ttt \circ \theta_i = \varphi_i \circ g,
\]
and
\begin{equation}\label{hoe86awi}
\theta_i(\ttt(a)) \cdot \c_i(\psi_i(a)) = \varphi_i(g(a)) \cdot \theta_i(\sss(a)),
\quad
\text{for all } a \in \K.
\end{equation}
Let $L_1$ be a 1-shifted coisotropic structure on $\c_1$ and let
\begin{equation}\label{x4j85ou2}
\delta \coloneqq 
- g^*\gamma + \theta_1^*\omega_1 - \theta_2^*\omega_2.
\end{equation}

\begin{step}
The pushforward
\begin{equation}\label{ddcou5n6}
L_2 \coloneqq \psi_{2*}(\psi_1^*L_1 + \Gamma_{\delta})
\end{equation}
is a well-defined Dirac structure on $\C\obj_2$ with background 3-form $\c_2^*\phi_2$.
Moreover,
\begin{equation}\label{o7uodz2w}
\psi_2^*L_2 = \psi_1^*L_1 + \Gamma_\delta.
\end{equation}
\end{step}

\begin{proof}
We apply Proposition \ref{9u2g4w2h} to the Dirac structure $L_0 \coloneqq \psi_1^*L_1 + \Gamma_{\delta}$ on $\K\obj$ (the pullback by a submersion is always well-defined \cite[Proposition 1.10]{bur:13}).
Note that the background 3-form of $L_0$ is
\begin{align}
\psi_1^*\c_1^*\phi_1 - d\delta &= \psi_1^*\c_1^*\phi_1 + g^*d\gamma - \theta_1^*d\omega_1 + \theta_2^*d\omega_2 \nonumber \\
&= \psi_1^*\c_1^*\phi_1 - g^*(\varphi_1^*\phi_1 - \varphi_2^*\phi_2) - \theta_1^*(\sss^*\phi_1 - \ttt^*\phi_1) + \theta_2^*(\sss^*\phi_2 - \ttt^*\phi_2) \nonumber \\
&= \psi_2^*\c_2^*\phi_2. \label{hpgol1ec}
\end{align}
We have $\sss^*\psi_2^*\c_2^*\phi_2 - \ttt^*\psi_2^*\c_2^*\phi_2 = \psi_2^*\c_2^*(\sss^*\phi_2 - \ttt^*\phi_2) = \psi_2^*\c_2^*d\omega_2$, and $\c_2^*d\omega_2$ is multiplicative.
It then suffices to show that 
\begin{equation}\label{0kxyfhp7}
\ttt^*L_0 = \sss^*L_0 + \Gamma_{\psi_2^*\c_2^*\omega_2}.
\end{equation}
To do so, first note that by Lemma \ref{wip6t5yh}\ref{u0vnf6px}, we have
\begin{equation}\label{poq9p95s}
g^*\varphi_i^*\omega_i - \psi_i^* \c_i^*\omega_i = \ttt^*\theta_i^*\omega_i - \sss^*\theta_i^*\omega_i,
\end{equation}
for $i = 1, 2$.
By \eqref{rqchlevu}, it follows that
\begin{align}
\ttt^*\delta - \sss^*\delta
&= -g^*(\ttt^*\gamma - \sss^*\gamma) + (\ttt^*\theta_1^*\omega_1 - \sss^*\theta_1^*\omega_1) - (\ttt^*\theta_2^*\omega_2 - \sss^*\theta_2^*\omega_2) \nonumber \\
&= -g^*(\varphi_1^*\omega_1 - \varphi_2^*\omega_2) + (g^*\varphi_1^*\omega_1 - \psi_1^*\c_1^*\omega_1) - (g^*\varphi_2^*\omega_2 - \psi_2^*\c_2^*\omega_2) \nonumber \\
&= \psi_2^*\c_2^*\omega_2 - \psi_1^*\c_1^*\omega_1. \label{xlnikjej}
\end{align}
Hence, $\ttt^*L_0 = \psi_1^* \ttt^*L_1 + \Gamma_{\ttt^*\delta} = \psi_1^* \sss^*L_1 + \Gamma_{\psi_1^*\c_1^*\omega_1 + \ttt^*\delta} = \sss^*L_0 + \Gamma_{\psi_1^*\c_1^*\omega_1 + \ttt^*\delta - \sss^*\delta} = \sss^*L_0 + \Gamma_{\psi_2^*\c_2^*\omega_2}$, proving \eqref{0kxyfhp7}.
By Proposition \ref{9u2g4w2h}, $L_2 \coloneqq \psi_{2*} L_0$ is a Dirac structure with a background 3-form $\eta_2$ satisfying $\psi_2^*\eta_2 = \psi_1^*\c_1^*\phi_1 - d\delta$.
By \eqref{hpgol1ec}, $\psi_2^*\c_2^*\phi_2 = \psi_2^*\eta_2$, so $\c_2^*\phi_2 = \eta_2$.
By the last part of Proposition \ref{9u2g4w2h}, we have $\psi_2^*L_2 = L_0 = \psi_1^*L_1 + \Gamma_\delta$.
\end{proof}

\begin{step}
The Dirac structure $L_2$ satisfies the compatibility condition $\ttt^*L_2 = \sss^*L_2 + \Gamma_{\c_2^*\omega_2}$.
\end{step}

\begin{proof}
By \eqref{0kxyfhp7}, we have
\[
\psi_2^*\ttt^*L_2 
= \ttt^* \psi_2^*L_2
= \ttt^*L_0 
= \sss^*L_0 + \Gamma_{\psi_2^*\c_2^*\omega_2}
= \psi_2^*(\sss^*L_2 + \Gamma_{\c_2^*\omega_2}).
\]
Since $\psi_2$ is a submersion, this implies that $\ttt^*L_2 = \sss^*L_2 + \Gamma_{\c_2^*\omega_2}$.
\end{proof}

\begin{step}
The Dirac structure $L_2$ is a 1-shifted coisotropic structure on $\c_2$.
\end{step}

\begin{proof}
It remains to show the non-degeneracy condition.
Let $((v_2, \alpha_2), a_2) \in L_2 \times_{\c_2} A_{\G_2}$, i.e.\  $\c_{2*} v_2 = \aaa a_2$ and $\alpha_2 = \c_2^* \IM_{\omega_2} a_2$.
Since $L_2 = \psi_{2*}(\psi_1^*L_1 + \Gamma_{\delta})$, there exists $(v_1, \alpha_1) \in L_1$ and $v \in T\K\obj$ such that $v_2 = \psi_{2*} v$, $v_1 = \psi_{1*} v$, and $\psi_2^*\alpha_2 = \psi_1^*\alpha_1 + i_v \delta$.
By \eqref{rqchlevu}, we have
\begin{equation}\label{5b9ebasj}
\varphi_1^* \IM_{\omega_1} \varphi_{1*} \ell - \varphi_2^* \IM_{\omega_2} \varphi_{2*} \ell = i_{\aaa \ell} \gamma
\end{equation}
for all $\ell \in A_\L$.
Let $x \in \K\obj$ be the point over which $v$ lives.
Note that by Proposition \ref{uipubhok}, we have
\[
\aaa(\qad_{\theta_2(x)}a_2 + \dot{\theta}_2 v) = \qad_{\theta_2(x)} \c_2 \psi_{2*} v + \aaa \dot{\theta}_2 v = \varphi_2 g_* v.
\]
Since $\varphi_2$ is a weak Morita morphism, Lemma \ref{htlesxzs} shows that there exists $\ell \in A_\L$ such that $\aaa \ell = g_* v$ and $\varphi_{2*} \ell = \qad_{\theta_2(x)} a_2 + \dot{\theta}_2 v$.
Consider the inverse natural transformation $\theta_0 \coloneqq \iii \circ \theta_1 : \varphi_1 g \Rightarrow \c_1 \psi_1$.
Let $a_1 \coloneqq \qad_{\theta_0(x)} \varphi_{1*} \ell + \dot{\theta}_0 v \in A_{\G_1}$.
We claim that $((v_1, \alpha_1), a_1) \in L_1 \times_{\c_1} A_{\G_1}$, i.e.\
\begin{enumerate}[label={(\arabic*)}]
\item \label{dcf3w1kn}
$\c_{1*} v_1 = \aaa a_1$
\item \label{eqh6buv2}
$\alpha_1 = \c_1^*\IM_{\omega_1}a_1$.
\end{enumerate}
We have $\aaa a_1 = \qad_{\theta_0(x)} \varphi_1 g_* v + \aaa \dot{\theta}_0 v = \c_1 \psi_{1*} v = \c_{1*} v_1$, which proves \ref{dcf3w1kn}.
We now show \ref{eqh6buv2}.
Since $\psi_1$ is a submersion, it suffices to show that $\psi_1^*\c_1^*\IM_{\omega_1} a_1 = \psi_1^*\alpha_1$.
But $\psi_1^*\alpha_1 = \psi_2^*\alpha_2 - i_v\delta = \psi_2^*\c_2^* \IM_{\omega_2} a_2 - i_v\delta$, so we need to show that
\begin{equation}\label{eacl1j7c}
\psi_1^*\c_1^*\IM_{\omega_1} a_1 = \psi_2^*(\c_2^*\IM_{\omega_2} a_2) - i_v \delta.
\end{equation}
Let $w \in T_x\K\obj$.
We have
\begin{align*}
\ip{\psi_1^*\c_1^*\IM_{\omega_1} a_1, w}
&= \ip{\IM_{\omega_1} a_1, \c_1 \psi_{1*} w} \\
&= \ip{\IM_{\omega_1}(\qad_{\theta_0(x)}\varphi_{1*} \ell + \dot{\theta}_0 v), \qad_{\theta_0(x)} \varphi_1 g_* w + \aaa \dot{\theta}_0 w} \\
&= \ip{\IM_{\omega_1} \varphi_{1*} \ell, \varphi_1 g_* w} + \omega_1(\tau_{\theta_0(x)} \varphi_1 g_* v, \tau_{\theta_0(x)} \varphi_1 g_* w) \\
&\qquad + \ip{\IM_{\omega_1} \dot{\theta}_0 v, \qad_{\theta_0(x)} \varphi_1 g_* w + \aaa \dot{\theta}_0 w} + \ip{\IM_{\omega_1} \qad_{\theta_0(x)} \varphi_{1*} \ell, \aaa \dot{\theta}_0 w}
\end{align*}
by Lemma \ref{9lc8lqnv}\ref{nxi6yds0}.
Noting that
\[
\ip{\IM_{\omega_1} \qad_{\theta_0(x)} \varphi_{1*} \ell, \aaa \dot{\theta}_0 w} = -\ip{\IM_{\omega_1} \dot{\theta}_0 w, \aaa \qad_{\theta_0(x)} \varphi_{1*} \ell} = -\ip{\IM_{\omega_1} \dot{\theta}_0 w, \qad_{\theta_0(x)}\varphi_1 g_* v},
\]
and applying Lemma \ref{9lc8lqnv}\ref{b91k4xix}, we get that
\[
\ip{\psi_1^* \c_1^* \IM_{\omega_1} a_1, w} = \ip{\IM_{\omega_1} \varphi_{1*} \ell, \varphi_1 g_* w} + \theta_0^*\omega_1(v, w) = \ip{\IM_{\omega_1} \varphi_{1*} \ell, \varphi_1 g_* w} - \theta_1^*\omega_1(v, w),
\]
i.e.\
\begin{equation}\label{tgi4nkk9}
\psi_1^*\c_1^*\IM_{\omega_1} a_1 = g^* \varphi_1^* \IM_{\omega_1} \varphi_{1*} \ell - i_v \theta_1^*\omega_1.
\end{equation}
Similarly, 
\begin{equation}\label{br8hdzve}
\psi_2^*\c_2^*\IM_{\omega_2} a_2 = g^* \varphi_2^* \IM_{\omega_2} \varphi_{2*} \ell - i_v\theta_2^*\omega_2.
\end{equation}
Combining \eqref{tgi4nkk9} and \eqref{br8hdzve} with \eqref{5b9ebasj}, we get
\begin{align*}
\psi_1^*\c_1^* \IM_{\omega_1} a_1 &= g^* \varphi_1^* \IM_{\omega_1} \varphi_{1*} \ell - i_v \theta_1^*\omega_1 \\
&= g^*(\varphi_2^* \IM_{\omega_2} \varphi_{2*} \ell + i_{g_* v} \gamma) - i_v \theta_1^*\omega_1 \\
&= \psi_2^* \c_2^*\IM_{\omega_2} a_2 + i_v \theta_2^*\omega_2 + i_v g^*\gamma - i_v \theta_1^*\omega_1 \\
&= \psi_2^* \c_2^* \IM_{\omega_2} a_2 - i_v \delta,
\end{align*}
which proves \eqref{eacl1j7c}, and hence \ref{eqh6buv2}.
Therefore, by the non-degeneracy condition of the 1-shifted coisotropic structure $L_1$, there exists $h_1 \in A_{\C_1}$ such that $((\aaa h_1, \c_1^* \IM_{\omega_1} \c_{1*} h_1), \c_{1*} h_1) = ((v_1, \alpha_1), a_1)$.
By Lemma \ref{htlesxzs}, there exists $k \in A_\K$ such that $\aaa k = v$ and $\psi_{1*} k = h_1$.
By Proposition \ref{uipubhok}, we have $\varphi_1 g_* k = \dot{\theta}_1 \aaa k + \qad_{\theta_1(x)} \c_1 \psi_{1*} k = \dot{\theta}_1 v + \qad_{\theta_1(x)} a_1$.
By \eqref{6ohv7hix} and Lemma \ref{1g34n1g6}, we have
\begin{align*}
\dot{\theta}_1 v + \qad_{\theta_1(x)} a_1 &= \dot{\theta}_1 v + \qad_{\theta_1(x)} \qad_{\theta_0(x)} \varphi_{1*} \ell + \qad_{\theta_1(x)} \dot{\theta}_0 v \\
&= \dot{\theta}_1 v + K(\theta_1(x), \theta_0(x)) g_* v + \varphi_{1*} \ell + \qad_{\theta_1(x)} \dot{\theta}_0 v \\
&= \varphi_{1*} \ell.
\end{align*}
It follows that $\varphi_1 g_* k = \varphi_{1*} \ell$.
Since also $\aaa g_* k = g_* v = \aaa \ell$, we have $g_* k = \ell$. 
Let $h_2 \coloneqq \psi_{2*} k$.
Then
\begin{equation}\label{ehqr2ibo}
\aaa h_2 = \psi_{2*} v = v_2.
\end{equation}
Also, by Proposition \ref{uipubhok},
\[
\qad_{\theta_2(x)}a_2 + \dot{\theta}_2 v = \varphi_{2*} \ell = \varphi_2 g_* k = \dot{\theta}_2\aaa k + \qad_{\theta_2(x)} \c_2 \psi_{2*} k = \dot{\theta}_2v + \qad_{\theta_2(x)} \c_{2*} h_2,
\]
so $\qad_{\theta_2(x)}a_2 = \qad_{\theta_2(x)} \c_{2*} h_2.$
Since also $\aaa a_2 = \aaa \c_{2*} h_2$, \eqref{gs6260jy} implies that
\begin{equation}\label{gfre77wt}
a_2 = \c_{2*} h_2
\end{equation}
Equations \eqref{ehqr2ibo} and \eqref{gfre77wt} then show that $L_2$ is a coisotropic structure.
\end{proof}

\begin{step}
If $L_1$ is a strong 1-shifted coisotropic structure and $\psi_1$ and $\psi_2$ are Morita morphisms, then $L_2$ is also strong.
\end{step}

\begin{proof}
Let $h_2 \in A_{\C_2}$ be such that $\aaa h_2 = 0$ and $\c_{2*} h_2 = 0$.
We need to show that $h_2 = 0$.
By Lemma \ref{htlesxzs}, there exists $k \in A_\K$ such that $\psi_{2*} k = h_2$ and $\aaa k = 0$.
Moreover, by Proposition \ref{uipubhok}, $\varphi_2 g_* k = \Ad_{\theta_2(x)} \c_2 \psi_{2*} k + \dot{\theta}_2 \aaa k = 0$.
Since also $\aaa g_* k = g_* \aaa k = 0$ and $\varphi_2$ is a Morita morphism, we have $g_* k = 0$.
Let $h_1 \coloneqq \psi_{1*} k \in A_{\C_1}$.
Then $\aaa h_1 = \psi_{1*} \aaa k = 0$.
Also, $\qad_{\theta_1(x)}\c_{1*} h_1 = \varphi_1 g_* k - \dot{\theta}_1 \aaa k = 0$ and $\aaa \c_{1*} h_1 = 0$ so $\c_{1*} h_1 = 0$ by \eqref{gs6260jy}.
By non-degeneracy of the coisotropic structure on $\C_1 \to \G_1$, we have $h_1 = 0$.
It follows that $\psi_{1*} k = 0$ and $\aaa k = 0$.
Since $\psi_1 : \K \to \C_1$ is a Morita morphism, this implies that $k = 0$ and hence $h_2 = 0$.
It follows that $L_2$ is a strong 1-shifted coisotropic structure on $\c_2$.
\end{proof}

\begin{step}
The map $L_1 \mto L_2$ is a bijection from the set of 1-shifted coisotropic structures on $\c_1$ to those on $\c_2$.
\end{step}

\begin{proof}
The same symplectic Morita equivalence \eqref{27zunnqp} gives a map $L_2 \mto L_1$ in the other direction.
The compatibility equation \eqref{o7uodz2w} together with the fact that $\psi_i$ are surjective submersions show that they are inverse to each other.
\end{proof}

%
%
%

\section{Functorial properties of the transfer map}
\label{r9gv7nax}

We now study the extent to which the correspondence in Theorem \ref{vazibeld} depends on the choice of symplectic Morita equivalence \eqref{27zunnqp} and prove that it is compatible with the composition of Morita equivalences.
This will be used in \S\ref{dy26su68} for the definition of 1-shifted coisotropics on morphisms of differentiable stacks.
The results of this section will be at the level of gauge equivalence classes of 1-shifted coisotropic structures, which we define next.

\subsection{Gauge equivalence of 1-shifted coisotropic structures}
We have the following notion of internal symmetries of 1-coisotropics.

%
%

\begin{definition}\label{x88vq0lb}
Two 1-shifted coisotropic structures $L_1$ and $L_2$ on $\c : \C \to \G$ are \defn{gauge equivalent} if $L_2 = L_1 + \Gamma_\beta$ for some closed basic $2$-form $\beta$ on $\C$.
\end{definition}

\begin{remark}
In particular, this gives a notion of gauge equivalence of $0$-shifted Poisson structures (Proposition \ref{q2gecljb}).
If $\G$ is a 0-shifted Poisson Lie groupoid such that $\G\obj/\G\arr$ is a manifold, then (by Corollary \ref{s7wgomy8} and Lemma \ref{6sfaxsof}) this coincides with the standard notion of gauge equivalence of Poisson structures on $\G\obj/\G\arr$ \cite{sev-wei:01,bur:05}.
\end{remark}

Note that if $L$ is a 1-shifted coisotropic structure and $\beta$ is any closed basic $2$-form on $\C\obj$, then $L + \Gamma_\beta$ satisfies the compatibility condition \ref{0k2j1uwv} in Definition \ref{gm52z93m}.
Hence, $L + \Gamma_\beta$ is a 1-shifted coisotropic structure for $\beta$ small enough.
For arbitrary $\beta$, $L + \Gamma_\beta$ does not necessarily satisfy the non-degeneracy condition \ref{3bw7gcnj}, but a sufficient condition is when it comes from an internal symmetry of the morphism $\c$:

\begin{proposition}
Let $(\C, \c, L)$ be a 1-coisotropic on $(\G, \omega, \phi)$ and $\theta : \c \Rightarrow \c$ a natural transformation.
Then $(\C, \c, L + \Gamma_{\theta^*\omega})$ is a 1-coisotropic on $\G$.
\end{proposition}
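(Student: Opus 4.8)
The plan is to identify $(\C, \c, L + \Gamma_{\theta^*\omega})$ as the Morita transfer of $(\C, \c, L)$ under a self-equivalence of $\c$ built from $\theta$, and thereby to deduce the non-degeneracy condition from Theorem \ref{vazibeld} rather than by a bare-hands fibrewise argument.

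First I would record two elementary properties of the $2$-form $\theta^*\omega$ on $\C\obj$. Applying Lemma \ref{wip6t5yh}\ref{u0vnf6px} to the natural transformation $\theta : \c \Rightarrow \c$ (so $f = g = \c$) gives $\ttt^*\theta^*\omega = \sss^*\theta^*\omega$, i.e.\ $\theta^*\omega$ is a basic form on $\C$; and since $d\omega = \sss^*\phi - \ttt^*\phi$ while $\sss\circ\theta = \ttt\circ\theta = \c$, we get $d(\theta^*\omega) = \theta^*(\sss^*\phi - \ttt^*\phi) = \c^*\phi - \c^*\phi = 0$, so $\theta^*\omega$ is closed. Consequently $L + \Gamma_{\theta^*\omega}$ is a Dirac structure with background $3$-form $\c^*\phi - d(\theta^*\omega) = \c^*\phi$, and since pullback distributes over sums of Dirac structures and $\ttt^*\theta^*\omega = \sss^*\theta^*\omega$, the identity $\ttt^*L = \sss^*L + \Gamma_{\c^*\omega}$ upgrades to $\ttt^*(L+\Gamma_{\theta^*\omega}) = \sss^*(L+\Gamma_{\theta^*\omega}) + \Gamma_{\c^*\omega}$. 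This settles the compatibility condition \ref{0k2j1uwv} of Definition \ref{gm52z93m}.

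The non-degeneracy condition \ref{3bw7gcnj} is the main obstacle, since $\theta$ twists the relationship between $\aaa_\C$ and $\c_*$ and so the non-degeneracy of $L$ cannot be transported fibrewise in a naive way. To obtain it I would invoke Theorem \ref{vazibeld} for the weak (in fact strong) symplectic Morita equivalence \eqref{27zunnqp} with $\K = \C$, $\L = \G$, $\psi_1 = \psi_2 = \Id_\C$, $\varphi_1 = \varphi_2 = \Id_\G$, $g = \c$, $\theta_1 = \theta$, and $\theta_2 = \uuu\circ\c$ the trivial natural transformation, with $1$-Lagrangian structure $\gamma = 0$ on the diagonal $\G \to \G \times \G^-$. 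I would first check that $\gamma = 0$ is a genuine $1$-Lagrangian structure: compatibility is immediate because the diagonal pulls the multiplicative form $\pr_1^*\omega - \pr_2^*\omega$ and the background $3$-form $\pr_1^*\phi - \pr_2^*\phi$ of $\G \times \G^-$ back to $0$, and non-degeneracy reduces via Lemma \ref{t7rjqxe5} to the map $A_\G \to \{(v, a_1, a_2) : \aaa a_1 = v = \aaa a_2,\ \IM_\omega a_1 = \IM_\omega a_2\}$, $a \mapsto (\aaa a, a, a)$, being an isomorphism, which holds by Lemma \ref{7wn1m20c}\ref{rwp7208a}.

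Finally I would compute the connecting form \eqref{h2qt67r9} for this data: $\delta = -g^*\gamma + \theta_1^*\omega - \theta_2^*\omega = \theta^*\omega - \c^*\uuu^*\omega = \theta^*\omega$, using $\uuu^*\omega = 0$ for the multiplicative form $\omega$. Theorem \ref{vazibeld} then produces a $1$-shifted coisotropic structure $L_2 = \psi_{2*}(\psi_1^*L + \Gamma_\delta) = L + \Gamma_{\theta^*\omega}$ on $\c_2 = \c$, which is exactly the assertion. I expect the only nontrivial point to be the reflexivity statement that the diagonal $\G \to \G \times \G^-$ carries $\gamma = 0$ as a $1$-Lagrangian structure; everything else is formal.
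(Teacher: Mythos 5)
Your proposal is correct and is essentially the paper's own proof: the paper likewise realizes $L + \Gamma_{\theta^*\omega}$ as the Morita transfer of $L$ through the self-equivalence of $\c$ with $\K = \C$, $\L = \G$, identity Morita morphisms, and the natural transformation $\theta$ on one side, and then applies Theorem \ref{vazibeld}. Your computation of the connecting form $\delta = \theta^*\omega$ and your verification that $\gamma = 0$ is a 1-Lagrangian structure on $\G \to \G \times \G^-$ (which also follows directly from Lemma \ref{bczyu6nt}) just make explicit the details the paper leaves to the reader.
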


\begin{proof}
Consider the symplectic Morita equivalence
\[
\begin{tikzcd}[column sep={5em,between origins},row sep={2.5em,between origins}]
& \C \arrow[swap]{dl}{\Id} \arrow{dd}{\c} \arrow{dr}{\Id} & \\
\C \arrow[swap]{dd}{\c} \arrow[Rightarrow,shorten=14pt,swap]{dr}{\theta} &  & \C \arrow{dd}{\c}  \\
& \G \arrow{dl}{\Id} \arrow[swap]{dr}{\Id} & \\
\G & & \G
\end{tikzcd}
\]
and apply Theorem \ref{vazibeld}. 
\end{proof}

The following sufficient condition for gauge equivalence will be useful.

\begin{lemma}\label{k1r3zchl}
Let $L_1$ and $L_2$ be 1-shifted coisotropic structures on $\c : \C \to \G$ and suppose that there is a weak coisotropic Morita equivalence
\[
\begin{tikzcd}[column sep={4em,between origins},row sep={2em,between origins}]
& \K \arrow[swap]{dl}{\psi_1} \arrow{dd}{g} \arrow{dr}{\psi_2} & \\
\C \arrow[swap]{dd}{\c} \arrow[Rightarrow,shorten=10pt,swap]{dr}{\theta_1} &  & \C \arrow{dd}{\c} \arrow[swap,Rightarrow,shorten=10pt,swap]{dl}{\theta_2} \\
& \L \arrow{dl}{\varphi_1} \arrow[swap]{dr}{\varphi_2} & \\
\G & & \G
\end{tikzcd}
\]
such that $\psi_1$ and $\psi_2$ are homotopic (i.e.\ related by a natural transformation).
Then $L_1$ and $L_2$ are gauge equivalent.
\end{lemma}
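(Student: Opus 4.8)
The plan is to combine the compatibility condition for $L_2$ with the homotopy $\lambda \colon \psi_1 \Rightarrow \psi_2$ in order to convert the defining relation \eqref{3ttcx243} of the coisotropic Morita equivalence into the statement that $\psi_1^*L_2$ and $\psi_1^*L_1$ differ by gauge transformation by a basic closed form, and then descend that form along $\psi_1$. First I would record the Dirac-geometric analogue of Lemma~\ref{wip6t5yh}\ref{u0vnf6px}: since $\sss \circ \lambda = \psi_1$ and $\ttt \circ \lambda = \psi_2$ on objects, functoriality of the pullback together with its compatibility with gauge transformations (which holds because $\lambda^*\Gamma_\mu = \Gamma_{\lambda^*\mu}$ for any $2$-form $\mu$) gives
\[
\psi_2^*L_2 = \lambda^*\ttt^*L_2 = \lambda^*\big(\sss^*L_2 + \Gamma_{\c^*\omega}\big) = \psi_1^*L_2 + \Gamma_{\lambda^*\c^*\omega}.
\]
Comparing this with \eqref{3ttcx243}, which reads $\psi_2^*L_2 = \psi_1^*L_1 + \Gamma_\delta$, yields
\[
\psi_1^*L_2 = \psi_1^*L_1 + \Gamma_{\beta'}, \qquad \beta' \coloneqq \delta - \lambda^*\c^*\omega,
\]
a $2$-form on $\K\obj$.

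Next I would verify that $\beta'$ is basic and closed. For basicness, applying Lemma~\ref{wip6t5yh}\ref{u0vnf6px} to $\lambda \colon \psi_1 \Rightarrow \psi_2$ and the multiplicative form $\c^*\omega$ on $\C$ gives $\ttt^*(\lambda^*\c^*\omega) - \sss^*(\lambda^*\c^*\omega) = \psi_2^*\c^*\omega - \psi_1^*\c^*\omega$; subtracting this from the identity \eqref{xlnikjej}, which in the present notation is $\ttt^*\delta - \sss^*\delta = \psi_2^*\c^*\omega - \psi_1^*\c^*\omega$, leaves $\ttt^*\beta' = \sss^*\beta'$. For closedness, the background-$3$-form computation \eqref{hpgol1ec} gives $d\delta = \psi_1^*\c^*\phi - \psi_2^*\c^*\phi$, while $d(\lambda^*\c^*\omega) = \psi_1^*\c^*\phi - \psi_2^*\c^*\phi$, which follows from $d\omega = \sss^*\phi - \ttt^*\phi$ together with $\sss\circ\lambda = \psi_1$ and $\ttt\circ\lambda = \psi_2$; hence $d\beta' = 0$.

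The final step is descent. Since $\psi_1 \colon \K \to \C$ is a weak Morita morphism, Proposition~\ref{m9pdmd75} produces a (unique) basic $2$-form $\beta$ on $\C$ with $\psi_1^*\beta = \beta'$, and $\beta$ is closed because $\psi_1\obj$ is a surjective submersion and $\psi_1^*d\beta = d\beta' = 0$. Then
\[
\psi_1^*\big(L_1 + \Gamma_\beta\big) = \psi_1^*L_1 + \Gamma_{\beta'} = \psi_1^*L_2,
\]
and injectivity of the pullback of Dirac structures along the surjective submersion $\psi_1\obj$ forces $L_2 = L_1 + \Gamma_\beta$. As $\beta$ is closed and basic, this is precisely gauge equivalence in the sense of Definition~\ref{x88vq0lb}.

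I expect the only real obstacle to be the bookkeeping in the middle paragraph: both the basicness and the closedness of $\beta'$ must be read off from identities already produced inside the proof of Theorem~\ref{vazibeld} (namely \eqref{xlnikjej} and \eqref{hpgol1ec}) combined with Lemma~\ref{wip6t5yh}\ref{u0vnf6px}, so no genuinely new computation is needed. It is worth emphasizing that the homotopy hypothesis on $\psi_1, \psi_2$ is used exactly once---to obtain $\psi_2^*L_2 = \psi_1^*L_2 + \Gamma_{\lambda^*\c^*\omega}$---and it is this relation that collapses the $\delta$-term into a basic form and thereby makes the descent along $\psi_1$ possible.
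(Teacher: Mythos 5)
Your proposal is correct and follows essentially the same route as the paper: both arguments show that the connecting form $\delta$ differs from the pullback of $\c^*\omega$ along the homotopy by a closed basic $2$-form, and then descend that form via Proposition~\ref{m9pdmd75}. The only (immaterial) difference is that you apply the homotopy identity to $L_2$ and descend along $\psi_1$, whereas the paper applies it to $L_1$ and descends along $\psi_2$.
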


\begin{proof}
Let $\eta : \psi_1 \Rightarrow \psi_2$ be a natural transformation and let $\delta \coloneqq - g^*\gamma + \theta_1^*\omega - \theta_2^*\omega$ be the connecting form.
Note that
\begin{align*}
\ttt^*\delta - \sss^*\delta
&= g^*(\sss^*\gamma - \ttt^*\gamma) + (g^*\varphi_1^*\omega - \psi_1^*\c^*\omega) + (\psi_2^*\c^*\omega - g^*\varphi_2^*\omega) \\
&= \psi_2^*\c^*\omega - \psi_1^*\c^*\omega \\
&= \ttt^*\eta^*\c^*\omega - \sss^*\eta^*\c^*\omega,
\end{align*}
so $\delta - \eta^*\c^*\omega$ is basic.
Moreover, by \eqref{hpgol1ec}, we have
\[
d\delta = \psi_1^*\c^*\phi - \psi_2^*\c^*\phi = \eta^*\sss^*\c^*\phi - \eta^*\ttt^*\c^*\phi = \eta^*\c^*d\omega
\]
so $\delta - \eta^*\c^*\omega$ is closed.
It follows that $\delta - \eta^*\c^*\omega = \psi_2^*\alpha$ for some closed basic form $\alpha$ on $\C$ (Proposition \ref{m9pdmd75}).
Therefore,
\begin{align*}
\psi_2^*L_2 &= \psi_1^*L_1 + \Gamma_{\delta} = \eta^* \sss^* L_1 + \Gamma_{\delta} = \eta^*(\ttt^* L_1 - \Gamma_{\c^*\omega}) + \Gamma_{\delta} = \psi_2^*L_1 + \Gamma_{\delta - \eta^*\c^*\omega} = \psi_2^*(L_1 + \Gamma_\alpha).
\end{align*}
Since $\psi_2$ is a submersion, we have $L_2 = L_1 + \Gamma_\alpha$.
\end{proof}


%
%
%
%
\subsection{The transfer map}
For a quasi-symplectic groupoid $\G$ and a morphism of Lie groupoids $c : \C \to \G$, let $\coiso(c)$ be the set of 1-shifted coisotropic structures on $c$.
Theorem \ref{vazibeld} shows that for a weak symplectic Morita equivalence $\ME$ 
\begin{equation}\label{57mf6lqw}
\begin{tikzcd}[column sep={5em,between origins},row sep={2.5em,between origins}]
& \K \arrow[swap]{dl}{\psi_1} \arrow{dd}{g} \arrow{dr}{\psi_2} & \\
\C_1 \arrow[swap]{dd}{c_1} \arrow[Rightarrow,shorten=14pt,swap]{dr}{\theta_1} &  & \C_2 \arrow{dd}{c_2} \arrow[Rightarrow,shorten=14pt]{dl}{\theta_2} \\
& \L \arrow{dl}{\varphi_1} \arrow[swap]{dr}{\varphi_2} & \\
\G_1 \arrow[rr, yshift=-1em, no head, decorate, decoration={brace,mirror}, swap, "\ME" {yshift=-4pt}] & & \G_2,
\end{tikzcd}
\end{equation}
there is a bijection
\begin{equation}\label{pn3lblh9}
\transfer_{\ME} : \coiso(c_1) \too \coiso(c_2).
\end{equation}
Let
\[
\coisoeq(c) \coloneqq \coiso(c) / {\sim} 
\]
be the set of gauge equivalence classes of 1-shifted coisotropic structures (Definition \ref{x88vq0lb}).
We will show that $\transfer_{\ME}$ descends to a bijection
\begin{equation}\label{1lj7fbbs}
\transfereq_\ME : \coisoeq(c_1) \too \coisoeq(c_2),
\end{equation}
called the \defn{transfer map}, and study its dependency on $\ME$.

\begin{proposition}
The bijection \eqref{pn3lblh9} descends to a bijection \eqref{1lj7fbbs}, and the latter is independent of the choices of natural transformations $\theta_i$ and 1-shifted Lagrangian structure on $\L \to \G_1 \times \G_2^-$.
\end{proposition}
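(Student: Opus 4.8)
The plan is to isolate a single descent statement and then apply it to each of the three kinds of modification in turn. The statement I would establish first is: if $L_2, L_2' \in \coiso(\c_2)$ satisfy $\psi_2^*L_2' = \psi_2^*L_2 + \Gamma_\nu$ for some closed basic $2$-form $\nu$ on $\K\obj$, then $L_2$ and $L_2'$ are gauge equivalent. To prove it I would apply Proposition \ref{m9pdmd75} to the weak Morita morphism $\psi_2$ to write $\nu = \psi_2^*\bar\nu$ for a unique basic $2$-form $\bar\nu$ on $\C_2\obj$; since $\psi_2$ is a submersion and $\nu$ is closed, $\bar\nu$ is closed as well. Then $\psi_2^*L_2' = \psi_2^*(L_2 + \Gamma_{\bar\nu})$, and as $\psi_2^*$ is injective on Dirac structures (being a submersion) this gives $L_2' = L_2 + \Gamma_{\bar\nu}$, i.e.\ $L_2 \sim L_2'$ in the sense of Definition \ref{x88vq0lb}. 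By \eqref{o7uodz2w} every transferred structure is determined by $\psi_2^*L_2 = \psi_1^*L_1 + \Gamma_\delta$, so each modification below simply produces such a $\nu$ as the change in $\psi_1^*L_1 + \Gamma_\delta$.

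For the descent of $\transfer_\ME$ to classes I would take $L_1' = L_1 + \Gamma_\beta$ with $\beta$ a closed basic $2$-form on $\C_1\obj$. Since $\psi_1$ is a groupoid morphism, $\psi_1^*\beta$ is again closed and basic on $\K\obj$, and $\psi_2^*L_2' = \psi_1^*L_1' + \Gamma_\delta = \psi_2^*L_2 + \Gamma_{\psi_1^*\beta}$; the descent statement with $\nu = \psi_1^*\beta$ then yields $L_2 \sim L_2'$, so $\transfer_\ME$ respects gauge equivalence. Because the inverse of $\transfer_\ME$ is itself the transfer along the same Morita equivalence read in the opposite direction (Step 5 in the proof of Theorem \ref{vazibeld}), it respects gauge equivalence by the identical argument, so the two descended maps are mutually inverse and $\transfer_\ME$ descends to a bijection $\transfereq_\ME$.

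It then remains to show $\transfereq_\ME$ is independent of $\gamma$ and the $\theta_i$. Fixing $L_1$, I would show that changing any one of these alters the connecting form $\delta$ of \eqref{h2qt67r9} by a closed basic $2$-form on $\K\obj$; the descent statement finishes the argument. If $\gamma$ is replaced by another $1$-shifted Lagrangian structure $\gamma'$, subtracting the compatibility conditions \eqref{kvzg1hqz} for $\gamma$ and $\gamma'$ shows $\gamma - \gamma'$ is closed basic on $\L\obj$, whence $g^*(\gamma - \gamma')$ is closed basic on $\K\obj$ and $\delta$ changes by $-g^*(\gamma - \gamma')$. If $\theta_1$ is replaced by another natural transformation $\theta_1' : \c_1\psi_1 \Rightarrow \varphi_1 g$, then applying Lemma \ref{wip6t5yh}\ref{u0vnf6px} to $\theta_1$ and to $\theta_1'$ and subtracting gives $\ttt^*\mu_1 = \sss^*\mu_1$ for $\mu_1 \coloneqq (\theta_1')^*\omega_1 - \theta_1^*\omega_1$, so $\mu_1$ is basic; and since $\sss\circ\theta_1 = \sss\circ\theta_1' = \c_1\psi_1$ and $\ttt\circ\theta_1 = \ttt\circ\theta_1' = \varphi_1 g$, the relation $d\omega_1 = \sss^*\phi_1 - \ttt^*\phi_1$ forces $\theta_1^*d\omega_1 = (\theta_1')^*d\omega_1$, so $\mu_1$ is closed. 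As $\delta$ changes exactly by $\mu_1$, the descent statement applies, and the same computation with $\omega_2,\phi_2$ handles a change of $\theta_2$.

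The hard part will be the $\theta_i$-independence: one must recognise the difference $(\theta_i')^*\omega_i - \theta_i^*\omega_i$ of pulled-back multiplicative forms as closed and basic on $\K$, which is precisely what makes it descend along $\psi_2$; this is where Lemma \ref{wip6t5yh}\ref{u0vnf6px} together with the multiplicativity relation $d\omega_i = \sss^*\phi_i - \ttt^*\phi_i$ enter. Everything else is a direct application of the descent statement, itself a formal consequence of Proposition \ref{m9pdmd75} and the injectivity of pullback along submersions.
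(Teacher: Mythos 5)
Your proposal is correct and follows essentially the same route as the paper: both arguments reduce everything to showing that the relevant change in $\psi_1^*L_1+\Gamma_\delta$ is by a closed basic $2$-form on $\K\obj$, descend it to $\C_2\obj$ via Proposition \ref{m9pdmd75}, and conclude by injectivity of $\psi_2^*$; the closedness and basicness of the $\theta_i$-terms come from the same two ingredients (the relation $d\omega_i=\sss^*\phi_i-\ttt^*\phi_i$ and Lemma \ref{wip6t5yh}\ref{u0vnf6px}). The only difference is organizational — you isolate a reusable descent lemma and treat the three modifications separately, whereas the paper handles them at once through the single form $\zeta=\delta-\widetilde\delta$.
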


\begin{proof}
Let us first show that the map $\transfer_{\ME}$ descends to gauge equivalence classes.
Let $L_1$ be a 1-shifted coisotropic structure on $\c_1$ and $\beta_1$ a closed basic 2-form on $\C_1$ such that $L_1 + \Gamma_{\beta_1}$ is a 1-shifted coisotropic structure.
By Proposition \ref{m9pdmd75}, there is a unique closed basic 2-form $\beta_2$ on $\C_2$ such that $\psi_1^*\beta_1 = \psi_2^*\beta_2$.
Let $\delta$ be the connecting form of $\ME$.
Then $\transfer_{\ME}(L_1 + \Gamma_{\beta_1}) = \psi_{2*}(\psi_1^*(L_1 + \Gamma_{\beta_1}) + \Gamma_\delta) = \psi_{2*}(\psi_1^*L_1 + \Gamma_\delta) + \Gamma_{\beta_2} = \transfer_{\ME}(L_1) + \Gamma_{\beta_2}$.
It follows that $\transfer_\M$ descends to a bijection $\transfereq_\M$.


Now, let $\widetilde{\ME}$ be as in \eqref{57mf6lqw} but with possibly different natural transformations $\widetilde{\theta}_i$ and 1-shifted Lagrangian structure $\widetilde{\gamma}$.
Let $L_1 \in \coiso(\c_1)$, $L_2 \coloneqq \transfer_\ME(L_1)$, $\widetilde{L}_2 \coloneqq \transfer_{\widetilde{\ME}_2}(L_1)$, and $\widetilde{\delta}$ the connecting form of $\widetilde{\ME}$.
We have $\psi_2^*L_2 = \psi_1^*L_1 + \Gamma_{\delta}$ and $\psi_2^*\widetilde{L}_2 = \psi_1^*L_1 + \Gamma_{\widetilde{\delta}}$, so $\psi_2^*L_2 = \psi_2^*\widetilde{L}_2 + \Gamma_{\zeta}$, where $\zeta \coloneqq \delta - \widetilde{\delta} = g^*(\widetilde{\gamma} - \gamma) + \theta_1^*\omega_1 - \widetilde{\theta}_1^*\omega_1 + \widetilde{\theta}_2^*\omega_2 - \theta_2^*\omega_2$.
Note that $d(\widetilde{\theta}_i^*\omega_i - \theta_i^*\omega_i) = \widetilde{\theta}_i^*(\sss^*\phi_i - \ttt^*\phi_i) - \theta_i^*(\sss^*\phi_i - \ttt^*\phi_i) = 0$ for $i = 1, 2$, and $d(\widetilde{\gamma} - \gamma) = 0$, so $\zeta$ is closed.
By Lemma \ref{wip6t5yh}\ref{u0vnf6px}, $\zeta$ is also basic.
We then have $\zeta = \psi_2^*\beta_2$ for some closed basic 2-form $\beta_2$ on $\C_2$ (Proposition \ref{m9pdmd75}).
It follows that $\psi_2^*L_2 = \psi_2^*(\widetilde{L}_2 + \Gamma_{\beta_2})$ and hence $L_2 = \widetilde{L}_2 + \Gamma_{\beta_2}$ since $\psi_2$ is a submersion.
\end{proof}

\subsection{Functoriality}

Since the transfer map $\transfereq_\ME$ is independent of the choice of natural transformations, we now drop them from our notation in diagrams.
We must, however, keep in mind that diagrams are only 2-commutative.

Suppose that we have two symplectic Morita equivalences $\ME_1$ and $\ME_2$:
\[
\begin{tikzcd}[column sep={4em,between origins},row sep={2em,between origins}]
     & \K_1 \arrow{dl}\arrow{dd}\arrow{dr} &      & \K_2 \arrow{dl}\arrow{dd}\arrow{dr}     &   \\
\C_1 \arrow{dd} &  & \C_2 \arrow{dd} &  & \C_3 \arrow{dd} \\
& \L_1 \arrow{dl}\arrow{dr} & & \L_2 \arrow{dl}\arrow{dr} & \\
\G_1 \arrow[rr, yshift=-1em, no head, decorate, decoration={brace,mirror}, swap, "\ME_1" {yshift=-4pt}] &      & \G_2 \arrow[rr, yshift=-1em, no head, decorate, decoration={brace,mirror}, swap, "\ME_2" {yshift=-4pt}] &      & \G_3.
\end{tikzcd}
\]
By Proposition \ref{9j2iully}, $\L_1 \htimes_{\G_2} \L_2$ is a 1-shifted Lagrangian correspondence from $\G_1$ to $\G_3$.
The fibre product $\K_1 \htimes_{\C_2} \K_2$ is also a Morita equivalence between $\C_1$ and $\C_3$ \cite[Proposition 4.4.4]{hoy:13}.
Hence, we have a composition $\ME_2 \circ \ME_1$ of symplectic Morita equivalences
\[
\begin{tikzcd}[column sep={5em,between origins},row sep={4em,between origins}]
& & \K_1 \htimes_{\C_2} \K_2 \arrow{dl}\arrow{d}\arrow{dr} & &\\
     & \K_1 \arrow{dl}\arrow{d} &  \L_1 \htimes_{\G_2} \L_2 \arrow{dl}\arrow{dr}   & \K_2 \arrow{d}\arrow{dr} &       \\
\C_1 \arrow{d} & \L_1 \arrow{dl}\arrow{dr} & \C_2 \arrow{d} \arrow[from=ul, crossing over] \arrow[from=ur, crossing over] & \L_2 \arrow{dl}\arrow{dr} & \C_3 \arrow{d} \\
\G_1 \arrow[rrrr, yshift=-1em, no head, decorate, decoration={brace,mirror}, swap, "\ME_2 \circ \ME_1" {yshift=-4pt}] &      & \G_2 &      & \G_3
\end{tikzcd}
\]

\begin{proposition}\label{3ni3b2b5}
We have $\transfereq_{\ME_2 \circ \ME_1} = \transfereq_{\ME_2} \circ \transfereq_{\ME_1}$.
\end{proposition}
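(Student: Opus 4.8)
The plan is to reduce everything to the characterizing identity \eqref{o7uodz2w}: a $1$-shifted coisotropic structure $\widetilde{L}$ on the right-hand morphism equals $\transfer_\ME(L_1)$ precisely when $\psi_2^*\widetilde{L} = \psi_1^*L_1 + \Gamma_\delta$ for the connecting form $\delta$ of the equivalence, and Theorem \ref{vazibeld} guarantees that $\transfer_\ME(L_1)$ is the \emph{unique} such structure. Since the preceding proposition shows that the induced map $\transfereq_\ME$ on gauge classes is independent of the auxiliary natural transformations and of the $1$-shifted Lagrangian structure, it suffices to exhibit $L_3 \coloneqq \transfer_{\ME_2}(\transfer_{\ME_1}(L_1))$ as the output of the transfer along the composite $\ME_2 \circ \ME_1$ for one convenient choice of connecting data, i.e.\ to verify the coisotropic Morita condition \eqref{3ttcx243} for the composite. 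Write the data of $\ME_i$ with superscripts, so $\psi^{(1)}_1 \colon \K_1 \to \C_1$, $\psi^{(1)}_2 \colon \K_1 \to \C_2$, $g_1, \theta^{(1)}_1, \theta^{(1)}_2$, $\gamma_1$ for $\ME_1$, and $\psi^{(2)}_2, \psi^{(2)}_3, g_2, \theta^{(2)}_2, \theta^{(2)}_3, \gamma_2$ for $\ME_2$; let $\delta_1, \delta_2$ be their connecting forms \eqref{h2qt67r9}.

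Set $\K \coloneqq \K_1 \htimes_{\C_2}\K_2$ with projections $P_1, P_2$ to $\K_1, \K_2$ and arrow component $P_0 \colon \K\obj \to \C\arr_2$, so that $\sss P_0 = \psi^{(1)}_2 P_1$ and $\ttt P_0 = \psi^{(2)}_2 P_2$ on objects, and put $\Psi_1 \coloneqq \psi^{(1)}_1 P_1$, $\Psi_3 \coloneqq \psi^{(2)}_3 P_2$. The first step is to bridge the two incarnations of $L_2$ on $\K$. Pulling back the coisotropic compatibility $\ttt^*L_2 = \sss^*L_2 + \Gamma_{\c_2^*\omega_2}$ along the submersion $P_0$ and using functoriality of Dirac pullback (so $P_0^*\ttt^*L_2 = P_2^*(\psi^{(2)}_2)^*L_2$ and $P_0^*\sss^*L_2 = P_1^*(\psi^{(1)}_2)^*L_2$) gives
\[
P_2^*(\psi^{(2)}_2)^*L_2 = P_1^*(\psi^{(1)}_2)^*L_2 + \Gamma_{P_0^*\c_2^*\omega_2}.
\]
Combining this with the defining identities $(\psi^{(1)}_2)^*L_2 = (\psi^{(1)}_1)^*L_1 + \Gamma_{\delta_1}$ and $(\psi^{(2)}_3)^*L_3 = (\psi^{(2)}_2)^*L_2 + \Gamma_{\delta_2}$ (pulled back along $P_1$ and $P_2$) and the additivity of graphs yields
\[
\Psi_3^*L_3 = \Psi_1^*L_1 + \Gamma_{P_1^*\delta_1 + P_0^*\c_2^*\omega_2 + P_2^*\delta_2}.
\]

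It then remains to identify the right-hand connecting form with the connecting form $\Delta$ of $\ME_2 \circ \ME_1$. By Proposition \ref{9j2iully} (through Theorem \ref{rt0qkos3}\ref{lpfuwmeg}) the $1$-shifted Lagrangian structure on $\L \coloneqq \L_1 \htimes_{\G_2}\L_2$ is the $2$-form $\gamma = r_1^*\gamma_1 + r_2^*\gamma_2 - r_0^*\omega_2$, where $r_1, r_0, r_2$ are the projections of $\L\obj$. Writing $G \colon \K \to \L$ for the induced map, the first- and third-leg natural transformations of the composite are $\Theta_1 = \theta^{(1)}_1 P_1$ and $\Theta_3 = \theta^{(2)}_3 P_2$, while the middle arrow is $r_0 G = (\theta^{(2)}_2 P_2)\cdot(\c_2 P_0)\cdot(\theta^{(1)}_2 P_1)^{-1}$ in $\G_2$. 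Expanding $(r_0 G)^*\omega_2$ by multiplicativity of $\omega_2$ and $\iii^*\omega_2 = -\omega_2$ gives
\[
(r_0 G)^*\omega_2 = P_2^*(\theta^{(2)}_2)^*\omega_2 + P_0^*\c_2^*\omega_2 - P_1^*(\theta^{(1)}_2)^*\omega_2 .
\]
Substituting into $\Delta = -G^*\gamma + \Theta_1^*\omega_1 - \Theta_3^*\omega_3$ and regrouping the terms according to the factors $P_1$, $P_0$, $P_2$ collapses, by the very definition of $\delta_1$ and $\delta_2$, to $\Delta = P_1^*\delta_1 + P_0^*\c_2^*\omega_2 + P_2^*\delta_2$. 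Hence $\Psi_3^*L_3 = \Psi_1^*L_1 + \Gamma_\Delta$, i.e.\ \eqref{3ttcx243} holds, so by the uniqueness in Theorem \ref{vazibeld} we get $\transfer_{\ME_2\circ\ME_1}(L_1) = L_3 = \transfer_{\ME_2}(\transfer_{\ME_1}(L_1))$; passing to gauge classes gives the claim.

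The main obstacle is the bookkeeping in this last paragraph: correctly describing the composite Lagrangian correspondence, in particular the middle arrow $r_0 G$ and the two natural transformations $\Theta_1, \Theta_3$, and then carrying out the multiplicativity computation for $(r_0 G)^*\omega_2$ with the correct composability and signs (the inversion factor contributing $-\omega_2$). A secondary point requiring care is the bridging identity, which hinges on the functoriality of Dirac pullback along the submersions $P_0, P_1, P_2$. Once these two computations are in place, the reduction to the characterization and uniqueness of the transfer map (Theorem \ref{vazibeld}), together with independence of the auxiliary choices, makes the conclusion immediate.
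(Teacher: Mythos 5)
Your proof is correct and follows the same overall strategy as the paper's: chain the identities $\psi_{12}^*L_2 = \psi_{11}^*L_1 + \Gamma_{\delta_1}$ and $\psi_{23}^*L_3 = \psi_{22}^*L_2 + \Gamma_{\delta_2}$ through the compatibility condition $\ttt^*L_2 = \sss^*L_2 + \Gamma_{\c_2^*\omega_2}$ pulled back along the middle projection of $\K_1\htimes_{\C_2}\K_2$, then compare the resulting $2$-form with the connecting form of the composite. The one place where you genuinely diverge is the middle arrow of the induced map $\widehat g\colon \K_1\htimes_{\C_2}\K_2 \to \L_1\htimes_{\G_2}\L_2$: you use the actual induced map, whose middle component is $(\theta_{22}\circ\pr_{\K_2})\cdot(\c_2\circ\pr_{\C_2})\cdot(\theta_{12}\circ\pr_{\K_1})^{-1}$, and expand its pullback of $\omega_2$ by multiplicativity (the same computation as Lemma \ref{wip6t5yh}\ref{8096zsbm}); the terms $\pm\theta_{12}^*\omega_2$, $\mp\theta_{22}^*\omega_2$ then cancel against those hidden in $\delta_1$ and $\delta_2$, so the connecting form of the composite is exactly $\pr_{\K_1}^*\delta_1 + \eta^*\c_2^*\omega_2 + \pr_{\K_2}^*\delta_2$ and the two transfers agree on the nose, whence the claim by the uniqueness clause of Theorem \ref{vazibeld}. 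The paper instead invokes $\xi\circ\widehat g = \c_2\circ\eta$ on middle components, which leaves a residual term $\zeta = \pr_{\K_1}^*\theta_{12}^*\omega_2 - \pr_{\K_2}^*\theta_{22}^*\omega_2$ in the connecting form; it then shows $\zeta$ is closed and basic, descends to a $2$-form $\beta$ on $\C_3$, and concludes $\widehat L_3 = L_3 + \Gamma_\beta$. Both arguments prove the statement at the level of gauge equivalence classes, which is all that is asserted; your version buys an exact equality of transferred structures for the natural choice of connecting data, at the cost of the extra multiplicativity computation, and your appeal to the preceding independence-of-choices proposition correctly insulates the conclusion from any ambiguity in how the composite's auxiliary data are normalized.
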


\begin{proof}
Let $\widehat{\L} = \L_1 \htimes_{\G_2} \L_2$ and $\widehat{\K} = \K_1 \htimes_{\C_2} \K_2$ and introduce the labels
\begin{equation}\label{04g8a4em}
\begin{tikzcd}[column sep={7em,between origins},row sep={5em,between origins}]
& & \widehat{\K} \arrow[swap]{dl}{\pr_{\K_1}}\arrow{d}{\widehat{g}}\arrow{dr}{\pr_{\K_2}} & &\\
     & \K_1 \arrow[swap]{dl}{\psi_{11}}\arrow[swap]{d}{g_1} &  \widehat{\L} \arrow[swap, "\pr_{\L_1}"{pos=0.15,outer sep=-2pt}]{dl}\arrow["\pr_{\L_2}"{pos=0.15,outer sep=-2pt}]{dr}   & \K_2 \arrow{d}{g_2}\arrow{dr}{\psi_{23}}  &      \\
\C_1 \arrow{d}{c_1} & \L_1 \arrow{dl}{\varphi_{11}}\arrow[swap]{dr}{\varphi_{12}} & \C_2 \arrow{d}{c_2} \arrow[from=ul, crossing over, swap, "\psi_{12}"{pos=0.85,outer sep=-2pt}] \arrow[from=ur, crossing over, "\psi_{22}"{pos=0.85,outer sep=-2pt}] & \L_2 \arrow{dl}{\varphi_{22}}\arrow[swap]{dr}{\varphi_{23}} & \C_3 \arrow{d}{c_3} \\
\G_1 &      & \G_2 &      & \G_3.
\end{tikzcd}
\end{equation}
Let us denote the natural transformations in \eqref{04g8a4em} by
\[
\begin{tikzcd}[column sep={5em,between origins},row sep={2.5em,between origins}]
     & \K_1 \arrow{dl}\arrow{dd}\arrow{dr} &      & \K_2 \arrow{dl}\arrow{dd}\arrow{dr}     &   \\
\C_1 \arrow{dd} \arrow[Rightarrow,shorten=12pt]{dr}{\theta_{11}} &  & \C_2 \arrow{dd} \arrow[Rightarrow,shorten=12pt,swap]{dl}{\theta_{12}} \arrow[Rightarrow,shorten=12pt]{dr}{\theta_{22}} &  & \C_3 \arrow{dd} \arrow[Rightarrow,shorten=12pt,swap]{dl}{\theta_{23}} \\
& \L_1 \arrow{dl}\arrow{dr} & & \L_2 \arrow{dl}\arrow{dr} & \\
\G_1  &      & \G_2  &      & \G_3.
\end{tikzcd}
\]
Recall that the natural transformations making the squares defining the homotopy fibre products $\widehat{\K}$ and $\widehat{\L}$ 2-commute are the projections
\[
\eta \coloneqq \pr_{\C_2} : \psi_{12} \pr_{\K_1} \Rightarrow \psi_{22} \pr_{\K_2}
\quad\text{and}\quad
\xi \coloneqq \pr_{\G_2} : \varphi_{12} \pr_{\L_1} \Rightarrow \varphi_{22} \pr_{\L_2}.
\]
Note that
\[
\xi \circ \widehat{g} = \c_2 \circ \eta.
\]
The two vertical squares starting at the top of \eqref{04g8a4em} are commutative, i.e.
\[
g_i \circ \pr_{\K_i} = \pr_{\L_i} \circ \widehat{g},
\]
for $i = 1, 2$.
The morphisms in the composition $\M_2 \circ \M_1$ are given by $\widehat{\psi}_1 \coloneqq \psi_{11} \circ \pr_{\K_1}$, $\widehat{\psi}_2 \coloneqq \psi_{23} \circ \pr_{\K_2}$, $\widehat{\varphi}_1 \coloneqq \varphi_{11} \circ \pr_{\L_1}$, and $\widehat{\varphi}_2 \coloneqq \varphi_{23} \circ \pr_{\L_2}$.
Hence, the two natural transformations in $\M_2 \circ \M_1$ are
\[
\widehat{\theta}_1 \coloneqq \theta_{11} \circ \pr_{\K_1}\obj : \c_1\widehat{\psi}_1 \Rightarrow \widehat{\varphi}_1 \widehat{g}
\quad\text{and}\quad
\widehat{\theta}_2 \coloneqq \theta_{23} \circ \pr_{\K_2}\obj : \c_2 \widehat{\psi}_2 \Rightarrow \widehat{\varphi}_2 \widehat{g}.
\]
By Theorem \ref{rt0qkos3}, the 1-shifted Lagrangian structure on $\widehat{\L} \to \G_1 \times \G_3^-$ is given by 
\[
\widehat{\gamma} \coloneqq \pr_{\L_1}^*\gamma_1 + \pr_{\L_2}^*\gamma_2 - \xi^*\omega_2.
\]
The connecting forms of $\M_1$, $\M_2$ and $\M_2 \circ \M_1$ are given by
\begin{align*}
\delta_1 &\coloneqq -g_1^*\gamma_1 + \theta_{11}^*\omega_1 - \theta_{12}^*\omega_2 \\
\delta_2 &\coloneqq -g_2^*\gamma_2 + \theta_{22}^*\omega_2 - \theta_{23}^*\omega_3 \\
\widehat{\delta} &\coloneqq -\widehat{g}^*\widehat{\gamma} + \widehat{\theta}_1^*\omega_1 - \widehat{\theta}_2^*\omega_3,
\end{align*}
respectively.
It follows that
\begin{align}
\widehat{\delta}
&= -\pr_{\K_1}^*g_1^*\gamma_1 - \pr_{\K_2}^*g_2^*\gamma_2 + \widehat{g}^*\xi^*\omega_2 + \pr_{\K_1}^* \theta_{11}^*\omega_1 - \pr_{\K_2}^* \theta_{23}^*\omega_3 \nonumber \\
&= \pr_{\K_1}^*\delta_1 + \pr_{\K_2}^*\delta_2 + \eta^*c_2^*\omega_2 + \pr_{\K_1}^*\theta_{12}^*\omega_2 - \pr_{\K_2}^* \theta_{22}^*\omega_2 \nonumber \\
&= \pr_{\K_1}^*\delta_1 + \pr_{\K_2}^*\delta_2 + \eta^*c_2^*\omega_2 + \zeta, \label{3vlt24os}
\end{align}
where
\[
\zeta \coloneqq \pr_{\K_1}^*\theta_{12}^*\omega_2 - \pr_{\K_2}^* \theta_{22}^*\omega_2.
\]
Then, by repeated use of Lemma \ref{wip6t5yh}\ref{u0vnf6px}, we have
\begin{align*}
\sss^*\zeta &= \pr_{\K_1}^*(\ttt^*\theta_{12}^*\omega_2 + \psi_{12}^*c_2^*\omega_2 - g_1^*\varphi_{12}^*\omega_2) - \pr_{\K_2}^*(\ttt^*\theta_{22}^*\omega_2 + \psi_{22}^*c_2^*\omega_2 - g_2^*\varphi_{22}^*\omega_2) \\
&= \ttt^*\zeta + \sss^*\eta^*c_2^*\omega_2 - \ttt^*\eta^*c_2^*\omega_2 - \widehat{g}^*\pr_{\L_1}^*\varphi_{12}^*\omega_2 + \widehat{g}^* \pr_{\L_2}^* \varphi_{22}^*\omega_2 \\
&= \ttt^* \zeta + \sss^*\eta^*c_2^*\omega_2 - \ttt^*\eta^*c_2^*\omega_2 + \ttt^*\widehat{g}^*\xi^*\omega_2 - \sss^*\xi^*\omega_2\\
&= \ttt^*\zeta.
\end{align*}
Moreover,
\begin{align*}
d\zeta &= \pr_{\K_1}^*\theta_{12}^*(\sss^*\phi_2 - \ttt^*\phi_2) - \pr_{\K_2}^*\theta_{22}^*(\sss^*\phi_2 - \ttt^*\phi_2) \\
&= \pr_{\K_1}^*(\psi_{12}^*c_2^* \phi_2 - g_1^* \varphi_{12}^*\phi_2) - \pr_{\K_2}^*(\psi_{22}^*c_2^*\phi_2 - g_2^*\varphi_{22}^*\phi_2) \\
&= \sss^*\eta^* c_2^*\phi_2 - \widehat{g}^*\pr_{\L_1}^*\varphi_{12}^*\phi_2 - \ttt^*\eta^*c_2^*\phi_2 + \widehat{g}^* \pr_{\L_2}^* \varphi_{22}^* \phi_2 \\
&= \sss^*\eta^*c_2^*\phi_2 - \sss^* \widehat{g}^* \xi^* \phi_2 - \ttt^* \eta^* c_2^*\phi_2 + \ttt^* \widehat{g}^* \xi^* \phi_2 \\
&= 0.
\end{align*}
By Proposition \ref{m9pdmd75}, there is a basic closed 2-form $\beta$ on $\C_3$ such that $\zeta = \widehat{\psi}_2^*\beta = \pr_{\K_2}^* \psi_{23}^*\beta$.

Let $L_1 \in \coiso(c_1)$, $L_2 \coloneqq \transfer_{\M_1}(L_1) \in \coiso(c_2)$, $L_3 \coloneqq \transfer_{\M_2}(L_2) \in \coiso(c_3)$, and $\widehat{L}_3 \coloneqq \transfer_{\M_2 \circ \M_1} \in \coiso(c_3)$.
We need to show that $\widehat{L}_3$ and $L_3$ are gauge equivalent.
We have
\begin{align}
\psi_{11}^*L_1 &= \psi_{12}^*L_2 - \Gamma_{\delta_1} \label{dlkc6umi} \\
\psi_{22}^*L_2 &= \psi_{23}^*L_3 - \Gamma_{\delta_2} \label{teurg3x6} \\
\pr_{\K_1}^* \psi_{11}^*L_1 &= \pr_{\K_2}^* \psi_{23}^* \widehat{L}_3 - \Gamma_{\widehat{\delta}}. \label{upfw4j2f}
\end{align}
Now, \eqref{teurg3x6} implies that
\begin{align*}
\pr_{\K_1}^* \psi_{12}^*L_2
&= \eta^* \sss^* L_2 \\
&= \eta^* (\ttt^*L_2 - L_{c_2^*\omega_2}) \\
&= \pr_{\K_2}^*\psi_{22}^*L_2 - \Gamma_{\eta^*c_2^*\omega_2} \\
&= \pr_{\K_2}^*\psi_{23}^*L_3 + \Gamma_{-\pr_{\K_2}^*\delta_2 - \eta^*c_2^*\omega_2}.
\end{align*}
Making use of \eqref{dlkc6umi}, we then get
\[
\pr_{\K_1}^*\psi_{11}^*L_1 = \pr_{\K_2}^*\psi_{23}^*L_3 + \Gamma_{-\pr_{\K_1}^*\delta_1 - \pr_{\K_2}^*\delta_2 - \eta^*c_2^*\omega_2}.
\]
Inserting this in \eqref{upfw4j2f} and using \eqref{3vlt24os}, we get
\[
\pr_{\K_2}^*\psi_{23}^*\widehat{L}_3
=
\pr_{\K_2}^*\psi_{23}^*L_3 + \Gamma_{\widehat{\delta} - \pr_{\K_1}^*\delta_1 - \pr_{\K_2}^*\delta_2 - \eta^*c_2^*\omega_2}
=
\pr_{\K_2}^*\psi_{23}^*L_3 + \Gamma_{\zeta}
=
\pr_{\K_2}^*\psi_{23}^*(L_3 + \Gamma_\beta),
\]
and hence $\widehat{L}_3 = L_3 + \Gamma_\beta$.
\end{proof}

Finally, the dependency of $\transfereq_\ME$ on $\ME$ is expressed by the following result.

\begin{proposition}\label{kvvrwful}
Let $\c_i : \C_i \to \G_i$ for $i = 1, 2$ be Lie groupoid morphisms, where $\G_i$ are quasi-symplectic groupoids.
Suppose that we have two symplectic Morita equivalences
\[
\begin{tikzcd}[column sep={5em,between origins},row sep={2.5em,between origins}]
& \K_i \arrow[swap]{dl} \arrow{dd} \arrow{dr} & \\
\C_1 \arrow[swap]{dd}{c_1} &  & \C_2 \arrow{dd}{c_2} \\
& \L_i \arrow{dl} \arrow[swap]{dr} & \\
\G_1 \arrow[rr, yshift=-1em, no head, decorate, decoration={brace,mirror}, swap, "\ME_i" {yshift=-4pt}] & & \G_2,
\end{tikzcd}
\qquad
\text{for $i = 1, 2$},
\]
related by a 2-commutative diagram of the form
\begin{equation}\label{dkhiewxt}
\begin{tikzcd}[column sep={2.5em,between origins},row sep={3.5em,between origins}]
& & & & \K_2 \arrow{dllll} \arrow{drr} \arrow{dd} & & \\
\C_1 \arrow{dd} & & & \widetilde{\K} \arrow{ur} \arrow{dl} & & & \C_2 \arrow{dd} \\
& & \K_1 \arrow{ull} & & \L_2 \arrow{dllll} \arrow{drr} & & \\
\G_1 & & & \widetilde{\L} \arrow[from=uu,crossing over] \arrow{ur} \arrow{dl} & & & \G_2 \\
& & \L_1 \arrow{ull} \arrow[from=uu,crossing over] \arrow{urrrr},
\arrow[from=3-3,to=2-7,crossing over]{urrrr} & & &
\end{tikzcd}
\end{equation}
where $\widetilde{\K} \to \K_i$ and $\widetilde{\L} \to \L_i$ are Morita morphisms.
Then $\transfereq_{\ME_1} = \transfereq_{\ME_2}$.
\end{proposition}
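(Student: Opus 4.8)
The plan is to reduce the statement to a pointwise comparison of the two transferred Dirac structures after pulling everything back to the common refinement $\wt{\K}$, and then to descend the resulting discrepancy to a closed basic $2$-form on $\C_2$.

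First I would fix a $1$-shifted coisotropic structure $L_1$ on $c_1$ and set $L_2^{(i)} \coloneqq \transfer_{\ME_i}(L_1)$ for $i = 1, 2$. Since $\transfereq_{\ME_i}$ is independent of the auxiliary natural transformations and Lagrangian structures (by the preceding proposition), I may fix such data for each $\ME_i$, obtaining connecting forms $\delta_1$ on $\K_1\obj$ and $\delta_2$ on $\K_2\obj$. Writing $\psi_{11}, \psi_{12}$ (resp.\ $\psi_{21}, \psi_{22}$) for the Morita morphisms of $\ME_1$ (resp.\ $\ME_2$) to $\C_1$ and $\C_2$, Theorem \ref{vazibeld} (via \eqref{o7uodz2w}) characterises $L_2^{(i)}$ by $\psi_{i2}^*L_2^{(i)} = \psi_{i1}^*L_1 + \Gamma_{\delta_i}$. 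The goal becomes to show that $L_2^{(1)}$ and $L_2^{(2)}$ are gauge equivalent (Definition \ref{x88vq0lb}).

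Next I would pull these two relations back to $\wt{\K}\obj$ along the Morita morphisms $\pi_i \colon \wt\K \to \K_i$ from \eqref{dkhiewxt}, and use the $2$-commutativity of \eqref{dkhiewxt} to extract natural transformations $\lambda_1 \colon \psi_{11}\pi_1 \Rightarrow \psi_{21}\pi_2$ (of maps $\wt\K \to \C_1$) and $\lambda_2 \colon \psi_{12}\pi_1 \Rightarrow \psi_{22}\pi_2$ (of maps $\wt\K \to \C_2$). The key device is a Dirac-structure analogue of Lemma \ref{wip6t5yh}\ref{u0vnf6px}: for a natural transformation $\lambda \colon f \Rightarrow g$ between submersive morphisms $\H \to \C$ and a $1$-coisotropic structure $L$ on $c \colon \C \to \G$, pulling the compatibility relation $\ttt^*L = \sss^*L + \Gamma_{c^*\omega}$ (Definition \ref{gm52z93m}\ref{0k2j1uwv}) back along $\lambda$ and using $\lambda^*\ttt^* = g^*$, $\lambda^*\sss^* = f^*$ gives $g^*L = f^*L + \Gamma_{\lambda^*c^*\omega}$ (all pullbacks are smooth since $f, g$ are submersions). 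Applying this to $\lambda_1$ with $L_1$ and to $\lambda_2$ with $L_2^{(2)}$, and combining with the two pulled-back characterisations, I obtain on $\wt\K\obj$ the single identity $\pi_1^*\psi_{12}^*L_2^{(2)} = \pi_1^*\psi_{12}^*L_2^{(1)} + \Gamma_{\mu}$, where
\[
\mu \coloneqq \lambda_1^*c_1^*\omega_1 + \pi_2^*\delta_2 - \lambda_2^*c_2^*\omega_2 - \pi_1^*\delta_1 .
\]

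It then remains to show that $\mu$ is a closed basic $2$-form and to push it down. For basicness I would compute $\ttt^*\mu - \sss^*\mu$ using Lemma \ref{wip6t5yh}\ref{u0vnf6px} on the terms $\lambda_i^*c_i^*\omega_i$ together with the identity $\ttt^*\delta_i - \sss^*\delta_i = \psi_{i2}^*c_2^*\omega_2 - \psi_{i1}^*c_1^*\omega_1$ from \eqref{xlnikjej}; the four contributions cancel in pairs. For closedness I would compute $d\mu$ using $c_i^*d\omega_i = \sss^*c_i^*\phi_i - \ttt^*c_i^*\phi_i$ and the background-form identity $d\delta_i = \psi_{i1}^*c_1^*\phi_1 - \psi_{i2}^*c_2^*\phi_2$ from \eqref{hpgol1ec}, and again everything cancels. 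Since $\psi_{12}\pi_1 \colon \wt\K \to \C_2$ is a composite of Morita morphisms, hence a weak Morita morphism, Proposition \ref{m9pdmd75} yields a unique closed basic $2$-form $\beta$ on $\C_2$ with $\mu = (\psi_{12}\pi_1)^*\beta$; injectivity of pullback along the submersion $\psi_{12}\pi_1$ then gives $L_2^{(2)} = L_2^{(1)} + \Gamma_\beta$, i.e.\ a gauge equivalence, so $\transfereq_{\ME_1}([L_1]) = \transfereq_{\ME_2}([L_1])$. As $L_1$ is arbitrary, this proves $\transfereq_{\ME_1} = \transfereq_{\ME_2}$. The main obstacle is organisational: keeping track of the several natural transformations supplied by \eqref{dkhiewxt} and verifying that the connecting-form discrepancy $\mu$ is simultaneously closed and basic, which is precisely what the two cancellation computations accomplish.
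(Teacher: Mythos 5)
Your argument is correct, but it is organized quite differently from the paper's. The paper disposes of this proposition in one line by combining two results it has already established: Proposition \ref{3ni3b2b5} (compatibility of $\transfereq$ with composition of symplectic Morita equivalences, via homotopy fibre products) and Lemma \ref{k1r3zchl} (a weak coisotropic Morita self-equivalence whose two legs to $\C$ are homotopic induces a gauge equivalence). You instead never form any fibre product: you pull both characterizations $\psi_{i2}^*L_2^{(i)} = \psi_{i1}^*L_1 + \Gamma_{\delta_i}$ back to the common refinement $\wt{\K}$, convert the two 2-cells of \eqref{dkhiewxt} into gauge terms via the observation that the compatibility condition $\ttt^*L = \sss^*L + \Gamma_{\c^*\omega}$ pulls back along a natural transformation $\lambda$ to $g^*L = f^*L + \Gamma_{\lambda^*\c^*\omega}$ (the same device the paper uses inside the proof of Lemma \ref{k1r3zchl}), and then verify directly that the discrepancy $\mu$ is closed and basic using \eqref{xlnikjej}, \eqref{hpgol1ec}, and Lemma \ref{wip6t5yh}\ref{u0vnf6px}, before descending it with Proposition \ref{m9pdmd75}. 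In effect you have inlined and merged the cancellation computations that the paper performs separately in Lemma \ref{k1r3zchl} and Proposition \ref{3ni3b2b5}. The paper's route is shorter given its prior infrastructure and makes the bicategorical content (functoriality under composition) explicit, which it needs anyway for \S\ref{dy26su68}; your route is self-contained, avoids the homotopy fibre product bookkeeping entirely, and makes the single closed basic form responsible for the gauge equivalence completely explicit. Both are valid proofs of the statement.
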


\begin{proof}
This follows from Lemma \ref{k1r3zchl} and Proposition \ref{3ni3b2b5}.
\end{proof}

\section{Hamiltonian actions of quasi-symplectic groupoids as 1-coisotropics}\label{616chpl6}

As observed by Calaque \cite[Example 1.31]{cal:15}, given a quasi-symplectic groupoid $\G$ acting on a manifold $M$, a 1-shifted Lagrangian structure on the projection $\G \ltimes M \to \G$ is precisely a 2-form on $M$ making the $\G$-action Hamiltonian in the sense of Xu \cite{xu:04}.
We now generalize this to a correspondence between 1-shifted coisotropic structures and Hamiltonian actions on Dirac manifolds.


\begin{definition}[{cf.\ \cite[Theorem 4.7]{bur-cra:05}}]\label{vofxgsye}
Let $(\G, \omega, \phi)$ be a quasi-symplectic groupoid acting on a Dirac manifold $(M, L, \eta)$ with moment map $\moment : M \to \G\obj$.
The action is \defn{Hamiltonian} if the following two conditions hold.
\begin{itemize}
\item
\defn{Compatibility.}
We have $\moment^*\phi = \eta$ and $\mathsf{a}^*L = \pr_M^*L + \Gamma_{\pr_\G^*\omega}$, where $\mathsf{a} : \G \ltimes M \to M$ is the action map and $\pr_\G$ and $\pr_M$ are the natural projections on $\G \ltimes M$.
\item
\defn{Non-degeneracy.}
$\ker \moment_* \cap \ker L = 0$.
\end{itemize}
\end{definition}

\begin{remark}\
\begin{itemize}
\item
The compatibility condition implies that the moment map $\moment : M \to \G\obj$ is a forward Dirac map with respect to the Dirac structure on $M$ induced by the quasi-symplectic structure (this follows from Lemma \ref{18e8u16h}).
The non-degeneracy condition is then the condition that $\moment$ is a strong Dirac map \cite{ale-bur-mei:09,bur-cra:05} (\S\ref{xlzo381e}).
\item
If $L$ is the graph of a 2-form, we recover Xu's notion of Hamiltonian $\G$-spaces \cite[\S3.1]{xu:04}.
\item
A Hamiltonian space for the trivial groupoid is a Poisson manifold.
\end{itemize}
\end{remark}

\begin{proposition}\label{ww8x8m9h}
Let $(\G, \omega, \phi)$ be a quasi-symplectic groupoid acting on a smooth manifold $M$. 
Then a Dirac structure on $M$ making the $\G$-action Hamiltonian is precisely a 1-shifted coisotropic structure on the projection $\G \ltimes M \to \G$.
\end{proposition}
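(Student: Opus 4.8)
The plan is to unwind both definitions and verify that the two lists of conditions coincide. First I would spell out the groupoid $\G \ltimes M$: its object space is $M$, its arrow space is $\{(g,p) : \sss(g) = \moment(p)\}$, with $\sss(g,p) = p$ and $\ttt(g,p) = g\cdot p$. Thus, viewed as maps out of the arrow space, the action map $\act$ and the two projections $\pr_M, \pr_\G$ appearing in Definition~\ref{vofxgsye} are exactly $\act = \ttt$, $\pr_M = \sss$, and $\pr_\G = \c\arr$, where $\c : \G \ltimes M \to \G$ denotes the projection (so that $\c\obj = \moment$). Hence the Hamiltonian compatibility condition $\moment^*\phi = \eta$ together with $\act^*L = \pr_M^*L + \Gamma_{\pr_\G^*\omega}$ reads verbatim as $\c^*\phi = \eta$ and $\ttt^*L = \sss^*L + \Gamma_{\c^*\omega}$, which is condition~\ref{0k2j1uwv} of Definition~\ref{gm52z93m}. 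So the compatibility parts of the two notions are literally identical, and since $\c\obj = \moment$ the constraint on the background $3$-form matches as well.

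Next I would identify the Lie algebroid of $\G \ltimes M$ as the action algebroid $A_\C = \moment^*A_\G$, with anchor $\aaa_\C$ the infinitesimal action and $\c_* : A_\C \to A_\G$ the bundle projection; differentiating $\moment\circ\ttt = \ttt\circ\c\arr$ at units yields the intertwining relation $\moment_*\,\aaa_\C a = \aaa_\G\,\c_* a = \aaa_\G a$. Assuming the compatibility condition, Lemma~\ref{18e8u16h} shows that $(\aaa_\C a,\, \moment^*\IM_\omega a) \in L$ for every $a \in A_\C$, so the non-degeneracy map~\eqref{0v1fz7h6} takes the explicit form $a \mto ((\aaa_\C a,\, \moment^*\IM_\omega a),\, a)$ and indeed lands in $L \times_\c A_\G$.

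The heart of the proof is the equivalence of the two non-degeneracy conditions, namely that this map is surjective if and only if $\ker \moment_* \cap \ker L = 0$. Working over a fixed $p \in M$ and a fixed $a \in (A_\G)_{\moment(p)}$, the requirement $\c_* b = a$ forces $b = a$, so the map has a single image over $a$, determined by placing $\aaa_\C a$ in the $TM$-slot. On the other hand, the fibre of $L \times_\c A_\G$ over $(p,a)$ is the affine space of all $v$ with $(v,\, \moment^*\IM_\omega a) \in L$ and $\moment_* v = \aaa_\G a$; subtracting the distinguished solution $\aaa_\C a$, the difference $u = v - \aaa_\C a$ ranges exactly over $\{u : (u,0)\in L,\ \moment_* u = 0\} = (\ker\moment_* \cap \ker L)_p$. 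Therefore the map is onto over every $(p,a)$ precisely when this intersection vanishes, and since every $a$ occurs as a third component, surjectivity of~\eqref{0v1fz7h6} (condition~\ref{3bw7gcnj}) is equivalent to $\ker\moment_*\cap\ker L = 0$, which is the non-degeneracy condition of Definition~\ref{vofxgsye}.

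The delicate point, where I expect to spend the most care, is precisely this last equivalence: one must combine the rigidity of preimages (because $\c_*$ restricts to the identity on the algebroid factor) with the intertwining relation $\moment_*\,\aaa_\C a = \aaa_\G a$ to see that the defect of surjectivity is measured \emph{exactly} by $\ker\moment_*\cap\ker L$, rather than some larger or smaller bundle. Everything else is a faithful translation of structure maps, so once the action algebroid and the maps $\act,\pr_M,\pr_\G$ are correctly matched to $\ttt,\sss,\c$, the two definitions collapse onto one another and the proposition follows.
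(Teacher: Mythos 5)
Your proposal is correct and follows essentially the same route as the paper: identify $\act = \ttt$, $\pr_M = \sss$, $A_{\G\ltimes M} = \moment^*A_\G$, note via Lemma \ref{18e8u16h} that $(\act_*a, \moment^*\IM_\omega a) \in L$, and then observe that the fibre of $L \times_\c A_\G$ over a fixed $a$ is a torsor over $\ker\moment_* \cap \ker L$ based at that distinguished point. The paper phrases this last step as two separate implications rather than as an affine-space statement, but the content is identical.
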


\begin{proof}
Let $\mu : M \to \G\obj$ be the moment map and let $(L, \eta)$ be a Dirac structure on $M$.
The compatibility condition for $(L, \eta)$ to be a 1-shifted coisotropic structure on $\G \ltimes M \to \G$ is the same as the compatibility condition in Definition \ref{vofxgsye}.
It remains to show that the non-degeneracy conditions are equivalent.
First note that $A_{\G \ltimes M} = A_\G \ltimes M = \moment^*A_\G$.
The map \eqref{0v1fz7h6} is then given by
\begin{equation}\label{6z83mp46}
\moment^*A_\G \too \{((v, \alpha), a) \in L \times A_\G : \moment_* v = \aaa a, \alpha = \moment^*\IM_\omega a\}, \quad a \mtoo ((\act_* a, \moment^* \IM_\omega a), a),
\end{equation}
where $\act_* : \mu^*A_\G \to TM$ is the infinitesimal action.
We need to show that \eqref{6z83mp46} is surjective if and only if $\ker \moment_* \cap \ker L = 0$.
Suppose that $\ker \moment_* \cap \ker L = 0$ and let $((v, \alpha), a)$ be in the codomain of \eqref{6z83mp46}.
By viewing $a$ as an element of $\moment^*A_\G$, we have $(\act_* a, \moment^*\IM_\omega a) \in L$.
Since also $(v, \alpha) \in L$, we have $(v - \act_* a, 0) \in L$, so $v - \act_* a \in \ker \moment_* \cap \ker L = 0$.
Hence, \eqref{6z83mp46} is surjective.
Conversely, suppose that \eqref{6z83mp46} is surjective and let $v \in \ker \moment_* \cap \ker L$.
Then $((v, 0), 0)$ is in the codomain of \eqref{6z83mp46} so, by surjectivity, we have $v = \act_* 0 = 0$.
\end{proof}

%

By applying our results on intersections and Morita transfer of 1-coisotropics, we get:

\begin{theorem}
\label{ytrgmucn}
Let $\G$ and $\H$ be quasi-symplectic groupoids such that $\G \times \H$ acts on a Dirac manifold $M$ in a Hamiltonian way with moment map
\[
(\mu, \nu) : M \too \G\obj \times \H\obj.
\]
Let $\c : \C \to \G$ be a 1-coisotropic.
If the intersection $\C\obj \times_{\G\obj} M$ is clean and the quotient
\[
M \sll{\c} \G \coloneqq (\C\obj \times_{\G\obj} M) / \C
\]
has the structure of a smooth manifold such that the quotient map is a smooth submersion, then $M \sll{\c} \G$ has the structure of a Hamiltonian $\H$-space.
If $\C$ acts locally freely on $\C\obj \times_{\G\obj} M$, then the fibre product $\C\obj \times_{\G\obj} M$ is transverse.
\end{theorem}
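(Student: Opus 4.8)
The plan is to realize the conclusion as a composition of 1-coisotropic correspondences followed by a Morita transfer. By Proposition \ref{ww8x8m9h} applied to $\G \times \H$, the Hamiltonian action equips the projection $(\G \times \H) \ltimes M \to \G \times \H$ with a 1-shifted coisotropic structure, which I read as a 1-coisotropic correspondence $\mathcal{M} \colon \G \to \H$ (choosing opposite quasi-symplectic structures so that it composes with $\c$ and the output lands on $\H$ rather than $\H^-$). Together with $\c \colon \C \to \G$, viewed as a correspondence from a point to $\G$, these form a composable pair over $\G$, so I form the strong fibre product $\mathcal{R} \coloneqq \C \stimes_\G \mathcal{M}$. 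Its object space is exactly $\mathcal{R}\obj = \C\obj \times_{\G\obj} M =: P$, and a direct identification of arrows shows $\mathcal{R} \cong (\C \times \H) \ltimes P$, where $\C \times \H$ acts on $P$ through the commuting $\C$-action (via $\c$ and the $\G$-action) and $\H$-action.

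First I would verify the hypotheses of Theorem \ref{2jy36i7i}. The transversality of $A_\C \to A_\G \leftarrow A_{\mathcal{M}}$ required in \eqref{cqrik68i} is automatic because $A_{\mathcal{M}} = \moment^*(A_\G \times A_\H) \to A_\G$ is already fibrewise surjective; cleanness of the groupoid $\mathcal{R}$ reduces to the assumed cleanness of $P = \C\obj \times_{\G\obj} M$, the arrow-level cleanness being inherited from the action-groupoid structure of $\mathcal{M}$; and smoothness of $L \coloneqq p_1^*L_\C + p_2^*L_M$ follows from part \ref{zheuq0qk}, the cleanness of \eqref{cqrik68i} being a consequence of the clean intersection together with the fact that $\moment$ is a strong Dirac map. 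Theorem \ref{2jy36i7i}\ref{ukvnir14} then produces a 1-shifted coisotropic structure $L$ on $\mathcal{R} \to \H$.

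Next I would transfer $L$ along a weak Morita morphism. Since the $\H$-action commutes with the $\C$-action, it descends to $P/\C = M \sll{\c} \G$, and the quotient map $P \to M \sll{\c} \G$, which is a surjective submersion by hypothesis, assembles with it into a morphism $\psi \colon \mathcal{R} = (\C \times \H) \ltimes P \to \H \ltimes (M \sll{\c} \G)$ over $\H$. I would check that $\psi$ is a weak Morita morphism: it is a surjective submersion on objects by hypothesis, while on arrows the induced map to the pullback groupoid is surjective precisely because two points of $P$ lie in the same $\C$-orbit if and only if they coincide in $M \sll{\c} \G$. Feeding $\psi$ and $\id_\H$ into Theorem \ref{vazibeld} as a weak symplectic Morita equivalence between $\mathcal{R} \to \H$ and $\H \ltimes (M \sll{\c} \G) \to \H$ transfers $L$ to a 1-shifted coisotropic structure on $\H \ltimes (M \sll{\c} \G) \to \H$, which by Proposition \ref{ww8x8m9h} read backwards is exactly a Dirac structure making the residual $\H$-action Hamiltonian; this proves the first assertion.

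Finally, for the transversality statement I would forget the spectator $\H$ and apply Theorem \ref{2jy36i7i}\ref{u56ix79e} to the intersection of $\c \colon \C \to \G$ with the Hamiltonian $\G$-space $\G \ltimes M \to \G$. There the fibre-product algebroid is $A_\C$ with anchor $b \mapsto (\aaa_\C b, \act_* \c_* b)$, whose kernel is the infinitesimal isotropy of the $\C$-action on $P$; local freeness makes this vanish, so part \ref{u56ix79e} forces $R^\circ = 0$ and hence the transversality of $\C\obj \times_{\G\obj} M$. The hard part will be the weak-Morita step: showing that $\psi$ is genuinely a weak Morita morphism—in particular that its arrow map is a surjective submersion onto the pullback groupoid—and that $\mathcal{R}$ is clean with $L$ smooth, together with the careful bookkeeping of opposite quasi-symplectic structures needed to keep the correspondences composable.
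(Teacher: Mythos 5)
Your overall route is the same as the paper's: turn the Hamiltonian action into a 1\hbox{-}coisotropic correspondence via Proposition~\ref{ww8x8m9h}, compose with $\c$ by a strong fibre product using Theorem~\ref{2jy36i7i}, transfer the result along the weak Morita morphism $(\C\times\H)\ltimes Z \to \H\ltimes (Z/\C)$ with Theorem~\ref{vazibeld}, and read off the Hamiltonian $\H$-space structure via Proposition~\ref{ww8x8m9h} again; your treatment of the transversality statement through Part~\ref{u56ix79e} also matches.

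There is, however, one genuine gap: your justification that $L=p_1^*L_\C+p_2^*L_M$ is smooth. You claim that the cleanness of \eqref{cqrik68i} is ``a consequence of the clean intersection together with the fact that $\moment$ is a strong Dirac map.'' That implication is false. Cleanness of $\C\obj\times_{\G\obj}M$ controls the rank of $\im \c_*+\im\moment_*\subseteq T\G\obj$, whereas cleanness of \eqref{cqrik68i} concerns $R=\c_*p_T(L_\C)+\moment_*p_T(L_M)$, and $p_T$ of a Dirac structure can jump in rank. Concretely, let $\G$ be a symplectic groupoid of a non-regular Poisson manifold $(N,\pi)$ acting on $M=N$ with $\moment=\mathrm{Id}$ and $L_M=L_\G$ (a Hamiltonian space with $\moment$ a strong Dirac map), and take $\c=\mathrm{Id}_\G$ with $L_\C=L_\G$ as in Proposition~\ref{2kzh0or3}: the object fibre product is the diagonal, hence clean, yet $R=\im\pi^{\sharp}$ does not have constant rank. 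The correct argument --- and the only place where the quotient-submersion hypothesis enters the first assertion --- is precisely the mechanism you reserve for the transversality claim: the exact sequence of Theorem~\ref{2jy36i7i}\ref{u56ix79e} exhibits $R^\circ$ as a quotient of $\ker\mathsf{a}_{\C*}$, where $\mathsf{a}_{\C*}$ is the infinitesimal $\C$-action on $Z=\C\obj\times_{\G\obj}M$; since $Z\to Z/\C$ is assumed to be a submersion, $\mathsf{a}_{\C*}$ has constant rank, hence so do $R^\circ$ and $R$, and Lemma~\ref{ej8aeupa} then gives the smoothness of $L$. With this substitution your argument coincides with the paper's proof.
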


\begin{proof}
We apply the theorem on strong intersections of 1-shifted coisotropic correspondences (Theorem \ref{2jy36i7i}) to
\[
\begin{tikzcd}[column sep={4em,between origins},row sep={4em,between origins}]
& \mathcal{C}^- \arrow{dr} \arrow{dl} & & (\G \times \H) \ltimes M \arrow{dl} \arrow{dr} & \\
\{\star\} & & \G & & \H^-.
\end{tikzcd}
\]
Note that the corresponding maps of Lie algebroids $A_{\C} \to A_\G \leftarrow A_{(\G \times \H) \ltimes M}$ are transverse since the latter is surjective.
Let $Z \coloneqq \C\obj \times_{\G\obj} M$.
Then the strong fibre product of $\C$ and $(\G \times \H) \ltimes M$ over $\G$ is the action groupoid $(\C \times \H) \ltimes Z$, where $\C \times \H$ acts on $Z$ via $(a, h) \cdot (x, p) = (\ttt(a), (\c(a), h) \cdot p)$ with moment map $(x, p) \mto (x, \nu(p))$, for $(x, p) \in Z$ and $(a, h) \in \C \times \H$.
The short exact sequence in Part \ref{u56ix79e} of Theorem \ref{2jy36i7i} then reduces to
\[
0 \too 0 \too \ker \mathsf{a}_{\C*} \too R^\circ \too 0,
\]
where $\mathsf{a}_{\C*} : A_\C \to TZ$ is the infinitesimal action of $\C$ on $Z$.
Since $Z \to Z/\C$ is a submersion, $\mathsf{a}_{\C*}$ has constant rank and hence so does $R$.
It follows that $\pr_{\C^0}^*L_\C + \pr_M^*L_M$ (where $L_\C$ and $L_M$ are the Dirac structures on $\C\obj$ and $M$ respectively) is a smooth Dirac structure on $Z$ and a 1-shifted coisotropic structure on $(\C \times \H) \ltimes Z \to \H$.
Now, we have a commutative diagram
\[
\begin{tikzcd}[row sep={4em,between origins},column sep={4em,between origins}]
(\C \times \H) \ltimes Z \arrow{rr} \arrow{dr} & & \H \ltimes Z/\C \arrow{dl} \\
& \H &
\end{tikzcd}
\]
of Lie groupoid morphisms.
We claim that the horizontal arrow is a weak Morita morphism.
This reduces to the statement that the map
\[
\H \ltimes Z \too Z/\C, \quad (h, (x, p)) \mtoo [(x, h \cdot p)]
\]
is a surjective submersion.
This follows from the fact that the action map $\H \ltimes Z \to Z$ and the quotient map $\pi : Z \to Z/\C$ are surjective submersions. 
By Theorem \ref{vazibeld}, $\pi_*(\pr_{\C\obj}^*L_\C + \pr_M^*L_M)$ is a 1-shifted coisotropic structure on $\H \ltimes Z/\C \to \H$.
By Proposition \ref{ww8x8m9h}, it gives $Z/\C$ the structure of a Hamiltonian $\H$-space.
\end{proof}

In particular, by using the 1-Lagrangian $\G|_\O \to \G$ induced by an orbit (Proposition \ref{3pl9nrs8}), we get:

\begin{corollary}\label{2wut6us3}
Let $\G$ and $\H$ be quasi-symplectic groupoids such that $\G \times \H$ acts on a Dirac manifold $M$ in a Hamiltonian way with moment map $(\mu, \nu) : M \to \G\obj \times \H\obj$.
Let $\O \s \G\obj$ be an orbit.
If
\[
M \sll{\O} \G \coloneqq \mu^{-1}(\O) / (\G|_\O)
\]
is a manifold, then it has the structure of a Hamiltonian $\H$-space.
\end{corollary}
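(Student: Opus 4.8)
The plan is to deduce the corollary directly from Theorem \ref{ytrgmucn} by taking the 1-coisotropic to be the inclusion of the restricted groupoid over the orbit. By Proposition \ref{3pl9nrs8}, the canonical presymplectic form on $\O$ is a 1-shifted Lagrangian structure on $\c : \G|_\O \hookrightarrow \G$, so $(\G|_\O, \c)$ is in particular a 1-coisotropic on $\G$. Writing $\C = \G|_\O$, so that $\C\obj = \O$, the projection $(x, p) \mapsto p$ (under which $x = \mu(p)$ is determined by $p$) identifies the fibre product $\C\obj \times_{\G\obj} M = \O \times_{\G\obj} M$ with $\mu^{-1}(\O)$, carries the $\G|_\O$-action to the obvious one, and hence identifies the reduced space $M \sll{\c} \G = (\C\obj \times_{\G\obj} M)/\C$ with $M \sll{\O} \G = \mu^{-1}(\O)/(\G|_\O)$. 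The latter is a manifold by assumption, which supplies the quotient hypothesis of Theorem \ref{ytrgmucn} (the quotient map being a submersion is part of, or an immediate consequence of, the assumption that the quotient is smooth). It then remains only to check that the intersection hypothesis of Theorem \ref{ytrgmucn} holds for this $\c$, after which Theorem \ref{ytrgmucn} applies verbatim and produces the Hamiltonian $\H$-space structure on $M \sll{\O} \G$.

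The main obstacle is therefore verifying that $\O \times_{\G\obj} M \cong \mu^{-1}(\O)$ is a clean intersection. The essential input is that the $\G$-action is Hamiltonian, so by Proposition \ref{ww8x8m9h} and Definition \ref{vofxgsye} the moment map $\mu$ is a strong Dirac map. From the moment-map identity $\mu_* \act_* = \aaa_\G$ one gets $T_{\mu(p)}\O = \im \aaa_\G|_{\mu(p)} = \mu_*(\im \act_*|_p) \subseteq \im \mu_{*,p}$ for every $p \in \mu^{-1}(\O)$; that is, $\mu$ meets the orbit ``from inside.'' I would then upgrade this to cleanness: equivariance of $\mu$ shows that its rank is constant along each $\G$-orbit in $M$, the non-degeneracy $\ker \mu_* \cap \ker L = 0$ controls the kernel, and together with the inclusion $T\O \subseteq \im \mu_*$ these force $\mu$ to have locally constant rank along $\mu^{-1}(\O)$, whence $\mu^{-1}(\O)$ is a clean (indeed the expected) intersection. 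This constant-rank step is where the real work lies, and it is exactly the point at which the strong Dirac hypothesis is used; mere smoothness of $\mu^{-1}(\O)$ is not sufficient, as the example $(s,t) \mapsto (s, t^2)$ shows, whose fibre over the first axis is smooth but not clean.

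With cleanness established and the quotient identification in hand, Theorem \ref{ytrgmucn} applies with $\c : \G|_\O \hookrightarrow \G$ and yields the desired Hamiltonian $\H$-space structure on $M \sll{\O} \G$. As a byproduct, the final clause of Theorem \ref{ytrgmucn} shows that the intersection is in fact transverse whenever $\G|_\O$ acts locally freely on $\mu^{-1}(\O)$, which recovers the classical statement that a locally free action makes the orbit a regular value of the moment map.
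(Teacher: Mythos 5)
Your overall route is exactly the paper's: Corollary \ref{2wut6us3} is obtained by feeding the 1-Lagrangian $\G|_\O \hookrightarrow \G$ of Proposition \ref{3pl9nrs8} into Theorem \ref{ytrgmucn}, after identifying the strong fibre product $\O \times_{\G\obj} M$ with $\mu^{-1}(\O)$ and its quotient by $\G|_\O$ with $M \sll{\O} \G$. Up to that point your write-up matches the paper, which treats the corollary as an immediate specialization and does not separately re-derive the cleanness hypothesis --- it is simply inherited from Theorem \ref{ytrgmucn} as part of the standing smoothness assumptions on the reduction data.

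The genuine problem is the extra argument you offer for cleanness. The inclusion $T_{\mu(p)}\O = \im \aaa_\G \subseteq \im \mu_{*,p}$ is correct (it follows from $\mu_* \act_* = \aaa_\G$), and equivariance does make $\rk \mu_*$ constant along each individual $\G$-orbit in $M$. But $\mu^{-1}(\O)$ is a union of many $\G$-orbits, and nothing forces the rank to be locally constant across different orbits; the non-degeneracy $\ker \mu_* \cap \ker L = 0$ gives no control here. Concretely, let $\G = T^*S^1 \tto \R$ act on $M = \C^2$ by $e^{i\theta}\cdot(z,w) = (e^{i\theta}z, e^{-i\theta}w)$, with $L$ the graph of the standard symplectic form and $\mu(z,w) = \tfrac{1}{2}(|z|^2 - |w|^2)$; this is Hamiltonian, $\O = \{0\}$ is an orbit, the rank of $d\mu$ is $1$ on $\mu^{-1}(0)\setminus\{0\}$ but $0$ at the origin, and $\mu^{-1}(0)$ is a cone, not a submanifold. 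So cleanness of $\mu^{-1}(\O)$ is a genuine additional hypothesis and not a consequence of the Hamiltonian condition, and your constant-rank step fails. (Even if the rank were locally constant along the preimage, that alone would not yield smoothness: the constant rank theorem needs constant rank on a neighbourhood.) The repair is simply not to claim cleanness: read it as a hypothesis carried over from Theorem \ref{ytrgmucn}, which is how the paper's one-line deduction is meant.
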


\begin{example}\
\begin{itemize}
\item
In the special case where $M$ is non-degenerate, Corollary \ref{2wut6us3} recovers Xu's reduction \cite[Theorem 3.18]{xu:04}.
In particular, it includes Marsden--Weinstein--Meyer symplectic reduction \cite{mar-wei:74,mey:73} (when $\G = T^*G$ is the cotangent groupoid of a Lie group $G$), quasi-Hamiltonian reduction \cite{ale-mal-mei:98} (when $\G = G \times G$ is the double), Mikami--Weinstein reduction \cite{mik-wei:88} (when $\G$ is a symplectic groupoid), and Lu's reduction \cite{lu:91} (when $\G$ is the integration of the dual $G^*$ of a Poisson-Lie group).

\item
When $\G$ is source-simply-connected, Corollary \ref{2wut6us3} recovers the Dirac reduction of Bursztyn--Crainic \cite{bur-cra:05}.
In particular, it includes quasi-Poisson reduction \cite{ale-kos-mei:02}.

\item
When $M$ is a symplectic manifold, $\c : \C \hookrightarrow \G$ is a subgroupoid with a trivial 1-shifted Lagrangian structure $\gamma = 0$, and $\H = \{\star\}$, Theorem \ref{ytrgmucn} recovers the notion of symplectic reduction along a submanifold introduced in \cite{cro-may:22}.
In particular, it includes symplectic cutting \cite{ler:95,ler-mei-tol-woo:98}, symplectic implosion \cite{gui-jef-sja:02}, preimages of Poisson transversals \cite{fre-mar:17}, and the Ginzburg--Kazhdan construction \cite{gin.kaz:23} of the Moore--Tachikawa topological quantum field theory \cite{moo-tac:12}.

\item
When $\c : \C \hookrightarrow \G$ is a subgroupoid with a 1-shifted Lagrangian structure, Theorem \ref{ytrgmucn} gives a global version of the notion of reduction of strong Dirac maps introduced in \cite{bal-may:22}.
In particular, it includes group-valued implosion \cite{hur-jef-sja:06}, multiplicative Whittaker reduction \cite{bal:22}, quasi-Poisson reduction relative to a subgroup \cite{lib-sev:15,sev:15}, and a multiplicative version of the Moore--Tachikawa varieties \cite{moo-tac:12}.
\end{itemize}

\end{example}

\section{Shifted coisotropic structures for differentiable stacks}\label{dy26su68}

We now define 1-shifted coisotropic structures on morphisms of differentiable stacks.
Let $\mathbf{DiffStack}$ be the 2-category of differentiable stacks \cite{beh-xu:11}, i.e.\ presentable stacks over the site of smooth manifolds with the Grothendieck topology of open covers.
Recall that there is an equivalence of bicategories, called the \defn{classifying functor}
\begin{equation}\label{ifpavzpw}
\mathbf{B} : \mathbf{Lie}[\E^{-1}] \too \mathbf{DiffStack}
\end{equation}
where $\mathbf{Lie}[\E^{-1}]$ is the bicategory of Lie groupoids localised at essential equivalences \cite{pro:96}.
That is, an object of $\mathbf{Lie}[\E^{-1}]$ is a Lie groupoid, a 1-morphism from $\H$ to $\G$ is a span of Lie groupoid morphisms
\begin{equation}\label{skxte4yr}
\begin{tikzcd}[row sep={2em,between origins},column sep={4em,between origins}]
& \C \arrow[swap]{dl}{\simeq} \arrow{dr} & \\
\H & & \G
\end{tikzcd}
\end{equation}
where $\simeq$ is an essential equivalence, and a 2-morphism between $\H \overset{\simeq}{\leftarrow} \C_i \to \G$ for $i = 1, 2$ is a 2-commutative diagram
\begin{equation}\label{hx1x0jqk}
\begin{tikzcd}
& \C_1 \arrow[swap]{dl}{\simeq} \arrow{dr} & \\
\H & \K \arrow{u}{\simeq} \arrow[swap]{d}{\simeq} & \G \\
& \C_2. \arrow{ul}{\simeq} \arrow{ur}
\end{tikzcd}
\end{equation}
This equivalence is discussed in more details, for example, in \cite[Theorem 2.3]{hep:09}, \cite[Theorem I.2.1]{car:11}, \cite[\S4.2]{hof-sja:21}, and \cite{vil:18} (see also \cite{beh-xu:11, blo:08, ler:10, met:03, noo:05}.)
In particular, essential equivalences of Lie groupoids are mapped to weak equivalences in $\mathbf{DiffStack}$ (i.e.\ morphisms with an inverse up to 2-morphism).
By Remark \ref{nhqq70ch}, we may replace essential equivalences with Morita morphisms.

%
A \defn{presentation} of a differentiable stack $\mathbf{X}$ is a Lie groupoid $\G$ together with a weak equivalence $\mathbf{B}\G \to \mathbf{X}$ in $\mathbf{DiffStack}$.
The fact that \eqref{ifpavzpw} is an equivalence of bicategories implies that every differentiable stack has a presentation and that two Lie groupoids present the same stack if and only if they are Morita equivalent.

Let us now recall the definition 1-shifted symplectic structures on differentiable stacks via Lie groupoids (see also, e.g., \cite{get:14,cal:21,cue-zhu:23,mag-tor-vit:23}).
Let $\mathbf{X}$ be a differentiable stack.
Define an equivalence relation on presentations of $\mathbf{X}$ by quasi-symplectic groupoids as follows.
We say that $(\G_1, \omega_1, \phi_1) \sim (\G_2, \omega_2, \phi_2)$ if there is a symplectic Morita equivalence
\begin{equation}\label{b22utjui}
\begin{tikzcd}[row sep={2em,between origins},column sep={4em,between origins}]
& \L \arrow{dl} \arrow{dr} & \\
\G_1 & & \G_2
\end{tikzcd}
\end{equation}
such that the induced diagram of weak equivalences in $\mathbf{DiffStack}$
\begin{equation}\label{o1mxhizj}
\begin{tikzcd}[column sep={3em,between origins},row sep={3em,between origins}]
&\B\L \arrow{dl} \arrow{dr} & \\
\B\G_1 \arrow{dr} & & \B\G_2 \arrow{dl} \\
& \mathbf{X} &
\end{tikzcd}
\end{equation}
2-commutes.
Reflexivity and symmetry are clear, and transitivity follows by Proposition \ref{9j2iully}.

\begin{definition}\label{s5okb7r6}
A \defn{1-shifted symplectic structure} on a differentiable stack $\mathbf{X}$ is an equivalence class $\mathbf{\Omega}$ of presentations of $\mathbf{X}$ by quasi-symplectic groupoids.
A \defn{symplectic presentation} of a 1-shifted symplectic differentiable stack $(\mathbf{X}, \mathbf{\Omega})$ is a representative of $\mathbf{\Omega}$.
\end{definition}

To show that this genuinely defines a structure on the differentiable stack, 
we need to show that choosing a 1-shifted symplectic structure on $\mathbf{X}$ does not single out any particular subclass of presentations of $\mathbf{X}$.
This is the content of the following proposition.

\begin{proposition}\label{6x6gdfs5}
Let $(\mathbf{X}, \mathbf{\Omega})$ be a 1-shifted symplectic differentiable stack.
For any presentation $\G$ of $\mathbf{X}$, there is a quasi-symplectic structure $(\omega, \phi)$ on $\G$ such that $(\G, \omega, \phi) \in \mathbf{\Omega}$.
Moreover, $(\omega, \phi)$ is unique up to gauge transformation.
\end{proposition}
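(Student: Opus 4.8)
The plan is to reduce both assertions to the transfer theory for quasi-symplectic structures developed in \S\ref{1vfh5k59}, using the classifying functor \eqref{ifpavzpw} to translate the stacky compatibility condition into a groupoid-level Morita equivalence. Fix once and for all a representative $(\G_0, \omega_0, \phi_0) \in \mathbf{\Omega}$, so that $\G_0$ is a presentation of $\mathbf{X}$ via a weak equivalence $f_0 \colon \B\G_0 \to \mathbf{X}$, and let $f \colon \B\G \to \mathbf{X}$ be the weak equivalence exhibiting the given presentation $\G$. The essential point is that $\G_0$ and $\G$ present the same stack, so that the equivalence of bicategories \eqref{ifpavzpw} produces a Morita equivalence between them compatible with both presentations, after which Theorem \ref{jpmgv5l0} transports the symplectic data.

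For existence, I would first consider the weak equivalence $f^{-1} \circ f_0 \colon \B\G_0 \to \B\G$, where $f^{-1}$ is a quasi-inverse of $f$. Since $\B$ is essentially surjective and full on $1$-morphisms, this is, up to $2$-isomorphism, the image under $\B$ of a $1$-morphism $\G_0 \to \G$ in $\mathbf{Lie}[\E^{-1}]$, i.e.\ a span $\G_0 \overset{\varphi_1}{\leftarrow} \L \overset{\varphi_2}{\to} \G$ with $\varphi_1$ an essential equivalence; as $f^{-1}f_0$ is invertible, the leg $\varphi_2$ is also an essential equivalence, and by Remark \ref{nhqq70ch} I may take both legs to be Morita morphisms. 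Chasing the $2$-isomorphism $(\B\varphi_2)(\B\varphi_1)^{-1} \cong f^{-1}f_0$ through $f$ gives $f_0 \circ \B\varphi_1 \cong f \circ \B\varphi_2$, which is exactly the $2$-commutativity of \eqref{o1mxhizj}. Applying Theorem \ref{jpmgv5l0} to the underlying Morita equivalence $\G_0 \leftarrow \L \to \G$ and $(\omega_0, \phi_0)$ then yields a quasi-symplectic structure $(\omega, \phi)$ on $\G$, unique up to gauge transformation, making $\L$ a symplectic Morita equivalence. Because the $2$-commutativity of \eqref{o1mxhizj} depends only on the underlying groupoid maps, it is unaffected by the transfer, so $(\G_0, \omega_0, \phi_0) \sim (\G, \omega, \phi)$ and hence $(\G, \omega, \phi) \in \mathbf{\Omega}$.

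For uniqueness, suppose $(\G, \omega, \phi)$ and $(\G, \omega', \phi')$ both lie in $\mathbf{\Omega}$. Since $\mathbf{\Omega}$ is an equivalence class, each is equivalent to $(\G_0, \omega_0, \phi_0)$ by a symplectic Morita equivalence whose diagram \eqref{o1mxhizj} $2$-commutes; call the underlying Morita equivalences $\L$ and $\L'$. By the uniqueness clause of Theorem \ref{jpmgv5l0}, $(\omega, \phi)$ and $(\omega', \phi')$ are precisely the images of $(\omega_0, \phi_0)$ under the transfer maps \eqref{a17x97sf} associated with $\L$ and $\L'$. The $2$-commutativity of \eqref{o1mxhizj} for both means that $\L$ and $\L'$ present the same morphism of stacks, namely $f^{-1}f_0 \colon \B\G_0 \to \B\G$; since $\B$ is fully faithful on hom-categories, $\L$ and $\L'$ are then equivalent Morita equivalences in the sense of the diagram \eqref{poby95jm}. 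Proposition \ref{0cc6w7hy} now shows that the two transfer maps coincide, so $(\omega, \phi)$ and $(\omega', \phi')$ agree in $\operatorname{qsymp}(\G)$, i.e.\ are gauge equivalent.

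The hard part will be the bookkeeping at the interface between the $2$-categorical statement and the groupoid-level definitions: I must check carefully that the purely stacky condition that \eqref{o1mxhizj} $2$-commutes is the same as the condition that the spans $\L, \L'$ represent the single morphism $f^{-1}f_0$, and that this in turn matches the notion of equivalent Morita equivalences required by Proposition \ref{0cc6w7hy}. This rests entirely on $\B$ being an equivalence of bicategories, together with the harmless replacement of essential equivalences by Morita morphisms from Remark \ref{nhqq70ch}; once these identifications are in place, both existence and uniqueness follow formally from Theorem \ref{jpmgv5l0} and Proposition \ref{0cc6w7hy}.
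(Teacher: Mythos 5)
Your proposal is correct and follows essentially the same route as the paper's proof: obtain a Morita equivalence between the two presentations from the equivalence of bicategories $\B$, transfer the quasi-symplectic structure via Theorem \ref{jpmgv5l0}, and deduce uniqueness by upgrading the 2-commutativity of \eqref{o1mxhizj} for the two spans to a groupoid-level diagram of the form \eqref{poby95jm} and invoking Proposition \ref{0cc6w7hy}. The paper states these translation steps more tersely, but the underlying argument is identical to yours.
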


\begin{proof}
Let $(\G_1, \omega_1, \phi_1) \in \mathbf{\Omega}$ and let $\G_2$ be a presentation of $\mathbf{X}$.
We have weak equivalences $\B\G_1 \simeq \mathbf{X} \simeq \B \G_2$, so there is a Morita equivalence \eqref{b22utjui} such that \eqref{o1mxhizj} 2-commutes.
By Theorem \ref{jpmgv5l0}, there is a quasi-symplectic structure $(\omega_2, \phi_2)$ on $\G_2$ such that \eqref{b22utjui} has the structure of a symplectic Morita equivalence.
Therefore, $(\G_2, \omega_2, \phi_2) \in \boldsymbol{\Omega}$.
Suppose that there is another quasi-symplectic structure $(\omega_2', \phi_2')$ on $\G_2$ and a symplectic Morita equivalence $(\G_1, \omega_1, \phi_1) \leftarrow \L' \to (\G_2, \omega_2', \phi_2')$ such that \eqref{o1mxhizj} with $\L'$ in place of $\L$ 2-commutes.
In particular, there is a 2-commutative diagram of weak equivalences in $\mathbf{DiffStack}$
\begin{equation}\label{ufgrigp0}
\begin{tikzcd}[row sep={3em,between origins},column sep={3em,between origins}]
&\B\L \arrow{dl} \arrow{dd} \arrow{dr} & \\
\B\G_1 & & \B\G_2 \\
&\B\L' \arrow{ul} \arrow{ur}. &
\end{tikzcd}
\end{equation}
It follows that there is a 2-commutative diagram of Morita morphisms as in \eqref{poby95jm} and hence $(\omega_2, \phi_2)$ and $(\omega_2', \phi_2')$ are gauge equivalent by Proposition \ref{0cc6w7hy}.
\end{proof}

\begin{remark}\label{kel26qh6}
An alternative approach to defining structures on differentiable stacks is to focus only on stacks of the form $\B\G$ (i.e.\ not just weakly equivalent but \emph{equal} to $\B\G$) and define the structure as a Morita invariant equivalence class of objects on $\G$ (see e.g.\ \cite{hoy-fer:19,cra-mes:19,wei:09,tu-xu-lau:04}).
With this approach, we would define a 1-shifted symplectic structure on $\B\G$ as a gauge equivalence class of quasi-symplectic structures on $\G$.
See also Remark \ref{3cbowb68}.
\end{remark}

We now emulate the above discussion to define 1-shifted coisotropic structures on morphisms of differentiable stacks.
A \defn{presentation} of a morphism of differentiable stacks $\mathbf{c} : \mathbf{C} \to \mathbf{X}$ is a Lie groupoid morphism $\c : \C \to \G$ together with a 2-commutative diagram
\[
\begin{tikzcd}
\mathbf{B}\C \arrow{r}{\simeq} \arrow[swap]{d}{\B \c} & \mathbf{C} \arrow{d}{\mathbf{\c}} \\
\mathbf{B}\G \arrow{r}{\simeq} & \mathbf{X},
\end{tikzcd}
\]
in $\mathbf{DiffStack}$, where $\simeq$ are weak equivalences.
Every morphism of differentiable stacks has a presentation.
Moreover, for any presentation $\G$ of $\mathbf{X}$, there is a presentation of $\mathbf{\c}$ whose codomain is $\G$.

Let $(\mathbf{X}, \mathbf{\Omega})$ be a 1-shifted symplectic differentiable stack and $\mathbf{\c} : \mathbf{C} \to \mathbf{X}$ a morphism of differentiable stacks.
A \defn{symplectic presentation} of $\mathbf{c}$ is a symplectic presentation $(\G, \omega, \phi)$ of $(\mathbf{X}, \mathbf{\Omega})$ together with a Lie groupoid morphism $\c : \C \to \G$ presenting $\mathbf{c}$.
There is a natural equivalence relation on the set of pairs $(\c, L)$, where $\c$ is a symplectic presentation of $\mathbf{c}$ and $L$ is a 1-shifted coisotropic structure on $c$.
Namely, $(\c_1, L_1) \sim (\c_2, L_2)$ if they are coisotropically Morita equivalent as in Definition \ref{w43yacl2}\ref{bx4cvb7v} via a 2-commutative diagram
\begin{equation}\label{r5rwl431}
\begin{tikzcd}[column sep={4em,between origins},row sep={2em,between origins}]
& \K \arrow[swap]{dl} \arrow{dd} \arrow{dr} & \\
\C_1 \arrow[swap]{dd}{\c_1} &  & \C_2 \arrow{dd}{\c_2} \\
& \L \arrow{dl} \arrow[swap]{dr} & \\
\G_1 & & \G_2,
\end{tikzcd}
\end{equation}
such that the induced diagram of weak equivalences in $\mathbf{DiffStack}$
\begin{equation}\label{o6kq4ddf}
\begin{tikzcd}[column sep={4em,between origins},row sep={1.7em,between origins}]
& \B\K \arrow[swap]{dl} \arrow{ddd} \arrow{drr} & & \\
\B\C_1 \arrow[swap]{ddd} \arrow[crossing over]{drr} &  &  & \B\C_2 \arrow{ddd} \arrow{dl} \\
&  & \mathbf{C} &  \\
& \B\L \arrow{dl} \arrow[swap]{drr} & & \\
\B\G_1 \arrow{drr} & & & \B\G_2 \arrow{dl} \\
& & \mathbf{X} \arrow[from=uuu,crossing over] &
\end{tikzcd}
\end{equation}
2-commutes.
Reflexivity and symmetry are clear, and transitivity follows from Proposition \ref{3ni3b2b5}.

\begin{definition}
Let $(\mathbf{X}, \mathbf{\Omega})$ be a 1-shifted symplectic differentiable stack and $\mathbf{c} : \mathbf{C} \to \mathbf{X}$ a morphism of differentiable stacks.
A \defn{1-shifted coisotropic structure} on $\mathbf{c}$ is an equivalence class $\mathbf{L}$ of pairs $(\c, L)$, where $c$ is a symplectic presentation of $\mathbf{c}$ and $L$ is a 1-shifted coisostropic structure on $c$.
\end{definition}

As for 1-shifted symplectic structures, choosing a 1-shifted coisotropic structure on $\mathbf{c}$ does not single out a particular subclass of presentations of $\mathbf{c}$, and hence is genuinely a structure on the morphism of stacks:

\begin{proposition}
Let $(\mathbf{X}, \mathbf{\Omega})$ be a 1-shifted symplectic differentiable stack, $\mathbf{c} : \mathbf{C} \to \mathbf{X}$ a morphism of differentiable stacks, and $\mathbf{L}$ a 1-shifted coisotropic structure on $\mathbf{c}$.
For any symplectic presentation $c$ of $\mathbf{\c}$, there is a 1-shifted coisotropic structure $L$ on $\c$ such that $(\c, L) \in \mathbf{L}$.
Moreover, $L$ is unique up to gauge equivalence.
\end{proposition}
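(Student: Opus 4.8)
The plan is to mirror the proof of Proposition \ref{6x6gdfs5}, replacing the transfer of quasi-symplectic structures (Theorem \ref{jpmgv5l0}) by the transfer of $1$-shifted coisotropic structures (Theorem \ref{vazibeld}), and the invariance result Proposition \ref{0cc6w7hy} by the functoriality of the transfer map (Proposition \ref{3ni3b2b5}) together with the gauge-equivalence criterion of Lemma \ref{k1r3zchl}.

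For existence, I would fix a representative $(\c_1, L_1) \in \mathbf{L}$ with $\c_1 : \C_1 \to \G_1$, and let $\c : \C \to \G$ be the given symplectic presentation of $\mathbf{c}$. Since $\G_1$ and $\G$ are both symplectic presentations of $(\mathbf{X}, \mathbf{\Omega})$ (Definition \ref{s5okb7r6}), there is a symplectic Morita equivalence $\G_1 \leftarrow \L \to \G$ for which the induced diagram \eqref{o1mxhizj} $2$-commutes. Because $\c_1$ and $\c$ both present $\mathbf{c}$, the bicategory equivalence \eqref{ifpavzpw} applied to the $1$-morphism $\mathbf{c}$ lifts this base equivalence to a symplectic Morita equivalence between the morphisms $\c_1$ and $\c$ in the sense of Definition \ref{w43yacl2}\ref{0dpv2tg2}, i.e.\ a diagram \eqref{27zunnqp} over the chosen $\L$ whose induced stacky diagram \eqref{o6kq4ddf} $2$-commutes. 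By Theorem \ref{vazibeld}, there is then a unique $1$-shifted coisotropic structure $L$ on $\c$ turning \eqref{27zunnqp} into a coisotropic Morita equivalence (Definition \ref{w43yacl2}\ref{bx4cvb7v}). This exhibits $(\c, L) \sim (\c_1, L_1)$ with $2$-commuting stacky diagram, so $(\c, L) \in \mathbf{L}$.

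For uniqueness, suppose $(\c, L)$ and $(\c, L')$ both lie in $\mathbf{L}$ with the same symplectic presentation $\c$. By transitivity of the defining equivalence relation (which holds by Proposition \ref{3ni3b2b5}), there is a coisotropic Morita equivalence between $(\c, L)$ and $(\c, L')$, i.e.\ a diagram of the form \eqref{27zunnqp} with both legs equal to $\c : \C \to \G$, Morita morphisms $\psi_1, \psi_2 : \K \to \C$, and $2$-commuting stacky diagram \eqref{o6kq4ddf}. The $2$-commutativity forces $\B\psi_1 \cong \B\psi_2$ as maps $\B\K \to \mathbf{C}$; translating this $2$-isomorphism back through \eqref{ifpavzpw}, and using that $2$-morphisms in $\mathbf{Lie}[\E^{-1}]$ are refinements as in \eqref{hx1x0jqk}, yields a Morita morphism $q : \K' \to \K$ and a natural transformation $\psi_1 q \Rightarrow \psi_2 q$. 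Pulling the coisotropic Morita equivalence back along $q$ gives a weak coisotropic Morita equivalence with homotopic top legs, so Lemma \ref{k1r3zchl} shows that $L$ and $L'$ are gauge equivalent.

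The main obstacle is the lifting step in the existence argument: producing the full symplectic Morita equivalence \eqref{27zunnqp} between the two presentations $\c_1$ and $\c$ of $\mathbf{c}$, compatibly with the chosen base equivalence $\L$ and with the coherence diagram \eqref{o6kq4ddf}. This is the morphism-level analogue of the object-level assertion used in Proposition \ref{6x6gdfs5} that two presentations of $\mathbf{X}$ are linked by a Morita equivalence, and it again rests on \eqref{ifpavzpw} being an equivalence of bicategories, now applied to the $1$-morphism $\mathbf{c}$ rather than to objects. Concretely, one takes a presentation $g : \K \to \L$ of $\mathbf{c}$ with codomain $\L$ (possible since $\B\L \simeq \mathbf{X}$), and refines $\K$ so as to obtain Morita morphisms $\psi_1 : \K \to \C_1$ and $\psi_2 : \K \to \C$ together with the natural transformations $\theta_i : \c_i \psi_i \Rightarrow \varphi_i g$; checking that the resulting diagram \eqref{o6kq4ddf} $2$-commutes is the only genuinely delicate piece of bookkeeping, the rest being formal consequences of Theorem \ref{vazibeld} and Lemma \ref{k1r3zchl}.
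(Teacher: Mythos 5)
Your proposal is correct in substance and leans on the same two pillars as the paper (Theorem \ref{vazibeld} for existence, Lemma \ref{k1r3zchl} for uniqueness), but the existence step is organized in the opposite order, and that is exactly where the paper does something you should note. You fix a symplectic Morita equivalence $\G_1 \leftarrow \L \to \G$ of the bases first and then try to \emph{lift} it to a Morita equivalence of the morphisms $\c_1$ and $\c$ sitting over that particular $\L$; as you concede, this lifting (choosing a presentation $g : \K \to \L$ of $\mathbf{c}$, refining $\K$ until the legs to $\C_1$ and $\C$ are genuine Morita morphisms, and extracting the natural transformations $\theta_i$ from $2$-isomorphisms of stack maps) is the delicate point, and it is left as a sketch. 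The paper inverts the order: it first takes the morphism-level Morita equivalence \eqref{r5rwl431} that exists simply because $\c_1$ and $\c$ both present $\mathbf{c}$ (its base $\L$ need not be symplectic), separately takes a symplectic base equivalence $\L'$ from Definition \ref{s5okb7r6}, links $\L$ and $\L'$ by a $2$-commutative diagram \eqref{ufgrigp0}--\eqref{poby95jm} through some $\widehat{\L}$ (which inherits a $1$-shifted Lagrangian structure from $\L'$), and then forms the homotopy fibre product $\K \htimes_{\L} \widehat{\L}$ to transport the morphism-level data onto the symplectic base. This reduces your ``delicate bookkeeping'' to the already-proved composition results (Proposition \ref{9j2iully}, Theorem \ref{rt0qkos3}) and the argument of Proposition \ref{0cc6w7hy}, so no new lifting lemma is needed; your route would work but requires you to actually carry out the refinement-and-coherence argument you only outline. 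Your uniqueness argument coincides with the paper's, except that the paper asserts directly that $\psi_1$ and $\psi_2$ are homotopic (which holds for strict morphisms out of the same groupoid inducing $2$-isomorphic stack maps), whereas you insert an extra refinement $q : \K' \to \K$ before applying Lemma \ref{k1r3zchl}; this is harmless but unnecessary.
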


\begin{proof}
For existence, let $\c_1 : (\C_1, L_1) \to (\G_1, \omega_1, \phi_1)$ be a representative of $\mathbf{L}$ and $\c_2 : \C_2 \to (\G_2, \omega_2, \phi_2)$ a symplectic presentation of $\mathbf{c}$.
Since $\B \c_1$ and $\B \c_2$ are both weakly equivalent to $\mathbf{C} \to \mathbf{X}$, there is a diagram of Morita equivalences of the form \eqref{r5rwl431} such that \eqref{o6kq4ddf} 2-commutes.
Moreover, since $(\G_1, \omega_1, \phi_1)$ and $(\G_2, \omega_2, \phi_2)$ are symplectic presentations of $(\mathbf{X}, \mathbf{\Omega})$, there is a symplectic Morita equivalence $\G_1 \leftarrow \L' \to \G_2$ such that \eqref{o1mxhizj} with $\L'$ in place of $\L$ 2-commutes.
It follows that there is a 2-commutative diagram as in \eqref{ufgrigp0} and hence a 2-commutative diagram of Morita morphisms of the form \eqref{poby95jm}.
Note that $\G_1 \leftarrow \widehat{\L} \to \G_2$ is also a symplectic Morita equivalence.
Thus, by considering the fibre product $\K \htimes_\L \widehat{\L}$, we have a symplectic Morita equivalence between $\c_1$ and $\c_2$.
%
By Theorem \ref{vazibeld}, there is a coisotropic structure $L_2$ on $\c_2$ such that $(\c_1, L_1) \sim (\c_2, L_2)$, i.e.\ $(\c_2, L_2) \in \mathbf{L}$.

For uniqueness, let $\c : \C \to (\G, \omega, \phi)$ be a symplectic presentation of $\mathbf{c}$ and suppose that there are two coisotropic structures $L$ and $L'$ on $\c$ such that $(\c, L) \sim (\c, L')$.
We then have a symplectic Morita equivalence
\[
\begin{tikzcd}[column sep={4em,between origins},row sep={2em,between origins}]
& \K \arrow[swap]{dl}{\psi_1} \arrow{dd} \arrow{dr}{\psi_2} & \\
\C \arrow[swap]{dd}{\c} &  & \C \arrow{dd}{\c} \\
& \L \arrow{dl}{\varphi_1} \arrow[swap]{dr}{\varphi_2} & \\
\G & & \G,
\end{tikzcd}
\]
such that
\[
\begin{tikzcd}[column sep={4em,between origins},row sep={2em,between origins}]
& \B\K \arrow[swap]{dl} \arrow{dd} \arrow{dr} & \\
\B\C \arrow[swap]{dd} \arrow[equal,crossing over]{rr}  &  & \B\C \arrow{dd} \\
& \B\L \arrow{dl} \arrow[swap]{dr} & \\
\B\G \arrow[equal]{rr} & & \B\G  \\
\end{tikzcd}
\]
2-commutes.
It follows that $\psi_1$ and $\psi_2$ are homotopic.
By Lemma \ref{k1r3zchl}, $L$ and $L'$ are gauge equivalent.
\end{proof}

\begin{remark}\label{3cbowb68}
Continuing with Remark \ref{kel26qh6}, the reader who prefers to work only with quotient stacks may define 1-shifted coisotropic structures as follows.
In this approach, we consider the category induced by the bicategory $\mathbf{Lie}[\E^{-1}]$, i.e.\ a morphism $\mathbf{c} : \B\H \to \B\G$ is an equivalence class of spans \eqref{skxte4yr}, called \emph{fractions}, where two fractions are equivalent if they are related by a 2-commutative diagram \eqref{hx1x0jqk} \cite{bur-hoy:23, hoy:13}. 
Suppose that $\B\G$ is a 1-shifted symplectic stack in the sense of Remark \ref{kel26qh6}, i.e.\ $\G$ is quasi-symplectic.
We define a 1-shifted coisotropic structure on $\mathbf{\c}$ as an equivalence class of triples consisting of a fraction $\H \overset{\simeq}{\leftarrow} \C \to \G$ representing $\mathbf{c}$, a quasi-symplectic structure $(\omega, \phi)$ on $\G$ representing the 1-shifted symplectic structure on $\B\G$, and a 1-shifted coisotropic structure $L$ on $\C \to (\G, \omega, \phi)$.
Two such structures $\H \overset{\simeq}{\leftarrow} (\C_i, L_i) \to (\G, \omega_i, \phi_i)$ are equivalent if there is a 2-commutative diagram \eqref{hx1x0jqk} whose right-most square gives a coisotropic Morita equivalence
\[
\begin{tikzcd}[column sep={4em,between origins},row sep={2em,between origins}]
& \K \arrow[swap]{dl} \arrow{dd} \arrow{dr} & \\
\C_1 \arrow[swap]{dd} &  & \C_2 \arrow{dd} \\
& \G \arrow{dl} \arrow[swap]{dr} & \\
\G & & \G,
\end{tikzcd}
\]
as in Definition \ref{w43yacl2}\ref{bx4cvb7v}.
Proposition \ref{3ni3b2b5} shows that this is indeed an equivalence relation, Theorem \ref{vazibeld} shows that the equivalence class is Morita invariant, and Proposition \ref{kvvrwful} shows that it is independent of the choice of fraction.
\end{remark}

%
%

\bibliographystyle{plain}
\bibliography{coisotropics}
\end{document}